\newcommand{\inv}{^{-1}}
\DeclareMathOperator{\dist}{d}
\newtheorem{thm}{Theorem}[section]
\newtheorem{cor}[thm]{Corollary}
\newtheorem{lem}[thm]{Lemma}
\newtheorem{prop}[thm]{Proposition}
\newtheorem{prob}[thm]{Problem}
\newtheorem{que}[thm]{Question}
\theoremstyle{definition}
\newtheorem{defn}[thm]{Definition}
\newtheorem{nt}[thm]{Notation}
\theoremstyle{remark}
\newtheorem{rem}[thm]{Remark}
\newtheorem{ex}[thm]{Example}
\newcommand{\cA }{\mathcal A}
\newcommand{\cC }{\mathcal C}
\newcommand{\cL }{\mathcal L}
\newcommand{\cM }{\mathcal M}
\newcommand{\cR }{\mathcal R}
\newcommand{\cS }{\mathcal S}
\newcommand{\bQ}{\mathbb{Q}}
\newcommand{\bM}{\mathbb{M}}
\newcommand{\bR}{\mathbb{R}}
\newcommand{\bT}{\mathbb{T}}
\newcommand{\bZ}{\mathbb{Z}}
\newcommand{\Reg}{\mathsf{Reg}}
\newcommand{\CF}{\mathsf{CF}}
\newcommand{\onecounter}{\mathsf{1C}}
\newcommand{\RecEnum}{\mathsf{RE}}
\newcommand{\WP}{\mathrm{WP}}
\def\coloneqq{\mathrel{\mathop\mathchar"303A}\mkern-1.2mu=}
\date{}
\author{}
\begin{document}

\title{Regular left-orders on groups}
\author{Yago Antol\'{i}n, Crist\'{o}bal Rivas, Hang Lu Su }
\date{\today}

\maketitle

\begin{abstract}
A regular left-order on a finitely generated group  $G$ is a total, left-multiplication invariant order on $G$  whose corresponding positive cone is the image of a regular language over the generating set of the group under the evaluation map. We show that admitting regular left-orders is stable under extensions and wreath products and we give a classification of the groups whose left-orders are all regular left-orders. In addition, we prove that a solvable Baumslag-Solitar group $B(1,n)$ admits a regular left-order if and only if $n\geq -1$. Finally, Hermiller and \u{S}uni\'{c} showed that no free product admits a regular left-order. We show that if $A$ and $B$ are groups with regular left-orders, then $(A*B)\times \mathbb{Z}$ admits a regular left-order.
\end{abstract}

\vspace{0.2cm}\vspace{0.2cm}

\noindent{\bf MSC 2020 classification}: 06F15, 20F60, 68Q45

\textbf{}
\tableofcontents

\section{Introduction}
\label{sec:intro}

A group $G$ is \emph{left-orderable} if there exists a total order $\prec$ on the elements of $G$ which is invariant under left-multiplication, that is
$$ g \prec h \iff fg \prec fh, \qquad \forall g,h,f \in G.$$
In this case, the relation $\prec$ is called a {\em left-order}. It is sometimes easier to understand left-orders in terms of sets of elements which are greater than the identity.
Such sets are called {\em positive cones} (of the order). A positive cone completely encodes its associated left-order and vice-versa.
Equivalently, a positive cone is a subset $P \subset G$ which is closed under multiplication $PP \subseteq P$,  and partitions $G$ as $G = P \sqcup P^{-1} \sqcup \{1_G\}$ where the union is disjoint. 

This paper discusses the computational complexity  of left-orders (or equivalently positive cones). 
This is a topic that has gained interest in recent years: Darbinyan  \cite{Dar} and Harrison-Trainor \cite{HT} have constructed groups with solvable word problems but no computable left-orders. 
Rourke and Weist have studied the complexity of left-orders on certain mapping class groups \cite{Rourke}. 
\u{S}uni\'{c} \cite{Sunik13,Sunik13b} proved the existence of one-counter left-orders and later, with Hermiller \cite{HS}, proved that \u{S}uni\'{c}'s left-orders on free groups are the computationally the simplest possible by showing that no positive cone on a free product admits a regular left-order.

We are interested in finding groups that admit a computationally simple left-order and in describing such orders.
Specifically, we investigate which finitely generated groups admit a {\em regular left-order} or, equivalently, a {\em regular positive cone}. A positive cone $P\subseteq G$ is called regular if there is a finite generating set $X$ of $G$ and a  regular language $\mathcal L\subseteq X^*$  such that $\cL$ evaluates onto $P$. 
By a {\it  formal language} we mean a subset of a finitely generated free monoid $X^*$. 
 A formal language $\cL \subseteq X^*$ is {\it regular} if it is accepted by a {\it finite state automaton}. A finite state automaton for $\cL$ is a directed $X$-edge-labelled finite graph such that $\cL$ consists of the set of labels of paths from a fixed initial vertex to a vertex in a fixed subset of vertices called accepted states. 
Positive cones which are represented by a regular language are the simplest ones computationally from the point of view of the Chomsky hierarchy of formal languages \cite{Chomsky}. Section \ref{sec:prelim} will be devoted to reviewing the necessary material about formal languages and left-orders.

In previous works, the authors have given examples of groups not admitting regular left-orders \cite{AABR, Su}. 
In this paper we focus instead on constructing regular left-orders.
We  show that regularity of positive cones is preserved under the constructions of the following theorem.

\begin{thm}[Propositions \ref{prop: inheritance subgroups}, \ref{prop:extensions} and \ref{prop:wreath product}]\label{thm: closure}

 The class of finitely generated groups admitting regular positive cones is closed under
 passing to finite index subgroups, extensions and wreath products.
\end{thm}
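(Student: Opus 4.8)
The unifying idea is that all three constructions produce the new order as a \emph{lexicographic} (convex) combination of the given orders, so the real content is checking that the resulting positive cone is again the image of a regular language, that is, a rational subset of the ambient group. I will freely use the standard calculus of rational subsets: they are closed under union and product, every finitely generated subgroup is rational, the intersection of a rational subset with a recognizable subset is rational, and for a finite-index subgroup $H\le G$ the rational subsets of $H$ are exactly those rational subsets of $G$ contained in $H$.

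\textbf{Finite index.} Given a regular positive cone $P$ on $G$ and a finite-index $H$, the restriction $P\cap H$ is a positive cone on $H$ (it is a subsemigroup and $H=(P\cap H)\sqcup (P\cap H)\inv\sqcup\{1\}$). Since $H$ is recognizable, $P\cap H$ is the intersection of a rational subset with a recognizable subset, hence rational in $G$; being contained in $H$, it is rational in $H$. Thus $P\cap H$ is a regular positive cone on $H$.

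\textbf{Extensions.} For $1\to N\to G\xrightarrow{\pi} Q\to 1$ with regular positive cones $P_N$ on $N$ and $P_Q$ on $Q$, set $P=\pi\inv(P_Q)\sqcup P_N$, the classical lexicographic cone (decide by the image in $Q$, break ties inside $N$); a routine check shows $P$ is a positive cone. For regularity, choose a finite generating set $X$ of $G$ containing generators $Y$ of $N$ and mapping onto generators of $Q$. Applying the letter-by-letter lift of words to a regular language for $P_Q$ gives a regular language whose image $S\subseteq G$ satisfies $\pi(S)=P_Q$, while $P_N\cup P_N\inv\cup\{\e\}$ gives a regular language over $Y$ with image $N$. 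Here it is essential that $N$ be finitely generated, hence a rational subset of $G$: then $\pi\inv(P_Q)=S\cdot N$ is a product of rational subsets and so is rational, $P_N$ is rational, and $P$ is their union.

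\textbf{Wreath products, and the main obstacle.} Write $A\wr B=\big(\bigoplus_B A\big)\rtimes B$ and let $\pi\colon A\wr B\to B$ be the projection. The same lexicographic recipe suggests the cone that is positive when the $B$-coordinate lies in $P_B$ and, when that coordinate is trivial, is positive when the $A$-value at the $\prec_B$-maximal support point lies in $P_A$. The difficulty is that the base group $\bigoplus_B A$ is \emph{not} finitely generated when $B$ is infinite, hence is not a rational subset; consequently the extension argument collapses — indeed $\pi\inv(P_B)$ need not even be rational — and one cannot reduce to the previous case. I expect this to be the hard step. The plan is to build a tailored regular set of normal forms for $A\wr B$ from regular normal forms for $A$ and $B$, traversing the lamp positions in $\prec_B$-order so that the value deciding the sign of a base element is carried by a controlled suffix of the word; the crux is to arrange the traversal and the acceptance condition so that a \emph{finite} automaton, which cannot track the unbounded support, can nonetheless certify membership in the cone, after which closure of regular languages under concatenation and union assembles $P$. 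Verifying that finite memory suffices — equivalently, choosing the order on the base so that its positive part is genuinely rational in $A\wr B$ — is the decisive point of the argument.
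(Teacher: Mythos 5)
Your finite-index and extension arguments are correct, and are essentially the paper's proofs in different clothing. The finite-index case repackages the language-convexity argument (Proposition \ref{prop: inherited by L-convex subgroups}, used by the paper as Proposition \ref{prop: inheritance subgroups}) as the standard rational-subset facts you invoke: closure of rational sets under intersection with recognizable sets, plus Grunschlag's theorem that a rational subset of $G$ contained in a finite-index subgroup $H$ is rational in $H$. The extension case is exactly the paper's proof of Proposition \ref{prop:extensions}, with your product $S\cdot N$ of rational sets playing the role of the paper's shuffle language $\widetilde{\cL_Q}$ (the preimage of $\cL_Q$ under the letter-deleting homomorphism), and your emphasis that finite generation of $N$ is essential is on target.

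The wreath product case, however, is a genuine gap, and not merely because you stop at a plan: the cone you commit to can never be regular. You take the lexicographic cone in which the quotient $B$ leads, i.e.\ $\pi\inv(P_B)$ with ties broken by the max-support order on $\bigoplus_B A$. By the paper's Lemma \ref{lem: BNS} (Bieri--Neumann--Strebel theory), if the kernel of a surjection onto $\bZ$ is not finitely generated, then \emph{no} lexicographic order led by $\bZ$ is regular; already for $\bZ\wr\bZ$ your cone fails coarse connectivity (Lemma \ref{lem: regular implies coarsely connected}), so no traversal scheme or acceptance condition can yield a regular language evaluating onto it --- finite memory provably does not suffice for that order. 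The decisive move, which your setup forecloses, is to let the \emph{kernel} lead: Lemma \ref{lem: lex on wreath} shows the max-support cone $P_{\bf N}$ on $\bigoplus_{q\in Q}N$ is invariant under conjugation by $Q$, so by Lemma \ref{lem: semidirect} the lexicographic order with leading factor ${\bf N}$ is left-invariant, and its cone \eqref{wr-pos} declares an element positive whenever the lamp at the $\prec_Q$-largest support point is positive, \emph{irrespective of the cursor position}. That cone is then evaluated by the regular language $Y^*\cL_N\cM_Q(X^*\cM_Q)^*Y^*\cup\cL_Q$ of \eqref{eq: pos-wr-AFL} (Propositions \ref{prop: wr-lang} and \ref{prop:wreath product}): writing the lamps in strictly $\prec_Q$-decreasing order of position, positivity is witnessed by the single leading block in $\cL_N$ and the decreasing traversal by the negative-cone blocks $\cM_Q$, exactly the kind of controlled-suffix bookkeeping you hoped for --- but it only works for this order. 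Your closing hedge about ``choosing the order on the base'' gestures in the right direction; what must actually change is which factor leads.
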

The inheritance under passing to finite index subgroups is due to H. L. Su \cite{Su}. 
The other two closure properties are proved in Section \ref{sec: extensions} where we study left-orders associated to group extensions.
A basic fact is that the group of additive integers $\bZ$ admits regular left-orders. Starting from this, and the known fact that amenable left-orderable groups are locally indicable \cite{Morris}, the closure under group extensions property provides the following family of examples of groups which admit regular positive cones.
\begin{cor}[Proposition \ref{prop: virtually polycyclic}]\label{cor: polycyclic}
Left-orderable virtually polycyclic groups admit regular positive cones.
\end{cor}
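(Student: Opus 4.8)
The plan is to realize any left-orderable virtually polycyclic group as an iterated extension built from $\mathbb{Z}$, so that Proposition~\ref{prop:extensions} together with the regularity of left-orders on $\mathbb{Z}$ closes the argument. The engine driving this is local indicability: since polycyclic groups are solvable, and a finite extension of a solvable group is amenable, every virtually polycyclic group is amenable; hence by Morris's theorem \cite{Morris} a \emph{left-orderable} virtually polycyclic group $G$ is locally indicable. I would run an induction on the Hirsch length $h(G)$ (the number of infinite cyclic factors in a subnormal series, which is finite and well defined for virtually polycyclic groups).

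For the inductive step, assume $G$ is nontrivial. Virtually polycyclic groups are finitely generated, so local indicability furnishes a surjection $\phi\colon G \twoheadrightarrow \mathbb{Z}$; set $K = \ker\phi$, giving a short exact sequence $1 \to K \to G \xrightarrow{\phi} \mathbb{Z} \to 1$. The subgroup $K$ is again virtually polycyclic (subgroups of virtually polycyclic groups are virtually polycyclic) and left-orderable (left-orderability passes to subgroups), and since $h(G) = h(K) + h(\mathbb{Z}) = h(K)+1$ we have $h(K) = h(G)-1$. By the induction hypothesis $K$ admits a regular positive cone, and $\mathbb{Z}$ does by the basic fact recalled above; applying the closure under extensions (Proposition~\ref{prop:extensions}) to $1\to K \to G \to \mathbb{Z}\to 1$ then produces a regular positive cone on $G$.

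The base case is $h(G)=0$, forcing $G$ to be finite; being locally indicable it is torsion-free, hence trivial, and the trivial group admits the (regular) empty positive cone. The argument is short once the ingredients are in place, so the only real points to verify carefully are that the hypotheses are genuinely hereditary in the required sense --- that passing to the kernel $K$ preserves both virtual polycyclicity and left-orderability --- and that the Hirsch length is strictly additive across the extension so the induction is well-founded. The conceptual crux, rather than any computation, is the appeal to Morris's theorem: it is what guarantees the surjection onto $\mathbb{Z}$ at each stage and thereby lets us descend through the extension structure down to $\mathbb{Z}$.
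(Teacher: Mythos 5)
Your proof is correct and takes essentially the same route as the paper: induction on the Hirsch length, using Morris's theorem to produce a surjection $G\twoheadrightarrow\bZ$ whose kernel $K$ is again left-orderable, virtually polycyclic, and of smaller Hirsch length, then closing the induction with Proposition~\ref{prop:extensions} and the regularity of orders on $\bZ$. The only (cosmetic) difference is that the paper factors the argument into two separate inductions --- first showing a left-orderable virtually polycyclic group is poly-$\bZ$, then showing poly-$\bZ$ groups have $\Reg$-left-orders --- whereas you merge them into a single induction that builds the regular positive cone directly; the ingredients and their roles are identical.
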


For the case of wreath products $G=N\wr Q$, we have an exact sequence $1\to \oplus_{q\in Q} N \to G \to Q \to \{1\}$ but here $\oplus_{q\in Q} N$ does not admit a regular positive cone as, in general, it is not finitely generated.
Therefore, the construction of regular left-orders on  wreath products differs from the construction for the extensions. 

It is already known that one cannot expect to generalize Corollary \ref{cor: polycyclic} for solvable non-polycyclic groups. 
Indeed, Darbinyan \cite[Theorem 2]{Dar} showed that there is a two-generated recursively presented left-orderable solvable group
of derived length 3 with undecidable word problem (in fact, their order turns out to be two-sided invariant). 
As we will see in the Appendix, this implies that no left-order on this group is computable (i.e. described by a language that can be recognized by a Turing machine).

In the present paper, we are able to decide when a solvable Baumslag-Solitar group $BS(1,q)$ admits a regular positive cone or not.
 Recall that $BS(1,q)=\langle a,b \mid aba^{-1} =b^q\rangle$ and it is isomorphic to $\bZ[1/q]\rtimes \bZ$.
Note that $q\in \bZ$ and for $q=0$  we adopt the convention that $BS(1,0)=\bZ$.

\begin{thm}[Theorem \ref{thm: BS}] For all $q\in \mathbb{Z}$, the solvable Baumslag-Solitar group $BS(1,q)$ admits a one-counter left-order. Moreover,  $BS(1,q)$ admits a regular left-order  if and only if $q\geq -1$. 
\end{thm}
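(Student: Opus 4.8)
The plan is to work in the coordinates $BS(1,q)\cong \bZ[1/q]\rtimes_q\bZ$, writing an element as a pair $(x,n)$ with $x\in\bZ[1/q]$, $n\in\bZ$ and multiplication $(x,n)(y,m)=(x+q^{n}y,\,n+m)$; here $b=(1,0)$, $a=(0,1)$, and conjugation by $a$ is multiplication by $q$ on the abelian normal subgroup $N=\bZ[1/q]$. First I would dispose of $q\in\{-1,0,1\}$: there $\bZ[1/q]=\bZ$, so $BS(1,q)$ is (virtually) polycyclic and Corollary~\ref{cor: polycyclic} supplies a regular positive cone; all three values satisfy $q\ge -1$, as required. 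This leaves the genuine cases $q\ge 2$ (to be shown regular) and $q\le -2$ (to be shown one-counter but not regular). Throughout I will use that admitting a regular positive cone is equivalent to the cone being a \emph{rational subset} of the group, a property independent of the finite generating set; this is what makes the negative direction tractable, since it frees us from any particular choice of normal forms.

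For the one-counter statement, valid for every $q$, I would use the slope-dominant lexicographic order, with positive cone $P_{\mathrm{lex}}=\{(x,n):n>0\}\cup\{(x,0):x>0\}$; a direct check (using only that $N$ is totally ordered and that the $\bZ$-coordinate dominates) shows this is a genuine positive cone for all $q$. To recognise it I would read words in a normal form $a^{-i}\,b^{m}\,a^{\,i+n}$ and let a nondeterministic one-counter automaton use its single counter to match the $a$-heights $a^{-i}\cdots a^{\,i+n}$ and to compare the $a$-exponent sum $n$ with $0$, while the sign of the kernel part is read off the first $b$-letter together with the parity of the leading $a^{-1}$-run. The counter is exactly the resource needed for the height-matching, which is why this order is one-counter rather than regular in general.

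For $q\ge 2$ the positive cone can instead be taken \emph{translation-dominant}: set $P=\{(x,n):x>0\}\cup\{(0,n):n>0\}$. Because $q>0$ forces $q^{n}>0$ for every $n$, one checks directly that $P$ is closed under multiplication, hence a positive cone; this closure is exactly what fails when $q<0$. The decisive point is that $P$ is then represented by an almost trivial regular language over $X=\{a^{\pm1},b^{\pm1}\}$, namely the words containing at least one letter $b$ and no letter $b^{-1}$, together with the words $a^{n}$ for $n\ge 1$. Indeed, appending a letter $b$ at $\bZ$-height $h$ adds $q^{h}>0$ to the $x$-coordinate, so any word using only positive $b$-letters (and at least one) evaluates to some $(x,n)$ with $x>0$; conversely every $(x,n)$ with $x>0$ is reached by placing the nonnegative base-$q$ digits of $x$ at the appropriate heights, using only positive $b$'s. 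Thus this manifestly regular language evaluates onto $P$, proving regularity for all $q\ge 2$.

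The hard part is the negative direction: for $q\le -2$ I must show that \emph{no} positive cone is a rational subset of $BS(1,q)$. The starting observation is that the restriction of any left-order to $N\cong\bZ[1/q]\subseteq\bQ$ is, up to reversal, the standard order, and that conjugation by $a$ multiplies by $q<0$ and hence \emph{reverses} it; concretely $\sigma_k=a^{k}ba^{-k}=(q^{k},0)$ satisfies $\sigma_k\succ 1$ if and only if $k$ is even. Deciding $P$-membership of a kernel element $(x,0)$ thus amounts to deciding the sign of a finite sum $\sum_h c_h q^{h}$ in the \emph{negative} base $q$, where the net digit $c_h$ at each height $h$ may be arbitrarily large and successive heights contribute with opposite signs, so the sign is governed by an unbounded comparison of the mass at even heights against that at odd heights. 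For $q\ge 2$ this comparison was vacuous (all contributions share the digit's sign), which is precisely why the language above worked; for $q\le -2$ it is genuine. The plan is to convert this into a pumping/Myhill--Nerode contradiction: assuming a finite automaton evaluates onto $P$, one pumps along a family of words whose $N$-coordinates oscillate in sign while the automaton cannot tell them apart. \textbf{The main obstacle} is that the language may choose its own representatives, so the argument cannot fix the words $a^{k}ba^{-k}$ but must rule out \emph{all} representatives at once. Carrying this out requires first pinning down the possible orders by a convex-subgroup analysis ($N$ is archimedean, so either $N$ is convex, giving the lexicographic orders, or a cyclic subgroup transverse to $N$ is convex, giving the dynamical ones) and then showing that in either case the entangled bookkeeping of the $\bZ$-height (which inherently needs a counter, as in the one-counter construction) and of the sign in $N$ cannot both be maintained by a finite-state device. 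This interaction, rather than either piece alone, is where the proof must do its real work.
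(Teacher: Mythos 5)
You get three of the four pieces right, and one of them is a genuine improvement: the cases $q\in\{-1,0,1\}$ are dispatched exactly as in the paper; your one-counter recognizer for the quotient-led lexicographic cone is in essence the paper's Proposition \ref{prop: BS(1,-q)} (normal forms $a^{n}a^{-m}b^{k}a^{m}$, a nondeterministic counter matching the $a$-heights, parity of the leading $a^{-1}$-run kept in the finite control); and for $q\geq 2$ your language --- all words over $\{a^{\pm1},b\}$ containing at least one $b$, together with $a^{n}$ for $n\geq 1$ --- is a slicker certificate than the automaton of Lemma \ref{lem: BS(1,q)}: since every $b$-letter contributes $q^{h}>0$, containment in $P=\{(x,n):x>0\}\cup\{(0,n):n>0\}$ is immediate, and onto-ness follows from nonnegative base-$q$ expansions, with no normal-form bookkeeping at all. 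This evaluates onto the same cone as the paper's affine-action construction.

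The genuine gap is the whole negative direction $q\leq -2$, which you explicitly leave as a plan rather than a proof. Two problems. First, your convex-subgroup dichotomy is off: for $q\leq -2$ there is no ``dynamical'' branch. By Tararin's classification, $BS(1,q)$ then admits exactly four left-orders, the quotient-led lexicographic cones $P_1,\dots,P_4$ (the affine-induced orders exist only for $q\geq 2$, since $x\mapsto qx$ with $q<0$ does not preserve any order of the line). Second, and more seriously, even granting the classification you have no argument that these four cones are not regular; you correctly diagnose why your pumping/Myhill--Nerode attack stalls --- the language chooses its own representatives, so you cannot pump along a fixed family such as $a^{k}ba^{-k}$ --- but you do not resolve this, and resolving it is where the theorem's content lies. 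The paper sidesteps automata combinatorics with a representative-independent geometric invariant: a regular positive cone forces both $P$ and $P^{-1}$ to be coarsely connected in the Cayley graph (Lemma \ref{lem: regular implies coarsely connected}), while Bieri--Neumann--Strebel theory shows that for the projection $f\colon G\to\bZ$ with kernel $\bZ[1/q]$, coarse connectedness of both $f^{-1}(\bZ_{\geq 1})\cup P_N$ and its inverse is equivalent to $f,-f\in\Sigma^1(G)$, which holds if and only if $\ker f$ is finitely generated (Lemma \ref{lem: BNS}). Since $\bZ[1/q]$ is not finitely generated when $|q|\geq 2$, one of $P_i$, $P_i^{-1}$ fails coarse connectedness, so $P_i$ is not regular (Corollary \ref{cor: BNS BS}). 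Coarse connectedness is a property of the subset $P\subseteq G$ itself, not of any language representing it --- exactly the feature your sketch lacks. Without importing an invariant of this kind, or actually carrying out the unbounded even-versus-odd mass comparison uniformly over all representatives, your ``only if'' direction does not go through.
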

The class of one-counter languages  generalizes the class of regular languages.
These are languages that can be recognized by finite state automaton (the same machine as for regular languages) equipped with a very simple memory which consists on a stack that contains copies of the same letter (the counter).
The machine, depending on the input and whether the stack is empty or not, can modify the stack by adding or removing a counter from the stack. 
See Section \ref{sec:prelim} for a formal definition. 

From this theorem, we observe that while the existence of  regular of left-orders passes to finite index subgroups (Theorem \ref{thm: closure}),  it is not true that it passes to finite-index overgroups (even if the overgroup is left-orderable).
Indeed, $BS(1,-2)$ is a left-orderable group that does not admit any regular left-order but  contains  $BS(1,4)$ as an index 2 subgroup that does admit regular left-orders. 
Therefore,  admitting a regular order is not a property preserved by commensurability among left-orderable groups. 

The reason why $BS(1,q)$ for $q<-1$ does not admit regular positive cones is a combination of an algebraic argument and a geometric argument.
The algebraic argument goes back to Tararin \cite{Tararin} (see also \cite{rivas jgt}) and describes all possible left-orders in these groups.
The geometric argument uses Bieri-Neumann-Strebel theory \cite{BNS} and implies that positive cones associated to the orders described by Tararin are not coarsely connected (see Definition \ref{defn: coarsely connnected}) which is a necessary condition for regularity (see Lemma \ref{lem: regular implies coarsely connected}). 

We will use Tararin's classification \cite{Tararin} of groups admitting finitely many left-orders to describe the groups  admitting only regular left-orders. More precisely, in Section \ref{sec: all left-orders are regular} we prove the following theorem.

\begin{thm}[Theorem \ref{thm:tararin}]\label{thm: all regular}
 A finitely generated group $G$ admits only regular left-orders  if and only if it is Tararin poly-$\bZ$, that is, it   admits a unique subnormal series 
 $$G = G_0 \unrhd G_1 \unrhd \dots  \unrhd G_n = \{1\}$$
 where $G_i/G_{i+1}\cong \bZ$ and $G_i/G_{i+2}\cong K=\langle a,b \mid aba^{-1} = b^{-1}\rangle$.
\end{thm}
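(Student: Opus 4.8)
The plan is to prove the two implications separately, using Tararin's classification \cite{Tararin} as the algebraic backbone: in one direction I manufacture regular orders with the extension closure of Theorem \ref{thm: closure}, and in the other I run a counting argument against the space of all orders. Throughout I assume $G$ is left-orderable (otherwise the statement is vacuous, and one checks directly that a Tararin poly-$\bZ$ group is left-orderable). I would first record the elementary but crucial remark that a finitely generated group is countable, hence has only countably many finite generating sets and, over each, only countably many finite state automata; therefore there are only \emph{countably many} regular positive cones on $G$.

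For the direction ``$\Leftarrow$'', suppose $G$ is Tararin poly-$\bZ$ with the unique series $G = G_0 \unrhd \dots \unrhd G_n = \{1\}$. By Tararin's theorem the convex subgroups of \emph{every} left-order are exactly the $G_i$, so each left-order is the lexicographic order determined by a choice of orientation on each factor $G_i/G_{i+1}\cong \bZ$; in particular $\LO(G)$ is finite with $2^n$ elements. I would build these orders from the bottom up: $G_{n-1}\cong \bZ$ carries a regular positive cone, and each $G_i$ sits in an extension $1\to G_{i+1}\to G_i \to \bZ \to 1$, so Proposition \ref{prop:extensions} promotes a regular cone on $G_{i+1}$ to a regular lexicographic cone on $G_i$; iterating up to $G_0=G$ produces a regular order. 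To certify \emph{all} $2^n$ orders I would feed the construction the two orientations of the quotient $\bZ$ independently at each level (both are regular on $\bZ$, and reversal preserves regularity since regular languages are closed under word-reversal and letter-inversion). As the sign pattern of the lexicographic cone is exactly the tuple of chosen orientations, every one of the $2^n$ orders arises this way and is regular.

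For the direction ``$\Rightarrow$'', suppose every left-order on $G$ is regular. By the countability remark, $\LO(G)$ is then countable. I would invoke the standard dichotomy that for a countable left-orderable group $\LO(G)$ is either finite or uncountable: a non-Tararin group has no isolated points in $\LO(G)$, so $\LO(G)$ is a nonempty perfect compact metric space and hence uncountable. Countability therefore forces $\LO(G)$ finite, and Tararin's theorem \cite{Tararin, rivas jgt} identifies $G$ as a Tararin group, with a unique subnormal series whose factors are rank-one torsion-free abelian and whose consecutive two-step quotients $G_i/G_{i+2}$ are non-biorderable, i.e. of Klein-bottle type. Finally I would use finite generation to upgrade the factors to $\bZ$: a rank-one factor carrying the inversion action that forces the Klein-bottle relation cannot be finitely generated unless it is $\bZ$, since multiplication by $-1$ does not generate $\bZ[1/m]$ from $\bZ$. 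Hence every $G_i/G_{i+1}\cong \bZ$ and $G_i/G_{i+2}\cong K$, exhibiting $G$ as Tararin poly-$\bZ$.

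The main obstacle I expect is not the counting argument of ``$\Rightarrow$'' but the bookkeeping in ``$\Leftarrow$'': one must verify that the regularity-preserving construction of Proposition \ref{prop:extensions} genuinely realizes the lexicographic order for \emph{each} independent choice of orientations, so that all $2^n$ orders rather than a single representative are certified regular. A secondary point requiring care is the passage from ``finitely many orders'' to the precise poly-$\bZ$ structure, where I rely both on Tararin's classification and on the finite-generation argument that rules out proper rank-one factors such as $\bZ[1/m]$.
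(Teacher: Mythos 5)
Your ``$\Leftarrow$'' direction is fine and matches the paper's Lemma \ref{lem: poly-Z-Tararin-reg} (induction up the series, Proposition \ref{prop:extensions}, and the bookkeeping over all $2^n$ orientation patterns is indeed routine since regularity of cones on $\bZ$ is preserved under reversal). The genuine gap is the last step of ``$\Rightarrow$'': the claim that finite generation of $G$ upgrades a Tararin group to a poly-$\bZ$ one is false. Finite generation of $G$ only controls the \emph{top} factor ($G_0/G_1$ is a finitely generated subgroup of $\bQ$, hence $\bZ$); the lower terms $G_i$ are normal subgroups, not quotients, and need not be finitely generated. The concrete counterexample sits in this very paper: $BS(1,-2)\cong \bZ[1/2]\rtimes\bZ$ is a two-generated Tararin group whose unique series $G \unrhd \bZ[1/2] \unrhd \{1\}$ has a non-cyclic rank-one factor. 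Your algebraic remark about ``the inversion action'' misidentifies the structure of Tararin groups: the action on a rank-one factor is by multiplication by a negative rational (here $-2$), and conjugation does generate $\bZ[1/2]$ normally from a single element, so no contradiction with finite generation arises. Consequently your argument, as written, would prove ``every finitely generated Tararin group is poly-$\bZ$,'' which is refuted by $BS(1,-2)$.

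What is missing is precisely where the paper re-uses the hypothesis that \emph{all} left-orders are regular, inside an induction on the series (Lemma \ref{lem: fg-Tararin-reg}). Writing $G=G_1\rtimes\bZ$, there are two cases. If $G_1$ is not finitely generated, then by Tararin's classification every left-order on $G$ is lexicographic with the quotient $\bZ$ leading, and Lemma \ref{lem: BNS} (Bieri--Neumann--Strebel theory: one of $\pm f$ lies outside $\Sigma^1(G)$, so one of the cones fails to be coarsely connected, contradicting Lemma \ref{lem: regular implies coarsely connected}) shows such an order cannot be regular --- contradicting the hypothesis. If $G_1$ is finitely generated, one needs the nontrivial fact that a regular lexicographic order led by $\bZ$ restricts to a \emph{regular} order on the kernel; this is Proposition \ref{prop: convex implies L-convex}, proved via language-convexity and the coarsely non-decreasing behaviour of prefix counts in a regular language (Lemma \ref{lem: coarsely non-decreasing}). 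Since every left-order on $G_1$ extends to one on $G$ (Tararin again), all orders on $G_1$ are regular and the induction closes. Your counting step ($\Reg$ cones are countable, plus Linnell's finite-or-uncountable dichotomy, hence $\LO(G)$ finite and $G$ Tararin) agrees with the paper; but without the BNS obstruction and the language-convexity restriction theorem, the passage from ``finitely generated Tararin'' to ``poly-$\bZ$'' cannot be completed.
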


Finally, the last construction that preserves left-orderability that we will discuss are free products \cite{Passman}. 
We know from the work of Hermiller and \u{S}uni\'{c} that no non-trivial free product (in particular free groups) admits a regular positive cone. 
One might think that then it is helpless to pursue this route, however we have the following.
\begin{thm}[Corollary \ref{cor: A*BxZ}]\label{thm: A*BxZ} Suppose that $A$ and $B$ are groups admitting regular left-orders. 
Then $(A*B)\times\bZ$ admits a regular left-order.
\end{thm}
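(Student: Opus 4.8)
The plan is to realise the order geometrically and then trade the ``counter'' that obstructs regularity for the central generator of $\bZ$. Write $G=(A*B)\times\bZ$ and let $t$ generate the central $\bZ$. Since $A$ and $B$ are left-ordered by their regular positive cones $P_A,P_B$, their dynamical realisations give faithful actions of $A$ and $B$ on $\bR$; assembling these along the Bass--Serre tree of $A*B$ (a ping-pong / graph-of-spaces construction) I would build a faithful action $\rho\colon A*B\to\Homeo^+(\bR)$ that commutes with the integer translation $T\colon x\mapsto x+1$, i.e.\ a $\bZ$-periodic action, equivalently a circle action of $A*B$. Setting $\rho(t)=T$ then extends $\rho$ to a faithful action of all of $G$ on $\bR$ (here it is essential that $t$ is central, so the two actions commute). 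Fixing a basepoint $x_0$ and declaring $g\succ 1\iff \rho(g)(x_0)>x_0$ produces a left-invariant total order once the action is chosen free at $x_0$ (ties on the boundary being broken using $P_A,P_B$); multiplicative closure is immediate since each $\rho(g)$ is increasing.

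Because $\rho(w)$ commutes with $T$, for $g=(w,n)$ one computes $\rho(g)(x_0)=\rho(w)(x_0)+n$, so
\[
 g\succ 1 \iff n+\tau(w)>0,\qquad \tau(w):=\rho(w)(x_0)-x_0 .
\]
Arranging $\tau(w)\notin\bZ$ off the boundary, positivity of $(w,n)$ is governed by the integer $d(w):=\lfloor\tau(w)\rfloor$, namely $(w,n)\in P_G\iff n+d(w)\ge 0$. The function $\tau$ is a translation cocycle, and since the periodic action is built from finitely many interval pieces, $d(w)$ is computed letter-by-letter by a finite-state transducer that tracks which fundamental-domain region currently contains the image of $x_0$ and records the signed number of period crossings. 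Thus $d\colon (X_A\cup X_B)^*\to\bZ$ is a finite-state (transducer-output) integer function.

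The description $P_G=\{(w,n):n+d(w)\ge 0\}$ is, verbatim, a \emph{one-counter} condition with counter $d$, exactly mirroring the one-counter orders on $A*B$ itself. The new and decisive ingredient is that the counter value is now also visible as the $\bZ$-coordinate $n$, read off the $t$-letters of the input. Recalling that a regular positive cone only requires a regular language that \emph{evaluates onto} $P_G$ (not the full preimage), I would exhibit regular certificates as follows: run the transducer computing $d$ on the $(X_A\cup X_B)$-part and interleave $t^{\pm1}$ letters that externalise each counter increment, so that the machine \emph{reads} the counter instead of storing it and hence needs only finitely many states. Concatenating the regular languages $\hat L_A,\hat L_B$ (obtained from $L_A,L_B$, their inverses $L_A^{-1},L_B^{-1}$ and the empty word, so as to surject onto all of $A$ and $B$) into alternating syllables and weaving in the $t$-letters dictated by the transducer yields a regular language over $X_A\cup X_B\cup\{t,t^{-1}\}$ whose image is exactly $P_G$.

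The main obstacle is precisely this one-counter-to-regular conversion: I must choose certificate normal forms whose partial trajectories are verifiable by a finite automaton --- using the central $t$ to keep the running displacement controlled --- and then check that the resulting regular language surjects onto $P_G$ and onto nothing else, so that $P_G$ is a genuine (regular) positive cone. That the central $\bZ$ is indispensable is both structural and quantitative: for $A*B$ alone the counter cannot be offloaded, which is the content of Hermiller and \u{S}uni\'{c}'s theorem \cite{HS}, and correspondingly the free-product cone fails to be coarsely connected, whereas the central highway $\langle t\rangle$ restores coarse connectivity (the necessary condition of Lemma \ref{lem: regular implies coarsely connected}). A secondary technical point is handling general (non-free) factors $A,B$: their regular cones enter only through the dynamical realisations and through $\hat L_A,\hat L_B$, while the fact that the vertex groups $A\times\bZ$ and $B\times\bZ$ already carry regular positive cones by the extension result (Theorem \ref{thm: closure}) is what makes the pieces of the ping-pong compatible with the $t$-direction. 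Finally, this construction does not contradict the non-existence of a regular cone on $A*B$, since recovering the $\{t\text{-exponent}=0\}$ fibre requires intersecting the regular language with a non-regular condition.
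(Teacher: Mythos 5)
Your overall architecture is the same as the paper's: a quasi-morphism-like displacement function $\tau$ on $A*B$, computed by a rational transducer, whose counter is then offloaded onto the central $\bZ$ by interleaving $t^{\pm1}$-letters into the certificate words, exploiting that a regular positive cone only needs a language \emph{evaluating onto} $P_G$ (this externalisation is precisely the paper's Theorem \ref{thm: key-embedding}, and your closing remarks about Hermiller--\u{S}uni\'{c} and coarse connectivity are accurate). However, there is a genuine gap at the crux: you derive the finite-state computability of $d(w)=\lfloor\tau(w)\rfloor$ from a dynamical realisation, claiming a transducer can ``track which fundamental-domain region currently contains the image of $x_0$'' because ``the periodic action is built from finitely many interval pieces.'' You never construct such an action from arbitrary regular cones $P_A,P_B$, and for a generic $\bZ$-periodic realisation the claim is false: the future evolution of $\rho(w)(x_0)$ depends on its exact position inside the fundamental domain, i.e.\ on the fractional part of $\tau(w)$, which takes infinitely many values (the orbit of $x_0$ under $A$ alone has the full order type of $A$). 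Interleaving $t$-letters can only absorb the integer part; the fractional part is exactly the unbounded state your automaton would still have to remember. The paper avoids dynamics entirely: $\tau$ is the combinatorial Dicks--\u{S}uni\'{c} count of positive/negative syllables and index jumps/drops (Example \ref{ex: quasimorphism on  free product}), taking values in $2\bZ+1\cup\{0\}$, and the $\tau$-transducer is built directly from the automata accepting $P_A$, $P_A^{-1}$, $P_B$, $P_B^{-1}$ (Lemma \ref{lem: pm automaton} and Proposition \ref{prop: free-prod-cf}), with the transducer only required to be correct on a regular language of alternating syllable certificates that surjects onto the group. That combinatorial definition is what makes the residual displacement after $t$-compensation live in the finite set $\{0,1\}$ (the Balancing property in Theorem \ref{thm: key-embedding}), which is the fact your sketch silently needs and does not have.

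Two secondary points. First, your positivity criterion $(w,n)\in P_G\iff n+d(w)\geq 0$ misfires on the fibre $w=1$: since $d(1)=0$ it would place the identity $(1,0)$ in the cone; the paper's parity convention ($\tau$ odd off the kernel, and comparing $\tau(g)+2n$ with $0$, Proposition \ref{prop: pos-cone-cross-z}) is exactly the clean fix, and your ``arranging $\tau(w)\notin\bZ$'' gestures at it without resolving the $w=1$ case. Second, your appeal to regular cones on the vertex groups $A\times\bZ$ and $B\times\bZ$ via the extension theorem plays no role in the paper's argument and would not repair the main gap, since the difficulty is not ordering the factors but computing $\tau$ with finitely many states.
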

This sort of phenomenon was already observed in \cite{Su}, where H.L. Su found a finitely generated positive cone for $F_2\times \bZ$ (and hence a regular positive cone). 
In Section \ref{sec: quasimorphism} we prove Theorem \ref{thm: A*BxZ} and give an explanation of the role of the factor $\bZ$.
To do so, we will define the concept of an {\it ordering quasi-morphism}.
A quasi-morphism on $G$ is a map $\phi\colon G\to \mathbb{R}$ that is close to a homomorphism in the sense that there is a constant $R$ such that $|\phi(g)+\phi(h)-\phi(gh)|\leq R$. 
An ordering quasi-morphism  $\phi$ (Definition \ref{defn: ordering quasi-morphism}) has the property that elements whose image under $\phi$ is positive form a (relative) positive cone (see Definition \ref{defn: rel-order}. This will allow to construct a left-order on the group, as we will see.

While we coin the name of ordering quasi-morphism in this paper, the original idea comes from the work of \u{S}uni\'{c} \cite{Sunik13,Sunik13b}  about orders on free groups,  Dicks and \u{S}uni\'{c} \cite{DicksSunic2014}  about orders on free products  and Antol\'{i}n, Dicks and \u{S}uni\'{c}  \cite{ADS} about orders on certain fundamental groups of graph of groups.
We show that when the ordering-quasimorphism can be computed with a transducer (another type of machine similar to a finite state automaton, see Definition \ref{defn: rational transducer}) the corresponding left-order is one-counter.
More precisely, our chosen function $\phi$ has image $\bZ$, and when embedding $G$ in $G\times \bZ$, we use the $\bZ$ factor to compensate the increments/decrements in $\phi$ so that we no longer need a stack to keep track of the value of $\phi$ on the elements of $G$, hence constructing the desired regular left-order on $G\times \bZ$.

One of the motivations for studying left-orders on free products is to get a better understanding of the influence of negative curvature on the complexity of positive cones. 
In particular, we are interested in the following.
\begin{que}
Is there a non-elementary hyperbolic group admitting a regular positive cone?
\end{que} 
This question has some history.  Hermiller and \u{S}uni\'{c} \cite{HS} proved that  non-abelian free groups do not admit regular positive cones. 
Su \cite{Su} proved that acylindrically hyperbolic groups do not have positive cones that are quasi-geodesic and regular. This formalized and strengthened an argument sketched by Calegari \cite{Calegari}. Finally, Alonso, Antol\'{i}n, Brum and Rivas \cite{AABR} proved that non-abelian limit groups (in particular free groups and surface groups)  do not admit coarsely connected positive cones, and therefore do not admit regular positive cones.

Nevertheless, 
if $G$ is  hyperbolic (finitely generated free)-by-$\mathbb{Z}$, then the Bieri-Neumann-Strebel theory guarantees that $G$ admits coarsely connected left-orders (see discussion before Lemma \ref{lem: BNS}).
Thus, one might initially think that those left-orders on hyperbolic (f.g. free)-by-$\bZ$ groups might be regular. As a consequence of the methods developed to prove Theorem 
\ref{thm: A*BxZ} we get that this is almost the case.

\begin{thm}[Theorem \ref{thm: free-by-ZxZ}]\label{thm: lex on free-by-cyclic}
Let $G$ be  (finitely generated free)-by-$\bZ$ group. Then $G\times \bZ$ admits a regular positive cone, whose restriction to $G$ is  coarsely connected, one-counter, but  not regular.
\end{thm}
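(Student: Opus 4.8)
The plan is to realise $G$ as a split extension, order it lexicographically along its fibration over $\mathbb{Z}$, and then transfer the ``one-counter but not regular'' behaviour of the free fibre up to $G$ and across to $G\times\mathbb{Z}$. Since $\mathbb{Z}$ is free, the defining sequence $1\to F\to G\xrightarrow{\pi}\mathbb{Z}\to 1$ splits, so I may write $G=F\rtimes_{\alpha}\langle t\rangle$ with $F=F(X)$ finitely generated free and $\alpha\in\mathrm{Aut}(F)$ the monodromy. Let $C\subseteq F$ be the one-counter, non-regular positive cone on the free group obtained from \u{S}uni\'{c}'s ordering quasi-morphism $\psi\colon F\to\mathbb{Z}$ \cite{Sunik13}, with $\mathcal{L}_C\subseteq X^{*}$ a one-counter language evaluating onto $C$. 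I then define the $\pi$-lexicographic positive cone
$$ P \;=\; \{\, g\in G : \pi(g)>0 \,\}\ \sqcup\ \{\, g\in F : g\in C \,\}. $$
This is a genuine positive cone for any extension with left-orderable kernel and quotient: closure and the trichotomy follow from $\pi$ being a homomorphism to $\mathbb{Z}$, left-invariance is automatic, and crucially no conjugation-invariance of $C$ is required, because the fibre order is consulted only when $\pi=0$.

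For the geometry I would invoke Bieri--Neumann--Strebel theory: as the kernel $F$ is finitely generated, both $[\pi]$ and $[-\pi]$ lie in the BNS invariant $\Sigma(G)$, so Lemma \ref{lem: BNS} makes $P$ coarsely connected. For the complexity I would use the collected normal form $u\,t^{n}$ with $u\in X^{*}$: the value $\pi=n$ can be read off as the trailing block of $t$'s, a purely regular test, so $X^{*}t^{+}$ evaluates onto the part $\{\pi>0\}$ while $\mathcal{L}_C$ covers the fibre part $\{\pi=0\}$. Hence
$$ \mathcal{L}_G \;=\; X^{*}t^{+}\ \cup\ \mathcal{L}_C $$
is a union of a regular and a one-counter language, so it is one-counter and evaluates onto $P$. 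The key point is that the twisting $\alpha$ is never undone inside the language: $\pi$ is read from the suffix and the fibre order from a word already over $X$.

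Next I would pass to $G\times\mathbb{Z}$, writing the new central factor as $\langle s\rangle$; the fibre of $G\times\mathbb{Z}\to\langle t\rangle$ is then $F\times\langle s\rangle$. Here I apply the mechanism behind Theorem \ref{thm: A*BxZ} (applied to the free product $F$): the $\langle s\rangle$-coordinate compensates the increments of the transducer computing $\psi$, upgrading $C$ to a regular positive cone $C'\subseteq F\times\langle s\rangle$ with $C'\cap F=C$. Running the same lexicographic construction over $\pi$ with secondary cone $C'$ produces a positive cone on $G\times\mathbb{Z}$ whose language $(X\cup\{s^{\pm 1}\})^{*}t^{+}\cup\mathcal{L}_{C'}$ is a union of two regular languages, hence regular; by construction its restriction to $G=G\times\{0\}$ is exactly $P$.

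The hard part will be the last clause, that $P$ is not regular, precisely because the standard obstructions fail: $P$ is coarsely connected, so it passes Lemma \ref{lem: regular implies coarsely connected}, and $G$ need not be acylindrically hyperbolic (e.g.\ when $\alpha=\mathrm{id}$), so Su's quasi-geodesic argument does not apply. I would therefore argue computationally. Using that regularity of a positive cone is independent of the finite generating set, I assume a regular $\mathcal{L}$ over $X\cup\{t\}$ evaluates onto $P$ and pump a long $w\in\mathcal{L}$ representing a fibre element $c\in C$ that witnesses the non-regularity of $C$. If a pumped factor carries nonzero $t$-exponent, some power forces $\pi<0$, putting the element in $P^{-1}$; if it has zero $t$-exponent the pumped elements stay in the fibre and trace a sequence $f_x f_y^{\,i} f_z\in F$, one of which must leave $C$, contradicting \cite{HS}. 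The genuine difficulty, which I expect to absorb most of the work, is to make this robust against every pump decomposition simultaneously and to ensure the pumped factor is non-trivial in $G$ rather than merely a non-empty word --- exactly the point at which the Hermiller--\u{S}uni\'{c} combinatorics on the free fibre must be imported.
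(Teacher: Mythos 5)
Your constructive steps track the paper's proof closely and are correct: the lexicographic cone $P=\pi^{-1}(\bZ_{>0})\cup C$ led by $\bZ$ with \u{S}uni\'{c}'s fibre cone is exactly the order used (Lemma \ref{lem: quotient-leads}); one-counter complexity follows as you say (the paper routes this through Propositions \ref{prop: transducer imply one-counter} and \ref{prop:extensions}, but your explicit language $X^{*}t^{+}\cup\mathcal{L}_C$ is the same computation); coarse connectedness via $f,-f\in\Sigma^1(G)$ for finitely generated kernel is the paper's BNS remark; and for $G\times\bZ$ you correctly invoke the Theorem \ref{thm: key-embedding} mechanism to get a regular cone on $F_n\times\bZ$ (with trivial kernel of $\tau$, so $\{(g,n):\tau(g)+2n>0\}$ is a genuine cone meeting $F_n$ in $C$), then run the lexicographic extension again; the restriction to $G\times\{0\}$ is indeed $P$.

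The gap is where you anticipated it, and it is genuine: your pumping argument for non-regularity does not close. Case 2 of your analysis produces elements $f\,\alpha^{a}(f_y)^{i}\,h\in F$ all lying in $C$, and you assert ``one of which must leave $C$, contradicting \cite{HS}'' --- but nothing forces this. A positive cone of $F$ can easily contain entire families $\{f w^{i} h : i\geq 0\}$ (take $w\in C$, $f=h=1$), and since $\mathcal{L}$ is only required to evaluate \emph{onto} $P$, no single-word pumping can yield a contradiction: all pumped words may simply land back in $P$. The Hermiller--\u{S}uni\'{c} theorem is a statement about regular languages surjecting onto positive cones, so to use it you must manufacture, from the hypothetical regular $\mathcal{L}$, a regular language evaluating onto $C=F\cap P$ itself. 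That transfer is the paper's Proposition \ref{prop: convex implies L-convex}, and it has two ingredients your sketch lacks. First, Lemma \ref{lem: coarsely non-decreasing}: a \emph{global} pumping argument (every loop in an automaton for a language of words with nonnegative $t$-exponent must itself have nonnegative exponent, and loop-removal bounds excursions by the number of states) showing that prefix $t$-exponents along any accepted word are coarsely non-decreasing; hence on words evaluating into the fibre they stay within a uniform bound of $0$, i.e.\ $F$ is language-convex with respect to $\mathcal{L}$ (Definition \ref{def: L-convex}). Second, Su's Proposition \ref{prop: inherited by L-convex subgroups} \cite{Su}, which converts language-convexity into an actual regular language onto $H\cap\pi(\mathcal{L})=C$, at which point \cite{HS} applies. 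Your case-1 computation (loops with nonzero $t$-exponent are impossible on fibre words) is the germ of Lemma \ref{lem: coarsely non-decreasing}, but the missing idea is not ``more Hermiller--\u{S}uni\'{c} combinatorics on the fibre''; it is the language-convexity restriction mechanism, without which the hypothetical regular language on $G$ never descends to $F$.
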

We conclude Section \ref{sec: quasimorphism} showing that certain Artin groups admit regular positive cones.

Finally, we close the paper with an appendix. Let $X$ be finitely generating set of a group $G$ and $\pi\colon X^*\to G$ be a monoid surjection. If $P$ is a positive cone, one can ask about the complexity of $\pi^{-1}(P)$ as a formal language, which is in more direct analogy with studying formal languages with the Word Problem. We explore this question for regular and context-free languages (the lowest ones in the Chomsky hierarchy), and show that there are extremely few examples of such left-orders (for instance, it is never the case that the language of preimages of a positive cone is regular). This gives partial justification as to why we have focus on our definition of regular positive cones.

\section{Preliminaries about languages and left-orderability}
\label{sec:prelim}
In this section we review some formal language theory needed in the paper, focusing on the classes of regular and context-free languages. 
We also review the definition of left-orderability and we explain what we mean by a regular or context-free left-order on a group.
Most of the content of this section is standard or well-known and the main purpose is to set the notation and the definitions for the rest of the paper.

\subsection{Review of abstract families of languages}
By a {\it formal language} we refer to a subset of $\cL$ of $X^*$, where $X$ is a finite set and $X^*$  denotes the set of  all finite sequences over $X$, also called {\it words}.
We denote the empty word by $\epsilon$.
Note that $X^*$ is a monoid with respect to concatenation. We denote by $X^+$ the set $X^*\setminus\{\epsilon\}$, which is a semi-group with concatenation.

A {\it non-deterministic finite state automaton} is a 5-tuple $\bM=(\cS,X,\delta, s_0,\cA)$, where $\cS$ is a finite set whose elements are  called {\it states}, $\cA$ is a subset of $\cS$ of whose states are called {\it accepting states},  $s_0$ is a distinguished element of $\cS$ called {\it initial state}, $X$ is a finite set called the {\it input alphabet}, and $\delta\colon \cS\times (X\sqcup \{\epsilon\})\to \textrm{Subsets}(\cS)$ is a function called the {\it transition function}.

We extend $\delta$ to a function $\delta\colon \cS\times (X\sqcup \{\epsilon\})^*\to \textrm{Subsets}(\cS)$ recursively, by setting $\delta(s,wx)=\cup_{\sigma \in \delta(s,w)} \delta( \sigma ,x)$ where $w\in X^*$, $x\in X$ and $s\in \cS$. 
A language $\cL$ is {\it regular} if there is a non-deterministic finite state automaton $\bM$ 
such that $$\cL= |\bM|:= \{w\in X^* \mid \delta(s_0,w)\cap \cA\neq \emptyset\}.$$

We denote by $\Reg$ the class of all regular languages.

The outputs of $\delta$ under input $\epsilon$  should be thought as the machine acting spontaneously.
Images with input $\epsilon$ are usually called {\it $\epsilon$-moves} or {\it $\epsilon$-transitions}. 
Informally, the machine $\bM$ at a certain step is at a state $s \in \cS$ and can change to another state  $s'\in\cS$  either because it reads some $x\in X$, and $s' \in \delta(s,x)$ or is at state $s'$ spontaneously, meaning that $s' \in  \delta(s,\epsilon)$. 

\begin{rem}\label{rem: different types of fsa}
The class $\Reg$ coincides with the class of languages accepted by non-deterministic finite state automata without $\epsilon$-moves, and even with the class of languages accepted by {\it deterministic finite state automata} (See \cite[Chapter 2]{HopUll}). 
Deterministic automata do not have $\epsilon$-moves and in that setting $\delta(s,x)$ is always a singleton. 
Depending on our needs, we will choose to allow $\epsilon$-moves or not (therefore choosing whether we impose determinism).
\end{rem}

\begin{rem}\label{rem: fsa}
A finite state automaton $\bM$ can be regarded as a labelled directed graph. 
The vertices are the states of the  automaton, and edges are given by the transition function $\delta$. 
There is an edge from $s\in \cS$ to $s'\in \cL$ labeled by $x\in X\sqcup \{\epsilon\}$ if and only if $s'\in\delta(s,x)$.
A word $w$ is accepted by $\bM$ if it is a label of a path from $s_0$ to some accepting state $\cA$.
\end{rem}

\begin{nt}\label{not: counting letters}
Given a word $w\in X^*$, and $x\in X$, we use $\sharp_x (w)$ to denote the number of times the letter $x$ appears in the word $w$.
\end{nt}

\begin{ex}\label{ex: regular}
Let $\bM$ be the finite state automaton given in  Figure \ref{fig: fsa}.
The automaton is described as a graph following Remark \ref{rem: fsa}.
\begin{figure}[ht]
\begin{center}
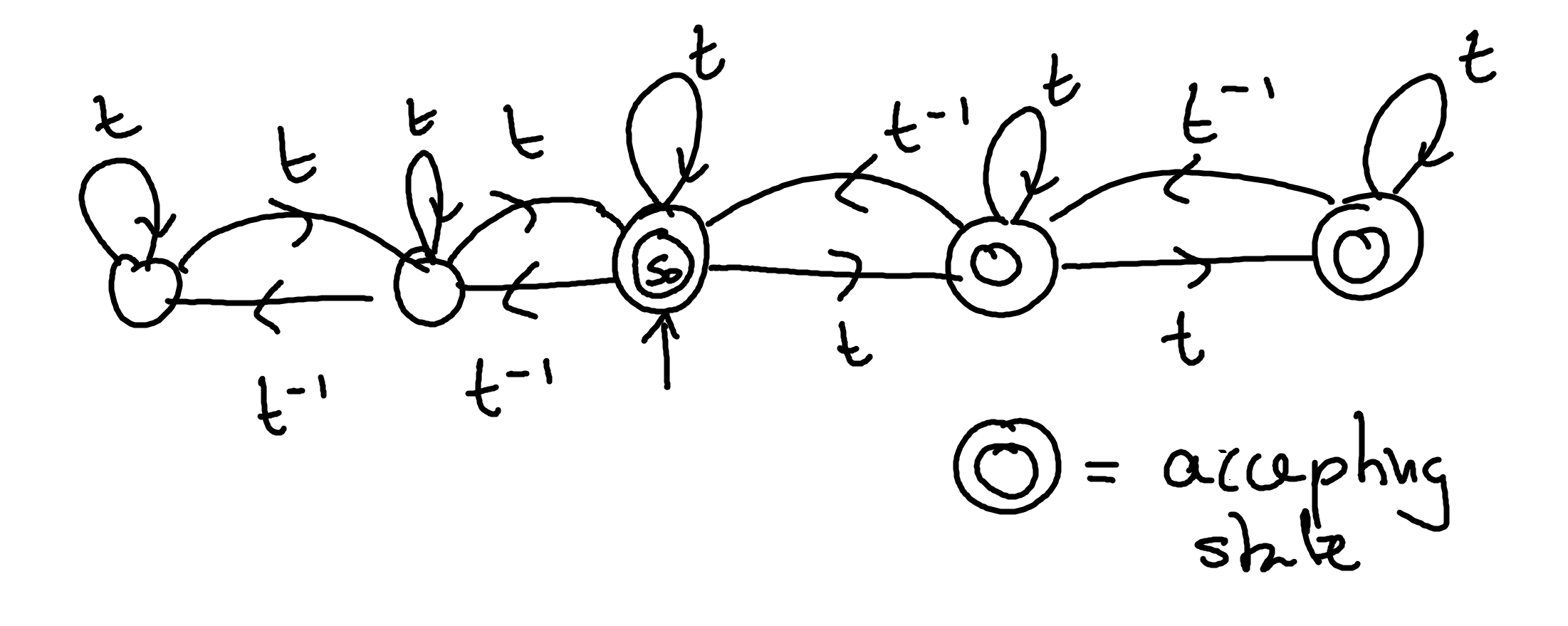
\end{center}
\caption{Finite state automaton accepting the language of Example \ref{ex: regular}.}
\label{fig: fsa}
\end{figure}

The language accepted by $\bM$ in Figure \ref{fig: fsa}, consists on all the $w$ in  $\{w\in \{t^{-1},t\}^* \mid \sharp_{t}(w)\geq \sharp_{t^{-1}}(w)\}$ such that for all subword $u$ of $w$,  $\sharp_{t}(u)-\sharp_{t^{-1}}(u)\geq -2$ holds.
\end{ex}

A {\it (non-deterministic) pushdown automaton} is a 7-tuple $\bM=(\cS,X, \Sigma, \delta,s_0, u_0,\cA)$, where $\cS$ is a finite set whose elements are called {\it states}, $\cA$ is a subset of $\cS$ whose states are called {\it accepting states}, $s_0\in \cS$ is a distinguished element called {\it initial state},  $X$ is a finite set called the {\it input alphabet}, $\Sigma$ is a finite set called the {\it stack alphabet}, $u_0$ is a distinguished word over $\Sigma$ called the {\it initial stack}, and $\delta$ is  a {\it non-deterministic transition function} $$\delta\colon  \cS \times (X\sqcup \{\epsilon\}) \times (\Sigma \sqcup \{\epsilon\}) \to  \textrm{Finite subsets of }(\cS \times \Sigma^*).$$

The pushdown automaton $\bM$ at each stage of the computation is at a certain state and contains a certain word over $\Sigma$ which we call {\it stack word}. 
At the start stage, the state is $s_0$ and the stack word is $u_0$.
To describe how the stack words change, it is convenient to extend $\delta$ so  we allow stack words (and not only stack letters) as inputs.
The behavior of $\bM$ will depend only on the last letter $\sigma$ of the stack word (if the stack word is empty we take $\sigma= \epsilon$).
Precisely, we extend $\delta$ to a function $\delta\colon  \cS \times (X\sqcup \{\epsilon\}) \times \Sigma^* \to \textrm{Finite subsets of }(\cS \times \Sigma^*)$, where if we write $w=u\sigma$ with $\sigma\in \Sigma$, then $\delta(s,x,w)=\{(s',u u') \mid (s',u')\in \delta(s,x,\sigma)\}$. Note that $\delta$ on input word $w=u\sigma \in \Sigma^*$ operates by deleting $\sigma$  and appending $u'$ to the end to get the  word $uu'$ as output.

As in the case of finite state automata, we extend $\delta$ to a function $\delta\colon \cS\times (X\sqcup\{\epsilon\})^*\times \Sigma^*\to \textrm{Finite subsets of }(\cS\times \Sigma^*)$ recursively.

A language $\cL$ is {\it context-free} if there is a non-deterministic pushdown automaton such that 
$$\cL=|\bM|:=\{w\in X^* \mid  \exists (s,u) \in \delta(s_0,w, u_0) \text{ with } s\in \cA\}.$$

We denote by $\CF$ the class of all context-free languages.

A language $\cL$ is {\it one-counter} if there is a non-deterministic pushdown automaton with $|\Sigma|=1$ and such that $\cL=|\bM|$.
The class of one-counter languages will be denoted by $\onecounter$.

\begin{rem}
In many computer science books, they use a model of pushdown automaton that has access to a tape where they write the stack word and read the last letter from it. 
In that model, the stack alphabet must include an extra symbol to denote the blank space on the tape. 
\end{rem}

We clearly have 
$$\Reg \subseteq \onecounter \subseteq \CF.$$
Note that $\Reg$ and $\CF$ are the simplest complexities in the Chomsky hierarchy \cite{Chomsky}.

\begin{ex}\label{ex:one-counter}
Consider the language $\cL=\{w\in \{t^{-1},t\}^* \mid \sharp_t(w)>\sharp_{t^{-1}}(w)\}$.
This language is in the class $\onecounter$ and will appear many times in the paper. 
For completeness, we give in Figure \ref{fig:one-counter} a one-counter push-down automaton for $\cL$ with the graphical notation found in Hopcroft, Motwani and Ullman \cite[Section 6.1.3]{HopUll}.
The vertices of the graph represent the states of the automaton.
An arrow from state $s$ to state $s'$ has label $x,\alpha/\beta$, where $x\in X=\{t,t^{-1}\}$, $\alpha\in \Sigma\cup \{\epsilon\}$ and $\beta\in  \Sigma^*$, if and only if $(s', \beta)\in \delta(s,x,\alpha)$. That is, the label tells what input is used and gives the old and new suffixes of the stack word. 

\begin{figure}[ht]
\hspace{4.1cm}
\import{fig/}{pda.pdf_tex}
\caption{1-counter pushdown automaton accepting $\cL=\{w\in \{t^{-1},t\}^* \mid \sharp_t(w)\geq\sharp_{t^{-1}}(w)\}$.}
\label{fig:one-counter}
\end{figure}

We note that this language is not regular. 
This follows easily from the pumping lemma for regular languages (see \cite[Theorem 4.1]{HopUll}). 
However, the reader not familiar with the pumping lemma will find a proof that $\cL$ is not regular in  Lemma \ref{lem: coarsely non-decreasing}.
\end{ex}

\subsection{Closure properties of families of languages}
Let $X, Y$ be finite sets.
The set $X^*$ equipped with concatenation has the structure of a free monoid over the set $X$. 

Let $\cC$ be a class of languages.
We say that $\cC$ is {\it closed under concactenation} if for every $\cL, \cM\in \cC$ the language $\cL\cdot \cM \coloneqq \{u v \mid u\in \cL, \, v\in \cL\}$ belongs to $\cC$.
Given a language $\cL$, we denote by $\cL^n$ the concatenation of $\cL$ with itself $n$ times, i.e. $\cL^n=\cL \cdot \cL \cdots \cL$.
By $\cL^0$ we denote the language with just the empty word.
The {\it Kleene closure } of $\cL$ is the language $\cL^*\coloneqq \cup_{n\geq 0} \cL^n$.
We say that the class $\cC$ is {\it closed under  homomorphism} if for every $\cL\subseteq X^*$ with $\cL\in \cC$ and every monoid homomorphism $f\colon X^*\to Y^*$ we have that $f(\cL)\in \cC$.
Similarly, the class is {\it closed under inverse  homomorphism} if for every $\cL\subseteq X^*$ with $\cL\in \cC$ and every monoid homomorphism $f\colon Y^*\to X^*$ we have that preimage of $\cL$ under $f$,  here denoted $f^{-1}(\cL)$, belongs to $\cC$. 
Finally, the {\it reversal or mirror image} of $\cL\subseteq X^*$ is the language $\mathcal{L}^R := \{x_{n} x_{n-1} \dots x_1 \mid x_1 \dots x_n \in \mathcal{L}\}$.

A class of languages is called \emph{full abstract family of languages} (or {\it full AFL} for short) if it is closed under homomorphisms, inverse homomorphisms, intersections with regular languages, unions, concatenations and Kleene closure.

\begin{prop}\label{prop: AFL}
The classes $\Reg, \onecounter$ and $\CF$ languages are full AFL.

The classes $\Reg$ and $\CF$ languages are closed under reversal.

The class $\Reg$ is closed under complement, but $\CF$ is not.
\end{prop}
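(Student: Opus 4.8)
The plan is to establish Proposition \ref{prop: AFL} by assembling standard closure results for each language class, treating the three assertions separately.

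\medskip

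\noindent\textbf{Full AFL for $\Reg$ and $\CF$.} For the regular and context-free classes, the full AFL property is entirely classical, so I would cite it rather than reprove it. The canonical reference is the theory of abstract families of languages developed by Ginsburg and Greibach; concretely, one can point to \cite{HopUll}, where each of the six closure operations (homomorphism, inverse homomorphism, intersection with regular languages, union, concatenation, Kleene closure) is verified for $\Reg$ and $\CF$ via standard automaton- or grammar-theoretic constructions. For $\Reg$, determinization and the product construction handle intersection and union; the Thompson-style constructions give concatenation and Kleene star; homomorphic images and preimages are handled by relabelling edges and by reading the preimage letter-by-letter. For $\CF$, one works with context-free grammars: union and concatenation are immediate by combining start symbols, Kleene closure by adding a recursive production, and intersection with a regular language by the standard product of a pushdown automaton with a finite state automaton (the finite-state component can be folded into the control states without touching the stack).

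\medskip

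\noindent\textbf{Full AFL for $\onecounter$.} The only genuinely non-routine part is verifying that $\onecounter$ is a full AFL, since the one-letter stack alphabet is a real constraint that must be preserved under each operation. The key observation is that intersection with a regular language, union, concatenation, and Kleene closure can all be realised by the product-with-finite-automaton and start-symbol/state constructions used for $\CF$, \emph{and none of these constructions enlarges the stack alphabet}: they only manipulate the finite control, so the single counter is untouched. For homomorphism and inverse homomorphism one likewise reads or emits input symbols while the stack behaviour is unaffected, so $|\Sigma|=1$ is preserved. The cleanest way to present this is to remark that the AFL closure constructions for pushdown automata are ``stack-alphabet-preserving,'' hence restrict verbatim to the one-counter case; I expect this to be the main obstacle only in the sense of bookkeeping, and I would either cite a reference establishing that one-counter languages form a full AFL or sketch the product construction to make the stack-preservation explicit.

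\medskip

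\noindent\textbf{Reversal and complement.} Closure of $\Reg$ under reversal follows by reversing every edge of a finite state automaton and swapping the roles of initial and accepting states; closure of $\CF$ under reversal follows by reversing the right-hand side of every production in a context-free grammar. Closure of $\Reg$ under complement follows from Remark \ref{rem: different types of fsa}: passing to a deterministic automaton and complementing the accepting set gives an automaton for the complement. Finally, to see that $\CF$ is \emph{not} closed under complement, I would invoke the standard fact that the non-context-free language $\{a^n b^n c^n \mid n \geq 0\}$ is the complement (intersected with a regular language, using the already-established AFL closure) of a context-free language; since $\CF$ is closed under intersection with regular languages and under union, closure under complement would force closure under intersection, contradicting the non-context-freeness of $\{a^n b^n c^n\}$. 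All three of these are standard and I would cite \cite{HopUll} for the details.
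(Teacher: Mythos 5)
Your proposal is correct and takes essentially the same approach as the paper, whose entire proof is a citation: \cite{Handbook} (p.~30) for the $\Reg$ and $\CF$ claims and \cite[Chapter VII, Theorem 4.4]{Berstel} for the $\onecounter$ case --- exactly the reference you would need for your one-counter step, which is the only genuinely non-routine part. One small caution if you sketch rather than cite there: PDA-level concatenation and Kleene closure do not purely ``manipulate the finite control,'' since the first machine's leftover counters must be drained by $\epsilon$-moves (empty stack being detectable in this model) before the next machine's initial stack is set up --- though this still preserves $|\Sigma|=1$, so your conclusion stands.
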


\begin{proof}
All the claims are contained in either \cite[Page 30]{Handbook}  for all classes but $\onecounter$. 
For $\onecounter$ the claims are contained in \cite[Chapter VII Theorem 4.4]{Berstel}.
\end{proof}

We  introduce a last type of machines that will  play an important role in the paper.

\begin{defn}\label{defn: rational transducer}
A {\it rational transducer} is a finite state automaton which can
output a finite number of symbols for each input symbol. 
Formally, a rational transducer is a six-tuple $\bT =(\cS,X,Y,\delta, s_0,\cA)$ where $\cS$, $X$, and $Y$ are finite sets called the states, input
alphabet, and output alphabet, respectively.
The function $\delta$ is a map from $\cS\times (X\sqcup \{\epsilon\})\to \textrm{Finite Subsets}(\cS \times Y^*)$.
The element $s_0\in \cS$ is the initial state and $\cA\subseteq \cS$ is the subset of accept states.
The interpretation of $(r, w) \in  \delta (s, x)$  is that $\bT$ in state $s$ with input symbol $x$
may, as one possible choice of move, enter the state $r$ and output the string $w\in Y^*$.

As usual, we extend recursively  $\delta$ to a function $\cS \times X^* \to \textrm{Finite Subsets}(\cS \times Y^*)$.
We have that $(r,v)\in \delta (s, ux)$ for some $x\in X$ and $u\in X^*$ if there is $(t,v')\in \delta(s,u)$ and $(r,w)\in \delta(t,x)$ such that $v=v'w$.
\end{defn}

\begin{rem}\label{rem: graph transducer}
We will also use a graph representation of a transducer $\bT$.
This is the graph associated to the underlying finite state automaton of $\bT$, with  labels on the edges indicating the output of reading a certain input. 
More precisely, if $\bT =(\cS,X,Y,\delta, s_0,\cA)$ is a rational transducer, we consider a graph with vertices $\cS$. There is a directed edge from $s\in \cS$ to $s'\in\cS$ if $(s',u)\in \delta(s,x)$ for some $x\in X\sqcup \{\epsilon\}$ and $u\in Y^*$. In that case, we put a label $x/u$ on that edge. It means that we can move along that edge from $s$  to $s'$ if we have input $x$ and on doing so we output the word $u$.
An example of a graphical representation can be found in Figure \ref{fig: transducer-f2}.
\end{rem}

For $u\in X^*$ let $\bT(u)\coloneqq\{v\in Y^* \mid (a,v) \in \delta(s_0,u) \text{ and } a\in \cA \}$.
With this, we can define {\it the image of $\cL\subseteq X^*$ under $\bT$} as  $\bT(\cL)\coloneqq\{\bT(u)\mid u\in \cL\}$. 
Finally, for $\cL\subseteq Y^*$, we can define the {\it inverse image of $\cL$ under $\bT$} as the set $\bT^{-1}(\cL)\coloneqq\{u\in X^* \mid (a,v)\in \delta(s_0, u) \text{ with } a\in \cA \text{ and }v\in \cL\}$.

\begin{prop}\label{prop: closure transducers}
Full AFL classes  are closed under rational transducers and inverse image under rational transducers.
\end{prop}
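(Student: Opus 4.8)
The plan is to prove both statements at once via the classical decomposition of a rational transduction into an inverse homomorphism, an intersection with a regular language, and a homomorphism (essentially Nivat's theorem), and then to invoke the AFL closure axioms directly.

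First I would build a \emph{path alphabet} $Z$ whose letters are the edges of the graph representation of $\bT$ from Remark \ref{rem: graph transducer}; concretely, a letter is a symbol $e$ recording a single transition $(s',w)\in \delta(s,x)$ with $x\in X\sqcup\{\epsilon\}$ and $w\in Y^*$, so $e$ remembers its source $s$, its target $s'$, its input $x$, and its output $w$. I would then define two monoid homomorphisms $\alpha\colon Z^*\to X^*$ and $\beta\colon Z^*\to Y^*$ on generators by $\alpha(e)=x$ and $\beta(e)=w$. Both may send a letter to $\epsilon$, which is permitted for monoid homomorphisms and is precisely what accommodates the $\epsilon$-moves and the multi-symbol outputs of the transducer. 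Finally, let $R\subseteq Z^*$ be the set of words $e_1e_2\cdots e_n$ that spell out a path from $s_0$ to a state in $\cA$ with consecutive edges compatible (the target of $e_i$ equals the source of $e_{i+1}$). This $R$ is recognized by the finite automaton over $Z$ whose states are $\cS$, with initial state $s_0$, accept states $\cA$, and a transition reading $e$ that moves from the source of $e$ to its target; hence $R$ is regular.

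Next I would verify the two identities
$$\bT(\cL) = \beta\big(\alpha^{-1}(\cL)\cap R\big), \qquad \bT^{-1}(\cL) = \alpha\big(\beta^{-1}(\cL)\cap R\big).$$
Unwinding the recursive definition of $\delta$ in Definition \ref{defn: rational transducer}, a word $z\in R$ corresponds exactly to an accepting run of $\bT$ that reads input $\alpha(z)$ and produces output $\beta(z)$, and conversely every accepting run arises from a unique such $z$. Thus $v\in \bT(\cL)$ holds iff there is an accepting run $z$ with $\alpha(z)\in\cL$ and $\beta(z)=v$, which is the first identity; dually, $u\in\bT^{-1}(\cL)$ iff there is an accepting run $z$ with $\alpha(z)=u$ and $\beta(z)\in\cL$, giving the second. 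The one place demanding care is the bookkeeping around $\epsilon$-moves: one must check that the recursive extension of $\delta$ agrees with concatenation of edge-labels under $\alpha$ and $\beta$, so that the set of accepting runs is faithfully encoded by $R$.

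Finally, assuming $\cL\in\cC$ for a full AFL $\cC$, each identity expresses the target language by applying, in order, an inverse homomorphism, an intersection with the regular language $R$, and a homomorphism. Since $\cC$ is closed under all three operations, we obtain $\bT(\cL)\in\cC$ and $\bT^{-1}(\cL)\in\cC$, which is exactly closure under rational transducers and under inverse image of rational transducers. I expect the main obstacle to be purely the verification of the decomposition identities with $\epsilon$-transitions handled correctly; once that is pinned down, the AFL axioms do all of the remaining work with no further computation.
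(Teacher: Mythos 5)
Your proof is correct, and it is a self-contained version of what the paper does by citation: the paper's entire proof consists of quoting Berstel twice --- once for the fact that the inverse image under a rational transducer is itself the image under some rational transducer $\bT'$, and once for Nivat's theorem, which supplies exactly your decomposition $\bT(\cL)=h\big(f^{-1}(\cL)\cap\cR\big)$ for homomorphisms $f,h$ out of an auxiliary alphabet $Z^*$ and a regular $\cR\subseteq Z^*$ --- and then closes with the same appeal to the AFL axioms. Your path-alphabet construction is precisely the standard proof of Nivat's theorem, so the underlying decomposition is identical; what you do differently, and what is a genuine (small) improvement in economy, is the treatment of the inverse image. The paper handles $\bT^{-1}(\cL)$ by first converting it into a forward transduction via Berstel and only then applying Nivat, whereas you get it for free from the symmetric identity $\bT^{-1}(\cL)=\alpha\big(\beta^{-1}(\cL)\cap R\big)$, since the roles of input and output in your run-encoding are interchangeable. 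Two points of bookkeeping that you correctly flag and that do need the care you promise: first, the $\epsilon$-moves, where the paper's recursive extension of $\delta$ is itself stated loosely, and your encoding (letters $e$ with $\alpha(e)=\epsilon$ allowed, erased by $\alpha$) matches the intended $\epsilon$-closure semantics --- verifying that concatenation of edge labels under $\alpha$ and $\beta$ agrees with the extended $\delta$ is routine; second, note that the paper writes $\bT(\cL)=\{\bT(u)\mid u\in\cL\}$, literally a set of sets, and your identity silently (and correctly) interprets this as $\bigcup_{u\in\cL}\bT(u)$, which is clearly what is meant. With those verifications spelled out, your argument is complete and slightly more informative than the paper's, at the cost of reproving a classical theorem rather than citing it.
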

\begin{proof}
The inverse image of a language $\cL$ under a rational transducer $\bT$, is also the image of the language under some rational transducer $\bT'$  (See \cite[III.4 and III.Theorem 6.1]{Berstel}).
Nivat's theorem (see \cite[III.Theorem 4.1 in view of III.Theorem 6.1]{Berstel}) says that  if $\bT$ is a rational transducer with input alphabet $X$,  output alphabet $Y$ and $\cL\subseteq X^*$ is a language, then there is a finite alphabet $Z$, a regular language $\cR\subseteq Z^*$ and homomorphisms $f\colon Z^*\to X^*$ and $h\colon Z^*\to Y^*$ such that $\bT(\cL)=h(f^{-1}(\cL)\cap \cR)$.
Now it follows that full AFL classes are closed under rational transducers.
\end{proof}

\subsection{Subsets of groups described by languages}

Let $G$ be a group. 
A {\it (monoid) generating set for $G$ } is a set $X$ together with a surjective monoid homomorphism map $\pi\colon X^* \to G$.
If $\pi$ is known (for example if $X$ is a subset of $G$) we often just say that $X$ is a generating set.
We refer to images under $\pi$ as {\it evaluations}.

We will use $\ell(w)$ to denote the length of a word $w\in X^*$.
For an element $g\in G$, we set $|g|_X = \min \{\ell(w) \mid \pi(w)= g\}$.
For $g,h\in G$, the {\it word distance between $g$ and $h$}, is equal to $|g^{-1}h|_X$ and is denoted by $\dist_X(g,h)$.
We might drop the subscript $_X$ if the generating set is clear from the context.

The next lemma is standard and the proof is omitted. 

\begin{lem}\label{lem: independence gen set}
Let $\mathcal{C}$ be a class of languages closed under homomorphisms and inverse homomorphism. 
Let $(X,\pi_X)$ and $(Y,\pi_Y)$ be two finite generating sets of a group $G$.
Let $S\subseteq G$ be any subset.

\begin{enumerate}
\item There is a language $\cL_X\subseteq X^*$ in the class $\cC$ such that $\pi_X(\cL_X)=S$ if and only if there is a language $\cL_Y\subseteq Y^*$ in the class $\cC$ such that $\pi_Y(\cL_Y) = S$. 
\item There is a language $\cL_X\subseteq X^*$ in the class $\cC$ such that $\cL_X=\pi_X^{-1}(S)$ if and only if there is a language $\cL_Y\subseteq Y^*$ in the class $\cC$ such that $\cL_Y=\pi_Y^{-1}(S)$.
\end{enumerate}

\end{lem}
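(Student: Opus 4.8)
The plan is to manufacture a pair of ``translation homomorphisms'' between the two free monoids that are compatible with evaluation into $G$, and then simply push the hypothesized languages back and forth through them, invoking the assumed closure under homomorphism and inverse homomorphism. The whole lemma is formal, so the work is really just setting up these maps correctly and checking two set-theoretic identities.

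First I would build the translation maps. Since $\pi_Y$ is surjective, for each generator $x\in X$ I can choose a word $\phi(x)\in Y^*$ with $\pi_Y(\phi(x))=\pi_X(x)$, and extending multiplicatively gives a monoid homomorphism $\phi\colon X^*\to Y^*$. The maps $\pi_Y\circ\phi$ and $\pi_X$ are both monoid homomorphisms $X^*\to G$ that agree on the generators $X$, hence they are equal: $\pi_Y\circ\phi=\pi_X$. Symmetrically, surjectivity of $\pi_X$ yields a monoid homomorphism $\psi\colon Y^*\to X^*$ with $\pi_X\circ\psi=\pi_Y$.

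For part (1) I would argue directly using closure under homomorphism. Given $\cL_X\in\cC$ with $\pi_X(\cL_X)=S$, set $\cL_Y\coloneqq\phi(\cL_X)$; this lies in $\cC$ by closure under homomorphism, and $\pi_Y(\cL_Y)=\pi_Y(\phi(\cL_X))=\pi_X(\cL_X)=S$. The converse is identical with the roles swapped: from $\cL_Y\in\cC$ with $\pi_Y(\cL_Y)=S$ one takes $\cL_X\coloneqq\psi(\cL_Y)$ and uses $\pi_X\circ\psi=\pi_Y$. For part (2) I would instead use closure under inverse homomorphism, via the identity $\pi_Y^{-1}(S)=\psi^{-1}(\pi_X^{-1}(S))$. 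This holds because for $v\in Y^*$ one has $v\in\psi^{-1}(\pi_X^{-1}(S))$ iff $\pi_X(\psi(v))\in S$ iff $\pi_Y(v)\in S$, where the middle equivalence is exactly $\pi_X\circ\psi=\pi_Y$. Hence if $\pi_X^{-1}(S)\in\cC$ then so is its inverse image $\psi^{-1}(\pi_X^{-1}(S))=\pi_Y^{-1}(S)$; the converse swaps $\phi$ for $\psi$ and uses $\pi_X^{-1}(S)=\phi^{-1}(\pi_Y^{-1}(S))$.

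The content here is genuinely light, so there is no real obstacle; the only points needing care are (i) verifying that the relevant composites of monoid homomorphisms agree on generators, so that $\pi_Y\circ\phi=\pi_X$ and $\pi_X\circ\psi=\pi_Y$ hold on all of the free monoids and not merely on $X$ and $Y$, and (ii) noticing that part (1) consumes closure under \emph{homomorphism} while part (2) consumes closure under \emph{inverse} homomorphism, so the two translation maps $\phi$ and $\psi$ are deployed in opposite directions for the two statements. This is presumably why the authors record the proof as standard and omit it.
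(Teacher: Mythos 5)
Your proof is correct, and since the paper explicitly omits the proof as standard, the argument you give — translation homomorphisms $\phi\colon X^*\to Y^*$ and $\psi\colon Y^*\to X^*$ built from surjectivity of the evaluation maps, with $\pi_Y\circ\phi=\pi_X$ and $\pi_X\circ\psi=\pi_Y$ verified on generators, then pushing forward for part (1) and pulling back via $\pi_Y^{-1}(S)=\psi^{-1}(\pi_X^{-1}(S))$ for part (2) — is precisely the standard argument the authors had in mind. Nothing is missing: the closure hypotheses are consumed exactly where you say they are, and the set-theoretic identities are checked correctly.
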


The following observation will be useful.
\begin{lem}\label{lem: negative cone in cC}
Let $(X,\pi)$ be a finite generating set of $G$ and $P\subset G$.
Let $\cC$ be a class of languages closed by homomorphisms, inverse homomorphisms and reversal. Then
\begin{enumerate}
\item [(i)]
there is a language $\cL\in \cC$ such that $\pi(\cL)=P$ if and only 
there is a language $\cL'\in \cC$ such that $\pi(\cL')=P^{-1}$, 

\item[(ii)]  
 $\pi^{-1}(P)\subseteq X^*$ is in the class $\cC$ if and only if 
 $\pi^{-1}(P^{-1})\subseteq X^*$ is in the class $\cC$.
\end{enumerate}
\end{lem}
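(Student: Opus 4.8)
The plan is to encode the group inversion $g \mapsto g^{-1}$ at the level of words as a composition of a monoid homomorphism and the reversal operation, and then push the two statements through the closure hypotheses on $\cC$.

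First I would build a \emph{word-inversion map}. Since $\pi\colon X^*\to G$ is surjective, for each generator $x\in X$ I can fix a word $\overline{x}\in X^*$ with $\pi(\overline{x})=\pi(x)^{-1}$, and extend $x\mapsto \overline{x}$ to a monoid homomorphism $\iota\colon X^*\to X^*$. Setting $\mathrm{inv}(w)\coloneqq \iota(w^R)$ for $w\in X^*$, a one-line computation on a word $w=x_1\cdots x_n$ gives $\pi(\mathrm{inv}(w))=\pi(x_n)^{-1}\cdots \pi(x_1)^{-1}=\pi(w)^{-1}$; the reversal is exactly what undoes the order reversal present in $(\pi(x_1)\cdots\pi(x_n))^{-1}$, which is precisely why a plain homomorphism does not suffice. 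The key structural observation is then that $\mathrm{inv}$ is a composition of reversal and the homomorphism $\iota$. Consequently, for a language $\cL\subseteq X^*$ one has $\mathrm{inv}(\cL)=\iota(\cL^R)$ and $\mathrm{inv}^{-1}(\cL)=(\iota^{-1}(\cL))^R$, so that $\cC$-membership is preserved in both directions by the assumption that $\cC$ is closed under homomorphisms, inverse homomorphisms and reversal.

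For part (i) I would simply take $\cL'=\mathrm{inv}(\cL)=\iota(\cL^R)$. Then $\cL'\in\cC$ by closure under (forward) homomorphisms and reversal, and $\pi(\cL')=\{\pi(w)^{-1}:w\in\cL\}=P^{-1}$. The reverse implication follows by symmetry, applying the same construction to $\cL'$ and using $(P^{-1})^{-1}=P$. For part (ii) I would prove the set identity $\pi^{-1}(P^{-1})=\mathrm{inv}^{-1}(\pi^{-1}(P))$: indeed $w\in\pi^{-1}(P^{-1})$ if and only if $\pi(w)^{-1}\in P$, if and only if $\pi(\mathrm{inv}(w))\in P$, if and only if $\mathrm{inv}(w)\in\pi^{-1}(P)$. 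Hence if $\pi^{-1}(P)\in\cC$ then $\pi^{-1}(P^{-1})=(\iota^{-1}(\pi^{-1}(P)))^R\in\cC$ by closure under inverse homomorphisms and reversal, and the converse again follows by symmetry.

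There is no serious obstacle here; the statement is essentially bookkeeping once the word-inversion map is set up. The only points requiring genuine care are (a) arranging $\mathrm{inv}$ so that the multiplication order is correctly reversed, which is why reversal must enter alongside a homomorphism, and (b) matching each part to the correct closure property: part (i) uses closure under forward homomorphisms, whereas part (ii) uses closure under inverse homomorphisms, with reversal needed in both. One could alternatively invoke Lemma~\ref{lem: independence gen set} to reduce to a symmetric generating set, making $\iota$ an honest involution on letters, but this is not necessary for the argument above.
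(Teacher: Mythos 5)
Your proof is correct and takes essentially the same route as the paper: both decompose the word-level inversion map as a monoid homomorphism composed with reversal ($f$ or $\mathrm{inv}$), verify $\pi(\mathrm{inv}(w))=\pi(w)^{-1}$, and then push (i) through closure under homomorphisms and reversal and (ii) through the identity $\pi^{-1}(P^{-1})=\mathrm{inv}^{-1}(\pi^{-1}(P))$ using closure under inverse homomorphisms and reversal. The only cosmetic difference is that the paper first reduces via Lemma \ref{lem: independence gen set} to a generating set $X\subseteq G$ closed under inverses, making the homomorphism letter-to-letter and $f$ an involution with $f^{-1}=f$, whereas you send each letter to a word representing its inverse and argue the converses by reapplying the construction --- an alternative you yourself point out at the end.
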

\begin{proof}
Since $\cC$ is closed by homomorphisms and inverse homorphisms, Lemma \ref{lem: independence gen set} implies that the properties claimed in (i) and (ii) are independent of the generating set. 
So we will assume that $X\subseteq G$ is a finite generating set closed under taking inverses.

Let $f\colon X^* \to X^*$ be the map sending $x_1 \dots x_n \mapsto x_n^{-1} \dots x_1^{-1}$. 
Then $f$ is a composition of the homomorphism map sending $x \mapsto x^{-1}$ and the reversal map.
Moreover $f^2 = id \colon  X^* \to X^*$. 
In particular, $\cL\subseteq X^*$ is in $\cC$ if and only if $f(\cL)$ is in $\cC$.\\

Note that we have that $P^{-1}=\pi(f(\cL))$ and so (i) follows. 
Also $\pi^{-1}(P^{-1})=f^{-1}(\pi^{-1}(P))$ and (ii) follows.
\end{proof}

We will now recall a property that allows us to pass the complexity of  sets to subsets. 

\begin{defn}\label{def: L-convex}
Let $(X,\pi)$ be a finite generating set of $G$.
Let $\cL$ be a regular language over $X$.
A subset $H\subseteq G$ is  {\it language-convex with respect to $\cL$} (or {\it $\cL$-convex} for short) if there exists an $R\geq 0$ such that for each $w \in \cL$
with $\pi(w) \in H$, where $w= x_1\dots x_n$ and $w_i = x_1 \dots x_i$, all prefixes $w_i$ of $w$ satisfy that $\dist(\pi(w_i),\pi(\cL))\leq R$.
\end{defn}

The following result was proved  by the second author in \cite[Corollary 4.4]{Su}. 

\begin{prop}\label{prop: inherited by L-convex subgroups}
  Let $\cL$ be a regular language, let  $(X,\pi)$ be a finite  generating set of a group $G$ and let  $H$ be a subgroup of G. If $H$ is language-convex with respect to $\cL$, then there exists a regular language $\cL'\subseteq X^*$  that evaluates onto $H \cap \pi(\cL)$.
\end{prop}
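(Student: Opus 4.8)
The plan is to refine a finite-state automaton recognizing $\cL$ into one that, as it reads a word, also keeps track of the left coset $H\pi(w_i)$ determined by the current prefix $w_i$. Concretely, I would fix a (possibly nondeterministic) automaton $\bM=(\cS,X,\delta,s_0,\cA)$ with $|\bM|=\cL$ and let $R\ge 0$ be the constant supplied by $\cL$-convexity. The $\cL$-convexity hypothesis confines, along any $w\in\cL$ with $\pi(w)\in H$, every prefix value $\pi(w_i)$ to a bounded neighbourhood $\{g\in G\mid \dist(g,H)\le R\}$ of $H$; since $G$ is finitely generated, this neighbourhood equals $H\cdot\ball_R$ with $\ball_R=\{g\in G\mid |g|_X\le R\}$ finite, and hence meets only finitely many left cosets of $H$, namely those in the finite set $\cK=\{Hc\mid c\in\ball_R\}$. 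This finiteness is exactly what will keep the refined automaton finite.

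I would take the states of the new automaton $\bM'$ to be the pairs $(s,\kappa)\in\cS\times(\cK\sqcup\{\dagger\})$, where $\dagger$ is an absorbing ``dead'' state signalling that a prefix has escaped the $R$-neighbourhood of $H$. The initial state is $(s_0,H)$, which is legitimate since $H=H\cdot 1_G$ and $1_G\in\ball_R$. On reading $x\in X$ the first coordinate is updated by $\delta$ exactly as in $\bM$, and the coset coordinate is updated by right multiplication $\kappa\mapsto\kappa x$; this is well defined on left cosets because $Hg=Hg'$ forces $Hgx=Hg'x$. If $\kappa x$ still belongs to $\cK$ we move there, otherwise we pass to $\dagger$ and remain there forever ($\epsilon$-moves leaving $\kappa$ unchanged). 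I declare $(s,\kappa)$ accepting exactly when $s\in\cA$ and $\kappa=H$; because $H$ is a \emph{subgroup}, the condition $H\pi(w)=H$ is equivalent to $\pi(w)\in H$, so the coset coordinate correctly detects membership in $H$. By construction the run of $\bM'$ maintains the invariant that its coset coordinate equals $H\pi(w_i)$ whenever it has not died, and $\cL':=|\bM'|$ is regular since $\bM'$ is a finite automaton.

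It then remains to verify $\pi(\cL')=H\cap\pi(\cL)$. The inclusion $\subseteq$ is immediate: any $w\in\cL'$ is accepted by the underlying $\bM$, hence $w\in\cL$, and its terminal coset is $H$, hence $\pi(w)\in H$. For the reverse inclusion, I would take $g\in H\cap\pi(\cL)$ and choose any $w\in\cL$ with $\pi(w)=g$; since $\pi(w)=g\in H$, convexity ensures every prefix stays within distance $R$ of $H$, so the run of $\bM'$ avoids $\dagger$, tracks the cosets $H\pi(w_i)\in\cK$, and terminates in an accepting state $(s,H)$ with $s\in\cA$. Hence $w\in\cL'$ and $g\in\pi(\cL')$. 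The one substantive point in the whole argument — and the only place the hypothesis enters — is the finiteness of $\cK$ extracted from $\cL$-convexity; I expect this to be the main obstacle, since without it the state set is infinite. Once that is secured, the remaining bookkeeping (well-definedness of the coset transition, the absorbing behaviour of $\dagger$, and carrying the nondeterminism of $\bM$ harmlessly in the first coordinate) is routine.
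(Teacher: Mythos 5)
Your proof is correct, and it is essentially the standard argument for this result: the paper itself does not reprove the proposition but cites \cite[Corollary 4.4]{Su}, whose proof is the same coset-tracking product construction --- run an automaton for $\cL$ in the first coordinate while the second coordinate records the left coset $H\pi(w_i)$ of the current prefix, with $\cL$-convexity guaranteeing that on the relevant runs only the finitely many cosets $\{Hc \mid c\in \ball_R\}$ occur, so that the product automaton is finite. One point worth flagging: Definition \ref{def: L-convex} as printed asks that $\dist(\pi(w_i),\pi(\cL))\leq R$, whereas your argument (correctly) uses $\dist(\pi(w_i),H)\leq R$; the latter is the intended reading --- it is the condition the paper itself verifies when it applies the definition in the proof of Proposition \ref{prop: convex implies L-convex} --- and it is exactly what your finiteness of $\cK$, and hence the whole construction, requires.
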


\subsection{Left-orders and complexity classes}
A {\it left-order} $\prec$ on a group  $G$ is a total order on $G$ that is invariant under left $G$-multiplication.
That is, for all $a,b,c\in G$, $a\prec b$ holds if and only if $ca\prec cb$ holds.
If a left-order exists on $G$ we say that {\it $G$  is  left-orderable}. 
A {\it positive cone } of $G$ is a subsemigroup $P$ of $G$ with the property that $G$ is the disjoint union of $P$, $P^{-1}$ and $\{1_G\}$. 
A positive cone $P$ defines a left-order $\prec_P$ on $G$ by $a\prec_P b\Leftrightarrow a^{-1}b\in P$.
Conversely, a left-order $\prec$ on $G$ defines a positive cone $P_\prec=\{g\in G\mid 1_G\prec g\}$.

A group $G$ is {\it bi-orderable} if there is a left-order $\prec$ on $G$ such that it is also invariant under right $G$-multiplication.
In that case, $\prec$ is a {\it bi-order}.

We now link positive cones with formal languages.
\begin{defn}\label{defn: C-left-order}
Let $\mathcal{C}$ be a class of languages. 
Let $(X,\pi)$ be a  generating set of $G$.
Let  $\prec$ a left-order. We say that $\prec$ is a {\it $\mathcal{C}$-left-order}, or equivalently {\it $P_\prec$ is a $\mathcal{C}$-positive cone}, if there exists a language $\mathcal{L}\subseteq X^*$ in the class $\mathcal{C}$ such that $\pi(\cL) = P_\prec$.
\end{defn}

\begin{ex}\label{ex: cyclic}
An infinite cyclic group $G=\langle t\mid \; \rangle$ is left-orderable. 
Consider the generating set $\{t,t^{-1}\}$ of $G$.
The set $\{t^n  \mid n>0\}\subseteq G$ is a positive cone and it is easy to check that $\langle t \rangle^+=\{t^n  \mid n>0\}\subseteq \{t,t^{-1}\}^*$ is a regular language. 
Thus $\bZ$ admits $\Reg$-left-orders. 
\end{ex}

In Example \ref{ex:one-counter}, we saw that  $\pi^{-1}(\mathbb{Z}_{\geq 1})=\{w\in \{t,t^{-1}\}^* \mid \sharp_t( w )> \sharp_{t^{-1}} (w) \}$ is $\onecounter$ but not regular. Taking the full pre-image of a positive cone under $\pi$ gives a language that represents the positive cone but it might not be of minimal language complexity. This will be discussed in more detail in the Appendix.

We now get some applications from the lemmata we proved for the complexity of subsets in the previous subsection.
By Lemma \ref{lem: independence gen set}, we get the following result.

\begin{cor}
Let $\cC$ be a class of languages closed under homomorphisms (eg. $\cC=\Reg$ or $\cC=\CF$).
Admitting a $\cC$-left-order is a group property, independent of the generating set.
\end{cor}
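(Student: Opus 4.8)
The plan is to deduce the statement directly from Lemma \ref{lem: independence gen set}, which already does the essential work. First I would unwind Definition \ref{defn: C-left-order}: a group $G$ admits a $\cC$-left-order with respect to a finite generating set $(X,\pi_X)$ precisely when there is a left-order $\prec$ on $G$ together with a language $\cL\in\cC$ such that $\pi_X(\cL)=P_\prec$. The point to highlight is that the positive cone $P_\prec\subseteq G$ is an intrinsic subset of the group: it is determined by the order $\prec$ alone and makes no reference to any choice of generating set. This is what lets us feed it into the generating-set-independence lemma.

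With this observation in hand, I would fix two finite generating sets $(X,\pi_X)$ and $(Y,\pi_Y)$ of $G$ and apply Lemma \ref{lem: independence gen set}(1) to the subset $S=P_\prec$. The lemma yields that there is a language $\cL_X\in\cC$ with $\pi_X(\cL_X)=P_\prec$ if and only if there is a language $\cL_Y\in\cC$ with $\pi_Y(\cL_Y)=P_\prec$. Reading this back through Definition \ref{defn: C-left-order} says exactly that $\prec$ is a $\cC$-left-order with respect to $X$ if and only if it is a $\cC$-left-order with respect to $Y$. Since $\prec$ (and hence $P_\prec$) is arbitrary, $G$ admits some $\cC$-left-order over $X$ if and only if it admits one over $Y$, which is the claimed independence; isomorphism invariance is then immediate, since an isomorphism carries a generating set to a generating set and a positive cone to a positive cone via a relabelling homomorphism.

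The only point requiring care is matching hypotheses, and this is where I would pause rather than at any genuine obstacle. Lemma \ref{lem: independence gen set} is stated for classes closed under both homomorphisms and inverse homomorphisms, whereas the corollary assumes only closure under homomorphisms. I would note that part (1)---the sole part invoked---needs nothing beyond homomorphism closure: choosing a substitution homomorphism $\nu\colon X^*\to Y^*$ with $\pi_Y\circ\nu=\pi_X$ (send each generator in $X$ to a word in $Y^*$ of equal evaluation), the language $\nu(\cL_X)$ lies in $\cC$ and satisfies $\pi_Y(\nu(\cL_X))=\pi_X(\cL_X)=P_\prec$, so homomorphism closure alone transports the witnessing language in either direction. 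Thus the weaker hypothesis of the corollary genuinely suffices, and the examples $\cC=\Reg,\CF$ qualify by Proposition \ref{prop: AFL}. There is no substantive difficulty: the entire content is packaged in Lemma \ref{lem: independence gen set}(1), and the corollary is essentially a translation of that statement into the language of left-orders.
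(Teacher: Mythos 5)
Your proof is correct and takes essentially the same route as the paper, which derives the corollary directly from Lemma \ref{lem: independence gen set}(1) applied to the subset $S=P_\prec$. Your extra observation---that part (1) needs only closure under homomorphisms, via a substitution homomorphism $\nu\colon X^*\to Y^*$ with $\pi_Y\circ\nu=\pi_X$---correctly resolves the hypothesis mismatch that the paper leaves implicit, since the lemma as stated assumes closure under inverse homomorphisms as well.
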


Left-orders restrict to left-orders on subgroups. 
It is natural to ask if their complexity is preserved.
This is a subtle question. 
For example $F_2\times \mathbb{Z}$ has a $\Reg$-left-order  (Theorem \ref{thm: A*BxZ} or \cite{Su}), however $F_2$ does not have $\Reg$-left-orders \cite{HS}.
 
A  sufficient condition for a left-order to inherit the complexity class when passing to a subgroups is given in terms of language-convexity (see Definition \ref{def: L-convex}). For instance, since finite index subgroups of a finitely generated group are always language-convex, we get the following consequence of Proposition \ref{prop: inherited by L-convex subgroups} (which was stated as \cite[Theorem 1.1]{Su}).

\begin{prop} \label{prop: inheritance subgroups}
A $\Reg$-left-order on a finitely generated group $G$ restricts to a $\Reg$-left-order on any finite index subgroup.
\end{prop}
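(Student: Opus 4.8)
The plan is to apply Proposition~\ref{prop: inherited by L-convex subgroups}, whose hypothesis is language-convexity of the subgroup. So the whole proof reduces to verifying that a finite index subgroup $H \le G$ is $\cL$-convex with respect to a regular language $\cL$ witnessing the $\Reg$-positive cone $P$ of the given left-order. First I would fix a finite generating set $(X,\pi)$ of $G$ closed under inverses, together with a regular language $\cL \subseteq X^*$ with $\pi(\cL) = P$. The goal is then to produce, from $\cL$ and $H$, a regular language $\cL'$ that evaluates onto $P \cap H = P_{\prec|_H}$, which is exactly the positive cone of the restricted order on $H$. Once $\cL$-convexity of $H$ is established, Proposition~\ref{prop: inherited by L-convex subgroups} hands us such an $\cL'$ directly.

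The key step is the geometric claim that $H$ is $\cL$-convex. Here I would exploit the standard fact that a finite index subgroup $H$ of a finitely generated group $G$ is \emph{coarsely dense} in $G$ with respect to the word metric $\dist_X$: there is a constant $R_0$ such that every element of $G$ lies within distance $R_0$ of $H$. Indeed, if $[G:H] = m$, pick coset representatives $g_1,\dots,g_m$; then for any $g \in G$ we have $g = h g_i$ for some $h \in H$ and some $i$, so $\dist_X(g, H) \le \dist_X(g, h) = |g_i|_X \le \max_j |g_j|_X =: R_0$. Now take any $w = x_1 \cdots x_n \in \cL$ with $\pi(w) \in H$, and any prefix $w_i = x_1\cdots x_i$. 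Writing $\dist(\pi(w_i), \pi(\cL))$ in Definition~\ref{def: L-convex}, I must bound the distance from each prefix evaluation to the set $\pi(\cL) = P$. The point is that $\pi(w_i)$ is within $R_0$ of $H$, and I must convert ``close to $H$'' into ``close to $P$''. This is where I expect the main obstacle.

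The obstacle is precisely that Definition~\ref{def: L-convex} demands prefixes stay close to $\pi(\cL)$, i.e. to the positive cone $P$ itself, whereas coarse density only places prefixes close to $H$. To bridge this I would argue that every element $h \in H$ is at bounded distance from $P \cap H$: since $P \cup P^{-1} \cup \{1_G\} = G$, given $h \in H$ either $h \in P \cap H$ (distance $0$) or $h^{-1} \in P \cap H$; and multiplying by a fixed generator or a short word in $H$ realizing the ``sign flip'' keeps the displacement bounded by a constant depending only on $H$ and a chosen nontrivial element of $P \cap H$. Combining the coarse density constant $R_0$ with this ``$P$-density within $H$'' constant yields a single $R$ valid for all prefixes of all such $w$, establishing $\cL$-convexity. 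With $\cL$-convexity in hand, Proposition~\ref{prop: inherited by L-convex subgroups} produces a regular $\cL' \subseteq X^*$ evaluating onto $P \cap H$. Finally I would note that $P \cap H$ is exactly the positive cone of the restriction $\prec|_H$, and invoke Lemma~\ref{lem: independence gen set} to transfer this regular language from the generating set $X$ of $G$ to a generating set of $H$, completing the proof that $\prec|_H$ is a $\Reg$-left-order on $H$.
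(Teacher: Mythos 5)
Your overall route is exactly the paper's: reduce to Proposition~\ref{prop: inherited by L-convex subgroups} by checking that a finite index subgroup is language-convex, and your coarse-density computation (right coset representatives $g_1,\dots,g_m$, constant $R_0=\max_j |g_j|_X$) is precisely the justification the paper has in mind when it asserts that finite index subgroups of a finitely generated group are always language-convex. The problem is your third paragraph. The bridging claim that every $h\in H$ lies at bounded distance from $P\cap H$ is false: already for $G=H=\bZ$ with $P=\{t^n \mid n\geq 1\}$ one has $\dist(t^{-n},P)=n+1$, which is unbounded. The mechanism you propose cannot be repaired, because $\dist(h,P)=\min\{|h^{-1}p|_X \mid p\in P\}$, so witnessing closeness to $P$ means finding a short word $u$ with $h\,\pi(u)\in P$; left-invariance of $\prec$ gives no control over right multiplication, and a ``deeply negative'' element remains negative after right multiplication by any word of bounded length. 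Knowing that $h^{-1}$ is positive says nothing about the distance from $h$ to $P$. If this step were genuinely needed, your proof would collapse at this point.

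Fortunately it is not needed: the obstacle you ran into is an artifact of a typo in Definition~\ref{def: L-convex}. The intended condition is $\dist(\pi(w_i),H)\leq R$, not $\dist(\pi(w_i),\pi(\cL))\leq R$; this is the definition in \cite{Su}, and it is also how the paper itself verifies language-convexity in the proof of Proposition~\ref{prop: convex implies L-convex}, where the displayed estimate bounds the distance from each prefix $\pi(u)$ to an element of $H$ (note that under the literal reading, the subset $H$ would play almost no role in its own ``convexity'', and the cited proposition---whose proof rewrites prefixes using bounded coset data relative to $H$---would not match the hypothesis). With the corrected definition, your coarse-density argument alone establishes that $H$ is language-convex with respect to \emph{every} language, with $R=R_0$, and Proposition~\ref{prop: inherited by L-convex subgroups} then yields a regular $\cL'\subseteq X^*$ evaluating onto $P\cap H$, which is the paper's (one-line) proof. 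One final small inaccuracy: Lemma~\ref{lem: independence gen set} compares two generating sets of the \emph{same} group, so it does not by itself convert $\cL'\subseteq X^*$ (a language over generators of $G$) into a language over a generating set of $H$; that conversion is part of what \cite[Theorem 1.1]{Su} supplies, and is routine here since $\{w\in X^* \mid \pi(w)\in H\}$ is recognized by the finite coset automaton. So: delete the bridge, quote the corrected definition, and your first two paragraphs are the proof.
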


\subsection{Relative left-orders}\label{subsec: rel-ord}
We recall now some properties of relative positive cones. See \cite[Definition 4]{ADS} or \cite[Corollary 5.1.5]{Kopytov-Medvedev} for more details.
\begin{defn}\label{defn: rel-order} Let $G$ be a group and $H$ be a subgroup of $G$.
A subsemigroup $P\subseteq G$ is a {\it positive cone relative to $H$} if $G=P\sqcup H \sqcup P^{-1}$.
\end{defn}

Suppose that $P\subseteq G$ is a  positive cone relative to $H$. 
Then $P\cdot H \subseteq P$  holds.
Indeed, that  $P \cdot H$  has a non-trivial intersection with $P\inv \cup H$ in order to derive a contradiction. 
If $P \cdot H \cap P\inv \not= \emptyset$. 
Then there exists $p_1, p_2 \in P$ and $h\in H$ such that $p_1h = p_2\inv$, meaning that $h = p_1\inv p_2\inv$, so $H$ is not disjoint from $P\inv$, a contradiction. Similarly, if $P \cdot H \cap H$ is non-empty, then there exists $h_1, h_2 \in H$ and $p\in P$ such that $ph_1 = h_2$, meaning that $p = h_2 h_1\inv$, which implies  that $P$ and $H$ are not disjoint, a contradiction.  
A similar argument shows that $H\cdot P \subseteq P$.

With the previous remark, we see that if $P$ is a positive cone relative to $H$ then we can define a total $G$-invariant order on $G/H$ by setting $g_1H\prec g_2H\Leftrightarrow g_1^{-1}g_2\in P$. 
This is well-defined, since if we pick different coset representatitives such that $g_1'=g_1h$ and $g_2'=g_2h'$ then $g_1^{-1}g_2\in P$ implies that $h^{-1}g_1^{-1}g_2 h'\in H\cdot P \cdot H\subseteq  P$. 
The fact that $\prec$ is a total left-invariant order on $G/H$ follows now easily.

\begin{defn}
Let $\mathcal{C}$ be a class of languages. 
Let $(X,\pi)$ be a  generating set of $G$.
Let  $\prec$ a left-order. 
We say that $P\subseteq G$ is {\it $\mathcal{C}$-positive cone relative to $H\leqslant G$}, if $P$ is a positive cone relative to $H$ and there exists a language $\mathcal{L}\subseteq X^*$ in the class $\mathcal{C}$ such that $\pi(\cL) = P_\prec$.
\end{defn}

The following argument will be used several times:
\begin{lem}\label{lem: extend from convex}
Let $G$ be a group and $H$ a subgroup.
Let $P_{rel}$ be a positive cone relative to $H$ and $P_H$ a positive cone for $H$. Then $P=P_{rel}\cup P_H$ is a positive cone for $G$.

Moreover, if $\cC$ is a class of languages closed under union,  $P_{rel}$ is  $\cC$-positive cone relative to $H$ and $P_H$ is a $\cC$-positive  cone for $H$, then $P$ is a $\cC$-positive cone.
\end{lem}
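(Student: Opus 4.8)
The plan is to verify directly the two defining properties of a positive cone for $G$ — that $P$ partitions $G$ together with $P\inv$ and $\{1_G\}$, and that $P$ is closed under multiplication — and then to dispatch the complexity refinement with a one-line union argument.

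First I would assemble the partition. Since $P_{rel}$ is a positive cone relative to $H$ we have $G = P_{rel} \sqcup H \sqcup P_{rel}\inv$, and since $P_H$ is a positive cone for $H$ we have $H = P_H \sqcup P_H\inv \sqcup \{1_G\}$. Substituting the second decomposition into the first yields the refinement
$$G = P_{rel} \sqcup P_H \sqcup \{1_G\} \sqcup P_H\inv \sqcup P_{rel}\inv,$$
a disjoint union of five pieces. Because $P_H \subseteq H$ is disjoint from $P_{rel}$, we get $P = P_{rel} \sqcup P_H$ and likewise $P\inv = P_{rel}\inv \sqcup P_H\inv$; reading off the five-piece partition then gives exactly $G = P \sqcup P\inv \sqcup \{1_G\}$, with $1_G \notin P$ since neither $P_{rel}$ nor $P_H$ contains the identity.

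Next I would check that $P$ is a subsemigroup by examining the four products arising from $P = P_{rel} \cup P_H$. The products $P_{rel} P_{rel} \subseteq P_{rel}$ and $P_H P_H \subseteq P_H$ hold because $P_{rel}$ and $P_H$ are themselves subsemigroups. For the two mixed products I would invoke the absorption property established in Subsection \ref{subsec: rel-ord} (the paragraph following Definition \ref{defn: rel-order}), namely $P_{rel} \cdot H \subseteq P_{rel}$ and $H \cdot P_{rel} \subseteq P_{rel}$; since $P_H \subseteq H$, these give $P_{rel} P_H \subseteq P_{rel}$ and $P_H P_{rel} \subseteq P_{rel}$. All four products land in $P$, hence $PP \subseteq P$, and $P$ is a positive cone for $G$.

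Finally, for the complexity statement, I would fix a common generating set $(X,\pi)$ of $G$ and take languages $\cL_{rel}, \cL_H \subseteq X^*$ in $\cC$ with $\pi(\cL_{rel}) = P_{rel}$ and $\pi(\cL_H) = P_H$. Then $\pi(\cL_{rel} \cup \cL_H) = \pi(\cL_{rel}) \cup \pi(\cL_H) = P_{rel} \cup P_H = P$, and since $\cC$ is closed under union the language $\cL_{rel} \cup \cL_H \in \cC$ evaluates onto $P$, so $P$ is a $\cC$-positive cone. The only place requiring care is the partition bookkeeping — one must confirm that the five pieces above are genuinely disjoint, in particular that $P_{rel}$ and $P_H\inv$ do not overlap, which is immediate from $P_H\inv \subseteq H$ and the disjointness of $P_{rel}$ from $H$. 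No step presents a genuine obstacle; the entire content lies in organizing the two given partitions correctly and quoting the absorption property for the mixed products.
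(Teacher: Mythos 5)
Your proof is correct and follows essentially the same route as the paper's: the same substitution of $H = P_H \sqcup P_H^{-1} \sqcup \{1_G\}$ into $G = P_{rel} \sqcup H \sqcup P_{rel}^{-1}$ for the partition, the same four-product case analysis using the absorption properties $P_{rel}\cdot H \subseteq P_{rel}$ and $H \cdot P_{rel} \subseteq P_{rel}$ from Subsection \ref{subsec: rel-ord} for the semigroup property, and the same one-line union of languages for the complexity claim. Your version is in fact slightly more explicit than the paper's (which compresses the partition into a single displayed chain of equalities and dismisses the moreover part as clear), but the content is identical.
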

\begin{proof}
We have that $G=P_{rel}\sqcup H\sqcup P_{rel}\inv=(P_{rel}\sqcup P_H) \sqcup \{1\} \sqcup (P_H^{-1}\sqcup P_{rel}\inv)=P\sqcup \{1\} \sqcup P^{-1}.$
To see that $P$ is a semigroup, note that we have $P_{rel} P_{rel} \subseteq P_{rel}$ and $P_H P_H \subseteq P_H$ by assumption, and that we observed previously that $P_{rel} H \subseteq P_{rel}$ and $H P_{rel} \subseteq P_{rel}$, therefore $(P_{rel} \cup P_H)(P_{rel} \cup P_H) \subseteq (P_{rel} \cup P_H)$.

The moreover part is clear by definition of $P$.
\end{proof}

\section{Lexicographic left-orders of group extensions}
\label{sec: extensions}

In this section we study left-orders on group extensions. Suppose that $N$ and $Q$ are groups and that $G$ is {\it an extension of $N$ by $Q$.}
That is, there is a short exact sequence $1\to N \to G \stackrel{f}{\to} Q \to 1$.
Fixing a right inverse  $s$ of $f$  (i.e. a section of $f$), with $s(1_Q)=1_G$, we obtain the bijection $G\to N \times Q$, $g\mapsto ( gs(f(g))^{-1}, f(g))$. In particular, it is natural to try to order $G$ out of this bijection with $N\times Q$.

\begin{defn}[Lexicographic order on direct products of totally ordered sets]
Let $(N,\prec_N)$ and $(Q,\prec_Q)$ be two totally ordered sets. \begin{itemize}
    \item  The {\it lexicographic order $\prec_{lex}$ on $N\times Q$ with leading factor $Q$} is given by $(n,q)\prec_{lex} (n',q')$ if and only if  $q\prec_Q q'$ or $q=q'$ and $n\prec_N n'$.
    \item The {\it lexicographic order $\prec_{lex}$ on $N\times Q$ with leading factor $N$} is given by $(n,q)\prec_{lex} (n',q')$ if and only if $n\prec_N n'$ or $n=n'$ and $q\prec_Q q'$.

\end{itemize}
\end{defn}

We will see that the lexicographic order on $N\times Q$ where $Q$ leads, always induces a left-order on the underlying group $G$ (Lemma \ref{lem: quotient-leads}). This is not the case for the lexicographic order where $N$ leads, yet in Section \ref{sec: kernel}, we will find conditions on orders  on $N$ and $Q$ (and the structure of $G$) so that the lexicographic order where $N$ leads induces a left-order on the group $G$.

\subsection{Lexicographic left-orders where the quotient leads}

\begin{lem}\label{lem: quotient-leads}
Let $f\colon G\to Q$ be a group epimorpishm  with kernel $N$.
Let $P_Q$ and $P_N$ be positive cones on $Q$ and $N$ respectively.
Then  $P_{\prec_{lex}}\coloneqq f^{-1}(P_{Q})\cup  P_{N}$ is a positive cone of $G$.
The left-order on $G$ induced by $P_{\prec_{lex}}$ is called \emph{ lexicographic led by the quotient $Q$}.
\end{lem}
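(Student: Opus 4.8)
The plan is to verify directly the two defining properties of a positive cone for the set $P\coloneqq f\inv(P_Q)\cup P_N$: that $G$ is partitioned as $G = P \sqcup P\inv \sqcup \{1_G\}$, and that $P$ is closed under multiplication. Everything follows from the interplay between $f$ and the two given cones, together with the fact that $N = \ker f$.

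For the partition, I would start from the decomposition $Q = P_Q \sqcup P_Q\inv \sqcup \{1_Q\}$ coming from $P_Q$ being a positive cone of $Q$. Pulling this back along $f$ and using $N = f\inv(\{1_Q\})$ gives $G = f\inv(P_Q) \sqcup f\inv(P_Q\inv) \sqcup N$. The key elementary observation is that $(f\inv(P_Q))\inv = f\inv(P_Q\inv)$, which is immediate from $f(g\inv)=f(g)\inv$. Combining this with the partition $N = P_N \sqcup P_N\inv \sqcup \{1_G\}$ of the kernel yields $P\inv = f\inv(P_Q\inv)\cup P_N\inv$, and hence $P \sqcup P\inv \sqcup \{1_G\} = f\inv(P_Q)\sqcup f\inv(P_Q\inv)\sqcup P_N \sqcup P_N\inv \sqcup \{1_G\} = G$.

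To show $P\cdot P \subseteq P$, I would argue by cases according to which of the two sets $f\inv(P_Q)$ or $P_N$ each factor belongs to. The governing principle is that $f$ kills $P_N$, since $P_N \subseteq N = \ker f$. Thus whenever at least one of the two factors lies in $f\inv(P_Q)$, the image of the product under $f$ equals a product of elements of $P_Q$ (the other factor contributing either an element of $P_Q$ or the identity $1_Q$), which lies in $P_Q$ because $P_Q$ is a subsemigroup; this places the product in $f\inv(P_Q)\subseteq P$. When both factors lie in $P_N$, the product lies in $P_N$ since $P_N$ is a subsemigroup of the subgroup $N$. These four cases exhaust $P\cdot P$.

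No step here is genuinely hard; the only point demanding a little care is the bookkeeping in the partition, specifically checking that $f\inv(P_Q)$ and $f\inv(P_Q\inv)$ are disjoint from one another and from all of $N$, which is immediate once one notes they sit over the disjoint pieces $P_Q$, $P_Q\inv$, $\{1_Q\}$ of $Q$. To close, I would remark that the resulting order is exactly the lexicographic order led by $Q$, since by construction $g \in P$ precisely when either $f(g) \in P_Q$, or $f(g) = 1_Q$ (equivalently $g \in N$) and $g \in P_N$.
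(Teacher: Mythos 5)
Your proof is correct, and it is a sound direct verification; the only difference from the paper is organizational rather than mathematical. The paper does not check the partition and semigroup properties of $f^{-1}(P_Q)\cup P_N$ by hand: it first observes that $f^{-1}(P_Q)$ is a subsemigroup (preimage of a semigroup under a homomorphism) and that pulling back $Q=P_Q\sqcup\{1_Q\}\sqcup P_Q^{-1}$ gives $G=f^{-1}(P_Q)\sqcup N\sqcup f^{-1}(P_Q)^{-1}$, so that $f^{-1}(P_Q)$ is a \emph{positive cone relative to} $N$ in the sense of Definition \ref{defn: rel-order}; it then simply cites Lemma \ref{lem: extend from convex}, which says in general that a positive cone relative to $H$ together with a positive cone on $H$ yields a positive cone on $G$. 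Your four-case analysis of $P\cdot P\subseteq P$ is precisely the content of that general lemma unrolled in this special case: your mixed cases (one factor in $f^{-1}(P_Q)$, one in $P_N$) are the specialization of the absorption properties $P_{rel}\cdot H\subseteq P_{rel}$ and $H\cdot P_{rel}\subseteq P_{rel}$ established in Section \ref{subsec: rel-ord}, and in your setting they admit the slicker proof you give, since $f$ kills $P_N$. What the paper's route buys is reusability --- Lemma \ref{lem: extend from convex} is invoked repeatedly elsewhere (e.g.\ in Lemma \ref{lem: BS(1,q)} and Proposition \ref{prop: transducer imply one-counter}), including for subgroups that are not kernels of homomorphisms, where your ``$f$ kills $P_N$'' shortcut is unavailable; what your route buys is a self-contained argument with no auxiliary machinery. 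One small point of care you handled correctly: when the second factor lies in $P_N$, the image under $f$ is a single element of $P_Q$ rather than a product of two, which still lies in $P_Q$, so the case analysis closes.
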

\begin{proof}
Clearly $f^{-1}(P_Q)$ is a subsemigroup as it is the pre-image of a semi-group under a homomorphism. 
Also, as $Q=P^{-1}_Q\sqcup \{1_Q\}\sqcup P_Q$ we get that $G= f^{-1}(P_Q)^{-1}\sqcup N \sqcup f^{-1}(P_Q)$ and hence $f^{-1}(P_Q)$ is a positive cone relative to $N$.
The result now follows from Lemma \ref{lem: extend from convex}. 
\end{proof}

Let $(N,\prec_N)$ and $(Q,\prec_Q)$ be left-ordered groups, and let $P_N$ and $P_Q$ be the corresponding positive cones.
It is easy to check that indeed, the left-order of the previous lemma on $G$ coincides to the lexicographic order on the set $N\times Q$ (which is naturally in bijection with $G$) with leading factor $Q$.

\begin{prop}\label{prop:extensions}
Let $\mathcal{C}$ be a class of languages closed under unions
and inverse homomorphisms.
Let $N$ and $Q$ be finitely generated groups and $G$ an extension of $N$ by $Q$.
Let $P_N$ and $P_Q$ be $\cC$-positive cones for $N$ and $Q$ respectively.
Then $P_{\prec_{lex}}$ constructed as in Lemma \ref{lem: quotient-leads} is a $\cC$-positive cone for $G$.

In particular, $\cC$ can be any class of  full AFL languages as the classes $\Reg, \onecounter$ or $\CF$.
\end{prop}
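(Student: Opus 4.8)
The plan is to realize the two pieces of the positive cone $P_{\prec_{lex}}=f^{-1}(P_Q)\cup P_N$ furnished by Lemma \ref{lem: quotient-leads} by $\cC$-languages over a \emph{single}, carefully chosen generating set of $G$, and then glue them together by closure under unions, exactly as in the ``moreover'' part of Lemma \ref{lem: extend from convex}. Concretely, I would fix generating sets $(X_N,\pi_N)$ of $N$ and $(X_Q,\pi_Q)$ of $Q$ together with languages $\cL_N,\cL_Q\in\cC$ satisfying $\pi_N(\cL_N)=P_N$ and $\pi_Q(\cL_Q)=P_Q$ (these exist by hypothesis). For each $x\in X_Q$ I choose a lift $y_x\in G$ with $f(y_x)=\pi_Q(x)$, set $Y_Q=\{y_x : x\in X_Q\}$, and take $X=X_N\sqcup Y_Q$ as a generating set of $G$ with evaluation $\pi\colon X^*\to G$; this generates $G$ because $N$ together with lifts of generators of $Q$ does.

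The central step is the relative cone $f^{-1}(P_Q)$. I would introduce the monoid homomorphism $\phi\colon X^*\to X_Q^*$ given by $\phi(a)=\epsilon$ for $a\in X_N$ and $\phi(y_x)=x$, so that $f\circ\pi=\pi_Q\circ\phi$ (both sides are homomorphisms into $Q$ agreeing on generators). Then $\cL_1\coloneqq\phi^{-1}(\cL_Q)$ lies in $\cC$ by closure under inverse homomorphisms, and I claim $\pi(\cL_1)=f^{-1}(P_Q)$. The inclusion $\subseteq$ is immediate from $f(\pi(w))=\pi_Q(\phi(w))$. For $\supseteq$, given $g$ with $f(g)\in P_Q$, I pick $u\in\cL_Q$ with $\pi_Q(u)=f(g)$, lift it letterwise to $\tilde u\in Y_Q^*$ (so $\phi(\tilde u)=u$), observe $n\coloneqq\pi(\tilde u)^{-1}g\in N$, choose $v\in X_N^*$ with $\pi(v)=n$, and note that $w\coloneqq\tilde u\,v$ satisfies $\phi(w)=u\in\cL_Q$ and $\pi(w)=g$.

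For the kernel part, $\cL_N$ regarded as a subset of $X^*$ (legitimate since $X_N\subseteq X$) evaluates onto $P_N$ under $\pi$, because $\pi$ restricted to $X_N^*$ is $\pi_N$; and it remains in $\cC$ when read over the larger alphabet $X$. Setting $\cL=\cL_1\cup\cL_N\in\cC$ by closure under unions, I get $\pi(\cL)=f^{-1}(P_Q)\cup P_N=P_{\prec_{lex}}$, so $P_{\prec_{lex}}$ is a $\cC$-positive cone; the final sentence follows since $\Reg$, $\onecounter$ and $\CF$ are full AFLs.

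I expect the surjectivity direction $\pi(\phi^{-1}(\cL_Q))\supseteq f^{-1}(P_Q)$ to be the main obstacle: a word in $\phi^{-1}(\cL_Q)$ only controls the $Q$-image of its evaluation, so one must append a word in $X_N^*$ to correct the kernel coordinate, crucially using that $\phi$ is unchanged by appending $X_N$-letters. A secondary point to state carefully is that a $\cC$-language over $X_N$ is still a $\cC$-language over the extended alphabet $X$; this is transparent for $\Reg$, $\onecounter$ and $\CF$ (an accepting machine simply never reads the letters of $Y_Q$), and should be flagged explicitly, since closure under unions and inverse homomorphisms alone does not formally yield such alphabet extension.
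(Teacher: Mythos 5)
Your proof is correct and follows essentially the same route as the paper's: the same generating set $X_N\sqcup Y_Q$ built from lifts of the $Q$-generators, the same erasing homomorphism $\phi$ with $f\circ\pi=\pi_Q\circ\phi$, the same two inclusions (the paper appends the correcting $N$-word on the left of the lifted word rather than on the right, which is immaterial since $\phi$ kills $X_N$-letters in any position), and the same final union step via Lemma \ref{lem: extend from convex}. Your closing caveat---that reading $\cL_N\subseteq X_N^*$ as a language over the enlarged alphabet is not a formal consequence of closure under unions and inverse homomorphisms alone---is a genuine point the paper's proof passes over silently, and it is harmless for the classes actually used, since $\Reg$, $\onecounter$, $\CF$ and any full AFL are closed under (injective) homomorphisms.
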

\begin{proof}
Fix finite generating sets  $(X,\pi_N)$ and $(Y,\pi_Q)$ for $N$ and $Q$.
Let $P_N$ and $P_Q$ be  $\cC$-left-positive cones for $N$ and $Q$,
 and let $\cL_N\subseteq X^*$ and $\cL_Q\subseteq Y^*$ be in $\cC$ such that $\pi_N(\cL_N)=P_N$ and $\pi_Q(\cL_Q)=P_Q$.

Denote by $f$ the epimorphism of $G$ onto $Q$, i.e. $f\colon G\to Q$.
We can define a generating set  $(X\sqcup Y, \pi)$ for $G$ such that $\pi_N(x)=\pi(x)$ for $x\in X$ and $\pi_Q(y)=f(\pi(y))$ for $y\in Y$.

Let $\widetilde{\cL_Q}$ be the preimage of $\cL_Q$ under the monoid morphism $\pi_{X\sqcup Y \to Y}\colon (X\sqcup Y)^*\to Y^*$ that is the identity on $Y$ and sends elements of $X$ to the empty word. 
Note that $f(\pi(\widetilde{\cL_Q}))=P_Q$ and hence $\pi(\widetilde{\cL_Q})\subseteq f^{-1}(P_Q)$.

To see that $\pi(\widetilde{\cL_Q})\supseteq f^{-1}(P_Q)$, let $g\in G$ such that $f(g)\in P_Q$. 
Then, there exists $w\in \cL_Q$ such that $\pi_Q(w)=f(\pi(w))$. 
There is $\tilde{w}\in \widetilde{\cL_Q}$ such that $\pi_{X\sqcup Y \to Y}(\tilde{w})=w$ and therefore
if $\tilde{g}=\pi(\tilde{w})$ we get that $f(g)=f(\tilde{g})$ and hence $g(\tilde{g})^{-1}\in N$.
There is $u\in X^*$ such that $\pi(u)=\pi_N(u)=g(\tilde{g})^{-1}$. 
Note that $\pi_{X\sqcup Y \to Y}(u\tilde{w})=w$, therefore  $u\tilde{w}\in \widetilde{\cL_Q}$ and $\pi(u\tilde{w})=g(\tilde{g})^{-1}\tilde{g}=g$.

As $\cC$ is closed under inverse homomomorphism and $\widetilde{\cL_Q}\in \cC$, we get that $f^{-1}(P_Q)$ is $\cC$-positive cone relative to $N$.
By Lemma \ref{lem: extend from convex}, $P_G = f^{-1}(P_Q)\cup P_N$ is a $\cC$-positive cone.
\end{proof}

\subsubsection{Polycyclic groups}

We use Proposition \ref{prop:extensions} to show that left-orderable virtually polycyclic groups admit $\Reg$-left-orders. 
Let us recall some definitions.
\begin{defn}
Let $G$ be a group. 
A {\it subnormal series for $G$} is a decreasing sequence of subgroups of $G$
$$G = G_0 \unrhd G_1 \unrhd \dots  \unrhd G_n = \{1\}$$
such that $G_{i+1}$ is normal in $G_i$ for $0 \leq i < n$. 
The quotients $G_i/G_{i+1}$ are called {\it factors}.
\end{defn}

\begin{defn}
A group $G$ is {\it polycyclic} (resp. {\it poly-$\bZ$})  if there is a finite subnormal series for $G$ with  cyclic (resp. infinite cyclic)  factors.
\end{defn}

As a consequence of Proposition \ref{prop:extensions} we have the following.

\begin{cor}
Poly-$\bZ$ groups have $\Reg$-left-orders.
\end{cor}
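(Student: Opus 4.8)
The plan is to induct on the length $n$ of a poly-$\bZ$ subnormal series for $G$, peeling off the top factor at each step and feeding the pieces into Proposition \ref{prop:extensions}. The only external inputs needed are that $\bZ$ admits a $\Reg$-positive cone (Example \ref{ex: cyclic}) and that $\Reg$ is a full AFL (Proposition \ref{prop: AFL}); being a full AFL, $\Reg$ is in particular closed under unions and inverse homomorphisms, which is precisely the hypothesis Proposition \ref{prop:extensions} demands.

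Concretely, write the series $G = G_0 \unrhd G_1 \unrhd \dots \unrhd G_n = \{1\}$ with each $G_i/G_{i+1}\cong \bZ$. For the base case $n\leq 1$ the group $G$ is either trivial or infinite cyclic, and Example \ref{ex: cyclic} settles it. For the inductive step, I would set $N = G_1$ and $Q = G/G_1 \cong \bZ$, so that $1 \to N \to G \to Q \to 1$ exhibits $G$ as an extension of $N$ by $Q$. The subgroup $N = G_1$ inherits the truncated series $G_1 \unrhd G_2 \unrhd \dots \unrhd G_n = \{1\}$, hence is poly-$\bZ$ of length $n-1$, so by the induction hypothesis it carries a $\Reg$-positive cone $P_N$. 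The quotient $Q\cong \bZ$ carries the $\Reg$-positive cone $P_Q$ of Example \ref{ex: cyclic}. Proposition \ref{prop:extensions}, applied with $\cC = \Reg$, then produces the lexicographic $\Reg$-positive cone $P_{\prec_{lex}} = f^{-1}(P_Q)\cup P_N$ on $G$ (as constructed in Lemma \ref{lem: quotient-leads}), completing the induction.

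Before invoking Proposition \ref{prop:extensions} I must verify its standing hypothesis that $N$ and $Q$ are finitely generated, and this is the only place that calls for a little care. I would record, as part of the same induction, that every poly-$\bZ$ group is finitely generated: $\bZ$ is finitely generated, and if $N$ is finitely generated and $Q\cong \bZ$, then an extension $G$ of $N$ by $Q$ is generated by a finite generating set of $N$ together with one element mapping to a generator of $Q$. Thus both $N = G_1$ and $Q\cong \bZ$ appearing in the inductive step are finitely generated, and Proposition \ref{prop:extensions} applies.

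I do not expect a genuinely hard step here: once the full-AFL closure properties of $\Reg$ and the extension construction of Proposition \ref{prop:extensions} are in hand, the statement is a routine induction. If anything requires attention it is purely bookkeeping, namely correctly identifying the normal subgroup $N = G_1$ (rather than the quotient) as the object to which the induction hypothesis applies, and confirming finite generation at each stage so that the hypotheses of Proposition \ref{prop:extensions} remain satisfied throughout.
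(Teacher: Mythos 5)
Your proof is correct and follows exactly the paper's argument: induction on the length of the subnormal series, with base case $\bZ$ handled by Example \ref{ex: cyclic} and the inductive step supplied by Proposition \ref{prop:extensions} applied to the extension $1\to G_1\to G\to \bZ\to 1$. Your extra check that poly-$\bZ$ groups are finitely generated (so the hypotheses of Proposition \ref{prop:extensions} hold) is a small point the paper leaves implicit, but it does not change the route.
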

\begin{proof}
The corollary follows by induction on the length of the subnormal series.
The base case is $\mathbb{Z}$ and Example \ref{ex: cyclic} shows that $\bZ$ has $\Reg$ positive cones. Proposition \ref{prop:extensions} allows the inductive argument. 
\end{proof}

Finally, we use a theorem of Morris \cite{Morris} to show the following.

\begin{prop}\label{prop: virtually polycyclic}
Left-orderable virtually polycyclic groups are poly-$\bZ$. In particular, they have $\Reg$-left-orders. 
\end{prop}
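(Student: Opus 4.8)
The plan is to use Morris's theorem to upgrade left-orderability to local indicability, and then to extract a poly-$\bZ$ series by induction on the Hirsch length.

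First I would observe that a virtually polycyclic group $G$ is finitely generated and amenable: it is solvable-by-finite, and amenability is preserved by extensions and by passing to finite-index overgroups. Since $G$ is also left-orderable, the theorem of Morris \cite{Morris} cited before Corollary \ref{cor: polycyclic} gives that $G$ is locally indicable, i.e.\ every nontrivial finitely generated subgroup surjects onto $\bZ$. I would also record two stability facts that will be used repeatedly: subgroups of virtually polycyclic groups are again virtually polycyclic (hence finitely generated), and subgroups of locally indicable groups are locally indicable. Finally, recall that the Hirsch length $h(G)$, the number of infinite cyclic factors in any subnormal series witnessing virtual polycyclicity, is a well-defined invariant that is additive in extensions.

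Next I would induct on $h(G)$. For the base case $h(G)=0$ the group $G$ is finite; but a nontrivial finite group is finitely generated and admits no surjection onto $\bZ$, so local indicability forces $G=\{1\}$, which is vacuously poly-$\bZ$. For the inductive step with $h(G)=n>0$, the group $G$ is infinite and hence nontrivial, so local indicability supplies an epimorphism $\phi\colon G\to\bZ$. Set $N=\ker\phi$. Then $N$ is virtually polycyclic and locally indicable, and additivity of Hirsch length gives $h(N)=n-1$. By the inductive hypothesis $N$ admits a subnormal series $N=N_0\unrhd N_1\unrhd\cdots\unrhd N_m=\{1\}$ with infinite cyclic factors. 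Prepending $G$ to this series, and using that $N$ is normal in $G$ with $G/N\cong\bZ$, yields a subnormal series $G\unrhd N\unrhd N_1\unrhd\cdots\unrhd\{1\}$ all of whose factors are infinite cyclic; thus $G$ is poly-$\bZ$.

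Finally I would invoke the corollary proved above, that poly-$\bZ$ groups admit $\Reg$-left-orders (via the closure under extensions of Proposition \ref{prop:extensions} and the base case $\bZ$ of Example \ref{ex: cyclic}), to conclude the ``in particular'' statement. The only genuinely delicate point is the inductive step: one must produce a \emph{normal} infinite cyclic quotient, which is supplied directly by local indicability since the kernel of a map onto $\bZ$ is automatically normal, and verify that the Hirsch length strictly drops so that the induction is well-founded. Everything else is bookkeeping with the standard closure properties of the class of virtually polycyclic groups.
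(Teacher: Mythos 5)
Your proof is correct and follows essentially the same route as the paper: induction on the Hirsch length, with Morris's theorem supplying an epimorphism onto $\bZ$ whose kernel is virtually polycyclic, left-orderable, and of Hirsch length one less. The only cosmetic difference is that you invoke Morris's theorem as local indicability while the paper uses the equivalent formulation that finitely generated left-orderable amenable groups have infinite abelianization (and, in the base case, you derive triviality from the absence of a surjection onto $\bZ$ rather than from finiteness plus left-orderability); both yield the identical argument.
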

\begin{proof}
Recall that if $G$ is polycyclic, the number of  $\bZ$-factors in a subnormal series of cyclic factors is well-defined and called the {\it Hirsch length} and denoted $h(G)$.
For a  virtually polycyclic group $G$, we can define $h(G)$ to be the Hirsch length of any finite index polycyclic subgroup, and it is still well-defined (see for example \cite{Segal}). 
Moreover, if $H$ is a normal subgroup in $G$ then both $H$ and $G/H$ are virtually polycyclic and $h(G) = h(H) + h(G/H)$.

We argue by induction on the Hirsch length that a virtually polycyclic left-orderable group is poly-$\bZ$.
If $h(G)=0$, then $G$ left-orderable and finite, so $G$ is trivial.
Suppose that $h(G)>0$, and recall that Morris' theorem says that finitely generated, left-orderable amenable groups have infinite abelianization \cite{Morris}. 
Thus,  there is a normal subgroup $N\unlhd G$ such that $G/N\cong \bZ$.
Since $N$ is virtually polycyclic, left-orderable and $h(N)=h(G)-1$, we get by hypothesis that $N$ is poly-$\bZ$, and so is $G$.
\end{proof}

\subsubsection{Solvable Baumslag-Solitar groups}\label{sec: BS}
The result about regularity of  left-orders on polycyclic groups cannot be promoted to the case of solvable groups \cite{Dar}. 
Here we give the complete picture for when a solvable Baumslag-Solitar groups $BS(1,q)$, $q\in \bZ$ admits a regular positive cone.
Recall that for $q = 0$, $BS(1, 0)\cong \mathbb{Z}$ and for $q\neq 0$ these groups are defined by the presentation
$$BS(1,q)= \langle a, b \,|\, aba^{-1} = b^{q}\rangle.$$

For $q\neq 0$, it is well known that $BS(1,q)\simeq \bZ[1/q]\rtimes \bZ$ (see for instance \cite{Farb Mosher}).
Under this isomorphism,
the element $b$ can be identified with the unity of the ring $\bZ[1/q]$ (the minimal sub-ring of $\bQ$ containing $\bZ$ and $1/q$).
The element $a$ can be identified with  the generator for the $\bZ$-factor which acts on $\bZ[1/q]$ by multiplying by $1/q$. 
 Therefore, an element $r/q^{s}\in \bZ[1/q]$ where $s\geq 0$ and $r\in \bZ-\{0\}$, can be written as $a^{-s}b^{\varepsilon r}a^{s}$
where $\varepsilon\in \{-1,1\}$ depends  on the parity of $s$ and the sign of $r$ and $q$ as in the following table.
\begin{center}
\begin{tabular}{|c|c|c|}
\hline
 $\text{sign}(q)$& $s$   & $\varepsilon$ \\\hline
$+$ & even/odd & $\text{sign}(r)$ \\\hline
$-$ & even & $\text{sign}(r)$\\\hline
$-$ & odd & $-\text{sign}(r)$\\\hline

\end{tabular}
\end{center}
In particular, for solvable Baumslag-Solitar groups, every group element can be written in the form $a^n(a^{-m}b^k a^{m})$ with $m\geq 0$, $n,k\in \bZ$.

Notice that the groups $\mathbb{Z}$ and $\mathbb{Z}[1/q]$ admit only two left-orders each.
Therefore, there are only four lexicographic left-orders on $BS(1,q)\cong \mathbb{Z}[1/q]\rtimes \mathbb{Z}$ where the quotient leads.
These four left-orders have the following positive cones,
$$P_1=\{(r/q^s, n)\in \bZ[1/q]\rtimes \bZ \mid n>0 \text{ or } (n=0 \text{ and } r/q^s>0)\},$$
$$P_2=\{(r/q^s, n)\in \bZ[1/q]\rtimes \bZ \mid n<0 \text{ or } (n=0 \text{ and } r/q^s>0)\},$$
$P_3 := P_1^{-1}$ and $P_4 := P_2^{-1}$. We first remark that these are one-counter positive cones.

\begin{prop}\label{prop: BS(1,-q)}
For each, $q\neq 0$ and each  $i=1,2,3,4$, there is a one-counter language over $\{a, a^{-1} ,b, b^{-1}\}^*$ evaluating onto the positive cone $P_i$ of $BS(1,q)$ described above.
\end{prop}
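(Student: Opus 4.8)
The plan is to exhibit each $P_i$ as a lexicographic positive cone led by the quotient, in the sense of Lemma~\ref{lem: quotient-leads}, and to build a one-counter language for each of its two pieces separately. Write $\phi\colon BS(1,q)\to\bZ$ for the homomorphism recording the exponent sum of $a$, so that on a word $\phi=\sharp_a-\sharp_{a^{-1}}$ and $\ker\phi$ is the normal subgroup $\bZ[1/q]$. Since each of $\bZ$ and $\bZ[1/q]$ has exactly two positive cones, each of $P_1,\dots,P_4$ is of the form $\phi^{-1}(P_{\bZ})\cup P_{\bZ[1/q]}$ for one of the four choices of positive cone $P_{\bZ}$ on the quotient and $P_{\bZ[1/q]}$ on the kernel. (Because $\bZ[1/q]$ is not finitely generated, Proposition~\ref{prop:extensions} does not apply directly, so I build the kernel piece by hand.) As $\onecounter$ is a full AFL (Proposition~\ref{prop: AFL}), it is closed under unions, so it suffices to give one-counter languages evaluating onto $\phi^{-1}(P_\bZ)$ and onto $P_{\bZ[1/q]}$ and then take their union, invoking Lemma~\ref{lem: extend from convex}.

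For the quotient piece, the language $\{w\mid \sharp_a(w)>\sharp_{a^{-1}}(w)\}$ evaluates onto $\{g\mid \phi(g)>0\}$: every word with positive $a$-exponent sum maps into this set, and conversely any group element with $\phi(g)>0$ admits such a representative. This language is the inverse image of the one-counter language of Example~\ref{ex:one-counter} under the monoid homomorphism $a\mapsto t,\ a^{-1}\mapsto t^{-1},\ b^{\pm1}\mapsto\epsilon$, hence is one-counter by full AFL closure; the opposite inequality is symmetric.

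The kernel piece is the heart of the argument. Every nonzero element of $\bZ[1/q]$ equals $(a^{-1})^{s}b^{j}a^{s}$ for some $s\geq 0$ and $j\in\bZ\setminus\{0\}$, and this element has real value $j/q^{s}$. I take as representatives precisely the words of shape $(a^{-1})^{s}b^{j}a^{s}$, with the sign of $j$ chosen so that the value is positive: when $q>0$ this forces $j>0$ for all $s$, while when $q<0$ it forces $j>0$ for $s$ even and $j<0$ for $s$ odd. A one-counter automaton reads the block $(a^{-1})^{s}$ pushing one counter symbol per letter (and, when $q<0$, recording $s\bmod 2$ in its finite control), then reads a nonempty block of $b$'s or $b^{-1}$'s of the prescribed sign, then reads a block of $a$'s popping one symbol per letter, accepting exactly when the counter returns to zero. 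This ensures the two end blocks have equal length $s$ and the evaluation is a positive element of $\bZ[1/q]$; conversely any positive $v$ can be written $v=j/q^{s}$ with $s$ large enough that $j=vq^{s}\in\bZ$, and the sign of $j$ then matches the prescription, so the language surjects onto the positive cone. Flipping every sign condition yields the negative-cone language, which covers $P_3=P_1^{-1}$ and $P_4=P_2^{-1}$.

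The main obstacle is this kernel construction, and within it the bookkeeping for $q<0$: there the admissible sign of the $b$-block depends on the parity of the number of leading $a^{-1}$'s, so the automaton must track that parity in its finite control while simultaneously using its single counter to match the $a^{-1}$-prefix against the $a$-suffix. Checking that the accepted words all land in the prescribed cone and that the language surjects onto it is the one step where the arithmetic of $\bZ[1/q]$ is used, rather than mere closure properties of $\onecounter$.
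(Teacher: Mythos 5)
Your proposal is correct and follows essentially the same route as the paper: both decompose each $P_i$ into a quotient piece ($\phi^{-1}(\bZ_{>0})$ or $\phi^{-1}(\bZ_{<0})$) and a kernel piece, represent the kernel cone of $\bZ[1/q]$ by the normal-form words $a^{-s}b^{j}a^{s}$ with exactly the same parity-of-$s$ sign prescription when $q<0$, and obtain one-counterness from an automaton whose counter matches the $a^{-1}$-prefix against the $a$-suffix. The only cosmetic differences are that the paper represents the quotient piece as $\{a\}^{+}\cdot\cL_{quot}$ and imposes the sign/parity conditions by intersecting $\cL_{quot}$ with regular languages, whereas you take the inverse homomorphic image of the language of Example \ref{ex:one-counter} and encode the parity in the finite control --- both legitimate applications of the full-AFL closure properties of $\onecounter$ (Proposition \ref{prop: AFL}).
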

\begin{proof}
We only deal  with the case $q<1$ and $i=1,2$ and we indicate the needed modifications for $q>1$ at the end of the proof.
The cases $i=3,4$ are identical.
The following two languages
$$ \cL_1=\{a^n(a^{-m}b^k a^{m}) \mid n>0 \text{ or } [n=0 \text{ and } (m \text{ odd } \text{ and }  k<0) 
\text{ or } (m \text{ even } \text{ and }  k>0)]
\}$$
$$\cL_2=\{a^n(a^{-m}b^k a^{m}) \mid n<0 \text{ or } [n=0 \text{ and } (m \text{ odd } \text{ and }  k<0) 
\text{ or } (m \text{ even } \text{ and }  k>0)]
\}$$
where we assume $m \geq 0$ and $k \in \bZ$,
 evaluate to $P_1$ and $P_2$. This follows easily from our description of normal forms at the beginning of the subsection.

We see that $\cL_1 = \{a\}^+ \cdot  \cL_{quot} \cup \cL_{ker}$ and $\cL_2 =   \{a^{-1}\}^+ \cdot  \cL_{quot} \cup \cL_{ker}$ where
 
$$\cL_{quot} = \{a^{-m}b^k a^{m} \mid m\geq 0, k \in \bZ\}$$ and 
$$\cL_{ker}= \{a^{-m}b^k a^{m} \mid m \geq 0, k \in \bZ,   (m \text{ odd } \text{ and }  k<0) 
\text{ or } (m \text{ even } \text{ and }  k>0)
\}.$$
By the closure properties of the class $\onecounter$,  to see that $\cL_1$ and $\cL_2$ are one-counter languages, it is enough to see that $\cL_{quot}$ and  $\cL_{ker}$ are in $\onecounter$. 
Moreover, 
\begin{align*}
\cL_{ker} & = \cL_{quot}\cap (\{a^{-1}a^{-1}\}^*\cdot \{b\}^*\cdot \{a a\}^*) \\
&  \cup \cL_{quot}\cap (\{a^{-1}\} \cdot \{a^{-1}a^{-1}\}^*\cdot \{b^{-1}\}^*\cdot \{a a\}^*\cdot \{a\})
\end{align*}
Again, since the class $\onecounter$ is closed under union and intersection with regular languages, we see that it is enough to show that $\cL_{quot}$ is one-counter. 
We illustrate a pushdown automaton accepting $\cL_{quot}$ in Figure \ref{fig: lquot} and leave the rest of the proof of this case to the reader.

\begin{figure}[ht]
\begin{center}
\import{fig/}{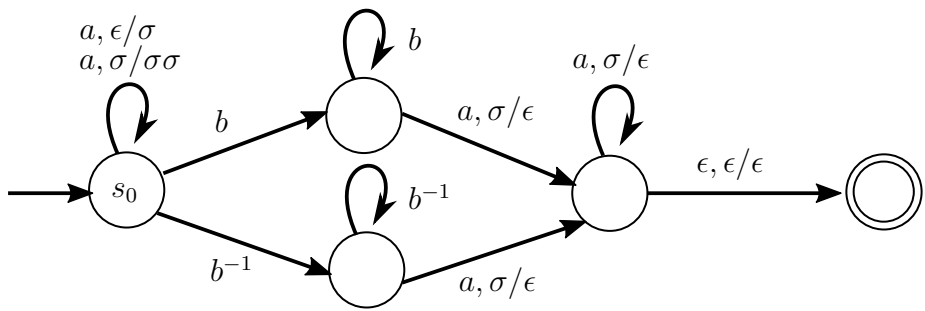}
\end{center}
\caption{Pushdown automaton accepting $\cL_{quot}$. Here $\{\sigma\}$ is the  stack alphabet (i.e the counter symbol).}
\label{fig: lquot}
\end{figure}

For the case $q>1$, observe that 
$$ \cL_1'=\{a^n(a^{-m}b^k a^{m}) \mid n>0 \text{ or } [n=0 \text{ and }   k>0]
\}$$
$$\cL_2'=\{a^n(a^{-m}b^k a^{m}) \mid n<0 \text{ or } [n=0 \text{ and }   k<0]
\},$$
evaluate to $P_1$ and $P_2$ respectively. 
An easy modification of the previous automaton allows to conclude this case.
\end{proof}

We are going to show now that the previous proposition is optimal, in the sense that $P_1, P_2, P_3$ and $P_4$ can not be regular. To show this  we need to recall some facts.
\begin{defn}\label{defn: coarsely connnected}
Let $(\cM,\dist)$ be a metric space.  A subset $Y\subseteq \cM$ is {\it coarsely connected } if there is $R>0$ such that $\{p\in \cM\mid \dist(p,Y)\leq R\}$, the $R$-neighborhood of $Y$, is connected.
 \end{defn} 
The following is \cite[Proposition 7.2]{AABR}, in view of Lemma \ref{lem: negative cone in cC}.
\begin{lem}\label{lem: regular implies coarsely connected}
Let $G$ be a finitely generated group. If $P$ is a $\Reg$-positive cone of $G$, then $P$ and $P^{-1}$ are coarsely connected subsets of the Cayley graph of $G$.
\end{lem}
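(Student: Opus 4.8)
The plan is to treat the two assertions---coarse connectedness of $P$ and of $P^{-1}$---separately: I would prove the statement for $P$ directly and then reduce the case of $P^{-1}$ to it by a symmetry argument. The content for $P$ is exactly \cite[Proposition 7.2]{AABR}, so the genuinely new input here is the reduction, which is what the phrase ``in view of Lemma \ref{lem: negative cone in cC}'' refers to. In a final write-up one may simply cite \cite{AABR} for $P$, but I will sketch the underlying argument since it exposes where regularity is used.

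For the coarse connectedness of $P$ itself I would proceed as follows. Fix a finite generating set $(X,\pi)$ and a finite state automaton $\bM$ with state set $\cS$, say $N=|\cS|$, which by Remark \ref{rem: different types of fsa} we may take without $\epsilon$-moves, accepting a language $\cL\subseteq X^*$ with $\pi(\cL)=P$. The key claim is that for every accepted word $w=x_1\cdots x_n\in\cL$ and every prefix $w_i=x_1\cdots x_i$ one has $\dist(\pi(w_i),P)<N$. Indeed, reading $w_i$ drives $\bM$ to a state from which an accepting state is reachable (via the suffix $x_{i+1}\cdots x_n$), so there is a word $v$ over $X$ of length less than $N$ leading from that state to an accepting state; hence $w_iv\in\cL$, $\pi(w_iv)\in P$, and $\dist(\pi(w_i),P)\leq|\pi(v)|_X\leq\ell(v)<N$.

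Granting the claim, coarse connectedness of $P$ follows with $R:=\max\{N,\dist(1_G,P)\}$, which is finite since $P\neq\emptyset$. For $g\in P$ choose $w\in\cL$ with $\pi(w)=g$; the prefixes trace an edge-path $1_G=\pi(w_0),\pi(w_1),\dots,\pi(w_n)=g$ in the Cayley graph which lies in the $R$-neighborhood of $P$ by the claim and passes through $1_G$. Hence every element of $P$ is joined to $1_G$ within this neighborhood. Moreover every point at distance $\leq R$ from $P$ is joined to some $g\in P$ by a geodesic, all of whose vertices remain at distance $\leq R$ from $g$ and hence within the neighborhood. Therefore the $R$-neighborhood of $P$ is connected, i.e. $P$ is coarsely connected.

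Finally, for $P^{-1}$ I would invoke closure properties rather than repeat the argument. By Proposition \ref{prop: AFL} the class $\Reg$ is a full AFL that is closed under reversal, in particular closed under homomorphisms, inverse homomorphisms and reversal, so Lemma \ref{lem: negative cone in cC}(i) applies with $\cC=\Reg$: since $P=\pi(\cL)$ with $\cL$ regular, there is a regular language $\cL'$ with $\pi(\cL')=P^{-1}$. Thus $P^{-1}$ is again a $\Reg$-positive cone, and the argument above (equivalently \cite[Proposition 7.2]{AABR}) shows it is coarsely connected as well. The main obstacle is the uniform bound in the claim for $P$---this is precisely where regularity is essential, the finiteness of $\cS$ forcing short completing paths and hence a state-independent bound $R$. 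Once that is in hand, the passage to $P^{-1}$ is a formal consequence of the closure of $\Reg$ under reversal, with no further geometric work needed.
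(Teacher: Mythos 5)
Your proposal is correct and takes essentially the same approach as the paper, whose proof consists precisely of citing \cite[Proposition 7.2]{AABR} for the coarse connectedness of $P$ and invoking Lemma \ref{lem: negative cone in cC} (with $\Reg$ closed under homomorphisms, inverse homomorphisms and reversal, by Proposition \ref{prop: AFL}) to transfer the conclusion to $P^{-1}$. Your expanded sketch of the automaton argument behind the cited proposition---a prefix of an accepted word can be completed to an accepted word by a path of length less than the number of states, yielding the uniform bound---is also sound.
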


For Lemma \ref{lem: regular implies coarsely connected} recall that given a finitely generated group $G$, a non-trivial homomorphism $\phi\colon G\to \bR$ belongs to $\Sigma^1(G)$, the Bieri-Neumann-Strebel invariant (BNS invariant for short), if and only if  $\phi^{-1}((0,\infty))$ is coarsely connected. Moreover, the kernel of $\phi$ is finitely generated if and only if both $\phi$ and $-\phi $ belong to $\Sigma^1(G)$ (see \cite{BNS}).  
Therefore we have the following.
\begin{lem}\label{lem: BNS}
Suppose that $G$ is an extension of $N$ by $\bZ$. 
If $N$ is not finitely generated, then no lexicographic order on $G$ where $\bZ$ leads is $\Reg$.
\end{lem}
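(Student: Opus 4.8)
The plan is to combine the two facts recalled immediately before the statement: first, the Bieri--Neumann--Strebel characterization that a non-trivial homomorphism $\phi\colon G\to\bR$ lies in $\Sigma^1(G)$ if and only if $\phi^{-1}((0,\infty))$ is coarsely connected, with $\ker\phi$ finitely generated exactly when both $\phi$ and $-\phi$ lie in $\Sigma^1(G)$; and second, Lemma \ref{lem: regular implies coarsely connected}, which says that a $\Reg$-positive cone $P$ must have both $P$ and $P^{-1}$ coarsely connected. The strategy is a contrapositive argument: assuming a lexicographic positive cone $P_{\prec_{lex}}$ where $\bZ$ leads is regular, I would derive that $N$ must be finitely generated, contradicting the hypothesis.

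First I would fix the quotient map $f\colon G\to\bZ$ with kernel $N$ and let $\phi=f$, regarded as a homomorphism into $\bR\supseteq\bZ$. The key observation is that the positive cone of any lexicographic order where $\bZ$ leads is, by the construction in Lemma \ref{lem: quotient-leads}, $P_{\prec_{lex}}=f^{-1}(P_\bZ)\cup P_N$, where $P_\bZ$ is one of the two positive cones of $\bZ$. Since $P_\bZ$ is either the strictly positive or strictly negative integers, $f^{-1}(P_\bZ)$ is exactly $\phi^{-1}((0,\infty))$ or $\phi^{-1}((-\infty,0))$; and because $P_N\subseteq N=\phi^{-1}(0)$, the positive cone $P_{\prec_{lex}}$ agrees with the corresponding open half-space up to the measure-zero (coarsely negligible) slice $\phi^{-1}(0)$. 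The point to nail down is that adding or removing elements that all lie in the single ``slab'' $N$ does not change coarse connectedness: $P_{\prec_{lex}}$ is coarsely connected if and only if $\phi^{-1}((0,\infty))$ is, since $N$ sits within bounded distance of both sets in the Cayley graph (every element of $N$ is adjacent, via a generator realizing a small increment of $\phi$, to an element of the positive half-space).

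With that identification in hand, suppose for contradiction that $P_{\prec_{lex}}$ is a $\Reg$-positive cone. By Lemma \ref{lem: regular implies coarsely connected}, both $P_{\prec_{lex}}$ and $P_{\prec_{lex}}^{-1}$ are coarsely connected. By the previous paragraph, $\phi^{-1}((0,\infty))$ and $\phi^{-1}((-\infty,0))=(-\phi)^{-1}((0,\infty))$ are then both coarsely connected. The BNS criterion then forces $\phi\in\Sigma^1(G)$ and $-\phi\in\Sigma^1(G)$, hence $\ker\phi=N$ is finitely generated, contradicting the hypothesis that $N$ is not finitely generated. Therefore no such $P_{\prec_{lex}}$ can be regular, which is the claim.

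I expect the main obstacle to be the coarse-connectedness comparison in the middle step, namely justifying cleanly that $P_{\prec_{lex}}$ and the open BNS half-space $\phi^{-1}((0,\infty))$ are coarsely connected simultaneously despite differing on the whole subgroup $N$. The subtlety is that $N$ need not be bounded in $G$, so one cannot simply say the two sets differ by a bounded set. The correct argument is local rather than global: for a chosen finite generating set there is a generator (or short word) that strictly raises the value of $\phi$, so each point of $\phi^{-1}(\{0\})$ lies at bounded distance from $\phi^{-1}((0,\infty))$, and conversely enlarging $\phi^{-1}((0,\infty))$ by the single slab $N$ keeps its $R$-neighborhood connected for a suitable $R$. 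Making this uniform-bound argument precise, so that it applies symmetrically to $\phi$ and $-\phi$, is where the care is needed; everything else is a direct invocation of the two quoted lemmas.
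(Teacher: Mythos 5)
Your proposal is correct and follows essentially the same route as the paper: write the positive cone as $f^{-1}(\bZ_{\geq 1})\cup P_N$, observe (using a generator with positive $f$-image) that it is coarsely connected if and only if the half-space $f^{-1}(\bZ_{\geq 1})$ is, and then combine Lemma \ref{lem: regular implies coarsely connected} with the BNS criterion that $\ker f$ is finitely generated exactly when both $f$ and $-f$ lie in $\Sigma^1(G)$. Your careful handling of the slab $N$ not being bounded is precisely the point the paper dispatches with the same local adjacency argument, so the two proofs coincide up to your choosing the contrapositive phrasing instead of a direct contradiction.
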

\begin{proof}
Suppose that $f\colon G \to \bZ$ is a homomorphism with kernel $N$. 
Without loss of generality, we can assume that a lexicographic order where $\bZ$ leads has a positive cone of the form $P_G\coloneqq f^{-1}(\bZ_{\geq 1})\cup P_N$
where $P_N$ is a positive cone for $N$. 

Since $f(P_N)=0$, and there is a group generator mapped under $f$ to a positive integer, we have that $f^{-1}(\bZ_{\geq 1})$ is coarsely connected if and only if $f^{-1}(\bZ_{\geq 1})\cup P_N$ is coarsely connected. Therefore, $f \in \Sigma^1(G)$ if and only if $f^{-1}(\bZ_{\geq 1})\cup P_N$ is coarsely connected. 
Similarly, $-f\in \Sigma^1(G)$ if and only if $f\inv(\bZ_{\leq -1})\cup P_N^{-1}$ is coarsely connected.

Assume that $N$ is not finitely generated and $P_G$ is a $\Reg$-positive cone. From BNS theory,  either $P_G=f^{-1}(\bZ_{\geq 1})\cup P_N$ or $P_G\inv=f\inv(\bZ_{\leq -1}) \cup P_ N\inv$ is not coarsely connected. By Lemma \ref{lem: regular implies coarsely connected} if the left-order given by $P_G$ is $\Reg$, then both cones $P_G$ and $P_G\inv$ would be coarsely connected. We have a contradiction.
\end{proof}

\begin{cor}\label{cor: BNS BS}
For each, $q\notin \{-1, 0, 1\}$ and each  $i=1,2,3,4$,  the positive cone $P_i$ of $BS(1,q)$ is not regular.
\end{cor}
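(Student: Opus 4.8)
The plan is to deduce the statement directly from Lemma \ref{lem: BNS}. For $q\neq 0$ the group $BS(1,q)$ sits in the short exact sequence $1\to \bZ[1/q]\to BS(1,q)\stackrel{f}{\to}\bZ\to 1$, where $f$ is the homomorphism with $f(a)=1$ and $f(b)=0$, so that the kernel is $N=\bZ[1/q]$. As recalled just before the corollary, the four positive cones $P_1,P_2,P_3,P_4$ are precisely the four lexicographic orders on $BS(1,q)\cong \bZ[1/q]\rtimes\bZ$ in which the quotient factor $\bZ$ leads. Thus Lemma \ref{lem: BNS} will apply to each $P_i$ once we check that $N=\bZ[1/q]$ fails to be finitely generated whenever $q\notin\{-1,0,1\}$.

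The only point to verify is this non-finite-generation of $\bZ[1/q]$ as an abelian group. For $q\notin\{-1,0,1\}$ we have $|q|\geq 2$, so $q$ admits a prime divisor and the elements $1/q^s\in\bZ[1/q]$, $s\geq 0$, have denominators growing without bound. Since any finitely generated subgroup of $(\bQ,+)$ is contained in $\tfrac{1}{D}\bZ$ for a common denominator $D$ and is therefore cyclic, $\bZ[1/q]$ cannot be finitely generated. (When $q=\pm 1$ one has $\bZ[1/q]=\bZ$, which is finitely generated, so the hypothesis $q\notin\{-1,0,1\}$ is exactly what is needed.)

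With $N$ not finitely generated, Lemma \ref{lem: BNS} gives that no lexicographic order on $BS(1,q)$ in which $\bZ$ leads is $\Reg$; in particular none of $P_1,P_2,P_3,P_4$ is a $\Reg$-positive cone. One could alternatively invoke Lemma \ref{lem: negative cone in cC} to reduce the cases $i=3,4$ to $i=1,2$, since $P_3=P_1^{-1}$ and $P_4=P_2^{-1}$, but this is unnecessary, as all four cones are of the form covered by Lemma \ref{lem: BNS}. There is no genuine obstacle here: the content is entirely in Lemma \ref{lem: BNS} together with the elementary observation that $\bZ[1/q]$ has unbounded denominators.
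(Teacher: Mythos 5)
Your proposal is correct and follows the paper's own proof exactly: the paper likewise deduces the corollary from Lemma \ref{lem: BNS} together with the fact that $\bZ[1/q]$ is not finitely generated, a fact it states without proof and that you justify with the standard bounded-denominator argument. Your observation that all four cones $P_1,\dots,P_4$ are lexicographic with $\bZ$ leading (so Lemma \ref{lem: BNS} applies directly, without reducing $i=3,4$ to $i=1,2$ via Lemma \ref{lem: negative cone in cC}) matches the paper's setup in the discussion preceding Proposition \ref{prop: BS(1,-q)}.
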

\begin{proof}
It follows from Lemma \ref{lem: BNS} and the fact that $\bZ[1/q]$ is not finitely generated, that $P_1,P_2, P_3$ and $P_4$ can not be regular.
\end{proof}

Let $q>1$. From the point of view of orders, $BS(1,q)$ and $BS(1,-q)$ behave  quite differently.
 On one hand, $BS(1,-q)$ falls into the classification of groups admitting only finitely many left-orders obtained by Tararin \cite{Tararin} (see also \cite{Kopytov-Medvedev, GOD}, or Subsection \ref{sec: all left-orders are regular}). In fact, $B(1,-q)$ admits only the four left-orders we described before.) 
 On the other hand $BS(1,q)$ admits uncountably many left-orders. Beside the four left-orders described above, its other left-orders appears as induced orders from the (order-preserving) affine action of $BS(1,q)$ on the real line
 $$ a\colon x\mapsto qx\, , \; b\colon x\mapsto x+1.$$ 
 This classification of left-orders is shown in \cite{rivas jgt}  for the case of $BS(1,2)$ but the same proof works for general $q>1$.

We next observe that some of these induced orders have regular positive cones.
 
\begin{lem}\label{lem: BS(1,q)}
Let $BS(1,q)=\langle a,b \mid aba^{-1} = b^q\rangle$ with $q\geq 2$. 
Then $$P_0=\{ a^n(a^{-m}b^ka^{m})\mid k> 0, m,n\in \bZ\}$$ is a $\Reg$-positive cone relative to $\langle a \rangle$.
In particular, $P= \langle a\rangle^+ \cup P_0$ is $\Reg$-positive cone.
\end{lem}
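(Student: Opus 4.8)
The plan is to verify directly that $P_0=\{a^n(a^{-m}b^ka^m)\mid k>0,\ m,n\in\bZ\}$ is a positive cone relative to $\langle a\rangle$, and then produce a regular language over $\{a,a^{-1},b,b^{-1}\}$ evaluating onto it. The relative-cone part is essentially algebra: an element $g=a^n(a^{-m}b^ka^m)$ corresponds under the isomorphism $BS(1,q)\cong\bZ[1/q]\rtimes\bZ$ to the pair $(r/q^s,\,n')$ where the $\bZ[1/q]$-coordinate has the same sign as $k$ (since $q\geq 2$, the sign bookkeeping in the table at the start of the subsection is trivial: $\varepsilon=\mathrm{sign}(r)$ in all rows for $q>0$). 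Thus $g\in\langle a\rangle$ iff its $\bZ[1/q]$-coordinate is $0$, i.e.\ $k=0$, while $P_0$ collects exactly the elements with positive $\bZ[1/q]$-coordinate and $P_0^{-1}$ those with negative coordinate. So $G=P_0\sqcup\langle a\rangle\sqcup P_0^{-1}$ is immediate, and $P_0\cdot P_0\subseteq P_0$ follows because the affine action $a\colon x\mapsto qx$, $b\colon x\mapsto x+1$ on $\bR$ is orientation-preserving, so the sign of the $\bZ[1/q]$-coordinate is respected under multiplication. I would phrase this via the action: $g$ has positive $\bZ[1/q]$-coordinate iff $g$ maps some (equivalently every) point to the right while fixing the ``leading'' $a$-direction, and this is preserved under the semigroup operation.

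\textbf{Regularity of $P_0$.} The key point distinguishing this from the one-counter cones $P_1,\dots,P_4$ is that here the condition is simply \emph{the sign of $k$}, with no parity-of-$m$ constraint (because $q>0$), and crucially no comparison of exponent \emph{counts} is needed. First I would normalize: every element of $P_0$ has a representative word of the form $a^{-m}b^k a^{m}a^{n}$ (or any fixed spelling), but to get a regular language I want to avoid matching the two blocks of $a$'s. The cleanest route is to exploit that regularity of a positive cone is a group property independent of the generating set and closed under the AFL operations (Lemma \ref{lem: independence gen set}, Proposition \ref{prop: AFL}). I would describe $P_0$ as the image of a regular language by writing words $w$ over $\{a,a^{-1},b,b^{-1}\}$ whose evaluation lands in $P_0$ and checking that ``positive $\bZ[1/q]$-coordinate'' is detectable by a finite automaton. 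Concretely, I expect to use the language $\{a^{-m}b^k a^{m}a^{n}\mid m\geq 0,\ k>0,\ n\in\bZ\}$, or rather a regular superset that still evaluates onto $P_0$: since the $\langle a\rangle$-cosets and the sign of $k$ are all that matter, and $k>0$ is recognized by a finite automaton reading a block of $b$'s (no counter, just ``at least one $b$, no $b^{-1}$''), this is genuinely regular.

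\textbf{The main obstacle} is making the normal-form-to-language step honest: the set of \emph{all} geodesic or natural words evaluating into $P_0$ is not obviously regular, so I must exhibit one specific regular language and prove it evaluates \emph{onto} $P_0$ (surjectivity) and \emph{into} $P_0$ (soundness). Surjectivity is the delicate direction: I would argue that every $r/q^s\in\bZ[1/q]$ with $r/q^s>0$ is represented by some $a^{-m}b^ka^m$ with $k>0$, $m\geq 0$, using the table (for $q\geq 2$, $\varepsilon=\mathrm{sign}(r)$, so a positive element needs $k>0$), and then append $a^n$ to realize the $\bZ$-coordinate; this shows the regular language $\cL=\{a^{-1}\}^*\{b\}^+\{a\}^*\{a,a^{-1}\}^*$ — suitably pruned so the evaluation has positive $\bZ[1/q]$-part — surjects onto $P_0$. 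Finally, the ``in particular'' clause is a one-line application of Lemma \ref{lem: extend from convex}: $\langle a\rangle^+$ is a regular (indeed $\Reg$) positive cone for $\langle a\rangle\cong\bZ$ by Example \ref{ex: cyclic}, and $\Reg$ is closed under union, so $P=\langle a\rangle^+\cup P_0$ is a $\Reg$-positive cone for $BS(1,q)$.
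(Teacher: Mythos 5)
Your proposal is correct and follows essentially the same route as the paper's proof: the paper also identifies $P_0$ through the affine action, literally setting $P_0=\{g \mid g(0)>0\}$ with $\langle a\rangle=\Stab(0)$ (your ``sign of the $\bZ[1/q]$-coordinate'' criterion is the same thing), it exhibits the regular language via an automaton that is the machine version of your regular expression $\{a^{-1}\}^*\{b\}^+\{a,a^{-1}\}^*$ (no ``pruning'' is in fact needed, since any word $a^{-m}b^{k}w$ with $m\geq 0$, $k\geq 1$, $w\in\{a,a^{-1}\}^*$ evaluates to an element with $\bZ[1/q]$-coordinate $k/q^{m}>0$), and it closes with Lemma \ref{lem: extend from convex} exactly as you do. The only blemish is your parenthetical ``maps some (equivalently every) point to the right'': an increasing affine map with $g(0)>0$ need not move every point rightward (e.g.\ $x\mapsto 2x+1$ moves $-2$ left), but nothing in your argument relies on this, since positivity at the single point $0$ suffices for the semigroup property because the action is by increasing maps.
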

\begin{proof}
Consider the action  of $BS(1,q)$ on $\mathbb{R}$ given by $a\colon x\mapsto qx$ and $b\colon x \mapsto x+1$. 
If $g\in BS(1,q)$ is an element with  normal form $g=a^{n}(a^{-m}b^ka^{m})$, then $g$ maps $x$ to $
q^nx+\frac{k}{q^m}$, which implies that this affine representation is faithful.

We let $P_0 = \{ g\in BS(1,q) : g(0)>0\}$.
 Note that an  element of the form $g=a^n(a^{-m}b^ka^m)$ belongs to $P_0$ if and only if $k>0$.
This coincides with the set $P_0$ of the statement, and therefore $P_0$ is a positive cone relative to $Stab_{BS(1,q)}(0)=\langle a \rangle \cong \bZ$.
From Lemma \ref{lem: extend from convex}, the set $P$  given by 
$$P=\{a^n\mid n>0\}\cup \{ a^n(a^{-m}b^ka^{m})\mid k> 0, m,n\in \bZ\},$$
is a positive cone for $BS(1,q)$.

Viewing the elements of $P$ as words in $\{a,b,a\inv,b\inv\}^*$, $P$ can be represented as a language accepted by the  automaton of Figure \ref{fig: BS(1,q)}. 
One can construct an automaton that gives a regular language for $P_0$ from the automaton of  Figure \ref{fig: BS(1,q)} by just changing the state right to $s_0$ to a non-accepting state.
\begin{figure}[ht]
\begin{center}
\import{fig/}{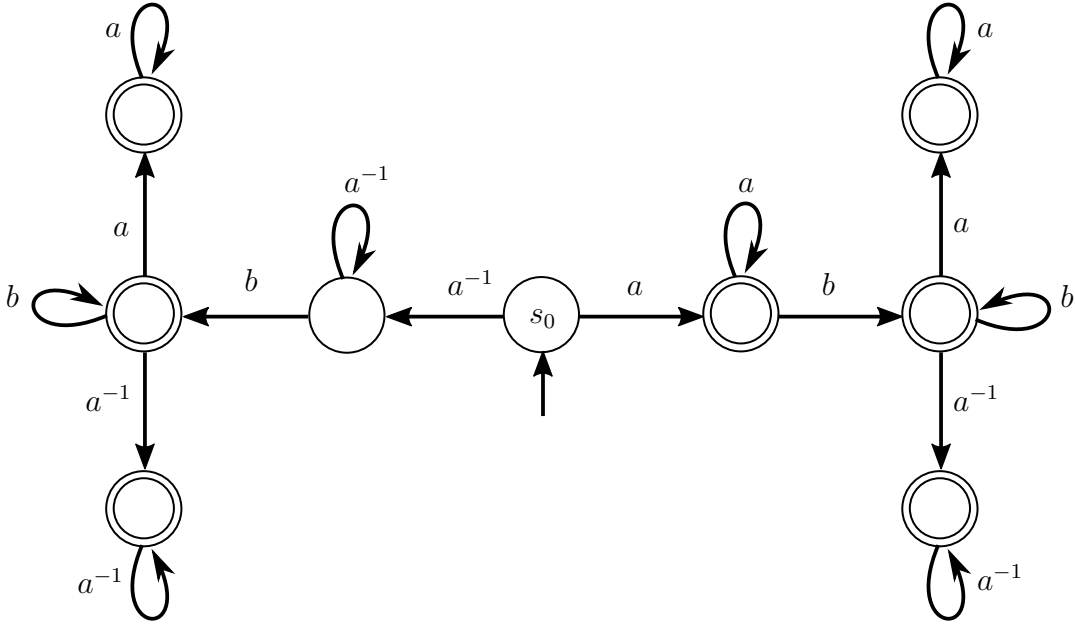}
\end{center}
\caption{Finite state automaton accepting a positive cone language for $BS(1,q)$ with $q>1$.}
\label{fig: BS(1,q)}
\end{figure}

Remark that this is a subgraph of the automaton-graph describing the language of normal forms in $BS(1,q)$. 
\end{proof}

\begin{thm}\label{thm: BS}
 All solvable Baumslag-Solitar groups $BS(1,q)=\langle a,b\mid aba^{-1}=b^q\rangle$ admit one-counter left-orders, and $BS(1,q)$ admits a regular left-order  if and only if $-1\leq q$. 
Moreover, for $q\geq 2$ all $\Reg$-left-orders on $BS(1,q)$ are induced by affine actions on $\bR$.
\end{thm}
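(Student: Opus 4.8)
The plan is to assemble the statement from pieces already in place, separating the small cases $q\in\{-1,0,1\}$ from the generic ranges $q\geq 2$ and $q\leq -2$. For the existence of $\onecounter$ left-orders I would first dispatch $q=0$, where $BS(1,0)\cong\bZ$ carries a $\Reg$ positive cone by Example \ref{ex: cyclic}, and hence a $\onecounter$ one since $\Reg\subseteq\onecounter$. For every $q\neq 0$, Proposition \ref{prop: BS(1,-q)} already furnishes $\onecounter$ languages evaluating onto each of the four lexicographic cones $P_1,P_2,P_3,P_4$, so one-counter left-orders exist in all cases.

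Next I would establish the forward (if) direction of the regularity dichotomy, producing a $\Reg$ positive cone whenever $q\geq -1$. The three values $q\in\{-1,0,1\}$ give poly-$\bZ$ groups, namely the Klein bottle group $K=\langle a,b\mid aba^{-1}=b^{-1}\rangle$, the group $\bZ$, and $\bZ^2$ respectively, each of which admits $\Reg$-left-orders by Proposition \ref{prop: virtually polycyclic}. For $q\geq 2$, Lemma \ref{lem: BS(1,q)} directly exhibits the $\Reg$ positive cone $P=\langle a\rangle^+\cup P_0$ arising from the affine action on $\bR$. Together these cover all $q\geq -1$.

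For the reverse (only if) direction and the ``moreover'' clause I would invoke the two order-classification inputs recalled before the statement. If $q\leq -2$ then $q\notin\{-1,0,1\}$, and Tararin's theorem, applied to $BS(1,q)$ with $q<-1$, says the only left-orders are the four lexicographic cones $P_1,\dots,P_4$; Corollary \ref{cor: BNS BS} shows none of these is regular, so no $\Reg$-left-order exists, completing the dichotomy. For the final clause, fix $q\geq 2$: by the classification of \cite{rivas jgt} every left-order on $BS(1,q)$ is either one of $P_1,\dots,P_4$ or is induced by the affine action on $\bR$; since Corollary \ref{cor: BNS BS} rules out $P_1,\dots,P_4$ being regular (again $q\notin\{-1,0,1\}$), every $\Reg$-left-order must be affine-induced.

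The main obstacle is not any computation but the reliance on the completeness of these external classifications. The regularity obstruction coming from Bieri-Neumann-Strebel theory (Corollary \ref{cor: BNS BS}) only eliminates the four lexicographic cones, so to rule out \emph{every} non-regular order in the range $q\leq -2$, and to pin down the regular ones for $q\geq 2$, one genuinely needs that Tararin's list and the affine classification of \cite{rivas jgt} exhaust all left-orders. The delicate point to confirm is precisely that no exotic left-order lies outside these lists that could evade the coarse-connectivity obstruction.
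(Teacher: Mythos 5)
Your proposal is correct and follows essentially the same route as the paper's proof: one-counter orders from Proposition \ref{prop: BS(1,-q)}, regularity for $q\in\{-1,0,1\}$ via poly-$\bZ$, regularity for $q\geq 2$ via Lemma \ref{lem: BS(1,q)}, and non-regularity for $q\leq -2$ (plus the ``moreover'' clause) from Tararin's classification and the classification in \cite{rivas jgt} combined with Corollary \ref{cor: BNS BS}. Your explicit handling of $q=0$ via Example \ref{ex: cyclic} and your flagging of the dependence on the completeness of the two external order classifications are slightly more careful than the paper's write-up, but the argument is the same.
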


\begin{proof}
From Propositions \ref{prop: BS(1,-q)}  all solvable Baumslag-Solitar groups admit a one-counter left-order.

If $q\in \{-1,0,1\} $,  then $BS(1,q)$ is poly-$\bZ$ and then has regular left-orders (our convention is that $BS(1,0)=\bZ$).

If $q\leq -2$, by Tararin's classification discussed above and Corollary \ref{cor: BNS BS}, we have that  $BS(1,q)$ do not admit regular orders.

Finally, suppose that $q\geq 2$.
From \cite{rivas jgt},  a left-order on $G$ is either one induced from an affine action on $\bR$ or a lexicographic order with leading factor $\bZ$ coming from the semidirect product $\bZ[1/q]\rtimes \bZ$. 
We saw in Corollary \ref{cor: BNS BS} that the lexicographic left-orders on $BS(1,q)$ with leading factor $\bZ$ cannot be regular. 
We saw in Lemma \ref{lem: BS(1,q)} that $BS(1,q)$ admits regular orders and thus such left-order must be induced by affine actions (as it is shown in the proof of the lemma).
\end{proof}

\subsection{Lexicographic left-orders  where the kernel leads}\label{sec: kernel}

In general, if $G$ is an $N$ by $Q$ extension of left-orderable groups, the lexicographic order on $N\times Q$ with leading factor $N$ is not $G$-left-invariant. However, in some special situations it is. The following lemma will be used to produce left-orders on extensions where the kernel leads. 

\begin{lem}\label{lem: semidirect}
Let $G$ be a semidirect product of the subgroups $N \unlhd G$ and $Q\leqslant G$.
Let $P_N$ and $P_Q$ be positive cones of $N$ and $Q$ respectively, and assume that $qP_Nq^{-1} = P_N$ for all $q\in Q$.
Then, the lexicographic order on the underlying set $N\times Q$  with leading factor $N$ is $G$-left-invariant. In particular, it is a left-order on $G$.
\end{lem}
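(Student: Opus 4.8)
The plan is to work with the unique normal form $g = nq$ (with $n\in N$, $q\in Q$) afforded by the semidirect product decomposition, under which $G$ is identified with $N\times Q$ via $nq\mapsto(n,q)$. Writing $\prec_{lex}$ for the lexicographic order led by $N$, it suffices to show that $\prec_{lex}$ is preserved by left translation by an arbitrary $g_0 = n_0 q_0 \in G$. First I would record how left multiplication acts on coordinates: for $a = nq$ one computes
$$g_0 a = n_0 q_0 n q = n_0 (q_0 n q_0\inv) (q_0 q),$$
so, since $N\unlhd G$ gives $q_0 n q_0\inv \in N$, the normal form of $g_0 a$ has $N$-coordinate $n_0 (q_0 n q_0\inv)$ and $Q$-coordinate $q_0 q$. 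Thus left translation acts on the $N$-coordinate by conjugation by $q_0$ followed by left multiplication by $n_0$, and on the $Q$-coordinate simply by left multiplication by $q_0$.

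The second step is to verify that each of these coordinate operations is order-preserving for the relevant factor order. The hypothesis $qP_Nq\inv = P_N$ for all $q\in Q$ says precisely that conjugation by any $q_0$ sends $P_N$ to itself; since $n\prec_N n'$ means $n\inv n' \in P_N$ and $(q_0 n q_0\inv)\inv(q_0 n' q_0\inv) = q_0(n\inv n')q_0\inv$, conjugation by $q_0$ is an order automorphism of $(N,\prec_N)$. Left multiplication by $n_0$ preserves $\prec_N$ because $\prec_N$ is a left-order on $N$, and left multiplication by $q_0$ preserves $\prec_Q$ because $\prec_Q$ is a left-order on $Q$.

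Finally I would combine these facts according to the two cases in the definition of $\prec_{lex}$. Suppose $(n,q)\prec_{lex}(n',q')$. If $n\prec_N n'$, the two observations above give $n_0(q_0 n q_0\inv)\prec_N n_0(q_0 n' q_0\inv)$, so the images of $a$ and $a'$ already differ in their leading $N$-coordinate and the $Q$-coordinates are irrelevant. If instead $n=n'$ and $q\prec_Q q'$, then the $N$-coordinates of $g_0 a$ and $g_0 a'$ coincide while $q_0 q\prec_Q q_0 q'$, so the images are again lexicographically ordered. In both cases $g_0 a\prec_{lex} g_0 a'$, which proves left-invariance; being the order associated to a total order on the set $N\times Q$, it is in particular a left-order on $G$. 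The only real subtlety is the conjugation by $q_0$ appearing in the $N$-coordinate: this is exactly the step where the invariance assumption $qP_Nq\inv = P_N$ is used, and without it the lexicographic order led by $N$ need not be left-invariant.
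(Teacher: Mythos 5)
Your proof is correct. It does, however, take the dual route to the paper's: you verify left-invariance of the order relation directly, by computing the coordinates of a left translate, $g_0 \cdot nq = n_0\,(q_0 n q_0\inv)\,(q_0 q)$, checking that each coordinate operation (conjugation by $q_0$, left multiplication by $n_0$, left multiplication by $q_0$) preserves the relevant factor order, and then running the two-case analysis from the definition of $\prec_{lex}$. The paper instead works entirely on the level of positive cones: it observes that under the bijection $N\times Q \to G$ the lexicographically positive elements correspond to the set $P = P_N Q \cup P_Q$, and then verifies the positive-cone axioms for $P$ --- the partition $G = P \sqcup P\inv \sqcup \{1_G\}$ (where the hypothesis $qP_Nq\inv = P_N$ is used to rewrite $P\inv = Q\inv P_N\inv \cup P_Q\inv$ as $P_N\inv Q \cup P_Q\inv$) and the subsemigroup property via the same pivotal identity $nq\,n'q' = n\,(qn'q\inv)\,qq'$ that underlies your coordinate computation. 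The two arguments are equivalent under the standard cone/order correspondence and invoke the conjugation hypothesis at the same essential point; what the paper's packaging buys is the explicit closed-form description $P = P_N Q \cup P_Q$ of the positive cone, which is exactly what is fed into the later language-theoretic constructions (e.g.\ the wreath-product language of Proposition \ref{prop: wr-lang} and the $BS(1,q)$ automaton), whereas your version is more directly responsive to the statement as phrased, since the lemma literally asserts left-invariance of the relation and totality is automatic for a lexicographic order.
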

\begin{proof}
Since $G$ is a semidirect product of $N$ and $Q$, we have that $N\cap Q = \{1_G\}$ and $NQ=G$.
There is a natural bijection between $N\times Q$ to $G=NQ$, and under this bijection, the subset of elements of $N\times Q$ that are lexicographical greater to $(1_N,1_Q)$ corresponds to the subset $P\coloneqq P_N Q \cup P_Q$ of $G$.
Note that $P^{-1}= Q^{-1} P_N^{-1} \cup P_Q^{-1}$ and since $qP_Nq^{-1} = P_N$ for all $q\in Q$, we get that $P^{-1}= P_N^{-1} Q \cup P_Q^{-1}$. 
Therefore $G= P \sqcup P^{-1} \sqcup \{1_G\}$. 

To show that $P$ is a subsemigroup, let $nq, n'q'\in P$, with $n,n'\in N$ and $q,q'\in Q$. 
Recall that $n$ and $n'$ either are trivial or belong to $P_N$.
Then $nq n'q'= n (q n' q^{-1}) q q'$ and we see that if  $n \neq 1_G$ or $n' \neq 1_G$  then $ n (q n' q^{-1})\in P_N$ and $nq n'q'\in P_NQ\subseteq P$.
 If $n = 1_G = n'$, then $q,q'\in P_Q$ and $nq n'q'=qq'\in P_Q \subseteq P$.
\end{proof}

This lemma will be helpful to construct regular left-orders on extensions when the kernel is not finitely generated (and thus the kernel alone cannot support a regular left-order). Our main example are wreath products, however we have already observed this phenomenon in Baumslag-Solitar groups.

\begin{ex}[Lexicographic left-orders on $BS(1,q)$ where the kernel  leads]
For $q>0$ we have already seen that $BS(1,q)$ has regular orders, constructed through an affine action on the real line.
Viewing $BS(1,q)=\langle a,b \, \mid aba^{-1}=b^{q}\rangle \cong \bZ[1/q]\rtimes \bZ$, the left-order of Figure \ref{fig: BS(1,q)} is lexicographic where the factor $\bZ[1/q]$ leads.
Indeed, a positive cone $P_{\bZ[1/q]}$ for $\bZ[1/q]$ is the set of elements greater than $0$. 
Since $\langle a \rangle \cong \bZ$ acts on $\bZ[1/q]$ by multiplying by $q$, we have that $a^nP_{\bZ[1/q]}a^{-n}= P_{\bZ[1/q]}$ for all $n\in \bZ$. 
The previous lemma tell us that $\langle a \rangle \cdot P_{\bZ[1/q]} \cup \{ a \}^+$ is a positive cone for $BS(1,q)$.
 This is exactly the left-order of the automaton of Figure \ref{fig: BS(1,q)}.
 \end{ex}

\subsubsection{Wreath products}

The {\it wreath product} $N\wr Q$ of groups $N$ and $Q$ is the semidirect product of ${\bf N}:=\oplus_{q\in Q} N$ by $Q$, where the conjugation action of $Q$ on ${\bf N}$ is given by the left-multiplication action on the indexes of the copies of $N$.
That is, if ${\bf n}= (n_q)_{q\in Q}\in {\bf N}$ and $q'\in Q$, we have $q'{\bf n}q'^{-1}= (n_{q'q})_{q\in G}$.

\label{section wreath}
We begin with an example that will illustrate the construction we develop in this section.
\begin{prop} \label{prop: ZwrW}
The group $\bZ\wr \bZ$ has $\Reg$-left-orders.
\end{prop}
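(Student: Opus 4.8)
The plan is to apply Lemma \ref{lem: semidirect} with $N = \mathbf{N} = \oplus_{q \in \bZ} \bZ$ and $Q = \bZ$, after first choosing a positive cone $P_{\mathbf N}$ on the (infinitely generated) base group $\mathbf{N}$ that is \emph{invariant} under the conjugation action of $Q$. The action of $Q = \langle t \rangle$ on $\mathbf{N}$ simply shifts the index coordinate. So I would first fix the standard left-order on each $\bZ$-factor and then order $\mathbf{N} = \bigoplus_{i \in \bZ} \bZ$ in a way that the shift preserves. The natural choice is to order a nonzero element $\mathbf{n} = (n_i)_{i \in \bZ}$ by the sign of its \emph{lowest-index} nonzero entry (or highest-index; either works): declare $\mathbf n \in P_{\mathbf N}$ if $n_{i_0} > 0$ where $i_0 = \min\{i : n_i \neq 0\}$. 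Because the shift sends $(n_i)_i \mapsto (n_{i+c})_i$, it carries the lowest-index nonzero entry to another lowest-index nonzero entry with the same sign, so $t^c P_{\mathbf N} t^{-c} = P_{\mathbf N}$ for all $c$. This verifies the hypothesis $q P_{\mathbf N} q^{-1} = P_{\mathbf N}$ of Lemma \ref{lem: semidirect}, which then yields a left-order on $G = \mathbf{N} \rtimes Q = \bZ \wr \bZ$, lexicographic with the kernel $\mathbf N$ leading.

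The real content is showing this positive cone is \emph{regular}. First I would fix a convenient generating set: writing $\bZ \wr \bZ = \langle a, t \mid [a, t^i a t^{-i}] = 1 \ (i \neq 0)\rangle$, where $a$ generates the $\bZ$-factor at index $0$ and $t$ generates $Q$, a normal form for a group element records the finitely supported tuple $\mathbf n$ together with the exponent of $t$. Since $G = \mathbf N \rtimes Q$, by Lemma \ref{lem: extend from convex} it suffices to produce: (i) a regular language evaluating onto $P_Q = \{t^n : n > 0\}$ (immediate, as in Example \ref{ex: cyclic}), and (ii) a regular language evaluating onto the \emph{relative} positive cone $P_{\mathbf N} \cdot Q$, i.e.\ the set of elements whose $\mathbf N$-component lies in $P_{\mathbf N}$, with arbitrary $Q$-component. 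The plan for (ii) is to design a finite-state automaton that reads a word, tracks (in its finitely many states) just enough information to determine the sign of the lowest-index nonzero coordinate of $\mathbf n$ while allowing the $t$-exponent to be arbitrary. The key point enabling regularity — as opposed to only one-counter — is that identifying \emph{which} coordinate is lowest and its sign does not require counting unboundedly: one can read the word, and because only the sign at the extreme index matters, the automaton needs to remember only a bounded amount of data (a current candidate sign and whether the index under consideration can still be lowered), not the actual index value or the coordinate magnitudes.

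The main obstacle I anticipate is precisely this last design: making the automaton correctly compute the sign of the extreme-index coordinate from a word in $\{a^{\pm 1}, t^{\pm 1}\}^*$ that need not be in normal form, where the $t$-moves shuffle the ``cursor'' position back and forth over the index line and the $a$-moves deposit $\pm 1$'s at the current position. A clean way to finesse this is to first pass to normal-form words (using that the language of normal forms is regular, and that regularity of the cone is independent of the generating set by Lemma \ref{lem: independence gen set}), and then intersect the normal-form language with the regular specification ``leftmost block of $a$-exponents is positive.'' I expect the verification that this specification is genuinely regular — that bounded state suffices to locate the first nonzero coordinate and read off its sign — to be the crux, after which closure of $\Reg$ under the operations in Proposition \ref{prop: AFL} and the relative-cone machinery of Lemma \ref{lem: extend from convex} assemble the final regular positive cone on $\bZ \wr \bZ$.
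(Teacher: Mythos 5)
Your proposal is correct and takes essentially the same route as the paper: the paper likewise views $\bZ\wr\bZ$ as $\bZ[X,X^{-1}]\rtimes\langle t\rangle$ with the kernel-leading order determined by the sign of the extreme-index (there the \emph{highest}, i.e.\ leading) coefficient, which is shift-invariant as in Lemma \ref{lem: semidirect}, and it obtains regularity by exactly your finesse of passing to normal-form words that list the coordinates in monotone index order, so that the extreme coefficient is the first $c$-block and all intermediate $t$-blocks have a fixed sign, yielding the visibly regular language $\{t^n c^m t^{n_1} c^{m_1}\dots c^{m_k} t^l \mid m>0,\ n_i<0,\ m_i\in\bZ,\ l\in\bZ\}\cup\{t^s\mid s>0\}$. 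Your choice of lowest-index rather than highest-index coordinate is an immaterial mirror image of the paper's convention.
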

\begin{proof}
The group $\bZ\wr \bZ$ is isomorphic to $\bZ[X,X^{-1}]\rtimes C_\infty$ where  $C_\infty=\langle t\rangle$ acts by multiplying by $X$ on $\bZ[X,X^{-1}]$. 
An element of $\bZ\wr \bZ$ is therefore uniquely written as $(p=a_{i_0} X^{i_0} + a_{i_1} X^{i_1} + \dots a_{i_k} X^{i_k}, t^s)$  with $i_0 >i_1 >i_2 > \dots > i_k \in \bZ$ and $s\in \bZ$.
We say that  $a_{i_0}$ is the leading coefficient of $p$, and define $\mathsf{leadcoef}(p):=a_{i_0}$.
It is easy to check that  $P=\{(p,t^s) \mid  \mathsf{leadcoef}(p)>0 \}\cup \{(0,t^s) \mid s>0\}$ is a positive cone.

The group is generated by the constant polynomial $c=1$ and $t$, since $t^k c^m t^{-k}$ represents the polynomial $mX^k$.
Therefore a word representing the element given by $(a_{i_0} X^{i_0} + a_{i_1} X^{i_1} + \dots a_{i_k} X^{i_k}, t^s)$
is $t^{{i_0}} c^{a_{i_0}} t^{{i_1}-{i_0}} c^{a_{i_1}} t^{{i_2}-{i_1}} \cdots c^{a_{i_k}}t^{{-i_k}} t^s$.
Since $i_0 >i_1 >i_2> \dots >i_k$, we have that $i_1-i_0, i_2-i_1, \dots, i_{k}-i_{k-1}$ are all negative.
A language for this positive cone is 
$$\{t^n c^m t^{n_1} c^{m_1} t^{n_2} c^{m_2}\dots c^{m_k} t^l \mid m>0,  n_i<0, m_i\in \bZ, k \geq 0, l\in \bZ\} \cup \{t^s \mid s>0\}.$$
This language is recognized by the finite state automaton of Figure \ref{fig: wreath-fsa}.
\begin{figure}[ht] 
\begin{center}
\import{fig/}{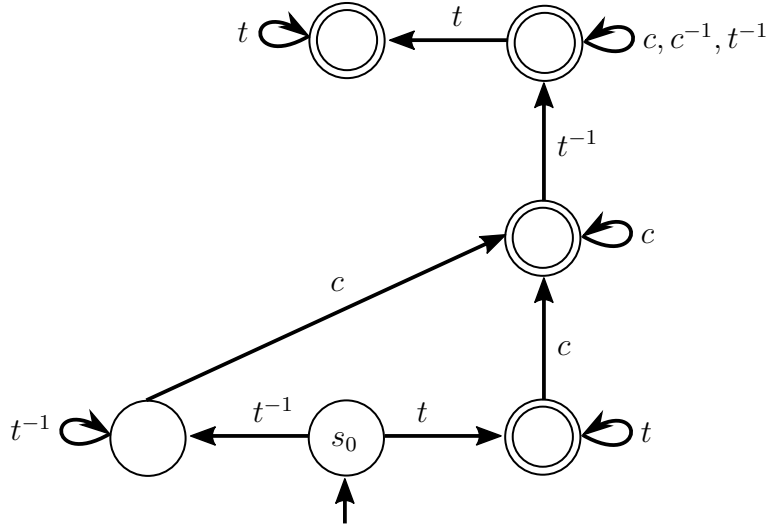}
\end{center}
\caption{Finite state automaton accepting a positive cone language for $\bZ \wr \bZ$.}
\label{fig: wreath-fsa}
\end{figure}
\end{proof}

The previous strategy also works in the more general case of wreath products $N \wr Q$. Let $\prec_N$ and $\prec_Q$ be left-orders on $N$ and $Q$ respectively, with corresponding positive cones $P_N$ and $P_Q$.
We can construct a lexicographic order $\prec_{{\bf N}}$ on ${\bf N }= \oplus_{q\in Q} N$ as follows.  Given ${\bf n}=(n_q)_{q\in Q},{\bf n}'= (n'_q)_{q\in Q}\in {\bf N}$ we put ${\bf n}\prec {\bf n'}$ if ${\bf n}\neq {\bf n'}$ and for $q'=\max_{\prec_Q} \{q\in Q \mid n_{q}\neq n'_{q}\}$ we have that $n_{q'}\prec_N n'_{q'}$.

\begin{lem}\label{lem: lex on wreath}
The lexicographic order on $N \wr Q = {\bf N} \rtimes Q$ extending $\prec_{ \bf N}$ and $\prec_Q$ is an $N\wr Q$-left-invariant lexicographic order with leading factor ${\bf N}$.
\end{lem}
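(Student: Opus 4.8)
The plan is to deduce this from Lemma~\ref{lem: semidirect}. Recall that $G = N \wr Q = {\bf N} \rtimes Q$ is a semidirect product with normal subgroup ${\bf N} \unlhd G$ and complement $Q \leqslant G$. The order $\prec_N$ furnishes a positive cone $P_N$ of $N$, the order $\prec_Q$ a positive cone $P_Q$ of $Q$, and $\prec_{{\bf N}}$ a subset $P_{{\bf N}} \subseteq {\bf N}$; under the bijection ${\bf N} \times Q \to G$ the lexicographic order with leading factor ${\bf N}$ corresponds precisely to the subset $P = P_{{\bf N}} Q \cup P_Q$ appearing in Lemma~\ref{lem: semidirect}. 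Thus it suffices to verify the two hypotheses of that lemma: that $P_{{\bf N}}$ is a genuine positive cone of ${\bf N}$, and that $p P_{{\bf N}} p^{-1} = P_{{\bf N}}$ for every $p \in Q$.

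First I would check that $\prec_{{\bf N}}$ is a well-defined left-order. Since every element of ${\bf N} = \oplus_{q \in Q} N$ has finite support, for ${\bf n} \neq {\bf n}'$ the set $\{q \in Q \mid n_q \neq n'_q\}$ is finite and nonempty, so its $\prec_Q$-maximum exists and $\prec_{{\bf N}}$ is a total order. Left-invariance is immediate from the fact that multiplication in ${\bf N}$ is coordinatewise: left-translating by ${\bf k}$ changes neither the set of disagreement coordinates nor its $\prec_Q$-maximum, and at that leading coordinate one invokes left-invariance of $\prec_N$. Consequently the associated positive cone is
\[
P_{{\bf N}} = \{\, {\bf n} \neq {\bf 1} \mid n_{q_0({\bf n})} \in P_N \,\}, \qquad q_0({\bf n}) \coloneqq \max_{\prec_Q}\, \mathrm{supp}({\bf n}),
\]
so membership in $P_{{\bf N}}$ is detected solely by the value of ${\bf n}$ at the $\prec_Q$-largest coordinate of its support.

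The one substantive point, and the step I expect to be the main obstacle, is the conjugation-invariance $p P_{{\bf N}} p^{-1} = P_{{\bf N}}$. Using the action convention $(p{\bf n}p^{-1})_q = n_{pq}$, one has $\mathrm{supp}(p{\bf n}p^{-1}) = p^{-1}\,\mathrm{supp}({\bf n})$. Here is where left-invariance of $\prec_Q$ is essential: left-multiplication by $p^{-1}$ is an order-preserving bijection of $Q$, so it sends the $\prec_Q$-maximum of the support to the $\prec_Q$-maximum of the translated support, giving $q_0(p{\bf n}p^{-1}) = p^{-1} q_0({\bf n})$. Evaluating the conjugate at this leading coordinate yields $(p{\bf n}p^{-1})_{p^{-1}q_0({\bf n})} = n_{q_0({\bf n})}$; that is, conjugation merely relocates the leading coordinate while leaving the leading coefficient unchanged. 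Since positivity is decided exactly by whether this coefficient lies in $P_N$, we obtain ${\bf n} \in P_{{\bf N}} \iff p{\bf n}p^{-1} \in P_{{\bf N}}$, as required. With both hypotheses in place, Lemma~\ref{lem: semidirect} immediately gives that $P = P_{{\bf N}} Q \cup P_Q$ is a positive cone of $G$ and that the order it defines is the asserted $N\wr Q$-left-invariant lexicographic order with leading factor ${\bf N}$.
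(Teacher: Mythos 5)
Your proposal is correct and follows essentially the same route as the paper: identify $P_{\bf N}$ as the set of elements whose coordinate at the $\prec_Q$-maximum of the support lies in $P_N$, deduce $q P_{\bf N} q^{-1} = P_{\bf N}$ from left-invariance of $\prec_Q$, and conclude via Lemma \ref{lem: semidirect}. The only differences are cosmetic: you additionally spell out that $\prec_{\bf N}$ is a well-defined left-order (finite supports give the maximum, coordinatewise multiplication gives invariance), and you use the conjugation convention exactly as stated in the paper, so your leading coordinate moves by $q_0 \mapsto p^{-1}q_0$ where the paper's computation writes $qq'$ --- an immaterial discrepancy, since left translation by any fixed element is a $\prec_Q$-order-preserving bijection either way.
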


\begin{proof}
Observe that $P_{\bf N}$, the positive cone ${\bf N}$ associated to $\prec_{{\bf N}}$, is equal to $$\{ (n_q)_{q\in G} \in {\bf N}\setminus \{1_{\bf N}\} \mid n_q\in P_N \text{ where } q'=\max \{q\in G \mid n_q\neq 1_N\}\}.$$
Now, given ${\bf n}\in P_{\bf N}$ and $q'=\max \{q\in Q \mid n_q\neq 1_N\}$, and given $q\in Q$, we set ${\bf n}' = q {\bf n} q^{-1} = (n'_q)_{q\in Q}$, and we see that $qq' = \max_{\prec_Q} \{p\in Q \mid n'_p \neq 1_N\}$ since $\prec_Q$ is left $Q$-invariant.
Therefore $n'_{qq'}=n_{q'} \in P_N$ and thus ${\bf n}' \in P_{\bf N}$.
We have showed that $q P_{\bf N } q^{-1} \subseteq P_{\bf N}$ for all $q \in Q$, which implies  $q P_{\bf N} q^{-1} =P_{\bf N}$ for all $q \in Q$.
The proof then follows from Lemma \ref{lem: semidirect}. 
\end{proof}

Let $X$ and $Y$ be generating sets of $N$ and $Q$ respectively. 
The set $X\cup Y$ generates  $N\wr Q$ since the $q$-th copy of $N$ in $\oplus_{q\in Q}N$ is identified with $qNq^{-1}$ and thus it can be generated by $qXq^{-1}$ (the conjugates of $X$ by $q$) and each element of $qXq^{-1}$ can be expressed in terms of $X$ and $Y$. 

An element $({\bf n}=\{n_q\}_{q\in Q}, p)\in N\wr Q$ can be written as 
$(\prod_{q\in Q} q n_q q^{-1}) p$, and we can use the $\prec_Q$-order  to write this element uniquely as
$$(q_1 n_1 q_1^{-1})(q_2 n_2 q_2^{-1})\cdots (q_n n_m q_m^{-1}) p$$
with the property that $q_1\succ_Q q_2 \succ_Q q_3 \succ_Q \dots \succ_Q q_m$.

Thus, with this unique way of writing the elements of $N\wr Q$, the lexicographic positive cone of $N\wr Q$ is 
\begin{equation} \label{wr-pos}
P=\left\lbrace(q_1 n_1 q_1^{-1})(q_2 n_2 q_2^{-1})\cdots (q_m n_m q_m^{-1}) p \middle|  \begin{array}{c} q_1\succ_Q q_2 \succ_Q \dots \succ_Q q_m\\
(n_1 \in P_N \wedge  p\in Q) \vee (m=0 \wedge p\in P_Q) \end{array}\right\rbrace.
\end{equation}

We will now define a positive cone language for wreath products of groups in terms of the positive (and negative) cone languages for $N$ and $Q$.
\begin{prop} \label{prop: wr-lang}
Let $\cL_N\subseteq X^*$ and $\cL_Q, \cM_Q\subseteq Y^*$ be languages such that $\pi_N(\cL_N)=P_N$ and $\pi_Q(\cL_Q)=P_Q$ are positive cones for $N$ and $Q$ respectively, and  $\pi_Q(\cM_Q) = P_Q\inv$. 
Then, the  language
\begin{equation} \label{eq: pos-wr-AFL}
\cL \coloneqq Y^* \cL_N \cM_Q (X^*\cM_Q)^* Y^* \cup \cL_Q
\end{equation}
evaluates onto the positive cone $P$ of Equation \eqref{wr-pos}.  
\end{prop}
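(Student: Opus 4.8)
The plan is to establish the two inclusions $\pi(\cL)\subseteq P$ and $P\subseteq\pi(\cL)$, where $\pi\colon (X\cup Y)^*\to N\wr Q$ is the evaluation map; recall that $X$ generates the base copy $N\leqslant N\wr Q$ sitting at coordinate $1_Q$ (so $\pi$ restricted to $X^*$ is $\pi_N$) while $Y$ generates $Q\leqslant N\wr Q$ (so $\pi$ restricted to $Y^*$ is $\pi_Q$). In both inclusions I would parse a word according to the block decomposition $Y^*\,\cL_N\,\cM_Q\,(X^*\cM_Q)^*\,Y^*$ and track how each block changes the evaluated element, using repeatedly the identity $qn=(qnq^{-1})q$ for $q\in Q$, $n\in N$. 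Its consequence is the \emph{evaluation rule}: reading a maximal $X$-block while the current $Q$-coordinate equals $q$ deposits the evaluated $N$-element at coordinate $q$ and leaves the $Q$-coordinate fixed, whereas reading a $Y$-block or an $\cM_Q$-block alters only the $Q$-coordinate, multiplying it on the right by the evaluated element of $Q$.

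For $\pi(\cL)\subseteq P$, take a word $w=w_0\,u_1\,v_1\,u_2v_2\cdots u_mv_m\,w_{end}$ in the first branch of \eqref{eq: pos-wr-AFL}, with $w_0,w_{end}\in Y^*$, $u_1\in\cL_N$, $v_i\in\cM_Q$, and $u_i\in X^*$ for $i\geq2$. Evaluating left to right: set $q_1\coloneqq\pi(w_0)$, so that $n_1\coloneqq\pi(u_1)\in P_N$ is deposited at coordinate $q_1$. Each $v_i\in\cM_Q$ evaluates into $P_Q^{-1}$, hence strictly decreases the $Q$-coordinate in $\prec_Q$; writing $q_{i+1}$ for the coordinate reached after $v_i$, we obtain $q_1\succ_Q q_2\succ_Q\cdots$, so the deposit coordinates are pairwise distinct and $q_1$ is the $\prec_Q$-largest. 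Therefore the $N$-part of $\pi(w)$ carries the coefficient $n_1\in P_N$ at the maximum of its support (trivial deposits by later $u_i$ cannot disturb this maximum), which by the description of $P_{\bf N}$ in the proof of Lemma \ref{lem: lex on wreath} places that $N$-part in $P_{\bf N}$; since $P=P_{\bf N}Q\cup P_Q$ by Lemma \ref{lem: semidirect} applied to ${\bf N}\rtimes Q$, we conclude $\pi(w)\in P$. Words in the branch $\cL_Q$ evaluate into $P_Q\subseteq P$, completing the inclusion.

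For $P\subseteq\pi(\cL)$, I would build a preimage from the unique normal form $(q_1n_1q_1^{-1})\cdots(q_mn_mq_m^{-1})p$ of an element of $P$. When $m=0$ we have $p\in P_Q$, and any word of $\cL_Q$ mapping to $p$ works. When $m\geq1$, pick $w_0\in Y^*$ with $\pi(w_0)=q_1$ and $u_1\in\cL_N$ with $\pi(u_1)=n_1$ (possible as $n_1\in P_N$); for $1\leq i\leq m-1$ pick $v_i\in\cM_Q$ with $\pi(v_i)=q_i^{-1}q_{i+1}\in P_Q^{-1}$ (possible as $q_{i+1}\prec_Q q_i$) and $u_{i+1}\in X^*$ with $\pi(u_{i+1})=n_{i+1}$; finally pick one more block $v_m\in\cM_Q$ (any element of the nonempty set $P_Q^{-1}$) and a word $w_{end}\in Y^*$ whose value makes the trailing $Q$-coordinate equal to $p$. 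By the evaluation rule this word lies in $Y^*\cL_N\cM_Q(X^*\cM_Q)^*Y^*$ and maps to the chosen element of $P$.

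The step I expect to be the main obstacle is the bookkeeping in the first inclusion, namely verifying that the coefficient located at the $\prec_Q$-maximal coordinate of the support of $\pi(w)$ is exactly the positive element $n_1$ produced by the single $\cL_N$-block. This hinges on the $\cM_Q$-blocks forcing the deposit coordinates to be \emph{strictly} $\prec_Q$-decreasing, so that nothing is ever deposited again at $q_1$, and on the observation that possibly trivial deposits by the later $X$-blocks do not change the maximum of the support. The trailing $Y^*$-block plays the auxiliary but essential role of absorbing the forced final $\cM_Q$-move, which is what allows the last $Q$-coordinate $p$ to be made arbitrary.
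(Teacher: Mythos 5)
Your proof is correct and takes essentially the same route as the paper's: the same parsing of words in $\cL$ into blocks $Y^*\,\cL_N\,\cM_Q\,(X^*\cM_Q)^*\,Y^*$, the same observation that the $\cM_Q$-blocks force the deposit coordinates to be strictly $\prec_Q$-decreasing so that $n_1\in P_N$ sits at the maximum of the support, and the same normal-form construction of preimages for the reverse inclusion. If anything, you are more explicit than the paper about the forced final $\cM_Q$-block being absorbed by the trailing $Y^*$ (a point the paper's indexing glosses over), so no gap to report.
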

\begin{proof}
First observe that
\begin{equation} \label{eq: pos-wr}
\cL  =
 \left\lbrace v u_1 w_1 u_2 w_2 \dots u_m w_{m} z \middle| \begin{array}{c} v, z \in Y^*, \\ u_1 \in \cL_N \text{ or } (m=0, v= \varepsilon \text{ and } z \in \cL_Q), \\ u_i\in X^*, w_i \in \cM_Q\\
\end{array}\right\rbrace.
\end{equation}

Let $P$ be the positive cone described in \eqref{wr-pos}.

Let us first prove that $P \subseteq \pi(\cL)$. 
Let $g\in P$, and assume   that $g= (q_1 n_1 q_1^{-1})(q_2 n_2 q_2^{-1})\cdots \break (q_m n_m q_m^{-1}) p$ with $q_i,n_i,p$ as in \eqref{wr-pos}.
Let $z\in Y^*$ such that $\pi(z)=p$.
If $m=0$, then   $g=p \in P_Q$. 
We can assume that $z \in \cL_Q$.
If $m > 0$,  there is $v \in Y^*$ such that $\pi(v) = q_1$, $u_1 \in \cL_N$ such that $n_1 \in P_N$, and $w_i \in \cM_Q$ such that $\pi(w_i) = q_i\inv q_{i+1} \in P_Q\inv$, $u_i \in X^*$, such that $\pi(u_i) = n_i$ for $2 \leq i \leq m$. 
We see that $g= \pi(vu_1w_2u_2\dots u_mw_mz).$

To prove that $\pi(\cL) \subseteq P$, let ${\bf w}=v u_1 w_1 u_2 w_2 \dots u_m w_{m} z\in \cL$ as in the description in \eqref{eq: pos-wr}.
If $m=0$, then ${\bf w}= z$ and $z\in \cL_Q$. Thus $\pi({\bf w})\in P$.
If $m>0$, let $q_1=\pi(v)$ and for $i>1$, $q_i=q_{i-1}\pi(w_i)$, thus $\pi(w_i)=q_{i-1}^{-1}q_i$. For $i=1,\dots, m$ let $n_i=\pi(u_i)$ and $p=\pi(z)$.
Therefore $\pi({\bf w})=(q_1u_1q_1^{-1})(q_2u_2q_2^{-1})\cdots (q_mu_mq_m^{-1})p$.
Note that since $w_i\in \cM_Q$, we have that $q_i\prec_Q q_{i-1}$. It follows that $\pi({\bf w}) \in P$. 
\end{proof}

We now state a generalization of Proposition \ref{prop: ZwrW}.

\begin{prop}\label{prop:wreath product}
Let $\cC$ be a full AFL closed under reversal.
Let $N$ and $Q$ be finitely generated groups.
Suppose that $N$ and $Q$ have a $\cC$-left-order represented by $\cL_N$ and $\cL_Q$ respectively.
Then, the  wreath product  of $N\wr Q$ admits a $\cC$-left-order.

In particular admitting $\Reg$-left-orders is closed under wreath products.
\end{prop}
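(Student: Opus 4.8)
The plan is to combine the positive-cone language from Proposition \ref{prop: wr-lang} with the closure properties of full AFL classes established in Proposition \ref{prop: AFL} and Lemma \ref{lem: negative cone in cC}. First I would fix finite generating sets $(X,\pi_N)$ for $N$ and $(Y,\pi_Q)$ for $Q$, and take the union $X\cup Y$ as a generating set for $N\wr Q$ exactly as described in the paragraph preceding Equation \eqref{wr-pos}. By hypothesis there are languages $\cL_N\in\cC$ and $\cL_Q\in\cC$ with $\pi_N(\cL_N)=P_N$ and $\pi_Q(\cL_Q)=P_Q$. The one extra ingredient needed to invoke Proposition \ref{prop: wr-lang} is a language $\cM_Q\in\cC$ evaluating onto $P_Q^{-1}$; this is supplied by Lemma \ref{lem: negative cone in cC}(i), which applies precisely because $\cC$ is closed under homomorphisms, inverse homomorphisms, and reversal (the hypotheses of the proposition).

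Next I would verify that the language
\[
\cL = Y^*\,\cL_N\,\cM_Q\,(X^*\cM_Q)^*\,Y^* \cup \cL_Q
\]
of Equation \eqref{eq: pos-wr-AFL} lies in $\cC$. This is a routine bookkeeping step using that $\cC$ is a full AFL: the regular languages $X^*$ and $Y^*$ are in $\cC$, the operations of concatenation, Kleene closure (applied to $X^*\cM_Q$), and union all preserve membership in $\cC$, and $\cL_N,\cM_Q,\cL_Q$ are already in $\cC$. By Proposition \ref{prop: wr-lang}, $\pi(\cL)$ equals the lexicographic positive cone $P$ of Equation \eqref{wr-pos}, which is a genuine positive cone for $N\wr Q$ by Lemma \ref{lem: lex on wreath}. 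Hence $P$ is a $\cC$-positive cone and $N\wr Q$ admits a $\cC$-left-order.

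For the final assertion I would simply note that $\Reg$ is a full AFL closed under reversal by Proposition \ref{prop: AFL}, so the general statement specializes to give closure of the class of groups admitting $\Reg$-left-orders under wreath products. I do not expect a serious obstacle here: the conceptual work has already been done in Lemma \ref{lem: lex on wreath} (showing the lexicographic order with leading factor ${\bf N}$ is left-invariant, via Lemma \ref{lem: semidirect}) and in Proposition \ref{prop: wr-lang} (producing the evaluating language). The only mild subtlety worth stating explicitly is why the hypothesis ``closed under reversal'' is required: it is exactly what lets us pass from a language for $P_Q$ to one for $P_Q^{-1}$ via Lemma \ref{lem: negative cone in cC}, and this negative-cone language $\cM_Q$ is indispensable because the normal form $(q_1n_1q_1^{-1})\cdots(q_mn_mq_m^{-1})p$ reads the support elements in $\prec_Q$-decreasing order, so consecutive differences $q_i^{-1}q_{i+1}$ are $\prec_Q$-negative.
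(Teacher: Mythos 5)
Your proposal is correct and takes essentially the same route as the paper's own proof: both obtain the negative-cone language $\cM_Q$ for $P_Q^{-1}$ via reversal and letter inversion (Lemma \ref{lem: negative cone in cC}), form the language of Equation \eqref{eq: pos-wr-AFL}, and conclude from Proposition \ref{prop: wr-lang} together with the full-AFL closure properties (union, concatenation, Kleene closure). Your explicit explanation of why the reversal hypothesis is needed is exactly the role it plays in the paper's argument.
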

\begin{proof}
Assume that $N$ is generated by a finite set $X$ and $Q$ is generated by a finite set $Y$. 
We will construct a language over the generating set $X\sqcup Y$.
Let $\cM_Q = \cL_Q\inv$ be the  negative cone language associated to $\cL_Q$ obtained by reversal and sending each letter $x \mapsto x\inv$. 
Let $\cL$ be the language of Equation \eqref{eq: pos-wr-AFL}. 
By Proposition \ref{prop: wr-lang}, $\cL$ is a positive cone language for $N\wr Q$.
Since a class full AFL is  closed by concatenation, concatenation closure and union, we see that $\cL$ is in $\cC$.
\end{proof}

\section{Groups where all positive cones are regular}
\label{sec: all left-orders are regular}

In this section we classify the groups that only admit $\Reg$-left-orders.  

\subsection{Order-convex subgroups and language-convex subgroups}

\begin{defn}
Let $(G,\prec)$ be a left-ordered group.
A subgroup $H\leqslant G$ is called {\it $\prec$-convex} if for all $h_1,h_2\in H$ and $g\in G$ satisfying $h_1\prec g \prec h_2$ we have that $g\in H$.
\end{defn}

Left-order convexity relates to relative left-orders introduced in Section \ref{subsec: rel-ord} in the following way.

\begin{lem}
If $H$ is $\prec$-convex for a left-order $\prec$ on G, then $P = \{g \in G \mid H \prec g\}$ is a positive cone relative to $H$.
\end{lem}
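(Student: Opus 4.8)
The plan is to verify the two requirements of Definition \ref{defn: rel-order}: that $P$ is a subsemigroup and that $G = P \sqcup H \sqcup P^{-1}$. Throughout I read the notation $H \prec g$ as: $h \prec g$ for every $h \in H$, so that $P = \{g \in G \mid h \prec g \text{ for all } h \in H\}$.

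First I would dispose of the subsemigroup property directly, without invoking convexity. Given $g_1, g_2 \in P$ and any $h \in H$, the fact that $1_G \in H$ gives $1_G \prec g_2$, so left-invariance yields $g_1 \prec g_1 g_2$; combined with $h \prec g_1$ this gives $h \prec g_1 g_2$. As $h$ was arbitrary, $g_1 g_2 \in P$, so $PP \subseteq P$.

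The core of the argument is to identify $P$ with $P_\prec \setminus H$, where $P_\prec = \{g \in G \mid 1_G \prec g\}$ is the positive cone of $\prec$. The inclusion $P \subseteq P_\prec \setminus H$ is immediate: if $g \in P$ then $1_G \prec g$ (take $h = 1_G$), while $g \in H$ would force $g \prec g$. For the reverse inclusion I would use convexity: if $g \in P_\prec \setminus H$ but $g \notin P$, then some $h_0 \in H$ satisfies $g \prec h_0$ (equality is excluded since $g \notin H$), and then $1_G \prec g \prec h_0$ with $1_G, h_0 \in H$ forces $g \in H$ by $\prec$-convexity, a contradiction. This convexity step is the only place where the hypothesis is genuinely used, and it is the step I expect to be the main (though mild) obstacle; the rest is bookkeeping with the order trichotomy.

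From $P = P_\prec \setminus H$ I would deduce $P^{-1} = P_\prec^{-1} \setminus H$, using that $H$ is a subgroup (so $g \notin H \iff g^{-1} \notin H$) together with $g \in P_\prec \iff g^{-1} \in P_\prec^{-1}$. Finally I would assemble the partition: since $G = P_\prec \sqcup \{1_G\} \sqcup P_\prec^{-1}$, intersecting with $H$ gives $H = (P_\prec \cap H) \sqcup \{1_G\} \sqcup (P_\prec^{-1} \cap H)$, and regrouping yields
$$P \sqcup H \sqcup P^{-1} = (P_\prec \setminus H) \sqcup (P_\prec \cap H) \sqcup \{1_G\} \sqcup (P_\prec^{-1} \cap H) \sqcup (P_\prec^{-1} \setminus H) = P_\prec \sqcup \{1_G\} \sqcup P_\prec^{-1} = G,$$
with disjointness inherited directly from that of $P_\prec, \{1_G\}, P_\prec^{-1}$. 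This completes the verification that $P$ is a positive cone relative to $H$.
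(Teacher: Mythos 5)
Your proof is correct, and it takes a mildly but genuinely different route from the paper's. The paper works directly with the order relation: it first uses the convexity sandwich $h \prec g \prec h'$ to establish a dichotomy, namely that every $g \notin H$ satisfies either $g \prec H$ or $H \prec g$, and then applies left-invariance a second time to identify the lower set with $P^{-1}$ (from $H \prec g$ it gets $g^{-1} \prec 1_G$, hence $g^{-1} \prec H$ by the dichotomy), from which the partition $G = P \sqcup H \sqcup P^{-1}$ follows; your semigroup step is essentially word-for-word the paper's. You instead isolate all the order-theoretic content in the single identity $P = P_\prec \setminus H$, where convexity is invoked only through the special sandwich $1_G \prec g \prec h_0$, and after that everything is set bookkeeping: $P^{-1} = P_\prec^{-1} \setminus H$ comes from inversion being a bijection together with $H$ being a subgroup (so no second appeal to left-invariance or to the dichotomy is needed), and the partition comes from regrouping $G = P_\prec \sqcup \{1_G\} \sqcup P_\prec^{-1}$ against the trichotomy decomposition of $H$ itself. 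What your organization buys is conceptual economy --- one convexity step, then pure set identities, with disjointness inherited automatically from the known partition of $G$ by $P_\prec$; what the paper's buys is the slightly more informative intermediate statement that every element outside $H$ lies entirely above or entirely below $H$, which makes the relative-cone picture vivid. Both arguments are complete, and your reading of the notation $H \prec g$ as ``$h \prec g$ for all $h \in H$'' matches the paper's usage.
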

\begin{proof}
First, we observe that for all $g \not\in H$ and all $h\in H$, we have that $g \prec h$ or $h \prec g$. 
Indeed, otherwise we would have that $h \prec g \prec h'$ for $h, h' \in H$, which by $\prec$-convexity of  $H$ implies that $g \in H$. 
Therefore, for all $g\in G- H$ either $g\prec H$ or $H\prec g$. 
Now, if $H \prec g$, then $g\inv \prec 1\in H$. 
As $g^{-1}\notin H$ this means  that $g \inv \prec  H$. 
This shows that $G = P \sqcup P\inv \sqcup H$. 
Finally, to show that $P$ is a semigroup, notice that since $1 \in H$, we have that $1 \prec g_1$ and $1 \prec g_2$ for $g_1, g_2 \in P$. 
This implies that $H \prec g_1 \prec g_1 g_2$. 
\end{proof}

Thanks to the previous lemma and the discussion in Section \ref{subsec: rel-ord}, we can conclude that if $H$ is $\prec$-convex, then for every  $g_1\prec g_2$,  $g_1,g_2\in G$ one has that  $g_1h_1\preceq g_2 h_2$ for all $h_1,h_2\in H$. Moreover, if $H$ is $\prec$-convex, then  $\prec$ induces a left-order on the coset space $G/H$. 

The proposition below says that for certain subgroups being $\prec$-convex implies being language-convex.  

\begin{prop}\label{prop: convex implies L-convex}
Let $G=H \rtimes \bZ$ be finitely generated by $(X,\pi)$. 
Let $\prec$ be a lexicographical  $\Reg$-left-order on $G$ led by $\bZ$, with $\cL\subseteq X^*$, a regular positive cone language.
If $H$ is finitely generated, then $H$ is language-convex with respect to $\cL$.

In particular, the restriction of $\prec$ to $H$ is a $\Reg$-left-order.
\end{prop}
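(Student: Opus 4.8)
The plan is to reduce everything to controlling the projection homomorphism $f\colon G\to\bZ$ with $\ker f=H$ along the prefixes of accepted words. By Lemma \ref{lem: quotient-leads}, after possibly replacing the generator of $\bZ$ by its inverse, the positive cone is $P=\pi(\cL)=f^{-1}(\bZ_{\geq 1})\cup P_H$ with $P_H$ a positive cone of $H$; in particular $P\subseteq f^{-1}(\bZ_{\geq0})$. Set $C=\max_{x\in X}|f(\pi(x))|$, so that consecutive prefixes differ in $f$ by at most $C$ and $f(\pi(w))=\sum_i f(\pi(x_i))$ for $w=x_1\cdots x_n$. First I would observe that it suffices to bound $|f(\pi(w_i))|$ uniformly along the prefixes: choosing $t\in G$ with $f(t)=1$, from any $g$ one reaches $f^{-1}(\bZ_{\geq1})\subseteq P=\pi(\cL)$ by right-multiplying by a power $t^{k}$ with $|k|\leq 1+|f(g)|$, so $\dist(g,\pi(\cL))\leq (1+|f(g)|)\,|t|_X$; hence a bound $|f(\pi(w_i))|\le R_0$ yields the constant $R$ demanded by Definition \ref{def: L-convex}.

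The key step is a ``loops are $f$-nonnegative'' lemma. Fix a deterministic automaton accepting $\cL$ with state set $\cS$, and for an accepted $w$ let $p_0,\dots,p_n$ be its unique run. If $p_a=p_b$ with $a<b$, then $p_a$ is reachable from $s_0$ and co-reachable to an accept state, so $\alpha\beta^{m}\gamma\in\cL$ for all $m\geq0$, where $\alpha,\beta,\gamma$ label the initial segment, the cycle, and the final segment. Evaluating and applying $f$ gives $f(\pi(\alpha))+m\,f(\pi(\beta))+f(\pi(\gamma))\geq 0$ for all $m$, since $\pi(\alpha\beta^m\gamma)\in P\subseteq f^{-1}(\bZ_{\geq0})$; this forces $f(\pi(\beta))\geq0$. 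Thus every cycle appearing in any accepting run is $f$-nonnegative.

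Now take $w\in\cL$ with $\pi(w)\in H$, so $g_0:=f(\pi(w_0))=0$ and $g_n:=f(\pi(w))=0$, and write $g_i=f(\pi(w_i))$. To bound $M:=\max_i g_i$, pick $k$ with $g_k=M$ and decompose the suffix run $p_k\to\cdots\to p_n$ as a simple path (at most $|\cS|-1$ edges) together with excised cycles. As $f\circ\pi$ is additive over concatenation, the net change $g_n-g_k=-M$ equals the $f$-change of the simple path plus the sum of the cycle contributions, the latter being $\geq0$ by the previous step; since the simple path changes $f$ by at least $-(|\cS|-1)C$, we get $-M\geq-(|\cS|-1)C$, i.e. $M\leq(|\cS|-1)C$. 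The symmetric decomposition of a prefix run $p_0\to\cdots\to p_j$ at a minimizing index $j$ gives $\min_i g_i\geq-(|\cS|-1)C$, so $|g_i|\leq(|\cS|-1)C=:R_0$ for all $i$. With the first paragraph this proves $H$ is $\cL$-convex. The ``in particular'' then follows from Proposition \ref{prop: inherited by L-convex subgroups}, which yields a regular language over $X$ evaluating onto $H\cap\pi(\cL)=P_H$, the positive cone of $\prec|_H$, after which Lemma \ref{lem: independence gen set} lets us rephrase it over a finite generating set of $H$. I expect the main obstacle to be the excursion bound in this last paragraph—justifying the simple-path-plus-cycles decomposition and that the excised cycles contribute nonnegatively—rather than the routine passage from $|f|$-bounds to distance bounds.
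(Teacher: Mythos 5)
Your proposal is correct and follows essentially the same route as the paper: your key lemma that cycles in accepting runs are $f$-nonnegative is exactly the pumping argument of Lemma \ref{lem: coarsely non-decreasing} (the paper applies it after projecting $\cL$ to $\{t,t^{-1}\}^*$ by deleting the $X$-letters, whereas you work directly with $f\circ\pi$ and the constant $C$), and your simple-path-plus-excised-cycles excursion bound is the paper's ``remove loops, what remains has length at most the number of states'' step. The concluding reduction from the bound on $|f(\pi(w_i))|$ to the distance bound, and the appeal to Proposition \ref{prop: inherited by L-convex subgroups} for the ``in particular'' clause, likewise match the paper's proof.
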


Recall that we set in Notation \ref{not: counting letters}, that given word $w\in X^*$, and $x\in X$, we use $\sharp_x (w)$ to denote the number of times the letter $x$ appears in the word $w$.

Before proving the Proposition \ref{prop: convex implies L-convex} we need to describe all regular languages over the alphabet $\{t,t^{-1}\}$ mapping onto a  positive cone of $\bZ$.
Given a word $w=x_1\dots x_n\in \{t,t^{-1}\}^*$, with $x_i\in \{t,t^{-1}\}$ we define a function $f_w\colon \{0,1,\dots, n\}\to \bZ$ by $f_w(i)=\sharp_t(x_1\dots x_i)- \sharp_{t^{-1}}(x_1\dots x_i)$.

\begin{lem}\label{lem: coarsely non-decreasing}
Let $\cL \subseteq \{w\in \{t,t^{-1}\}^*\mid \sharp_t(w)- \sharp_{t^{-1}}(w)\geq 0\}$ be a regular language.
Then every $f_w$ is coarsely non-decreasing in the following sense: 
there is a constant $K\geq 0$ such that for all $w\in \cL$ and for all $i, j\in \{0,1\dots, \ell(w)\}$, if $j>i$ then  $f_w(j)>f_w(i)-K$.
\end{lem}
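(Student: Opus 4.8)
The plan is to use the pumping lemma for the regular language $\cL$. Since $\cL$ is regular, let $N$ be its pumping length. The idea is that if some $f_w$ were to decrease too much, I could exploit a pumpable loop to produce a word in $\cL$ whose total $t$-exponent sum is negative, contradicting the hypothesis that $\sharp_t(w) - \sharp_{t^{-1}}(w) \geq 0$ for all $w \in \cL$.

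More precisely, I would argue by contradiction and suppose no such constant $K$ exists. Then for every bound there is a word $w \in \cL$ and indices $i < j$ with $f_w(j) \leq f_w(i) - K$, i.e.\ the function $f_w$ drops by an arbitrarily large amount over the subword $x_{i+1}\dots x_j$. Taking $K$ large relative to the pumping length $N$, I can find within this descending stretch (a segment where the partial sums decrease substantially) a subword of length at most $N$ over which $f_w$ has a net strictly negative drop, say the subword $u$ satisfies $\sharp_t(u) - \sharp_{t^{-1}}(u) = -\delta < 0$. Here I expect the main technical point is a careful bookkeeping argument: a large total drop over a long stretch forces the existence of a short subword (of length $\le N$) on which the $t$-minus-$t^{-1}$ count is negative, which one gets by a pigeonhole or averaging argument on the increments.

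Once such a short subword $u$ of net negative weight is located inside $w \in \cL$, I would apply the pumping lemma. Writing $w = \alpha u \beta$ where $u$ is the offending subword, the pumping lemma (applied to a factorization with the pumpable part inside or around $u$) lets me repeat a loop that contributes negative net weight. Pumping it up sufficiently many times yields a word $w' \in \cL$ with $\sharp_t(w') - \sharp_{t^{-1}}(w') < 0$, directly contradicting the assumption that $\cL \subseteq \{w \mid \sharp_t(w) - \sharp_{t^{-1}}(w) \geq 0\}$. This contradiction establishes the existence of the uniform constant $K$.

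The step I expect to be the main obstacle is aligning the pumping lemma's loop with a subword of genuinely negative net weight: the pumping lemma only guarantees a pumpable factor within the first $N$ symbols of a chosen substring, not that this factor itself has negative weight. The clean way to handle this is to apply the pumping lemma not to $w$ directly but to a well-chosen factor — for instance, one reads the automaton along the descending stretch and finds a repeated state (by finiteness of the state set) between two positions where $f_w$ has strictly decreased; the loop between these two equal states is then a pumpable factor with strictly negative net $t$-weight. This repeated-state formulation, rather than the raw pumping lemma, is what makes the weight of the loop controllable, and I would phrase the argument in terms of the accepting run of the automaton to keep the bookkeeping transparent.
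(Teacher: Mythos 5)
Your proposal is correct in its final (repeated-state) formulation, and it runs on the same engine as the paper's proof: any loop in an accepting run whose label has strictly negative net $t$-weight can be pumped to produce a word of $\cL$ with negative total weight, contradicting $\cL \subseteq \{w \mid \sharp_t(w)-\sharp_{t^{-1}}(w)\geq 0\}$. The difference is direction. The paper argues directly: since every loop has nonnegative weight, deleting all loops from any subword $u$ of $w\in\cL$ can only decrease the weight, and the loop-free residue has length at most $n$ (the number of states, by pigeonhole), whence every subword satisfies $g(u)\geq -n$ and one may take $K=n$ outright. You instead argue by contradiction, assuming a drop exceeding the state count and hunting for a negative-weight loop inside it. One detail you must make explicit for your version to close: a bare pigeonhole over arbitrary positions in the descending stretch is not enough, because $f_w$ may oscillate, so the two positions sharing a state need not have decreasing $f_w$-values. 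The standard fix is to restrict attention to the positions where $f_w$ first attains each new record minimum $f_w(i)-1, f_w(i)-2,\dots,f_w(i)-K$; along this subsequence the values are strictly decreasing, there are more than $n$ such positions once $K>n$, and two of them share a state, which produces exactly the strictly negative loop you want. You correctly identified that the textbook pumping lemma (whose loop location is not controllable) is the wrong tool and that the repeated-state formulation along the accepting run is the right one; with the record-minima selection filled in, your argument is complete and yields essentially the same constant. The paper's loop-removal route avoids this bookkeeping and proves the slightly cleaner intermediate statement that \emph{every} subword of every $w\in\cL$ has weight at least $-n$, from which the lemma is immediate.
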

\begin{proof}
Let $\bM = (\cS, X=\{t,t^{-1}\}, \delta, s_0,\cA)$ be an automaton accepting $\cL$. 
We will consider $\bM$ to be without  $\epsilon$-moves and the image of transition function $\delta$ being singletons (see Remark \ref{rem: different types of fsa}).
We will think of this automaton as a directed graph as explained in Remark \ref{rem: fsa}.
Every edge has a label from $X$, and from every vertex there is at most one outgoing edge with label $x\in X$.
Hence, every $w\in \cL$ labels a unique a path in $\bM$ starting at the initial state $s_0$.

Every $w \in \cL$ can be decomposed  as $w = xyz$, where $y$ is a (possibly trivial) loop in $\bM$. 
Then, $w \in \cL$ implies that $xy^nz \in \cL$ for any $n\in \{0,1,2,\dots \}$. 
In other words, for each word $w$ accepted in the automaton for $\cL$, we may remove or insert words $y$ representing loops in $\bM$  and still get an accepted word $xy^nz$.

Let $w \in \cL$. Let $g(w) := \#_{t}(w) - \#_{t^{-1}}(w)$. Write $w = xyz$ where $y$ is a (possibly) trivial loop. 
Observe that $ g(y) \geq 0$, for otherwise $xy^iz \in \cL$ for any $i$, as we may pick $i$ large enough such that $g(xy^iz) < 0$, contradicting our assumption about $\cL$.
In other words, for any loop $y$ in $w=xyz$, 
\begin{equation}
\label{eq: y loop}
 g(xz)\leq g(xyz)
\end{equation}
and $xz \in \cL$ since we have only removed a loop $y$. 
 
Let $n$ be the number of states of $\bM$. 
For any subword $u$ of  $w \in \cL$, decompose $u = x_1 y_1 x_2 y_2 \dots x_{k-1} y_{k-1} x_{k}$, where the $y_i$ are loops and the length of the word $x_1x_2\dots x_k$ is minimal. 
We allow the subwords $x_i$ to be empty.
Viewing $u$ as a subpath of $w$ in $\bM$, we construct a new path whose label is $u' = x_1 \dots x_k$ which consists of removing all  loops from $u$.
 In particular, by the pigeonhole principle we have that $\ell(u') \leq n$ as otherwise the path associated to $u'$ would go through the same vertex in $\bM$ twice. Thus, we have a factorization of $u$ with a loop such that removing it produces a word shorter than $u'$. 
By \eqref{eq: y loop} we have that $g(u)\geq g(u')$, and as $\ell(u')\leq n$, this implies that 
\begin{equation}
\label{eq: g(u)}
g(u) \geq -n \text{ for all subword $u$ of $w\in \cL$,} 
\end{equation}
 since each transition can only contribute one $t$ or $t^{-1}$ and $\ell(u')\leq n$.

Let $1\leq i<j\leq \ell(w)$. 
We need to show that there is a constant $K\geq 0$ such that $f_w(j)-f_w(i)\geq -K$.
But $f_w(j)-f_w(i)=g(u)$ for $u$ equal to the subword of $w$ consisting of taking the prefix of $w$ of length $j$ and removing  to it the prefix of length $i$.
Now the result follows from \eqref{eq: g(u)} and taking $K=n$.
\end{proof}

Now we can prove Proposition \ref{prop: convex implies L-convex}.
\begin{proof}[Proof of Proposition \ref{prop: convex implies L-convex}]
Let $(X,\pi_H)$ be a generating set for $H$ and $\{t,t^{-1}\}$ a generating set for $\bZ$.
We combine them to make  $(X'=X\sqcup \{t,t^{-1}\}, \pi)$ a generating set for $G$ with evaluation map $\pi$.
Let $\cL\subseteq (X')^*$ be a regular language such that $\pi(\cL)$ is a lexicographic positive cone with the quotient being the leading factor.

Let $\phi \colon (X')^*\to \{t,t^{-1}\}^*$ consisting on deleting the letters of $X$.
This is monoid morphism, and hence $\phi(\cL)$ is regular.
Since $\cL$ is the language of a lexicographic order, $\phi(\cL)$ is contained in  $\{w\in \{t,t^{-1}\}^*\mid \sharp_t(w)- \sharp_{t^{-1}}(w)\geq 0\}$.
By Lemma \ref{lem: coarsely non-decreasing}, we get that there is a $K\geq 0$ such that  $f_{\phi(w)}(j)>f_{\phi(w)}(i)-K$ for all $w\in \cL$ and for all $j>i$.

To see that $H$ is language-convex with respect to $\cL$, let $w\in \cL$, with $\pi(w)\in H$.
Then, we get that $\sharp_t \phi(w)- \sharp_{t^{-1}}\phi(w)=0$.
Then, $0=f_{\phi(w)}(\ell(w))\geq \max_{i} f_{\phi(w)}(i)- K$, so $\max_{i} f_{\phi(w)}(i) \leq K$.
Also, $\min_{i} f_{\phi(w)}(i)> f_{\phi(w)}(0)-K=-K$.
It follows that for every prefix $u$ of $w$, $|\sharp_t \phi(u)- \sharp_{t^{-1}}\phi(u)|\leq K$.
Therefore $$\dist_G(\pi(u), \pi( ut^{-\sharp_t \phi(u)+ \sharp_{t^{-1}}\phi(u)}))\leq K.$$ Observe that $\pi( ut^{-\sharp_t \phi(u)+ \sharp_{t^{-1}}\phi(u)})\in H$ since the exponent ${-\sharp_t \phi(u)+ \sharp_{t^{-1}}\phi(u)}$ cancels the $t$'s in $u$. This shows that $H$ is language-convex.

By Proposition \ref{prop: inherited by L-convex subgroups}, we get that the restriction of the left-order to $H$ is regular.
\end{proof}

\subsection{Groups whose left-orders are all regular}

Now we will characterize the groups whose left-orders are all regular.  Since  the set of finite state automatons is countable,  a left-orderable group can have at most a countable number of regular left-orders.
The following result of Linnell \cite{Linnell} implies that if all the left-orders are regular, then there should be finitely many of them. 

\begin{thm}\cite{Linnell}
If a group admits infinitely many left-orders, then it admits uncountably many.
\end{thm}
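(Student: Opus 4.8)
The plan is to put a natural compact topology on the set of all left-orders and combine compactness with the Cantor--Bendixson theorem. Identifying each left-order $\prec$ with the indicator function of its positive cone $P_\prec \subseteq G$, the set $\LO(G)$ of left-orders sits inside $\{0,1\}^G$ with the product topology. The defining conditions of a positive cone --- namely $1_G \notin P$, that for each $g \neq 1_G$ exactly one of $g, g^{-1}$ lies in $P$, and that $gh \in P$ whenever $g,h \in P$ --- are each closed, since each constrains only finitely many coordinates; hence $\LO(G)$ is a closed, and therefore compact, Hausdorff, totally disconnected subspace of $\{0,1\}^G$. Since the groups in this paper are finitely generated, I will assume $G$ is countable, so that $\LO(G)$ is in addition second countable and metrizable.

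The backbone of the argument is the dichotomy for compact metrizable spaces: any such space decomposes uniquely as $\LO(G) = C \sqcup S$, where $C$ is its perfect kernel (closed with no isolated points) and $S$ is countable and scattered; moreover a nonempty perfect compact space is uncountable. Thus $\LO(G)$ is uncountable the moment $C \neq \emptyset$, and the entire problem reduces to excluding the remaining possibility, that $\LO(G)$ is infinite but scattered (equivalently, countably infinite). Ruling this case out is where the group structure, rather than pure topology, must enter: an infinite scattered compact space (a convergent sequence, say) is perfectly possible in general, so I need an algebraic obstruction to its occurring as a space of left-orders.

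The mechanism I would use is the action of $G$ on $\LO(G)$ by conjugation, $P \mapsto gPg^{-1}$, together with the flip involution $P \mapsto P^{-1}$; both act by homeomorphisms and so preserve the Cantor--Bendixson rank, and in particular the finite, $G$-invariant set of points of maximal rank in a scattered space. The analytic input I would isolate first is the characterization of isolated points of $\LO(G)$: a left-order is isolated exactly when its positive cone is finitely generated as a semigroup. Granting this, an isolated order has only a finite chain of convex subgroups, each ``jump'' being order-rigid, and I would induct on the length of this chain. If $\LO(G)$ were infinite and scattered it would contain an isolated order $\prec_0$ with finitely generated cone; passing to a convex subgroup $C$ of $\prec_0$ or to the quotient by it produces a group with a strictly shorter convex series that still carries infinitely many orders, contradicting the inductive hypothesis, while the base case of a rank-one jump (a subgroup of $\bR$) is immediate, since there either the order is one of exactly two, or the quotient is densely ordered and already forces a Cantor set of orders.

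The step I expect to be the genuine obstacle is precisely this translation of ``countable and scattered'' into an algebraic contradiction, and concretely the manufacture of a continuum of distinct orders at an accumulation point. The natural construction is to locate a convex subgroup supporting a dense family of orders and then to flip the order independently at infinitely many jumps; verifying that the resulting cones are all genuine, pairwise distinct positive cones, and that infinitely many independent jumps are actually available, is where the real work of the proof lies, and is exactly what the characterization of isolated points via finitely generated cones is designed to supply.
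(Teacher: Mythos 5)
The paper offers no proof of this statement at all --- it is quoted directly from Linnell's paper --- so your argument has to stand entirely on its own, and it does not. Your topological frame is correct and standard: for countable $G$ (legitimate here, since the paper only invokes the theorem for finitely generated groups), $\LO(G)$ is a closed subspace of $\{0,1\}^G$, hence compact metrizable, Cantor--Bendixson applies, and a nonempty perfect kernel forces uncountability, so everything reduces to excluding an infinite scattered $\LO(G)$. The gap is the lemma you then ``grant'': that an isolated point of $\LO(G)$ has positive cone finitely generated as a semigroup. Only the \emph{converse} is standard: if $P$ is generated as a semigroup by $g_1,\dots,g_n$, then the basic open set $\{Q\in\LO(G) \mid g_1,\dots,g_n\in Q\}$ equals $\{P\}$, because a positive cone containing a positive cone must equal it; hence finitely generated cones are isolated. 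The direction your proof needs --- isolated implies finitely generated --- is not a known theorem: isolation only yields finitely many elements $g_1,\dots,g_n$ such that $P$ is the \emph{unique} cone containing them, and nothing forces the semigroup they generate to exhaust $P$. Whether every isolated left-order has finitely generated positive cone is a well-known open question in this area (raised, e.g., in work of Ito and in the Deroin--Navas--Rivas monograph cited by this paper). So the first pillar of your plan is an open problem, not an ``analytic input.''

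Even granting that lemma, the inductive step is not sound as described. Convex subgroups of a left-order need not be normal, so ``the quotient by $C$'' is not a group: as in the paper's Section on relative orders, one only gets a $G$-invariant total order on the coset space $G/C$, not a left-ordered group to which an inductive hypothesis about $\LO$ could apply. The picture of a finite convex chain whose jumps are order-rigid subgroups of $\bR$ is a feature of \emph{Conradian} orders (Conrad's theorem, and Tararin's analysis in the finite case), not of arbitrary left-orders; a general left-order need not have any Archimedean-type jumps, and the genuinely hard case --- orders that are approximated by their own conjugates, where the conjugation action you mention is the only tool --- is precisely what your sketch leaves untouched, as you yourself acknowledge in the final paragraph. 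It is also unproved that infinitude of $\LO(G)$ descends to $C$ or to the quotient structure. For comparison, the known arguments take different routes: Linnell's published proof is a short argument with the compact space $\LO(G)$ that does not pass through the isolated-implies-finitely-generated implication, while for countable groups the Navas--Rivas approach manufactures the Cantor set of orders via the Conradian soul and dynamical realization. As written, your proposal is a program whose two key steps are, respectively, an open problem and an unestablished (and partly false-as-stated) structure theory for arbitrary left-orders.
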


The case when a group admits finitely many left-orders was classified by Tararin \cite{Tararin} (see also \cite{Kopytov-Medvedev, GOD}). Recall that a torsion-free abelian group has {\it  rank 1} if for any two non-identity elements $a$ and $b$ there is a non-trivial relation between them over the integers: $na+mb=0$.  Torsion-free abelian groups of rank $1$ are, up to isomorphism, subgroups of $\mathbb{Q}$.

A group $G$ is a {\it Tararin group} if it admits a unique subnormal series $$G = G_0 \unrhd G_1 \unrhd \dots  \unrhd G_n = \{1\}$$ such that all the factors are torsion-free abelian groups of rank 1 and such that no quotient $G_i/G_{i+2}$ is bi-orderable.

A group admits finitely many left-orders if and only if it is Tararin group (see \cite[Theorem 2.2.13]{GOD}).
Moreover, for any left-order $\prec$ on  a Tararin group with the subnormal series as above, the unique proper $\prec$-convex subgroups are the subgroups $G_1,G_2,\dots, G_n$. It follows that there are $2^n$ left-orders on $G$ and each of them is completely determined by choosing the positivity of a non-trivial element of $G_i/G_{i+1}$ for $i=1,\dots, n$.
More concretely, all left-orders on $G_i$ are lexicographic orders associated to the extension $G_{i+1}\to G_i \to G_i/G_{i+1}$ where the quotient leads.

\begin{lem}\label{lem: fg-Tararin-reg}
Suppose that $G$ is a finitely generated Tararin group with all the left-orders being regular. 
Then $G$ is is poly-$\bZ$.
\end{lem}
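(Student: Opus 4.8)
The plan is to induct on the length $n$ of the subnormal series $G = G_0 \unrhd G_1 \unrhd \dots \unrhd G_n = \{1\}$. The base cases $n \le 1$ are immediate: there $G$ is trivial or equals a finitely generated rank-$1$ torsion-free abelian group, and the only such group is $\bZ$, which is poly-$\bZ$. So assume $n \ge 2$ and that the statement holds for shorter series.

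For the inductive step I would first pin down the top factor. Since $G$ is finitely generated, $G/G_1$ is a finitely generated rank-$1$ torsion-free abelian group, hence $G/G_1 \cong \bZ$; and because $\bZ$ is free, the extension $G_1 \to G \to \bZ$ splits, giving $G = G_1 \rtimes \bZ$. By Tararin's classification recalled before the lemma, every left-order on $G$ is a lexicographic order associated to this extension with the quotient $\bZ$ leading.

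The crucial step, and the one I expect to be the main obstacle, is showing that $G_1$ is finitely generated. I would argue by contradiction: if $G_1$ is not finitely generated, then Lemma \ref{lem: BNS} applied to $G_1 \to G \to \bZ$ says that no lexicographic order on $G$ with $\bZ$ leading is $\Reg$. But $G$ is Tararin, hence left-orderable, and by the previous paragraph every one of its left-orders is of exactly this form; so $G$ would admit a non-regular left-order, contradicting the hypothesis. Thus $G_1$ is finitely generated. This is precisely the point where the coarse-connectivity/BNS obstruction to regularity forces the kernel to be finitely generated, and it is the heart of the argument.

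Finally I would transfer regularity down to $G_1$ and close the induction. The truncated series $G_1 \unrhd G_2 \unrhd \dots \unrhd G_n = \{1\}$ exhibits $G_1$ as a finitely generated Tararin group of length $n-1$. Given an arbitrary left-order $\prec_1$ on $G_1$, I would extend it to the lexicographic left-order $\prec$ on $G$ led by $\bZ$; since $G_1$ is $\prec$-convex and its elements map to $0 \in \bZ$, the restriction of $\prec$ to $G_1$ is exactly $\prec_1$. As $\prec$ is regular by hypothesis, Proposition \ref{prop: convex implies L-convex} (applied to $G = G_1 \rtimes \bZ$ with $G_1$ finitely generated) shows that $\prec_1$ is regular. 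Since $\prec_1$ was arbitrary, all left-orders on $G_1$ are regular, so by the induction hypothesis $G_1$ is poly-$\bZ$, whence $G = G_1 \rtimes \bZ$ is poly-$\bZ$ as well.
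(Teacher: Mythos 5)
Your proposal is correct and follows essentially the same route as the paper's proof: induction on the length of the Tararin series, with $G_0/G_1\cong\bZ$ since a finitely generated rank-$1$ torsion-free abelian group is cyclic, Lemma \ref{lem: BNS} ruling out a non-finitely generated $G_1$ (since by Tararin's classification every left-order on $G$ is lexicographic led by $\bZ$), and Proposition \ref{prop: convex implies L-convex} transferring regularity of all left-orders down to $G_1$ to close the induction. The only difference is organizational --- you establish finite generation of $G_1$ by contradiction up front, where the paper splits into the two cases at the end --- and your extra explicit details (the splitting of the extension and the extension of an arbitrary $\prec_1$ on $G_1$ to a lexicographic order on $G$) are correct.
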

\begin{proof}
Let $$G = G_0 \unrhd G_1 \unrhd \dots  \unrhd G_n = \{1\}$$ be the unique subnormal series of $G$ where  all the factors are torsion-free abelian groups of rank 1.
We have to show that all factors are cyclic.
We argue by induction on the length of the series. 
If the length is $0$, $G\cong \{1\}$.

Suppose that the length is $>0$.
Since $G$ is finitely generated, we get that $G_0/G_1$ is a finitely generated subgroup of $\bQ$ and hence $G_0/G_1\cong \bZ$.
Thus, $G=G_1\rtimes \bZ$.

Suppose that $G_1$ is finitely generated. 
Then by Proposition \ref{prop: convex implies L-convex} all the induced left-orders on $G_1$ are regular.
Since $G$ is a Tararin group,  all left-orders on $G_1$ are restrictions of left-orders in $G$.
Therefore, all left-orders in $G_1$ are regular, and by induction $G_1$ is poly-$\bZ$ and so is  $G$.

The remaining case is that $G_1$ is not finitely generated. 
Then, it follows from Lemma \ref{lem: BNS} that the lexicographic orders cannot be regular.
\end{proof}

Conversely we have the following. 

\begin{lem}\label{lem: poly-Z-Tararin-reg}
All left-orders on a poly-$\bZ$ Tararin group are regular.
\end{lem}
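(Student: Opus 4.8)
The plan is to prove the lemma by induction on the length $n$ of the subnormal series $G = G_0 \unrhd G_1 \unrhd \dots \unrhd G_n = \{1\}$, running the same extension argument used for poly-$\bZ$ groups earlier in the section, but applied uniformly to \emph{every} left-order rather than to a single one. Since $G$ is simultaneously poly-$\bZ$ and Tararin, uniqueness of the Tararin series forces each factor $G_i/G_{i+1}$ to be infinite cyclic, so the base case $n \le 1$ is either the trivial group or $\bZ$, whose two left-orders are regular by Example \ref{ex: cyclic} together with Lemma \ref{lem: negative cone in cC}.

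For the inductive step I would fix an arbitrary left-order $\prec$ on $G$ and exploit the structural description of left-orders on Tararin groups recalled before Lemma \ref{lem: fg-Tararin-reg}: the proper $\prec$-convex subgroups are precisely $G_1, \dots, G_n$, and $\prec$ is the lexicographic order led by the top quotient $G/G_1 \cong \bZ$. Concretely, writing $f\colon G \to \bZ$ for the quotient map with kernel $G_1$, the positive cone has the form $P_\prec = f^{-1}(P_{\bZ}) \cup P_{G_1}$ produced in Lemma \ref{lem: quotient-leads}, where $P_{\bZ}$ is one of the two positive cones on $\bZ$ and $P_{G_1}$ is the positive cone of the restriction $\prec|_{G_1}$. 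The subgroup $G_1$ is finitely generated, being a subgroup of a polycyclic group, and it inherits the unique Tararin series $G_1 \unrhd G_2 \unrhd \dots \unrhd G_n = \{1\}$ of length $n-1$, so it is again a poly-$\bZ$ Tararin group; by induction $P_{G_1}$ is regular, while $P_{\bZ}$ is regular by Example \ref{ex: cyclic} and Lemma \ref{lem: negative cone in cC}. Since $\Reg$ is a full AFL and hence closed under unions and inverse homomorphisms, Proposition \ref{prop:extensions} applied with $N = G_1$ and $Q = \bZ$ yields that $P_\prec$ is regular. As $\prec$ was arbitrary, this completes the induction.

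The main point to get right — rather than a genuine obstacle — is that the argument must account for \emph{all} left-orders simultaneously: the induction hypothesis on $G_1$ must assert that every one of its left-orders is regular, so that whatever restriction $\prec|_{G_1}$ happens to arise can be fed into Proposition \ref{prop:extensions}. This is exactly what the Tararin structure guarantees, since every left-order on $G_1$ is the restriction of a left-order on $G$ and conversely. The only remaining hypothesis to check is the finite generation required by Proposition \ref{prop:extensions}, which holds because poly-$\bZ$ groups are polycyclic and every term $G_i$ of the series is therefore finitely generated.
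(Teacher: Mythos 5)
Your proof is correct and takes essentially the same route as the paper's: both argue by induction (yours on the length of the series, the paper's on the Hirsch length, which is the same thing here), use the Tararin structure to see that every left-order on $G$ is lexicographic with the quotient $G/G_1\cong\bZ$ leading over an arbitrary left-order on the poly-$\bZ$ Tararin group $G_1$, and conclude by combining the induction hypothesis with the regularity of the two orders on $\bZ$ via Proposition \ref{prop:extensions}. Your added checks (that the induction hypothesis quantifies over all left-orders of $G_1$, and that $G_1$ is finitely generated) are implicit in the paper's argument and do not change the proof.
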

\begin{proof}
The proof is by induction on the Hirsch length. If $h(G)=0$, then $G\cong \{1\}$ and the lemma holds.

Now assume that $h(G)>0$.
 Let $G_1\unlhd G$ such that $G/G_1$ is infinite cyclic. 
Since $G$ is a Tararin group $G_1$ is $\prec$-convex in $G$ for any left-order $\prec$. 
Thus, any order on $G$ is a lexicographic order associated to an extension $G_1$-by-$\bZ$ in which the quotient group is the leading lexicographic factor. 
Since $G_1$ is a poly-$\bZ$ Tararin group with $h(G_1)<h(G)$, we get by induction 
that all the left-orders on $G_1$  are regular.
Recall from Example \ref{ex: cyclic} that the two left-orders that $\bZ$ admits are regular.
Therefore,  all the left-orders of $G$ are lexicographic extensions of a regular order on $G_1$ and a regular order on $\bZ$ and by Proposition \ref{prop:extensions} all left-orders of $G$ are regular.
\end{proof}

\begin{rem}
Although we will not use this, it is worth pointing out that a group $G$ is poly-$\bZ$ Tararin if and only if there exists a unique subnormal series $$G = G_0 \unrhd G_1 \unrhd \dots  \unrhd G_n = \{1\}$$ 
such that for all $i$,  $G_i/G_{i+1}\cong \bZ$ and $G_i/G_{i+2}\cong K$ where $K=\langle a,b \mid aba^{-1}=b^{-1}\rangle$ is the Klein bottle group.
\end{rem}

\begin{thm}\label{thm:tararin}
A group  only admits regular left-orders if and only if  it is  Tararin poly-$\bZ$. 
\end{thm}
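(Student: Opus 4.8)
The plan is to combine the three lemmata proved immediately before the statement with Linnell's theorem to organize the equivalence into a clean chain of implications. Recall that Theorem \ref{thm:tararin} asserts that a group $G$ admits only regular left-orders if and only if it is Tararin poly-$\bZ$. I would first reduce to the case where $G$ has finitely many left-orders: since the set of finite state automata is countable, a group admitting only regular left-orders has at most countably many left-orders, and Linnell's theorem forces a left-orderable group with countably many orders to have only finitely many. Hence such a $G$ is a Tararin group by the classification via \cite[Theorem 2.2.13]{GOD}.

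For the forward direction, suppose $G$ admits only regular left-orders. By the reduction above, $G$ is a Tararin group, and moreover $G$ is finitely generated (any group admitting a regular positive cone is finitely generated by definition, since a regular positive cone is specified relative to a \emph{finite} generating set). Lemma \ref{lem: fg-Tararin-reg} then applies directly: a finitely generated Tararin group all of whose left-orders are regular is poly-$\bZ$. Thus $G$ is Tararin poly-$\bZ$, which is one direction of the theorem.

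For the reverse direction, suppose $G$ is Tararin poly-$\bZ$. Then Lemma \ref{lem: poly-Z-Tararin-reg} states precisely that all left-orders on $G$ are regular, so there is nothing further to prove. Finally, I would note that the statement of the theorem and the statement of Theorem \ref{thm: all regular} in the introduction are phrased with the explicit subnormal series description ($G_i/G_{i+1}\cong\bZ$ and $G_i/G_{i+2}\cong K$ for the Klein bottle group $K$); the equivalence between ``Tararin poly-$\bZ$'' and this explicit series condition is exactly the content of the remark preceding the theorem, so I would invoke that remark to match the two formulations.

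Since all the substantive work is already packaged into Lemmata \ref{lem: fg-Tararin-reg} and \ref{lem: poly-Z-Tararin-reg}, the proof is essentially an assembly step and I do not expect a genuine obstacle. The one point requiring slight care is the passage from ``only regular left-orders'' to ``finitely many left-orders,'' which relies on Linnell's dichotomy together with the countability of automata; I would make sure to state that a group with only regular left-orders cannot have uncountably many, so Linnell's theorem rules out the infinite case and delivers the Tararin hypothesis needed to invoke Lemma \ref{lem: fg-Tararin-reg}.
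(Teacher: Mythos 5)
Your proposal is correct and follows essentially the same route as the paper's own proof: the countability of finite state automata plus Linnell's dichotomy reduces to the Tararin case, and then Lemmata \ref{lem: fg-Tararin-reg} and \ref{lem: poly-Z-Tararin-reg} settle the two directions. Your explicit observation that a regular positive cone forces finite generation (so that Lemma \ref{lem: fg-Tararin-reg} genuinely applies) is a point the paper leaves implicit, and it is a welcome bit of extra care rather than a deviation.
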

\begin{proof}
By the previous discussion a group that admits only regular left-orders must admit only a countable number of left-orders and therefore it must be a Tararin group.

To finish the proof, we need to show that a Tararin group only admits regular left-orders if and only if it is poly-$\bZ$. That result follows from Lemma \ref{lem: fg-Tararin-reg} and Lemma \ref{lem: poly-Z-Tararin-reg}.
\end{proof}

\section{Ordering quasi-morphisms}
\label{sec: quasimorphism}
In this section, we prove one of the main theorems of the paper and discuss some type of one-counter positive cones.
These one-counter positive cones will be constructed through a quasi-morphism, that we will call \emph{ordering quasi-morphism}, that is computable with a transducer.
The interesting fact of these one-counter positive cones, is that if $G$ has one of such cones, then $G\times \bZ$ will have a regular positive cone.
\subsection{Ordering quasi-morphism}

A {\it quasi-morphism} $\phi\colon G \to \bR$ is a function that is at bounded distance from a group homomorphism, i.e. there is a constant $D$ such that $|\phi(g)+\phi(h)-\phi(gh)|\leq D$.
A quasi-morphism  naturally assigns a positivity to elements of $G$. 
Dicks and \u{S}uni\'{c} \cite{DicksSunic2014}, found sufficient conditions on a quasi-morphism $\phi$ so that $\phi^{-1}( (0,\infty))$ is a positive cone in the group.

The following is a slight generalization of \cite[Lemma 2.3]{DicksSunic2014}.
\begin{lem}\label{lem: alt-poscone}
Let $G$ be a group and $\tau\colon G \to \mathbb{Z}$ be a function with the following properties. 
For all $g,h \in G$, 
\begin{enumerate}
	\item[(i)] $C=\{g\in G \mid \tau(g) = 0\}$ is a subgroup of $G$,
	\item[(ii)] $\tau(g) = -\tau(g\inv)$, 
	\item[(iii)] $\tau(g) + \tau(h) + \tau((gh)\inv) \leq 1$.
\end{enumerate}
Let $P = \{g \in G \mid \tau(g) > 0\}$. Then $P$ is a positive cone relative to $C$.
\end{lem}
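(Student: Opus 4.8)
The plan is to verify the two requirements in the definition of a positive cone relative to a subgroup (Definition \ref{defn: rel-order}): that $G$ decomposes as the disjoint union $P \sqcup C \sqcup P^{-1}$, and that $P$ is a subsemigroup. Property (i) will be used only to guarantee that $C$ is genuinely a subgroup, so that the conclusion makes sense.

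First I would identify the set $P^{-1}$. For any $g \in G$ we have $g \in P^{-1}$ if and only if $g^{-1} \in P$, i.e. $\tau(g^{-1}) > 0$; by (ii) this says $-\tau(g) > 0$, that is $\tau(g) < 0$. Hence $P^{-1} = \{g \in G \mid \tau(g) < 0\}$. I would also note that (ii) applied to $g = 1_G$ gives $\tau(1_G) = -\tau(1_G)$, so $\tau(1_G) = 0$ and $1_G \in C$. Since $\tau$ takes values in $\bZ \subseteq \bR$, for each $g \in G$ exactly one of $\tau(g) > 0$, $\tau(g) = 0$, $\tau(g) < 0$ holds, and this trichotomy immediately yields the disjoint decomposition $G = P \sqcup C \sqcup P^{-1}$.

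Next I would establish the semigroup property. Take $g, h \in P$; the goal is $\tau(gh) > 0$. Using (ii) to rewrite $\tau((gh)^{-1}) = -\tau(gh)$, inequality (iii) becomes
\[
\tau(g) + \tau(h) - \tau(gh) \leq 1, \qquad\text{i.e.}\qquad \tau(gh) \geq \tau(g) + \tau(h) - 1.
\]
Since $g, h \in P$ and $\tau$ is integer-valued, we have $\tau(g) \geq 1$ and $\tau(h) \geq 1$, whence $\tau(gh) \geq 1 > 0$ and $gh \in P$. Combined with the decomposition above and the fact (from (i)) that $C$ is a subgroup, this shows $P$ is a positive cone relative to $C$.

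The only real subtlety — what I would flag as the crux rather than an obstacle — is the integrality of $\tau$. If $\tau$ were merely real-valued, (iii) would still give $\tau(gh) \geq \tau(g) + \tau(h) - 1$, but this could be nonpositive even with $\tau(g), \tau(h) > 0$, so closure of $P$ would fail. It is precisely the fact that $\tau$ maps into $\bZ$, so that ``positive'' means ``$\geq 1$'', that makes the $-1$ slack in (iii) harmless and closes the argument.
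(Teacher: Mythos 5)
Your proof is correct and follows essentially the same route as the paper: identify $P^{-1}=\{g\in G\mid \tau(g)<0\}$ via (ii), get the disjoint decomposition $G=P\sqcup C\sqcup P^{-1}$ from trichotomy, and close $P$ under products by combining (ii) and (iii) to obtain $\tau(gh)\geq \tau(g)+\tau(h)-1\geq 1$. Your closing remark correctly isolates the role of integrality, which the paper uses implicitly in the same step.
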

\begin{proof}
We will first prove that $P\inv$ is disjoint from $P$. 
\begin{align*}
P\inv &= \{g \in G \mid g\inv \in  P \} \\
&= \{g \in G \mid \tau(g\inv) > 0\} \\
&= \{g \in G \mid -\tau(g) > 0\} \\
&= \{g \in G \mid \tau(g) < 0 \}. 
\end{align*}
Thus $P \cap P\inv = \emptyset$. Further, since $\tau(g) = 0 \implies g \in C$, we have that $G = P \sqcup P\inv \sqcup C$. 

Now we check that $P$ is a semigroup. Let $g,h \in P$. Then $\tau(g)\geq 1$ and $\tau(h)\geq 1$. Also, we have  that $\tau(g) + \tau(h) + \tau((gh)\inv) - 1 \leq 0$, so
\begin{align*}
\tau(gh) &\geq \tau(gh) + \tau(g) + \tau(h) + \tau((gh)\inv) - 1 \\
&= \tau(g) + \tau(h) -1 \geq 1.
\end{align*}
Thus $gh \in P$. 
\end{proof}
\begin{defn}
A quasi-morphism $\tau\colon G\to \bZ$ satisfying (i), (ii) and (iii) of Lemma \ref{lem: alt-poscone}, will be called an {\it ordering quasi-morphism}.
The subgroup $C$ of (i) is called the \emph{kernel} of $\tau$.
\end{defn}

\begin{ex}[Ordering quasi-morphism on free products]\label{ex: quasimorphism on  free product}
Dicks and \u{S}uni\'{c} proved in \cite[Proposition 4.2]{DicksSunic2014} that if $G=*_{i\in I} G_i$ is a free product of left-ordered groups $(G_i,\prec_i)$, then one can define an ordering quasi-morphism on $G$ as follows.

Let  $\prec_I$ be a total order on the index set $I$.  Given $g\in G$, the {\it normal form} of $g$ is an expression $g=g_1g_2\dots g_k$, where  each $g_i$ (called {\it syllable}) is in some $G_j\setminus \{1\}$ and two consecutive syllables $g_i,g_{i+1}$ lie in different free factors. 
The normal form of an element $g\in G$ is unique.
For a syllable $g_i$,  write $\textrm{factor}(g_i)=j$, to indicate that $g_i\in G_j$.
A syllable $g_i$ is positive if $1 \prec_{\textrm{factor}(g_i)} g_i$ and negative if $1 \succ_{\textrm{factor}(g_i)} g_i$.
An {\it index jump} in a normal form $g_1\dots g_k$ is a pair of  consecutive syllables $g_i g_{i+1}$ such that $\textrm{factor}(g_i)\prec_I \textrm{factor}(g_{i+1})$. 
Similarly, an {\it index drop} in a normal form $g_1\dots g_k$ is a pair of  consecutive syllables $g_i g_{i+1}$ such that $\textrm{factor}(g_i)\succ_I \textrm{factor}(g_{i+1})$.
Define 
\begin{align*}
\tau(g )&= \sharp(\text{positive syllables in $g$})- \sharp(\text{negative syllables in $g$})\\
&+ \sharp(\text{index jumps in $g$})- \sharp(\text{index drops in $g$}).
\end{align*}
Then \cite[Proposition 4.2]{DicksSunic2014} shows that the previous function is an ordering quasi-morphism on $G$ with trivial kernel.
\end{ex}

\begin{ex}[Ordering quasi-morphism on amalgamated free products]
\label{ex: quasimorphism on  amalgamated free product}
 In \cite[Theorem 17]{ADS}, the first author together with Dicks and \u{S}uni\'{c} described how to generalize the previous ordering quasi-morphism to more general groups acting on trees.
 We describe here the case of amalgamated free products. 
Suppose for example that $G$ is the free product of $G_i$, $i\in I$ amalgamated along a common subgroup $C\leqslant G_i$ for all $i$.  
For each $i$, assume that $P_i$ is a positive cone relative to $C$ (i.e. $G_i=P_i\sqcup C \sqcup P_i^{-1}$ and $P_i$ is a sub-semigroup of $G_i$).
Then, any element $g\in G$ can be written (uniquely fixing transversals)  as $g=g_1g_2\dots g_kc$ with $c\in C$ and $g_i\in P_{j_i}\cup P_{j_i}^{-1}$,  for $i\geq 0$.
Define
\begin{align*}
\tau(g )=\tau(g_1\dots g_k)&= \sharp(\text{positive syllables in $g_1\dots g_k$})- \sharp(\text{negative syllables in $g_1\dots g_k$})\\
&+ \sharp(\text{index jumps in $g_1\dots g_k$})- \sharp(\text{index drops in $g_1\dots g_k$}).
\end{align*}
Thus, from \cite[Theorem 17]{ADS},  $\tau$ is an ordering-quasimorphism on the free product of the $G_i$ amalagamated over $C$ with kernel $C$.
\end{ex}

\subsection{Ordering quasi-morphism computable through rational transducers}
We are interested in the situation when an ordering quasi-morphism  can be computed through a rational transducer.
\begin{defn}\label{defn: ordering quasi-morphism}
Let $G$ be a group finitely generated by $(X,\pi)$. Let $\bZ$ be generated by $(\{t^{-1},t\}, \pi_\bZ)$.
Let $\tau\colon G\to \bZ$ be an ordering quasi-morphism.

A {\it $\tau$-transducer} is a rational transducer $\bT$ with input alphabet $X$ and output alphabet $\{t^{-1},t\}$ such that
\begin{enumerate}
\item $G =\pi(\bT^{-1}(\{t^{-1},t\}^*))$, 
\item for every $w\in \bT^{-1}(\{t^{-1},t\}^*)$, one has that $\tau(\pi(w))= \pi_\bZ(\bT(w))$. 
\end{enumerate}
\end{defn}

In the following proposition we show that groups
admitting a $\tau$-transducer for an ordering quasi-morphism $\tau$ have one-counter positive cones.
\begin{prop}\label{prop: transducer imply one-counter}
Let $G$ be a group finitely generated by $(X,\pi)$ and $\tau \colon G\to \bZ$ an ordering quasi-morphism with kernel $C$.
If a $\tau$-transducer exists, then $P_\tau=\{g\in G \mid \tau(g)>0\}$ is a $\onecounter$-positive cone relative to $C$.

In particular, if there is a $\onecounter$ language $\cL_C$ such that $\pi(\cL_C)=P_C$ is a positive cone for $C$, then $P_\tau\cup P_C$ is a $\onecounter$ positive cone for $G$.
\end{prop}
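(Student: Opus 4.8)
The plan is to exhibit an explicit one-counter language evaluating onto $P_\tau$ by pulling back the one-counter positive cone of $\bZ$ through the transducer. Since $\tau$ is an ordering quasi-morphism with kernel $C$, Lemma \ref{lem: alt-poscone} already tells us that $P_\tau=\{g\in G\mid \tau(g)>0\}$ is a positive cone relative to $C$, so the only thing left to establish is that it is represented by a one-counter language.

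First I would set $\cL_+=\{v\in \{t^{-1},t\}^* \mid \sharp_t(v)>\sharp_{t^{-1}}(v)\}$, which is the one-counter language of Example \ref{ex:one-counter}, i.e. a positive cone language for $\bZ$. I would then consider the inverse image $\bT^{-1}(\cL_+)\subseteq X^*$. By Proposition \ref{prop: closure transducers}, full AFL classes are closed under inverse images under rational transducers, and by Proposition \ref{prop: AFL} the class $\onecounter$ is a full AFL; hence $\bT^{-1}(\cL_+)$ is one-counter. The core of the argument is then to show $\pi(\bT^{-1}(\cL_+))=P_\tau$.

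For the inclusion $\pi(\bT^{-1}(\cL_+))\subseteq P_\tau$, I would take $w\in \bT^{-1}(\cL_+)$. By definition this means $w$ has an accepting run of $\bT$ outputting some $v\in \cL_+$, so in particular $w\in \bT^{-1}(\{t^{-1},t\}^*)$, and property (2) of a $\tau$-transducer gives $\tau(\pi(w))=\pi_\bZ(v)=\sharp_t(v)-\sharp_{t^{-1}}(v)>0$, whence $\pi(w)\in P_\tau$. For the reverse inclusion, given $g\in P_\tau$ I would use property (1), namely $G=\pi(\bT^{-1}(\{t^{-1},t\}^*))$, to pick $w\in \bT^{-1}(\{t^{-1},t\}^*)$ with $\pi(w)=g$; property (2) forces the accepting output $v$ to satisfy $\pi_\bZ(v)=\tau(g)>0$, so $v\in \cL_+$ and $w\in \bT^{-1}(\cL_+)$.

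The subtle point, and the place where the definition of a $\tau$-transducer has to be used carefully, is that the inverse image $\bT^{-1}$ quantifies existentially over accepting runs while $\bT(w)$ is a priori a \emph{set} of outputs; property (2) is exactly what guarantees that every accepting output of $w$ evaluates to the same integer $\tau(\pi(w))$, so that membership of $w$ in $\bT^{-1}(\cL_+)$ is independent of the run chosen and is equivalent to $\tau(\pi(w))>0$. Once $P_\tau$ is known to be a $\onecounter$-positive cone relative to $C$, the ``in particular'' statement is immediate: $\onecounter$ is closed under union (being a full AFL), so if $\cL_C$ is a one-counter positive cone language for $C$, then Lemma \ref{lem: extend from convex} applied with $\cC=\onecounter$ shows that $P_\tau\cup P_C$ is a one-counter positive cone for $G$.
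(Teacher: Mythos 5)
Your proof is correct and follows essentially the same route as the paper: both deduce the relative positive cone property from Lemma \ref{lem: alt-poscone}, pull back the one-counter language of Example \ref{ex:one-counter} via $\bT^{-1}$ using Propositions \ref{prop: AFL} and \ref{prop: closure transducers}, verify $\pi(\bT^{-1}(\cL_+))=P_\tau$ from the two defining properties of a $\tau$-transducer, and conclude with closure under union and Lemma \ref{lem: extend from convex}. Your remark about run-independence of the output evaluation is a welcome elaboration of a point the paper's proof leaves implicit in the phrase ``observe that from the Definition \ref{defn: ordering quasi-morphism}, we get that $\pi(\tilde{\cL})=P_\tau$,'' but it does not change the argument.
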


\begin{proof}
By Lemma \ref{lem: alt-poscone}, $P_\tau$ is a positive cone relative to $C$.
The language $\cL=\{w\in \{t^{-1},t\} \mid \sharp_t(w)-\sharp_{t^{-1}}(w)>0\}$ is a one-counter language (See Example \ref{ex:one-counter}).
Let $\bT$ be a $\tau$-transducer.
Since the class of one-counter languages is closed under inverse image of rational transducers (Proposition \ref{prop: closure transducers} in view of Proposition \ref{prop: AFL}), we have that $\tilde{\cL}=\bT^{-1}(\cL)$ is a one-counter language.
Now, observe that from the Definition \ref{defn: ordering quasi-morphism}, we get that $\pi (\tilde{\cL}) = P_\tau$.

Finally, suppose that $\cL_C$ is a $\onecounter$ language such that $\pi(\cL_C)$ is a positive cone for $C$. Since one-counter languages are closed under union,  we get that $\tilde{\cL}\cup \cL_C$ is a one-counter language representing $P_\tau\cup P_C$.
\end{proof}

Let $\tau$ be an ordering quasi-morphism on a free product constructed as in Example \ref{ex: quasimorphism on  amalgamated free product}.
Our objective now is to construct $\tau$-transducer when the free factors have regular positive cones relative to $C$. 
The next lemma will be useful for this construction.

\begin{lem}
\label{lem: pm automaton}
Let $(X, \pi)$ be a finite generating set of $G$.
Suppose that $P$ is a $\Reg$-positive cone relative to $C\leqslant G$.

Then, there exists a non-deterministic finite state automaton  $\bM=(\cS,X, \delta, s_0,\cA)$ without $\epsilon$-moves where the set of accepting states $\cA$  is a disjoint union $\cA = \cA^- \sqcup \cA^+$ such that the language $\cL^-$ accepted by $\bM^-=(\cS,X,\delta,s_0,\cA^-)$, and the language $\cL^+$ accepted by $\bM^+=(\cS,X,\delta, s_0,\cA^+)$ satisfies that $\pi(\cL^-)=P^{-1}$ and $\pi(\cL^+)=P$.
\end{lem}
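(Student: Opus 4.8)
The plan is to assemble a single automaton whose transition graph consists of two vertex-disjoint pieces that share only their initial vertex, one piece recognizing a language for $P$ and the other a language for $P^{-1}$, and then to take $\cA^+$ and $\cA^-$ to be the accepting states of the two pieces. Since the two pieces are glued only at the start, an accepting run cannot switch from one piece to the other, which forces each restricted automaton to recognize exactly the intended language.

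First I would produce the two languages. We are given a regular $\cL^+\subseteq X^*$ with $\pi(\cL^+)=P$. Since $\Reg$ is closed under homomorphisms, inverse homomorphisms, and reversal (Proposition \ref{prop: AFL}), part (i) of Lemma \ref{lem: negative cone in cC} yields a regular $\cL^-$ with $\pi(\cL^-)=P^{-1}$. By Remark \ref{rem: different types of fsa} each of these is accepted by an NFA without $\epsilon$-moves, say $\bM_+=(\cS_+,X,\delta_+,s_0^+,\cA_+)$ and $\bM_-=(\cS_-,X,\delta_-,s_0^-,\cA_-)$, and I may take $\cS_+\cap\cS_-=\emptyset$. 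Next I would normalize so that neither initial state carries an incoming edge. Observe that $\epsilon\notin\cL^{\pm}$: because $P$ is a positive cone relative to $C$ we have $1_G\in C$, so $1_G\notin P\cup P^{-1}$, and acceptance of $\epsilon$ would place $\pi(\epsilon)=1_G$ in $P$ or $P^{-1}$. Hence $s_0^{\pm}\notin\cA_{\pm}$, and I can replace each initial state by a fresh copy carrying the same outgoing transitions but no incoming ones; the accepted languages are unchanged because $\epsilon$ is not involved.

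Then I would glue the two machines. Merge the (normalized) initial states $s_0^+$ and $s_0^-$ into a single new state $s_0$, keep the remaining states of $\cS_+$ and $\cS_-$ disjoint, and define $\delta(s_0,x)\coloneqq\delta_+(s_0^+,x)\cup\delta_-(s_0^-,x)$ for $x\in X$ while retaining $\delta_+$ and $\delta_-$ on the other states. Set $\cA^+\coloneqq\cA_+$ and $\cA^-\coloneqq\cA_-$, which are disjoint, and $\cA\coloneqq\cA^+\sqcup\cA^-$. No $\epsilon$-moves are introduced, so $\bM=(\cS,X,\delta,s_0,\cA)$ meets the format required by the statement.

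The main point to verify is that $\bM^+\coloneqq(\cS,X,\delta,s_0,\cA^+)$ accepts exactly $\cL^+$ and $\bM^-\coloneqq(\cS,X,\delta,s_0,\cA^-)$ exactly $\cL^-$, and this is where the normalization pays off. Since $s_0$ has no incoming edges and the two components are otherwise disjoint with no crossing transitions, every run on a nonempty input leaves $s_0$ on its first step into exactly one of the two components and then stays there; it can never switch sides or return to $s_0$. Consequently the runs ending in $\cA^+$ are in bijection with the accepting runs of $\bM_+$, and those ending in $\cA^-$ with the accepting runs of $\bM_-$. This gives $|\bM^+|=\cL^+$ and $|\bM^-|=\cL^-$, hence $\pi(\cL^+)=P$ and $\pi(\cL^-)=P^{-1}$, as required. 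The only genuinely delicate step is this no-crossing argument, and it reduces entirely to the structural fact that the shared state $s_0$ is a one-way entry point.
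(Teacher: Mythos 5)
Your proof is correct and takes essentially the same approach as the paper: obtain a regular language for $P^{-1}$ via Lemma \ref{lem: negative cone in cC}, take $\epsilon$-free automata $\bM^+$ and $\bM^-$ for the two languages, and glue them by identifying their start states, with $\cA^+$ and $\cA^-$ the two (disjoint) accepting sets. Your extra normalization step (a fresh start state with no incoming edges, legitimate since $1_G\in C$ forces $\epsilon\notin\cL^{\pm}$) is a careful refinement the paper's one-line gluing leaves implicit, as it rules out runs re-entering the merged start vertex and crossing from one component to the other.
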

\begin{proof}
By hypothesis, there is a regular language $\cL^+=\cL\subseteq X^*$ that evaluates to $P$.
Moreover, by Lemma \ref{lem: negative cone in cC}, there is a regular language $\cL^-\subseteq X^*$ that evaluates to $P^{-1}$.
Now there are non-deterministic finite state automata $\bM^-$ and $\bM^+$ without $\epsilon$-moves accepting $\cL^-$ and $\cL^+$ respectively (Remark \ref{rem: different types of fsa}).
Viewing this automata as directed graphs, we obtain the desired automaton by identifying the start vertex of $\bM^-$ with the start vertex of $\bM^+$ and designing that vertex to be $s_0$, the start vertex of $\bM$. 
In Figure \ref{fig: mfsa} we see an example of this construction.
\end{proof}

\begin{figure}[ht]
\begin{center}
\begingroup%
  \makeatletter%
  \providecommand\color[2][]{%
    \errmessage{(Inkscape) Color is used for the text in Inkscape, but the package 'color.sty' is not loaded}%
    \renewcommand\color[2][]{}%
  }%
  \providecommand\transparent[1]{%
    \errmessage{(Inkscape) Transparency is used (non-zero) for the text in Inkscape, but the package 'transparent.sty' is not loaded}%
    \renewcommand\transparent[1]{}%
  }%
  \providecommand\rotatebox[2]{#2}%
  \newcommand*\fsize{\dimexpr\f@size pt\relax}%
  \newcommand*\lineheight[1]{\fontsize{\fsize}{#1\fsize}\selectfont}%
  \ifx\svgwidth\undefined%
    \setlength{\unitlength}{179.35838114bp}%
    \ifx\svgscale\undefined%
      \relax%
    \else%
      \setlength{\unitlength}{\unitlength * \real{\svgscale}}%
    \fi%
  \else%
    \setlength{\unitlength}{\svgwidth}%
  \fi%
  \global\let\svgwidth\undefined%
  \global\let\svgscale\undefined%
  \makeatother%
  \begin{picture}(1,0.42942464)%
    \lineheight{1}%
    \setlength\tabcolsep{0pt}%
    \put(0,0){\includegraphics[width=\unitlength,page=1]{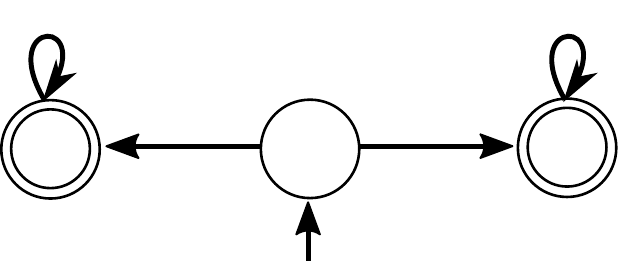}}%
    \put(0.46552185,0.17374266){\color[rgb]{0,0,0}\makebox(0,0)[lt]{\lineheight{1.25}\smash{\begin{tabular}[t]{l}$s_0$\end{tabular}}}}%
    \put(0.64592594,0.22372262){\color[rgb]{0,0,0}\makebox(0,0)[lt]{\lineheight{1.25}\smash{\begin{tabular}[t]{l}$a$\end{tabular}}}}%
    \put(0.89078028,0.39098549){\color[rgb]{0,0,0}\makebox(0,0)[lt]{\lineheight{1.25}\smash{\begin{tabular}[t]{l}$a$\end{tabular}}}}%
    \put(0.30117859,0.22372262){\color[rgb]{0,0,0}\makebox(0,0)[lt]{\lineheight{1.25}\smash{\begin{tabular}[t]{l}$a^{-1}$\end{tabular}}}}%
    \put(0.06853534,0.39510081){\color[rgb]{0,0,0}\makebox(0,0)[lt]{\lineheight{1.25}\smash{\begin{tabular}[t]{l}$a^{-1}$\end{tabular}}}}%
    \put(0.2835857,0.0085374){\color[rgb]{0,0,0}\makebox(0,0)[lt]{\lineheight{1.25}\smash{\begin{tabular}[t]{l}$\bM^{-}$\end{tabular}}}}%
    \put(0.61126803,0.00906529){\color[rgb]{0,0,0}\makebox(0,0)[lt]{\lineheight{1.25}\smash{\begin{tabular}[t]{l}$\bM^{+}$\end{tabular}}}}%
  \end{picture}%
\endgroup%

\end{center}
\caption{Example of the construction of Lemma \ref{lem: pm automaton} for $\bZ$, which gives an automaton that have states for recognizing the positive cone and states for recognizing the negative cone.}
\label{fig: mfsa}
\end{figure}

\begin{prop}\label{prop: free-prod-cf}
Let $I$ be a finite set.
For each $i\in I$, let $G_i$ be a group finitely generated by $(X_i,\pi_i)$ and assume that all $G_i$ have a common finitely generated subgroup $C$.
Let $G$ denote the free product of the $G_i$ amalgamated over $C$.

Let  $(Y,\pi_C)$ be a finite generating set for $C$ and $(X=Y\sqcup \bigsqcup X_i,\pi)$ be a generating set for $G$  where for $x\in X_i$ we have that $\pi(x)=\pi_i(x)$ and for $y\in Y$ we have that $\pi(y)=\pi_C(y)$.

For each $i\in I$, assume that $P_i$ is a positive cone relative to $C$.
Suppose that there is a regular language $\cL_i\subseteq X_i^*$ such that $\pi_i(\cL_i)=P_i$.

Then $G$ has an ordering quasi-morphism $\tau\colon G\to 2\bZ+1 \cup \{0\}$ with  kernel $C$ admitting a $\tau$-transducer $\bT$. 
Moreover, the following language  $$\bT^{-1}(\{w\in \{t,t^{-1}\}^*: \sharp_t(w)-\sharp_{t^{-1}}(w)>0\})$$ evaluates onto a positive cone relative to $C$ and it is equal to $$\{w = w_{i_1} \dots w_{i_m} z \in X^*\mid i_t\in \{1,\dots, n\},  i_j \not= i_{j+1},  w_{i_j} \in \cL^+_{i_j} \cup \cL^-_{i_j}, z \in Y^*, \tau(\pi(w)) > 0 \}.$$
\end{prop}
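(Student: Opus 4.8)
The plan is to take for $\tau$ the ordering quasi-morphism of Example \ref{ex: quasimorphism on amalgamated free product}, built from a fixed total order $\prec_I$ on $I$ and the relative positive cones $P_i$; by that example (citing \cite[Theorem 17]{ADS}) it is an ordering quasi-morphism with kernel $C$. Writing a nontrivial $g\in G$ in normal form $g=g_1\cdots g_m c$ with $g_j\in P_{i_j}\cup P_{i_j}^{-1}$, $i_j\neq i_{j+1}$ and $c\in C$, there are $m$ syllables and $m-1$ consecutive pairs, so the syllable contribution and the jump/drop contribution to $\tau(g)$ have parities $m$ and $m-1$; hence $\tau(g)$ is odd for $m\geq 1$ while $\tau(c)=0$, which gives $\tau(G)\subseteq (2\bZ+1)\cup\{0\}$.

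The core of the argument is to build the $\tau$-transducer $\bT$. First I would invoke Lemma \ref{lem: pm automaton} to obtain, for each $i\in I$, a finite state automaton $\bM_i=(\cS_i,X_i,\delta_i,s_{0,i},\cA_i^-\sqcup\cA_i^+)$ whose $+$-accepting (resp. $-$-accepting) paths evaluate onto $P_i$ (resp. $P_i^{-1}$); call the two languages $\cL_i^+$ and $\cL_i^-$. Then I assemble $\bT$ from disjoint copies of the $\bM_i$ together with ``hub'' states $H_j$ ($j\in I$, recording the factor of the last completed syllable), an initial hub $H_0$, and a single accept state $F$. The transitions, all reading from $X\sqcup\{\epsilon\}$ and writing over $\{t,t^{-1}\}$, are: from $H_j$ an $\epsilon$-move into $s_{0,i}$ for $i\neq j$ outputting $t$ if $j\prec_I i$ and $t^{-1}$ if $j\succ_I i$ (from $H_0$ the analogous $\epsilon$-moves output nothing); the transitions of each $\bM_i$, outputting $\epsilon$; from a state of $\cA_i^+$ (resp. $\cA_i^-$) an $\epsilon$-move to $H_i$ outputting $t$ (resp. $t^{-1}$); and from every hub an $\epsilon$-move to $F$, which carries a self-loop reading each $y\in Y$ and outputting $\epsilon$. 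Since $I$ is finite and each $\cS_i$ is finite, $\bT$ is a genuine rational transducer. By inspection the words it accepts are exactly the $w=w_{i_1}\cdots w_{i_m}z$ with $w_{i_j}\in\cL_{i_j}^+\cup\cL_{i_j}^-$, $i_j\neq i_{j+1}$ and $z\in Y^*$, and on such a word every accepting run outputs a word whose $t$-exponent sum equals $\sum_j\mathrm{sign}(w_{i_j})+\sum_{j<m}(\text{jump/drop between }i_j,i_{j+1})$.

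Next I would verify the two defining conditions of a $\tau$-transducer. Condition (1) is immediate: any $g\in G$ has a normal form $g=g_1\cdots g_m c$, each $g_j$ is represented by a word in $\cL_{i_j}^+\cup\cL_{i_j}^-$ and $c$ by a word in $Y^*$, so $g\in\pi(\bT^{-1}(\{t,t^{-1}\}^*))$. The heart of the matter is condition (2), that the output exponent sum of any accepted $w$ equals $\tau(\pi(w))$. Here I rely on the normal form theorem for amalgamated free products: since each $\pi(w_{i_j})\in P_{i_j}\cup P_{i_j}^{-1}\subseteq G_{i_j}\setminus C$, consecutive factors differ, and (using $P_iC=P_i$) the tail $\pi(z)\in C$ can be absorbed into the last syllable without changing its sign, the expression $\pi(w)=\pi(w_{i_1})\cdots\pi(w_{i_m})\,\pi(z)$ is already a normal form. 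Thus the factor sequence $(i_1,\dots,i_m)$ and the syllable signs read off by $\bT$ coincide with the data that computes $\tau(\pi(w))$, so the exponent sum equals $\tau(\pi(w))$; in particular this value does not depend on the (possibly several) accepting runs, so $\pi_\bZ(\bT(w))$ is well defined.

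Finally, having a $\tau$-transducer in hand, Lemma \ref{lem: alt-poscone} (or directly Proposition \ref{prop: transducer imply one-counter}) gives that $P_\tau=\{g\in G\mid\tau(g)>0\}$ is a positive cone relative to $C$ and that $\pi(\bT^{-1}(\cL))=P_\tau$ for $\cL=\{w\in\{t,t^{-1}\}^*\mid\sharp_t(w)-\sharp_{t^{-1}}(w)>0\}$. Combining this with the explicit description of the accepted words and the identity $\sharp_t(\bT(w))-\sharp_{t^{-1}}(\bT(w))=\tau(\pi(w))$ yields the displayed equality for $\bT^{-1}(\cL)$. The step I expect to require the most care is condition (2): one must argue that no unexpected reduction occurs in the amalgamated product and that absorbing the $C$-tail leaves the last syllable's sign intact, so that the syllable data seen by the transducer genuinely matches the normal form used to define $\tau$.
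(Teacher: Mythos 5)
Your proposal is correct and takes essentially the same approach as the paper: the quasi-morphism of Example \ref{ex: quasimorphism on  amalgamated free product}, the two-sided automata of Lemma \ref{lem: pm automaton}, and a transducer glued from them by $\epsilon$-moves whose $t^{\pm 1}$ outputs record syllable signs and index jumps/drops, with a single accept state carrying $Y$-loops. The only difference is bookkeeping: the paper dispenses with your hub states by sending each accepting state of $\bM_i$ directly to the start state of $\bM_j$ and merging the sign and jump/drop contributions into a single output $tt$, $t^{-1}t^{-1}$, or $\epsilon$ on that move.
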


\begin{proof}
By the previous Lemma \ref{lem: pm automaton}, there are finite state automata $\bM_i =(\cS_i,X_i, \delta_i, s_{0i},\cA_i^-\sqcup \cA_i^+)$ for $i=1,\dots, n$, such that the words accepted by $\bM_i$ on a state from  $\cA_i^{+}$ form a regular language $\cL_i^+$ evaluating onto $P_i$ and the words accepted by $\bM_i$ on  a state from  $\cA_i^{-}$ form a regular language $\cL_i^-$ evaluating onto $P_i^{-1}.$

 We  first construct a non-deterministic finite state automaton $\bM$ accepting the language 
 $$\cL := \{w = w_{i_1} \dots w_{i_m} z \in X^*\mid i_t\in \{1,\dots, n\},  i_j \not= i_{j+1},  w_{i_j} \in \cL^+_{i_j} \cup \cL^-_{i_j}, z \in Y^*\}.$$
 Then, we will modify this automaton $\bM$ to produce a $\tau$-transducer.
 An example of this construction can be found in Figure \ref{fig: transducer-f2} and might help the reader check the example as they read the proof.

Note that $G- C$, the complement of $C$ in $G$, is equal to $\pi(\cL )$.
Note also that by Example \ref{ex: quasimorphism on  amalgamated free product}, $\{w\in \cL \cdot Y^* \mid \tau(\pi(w))>0\}$ evaluates onto a  positive cone relative to $C$ of $G$.

We construct $\bM$ taking the union of the automata $\bM_i$, $i=1,2,\dots, n$ with their transitions and we add an extra start state $s_0$ and a final state $f$ and the following $\epsilon$-moves:
\begin{enumerate}
\item[(I)] there is an $\epsilon$-move from $s_0$ to every start state each $\bM_i$.
\item[(II)] there is an $\epsilon$-move from each accepting states of $\bM_i$ to the start state of $\bM_j$ with $i\neq j$.
\item[(III)] there is an $\epsilon$-move from each accepting states of $\bM_i$ to $f$.
\end{enumerate}
The state $f$ is the only accepting state of $\bM$.
We add loops on $f$ with label $y$, for each $y\in Y$.
It is easy to see that $\cL$ is accepted by $\bM$.

Now we construct a $\tau$-transducer $\bT$ from $\bM$ by adding some outputs on $T=\{t^{-1},t\}$ to the $\epsilon$-moves.
It will be clear from the construction that  the output is a word in $t$ and $t^{-1}$ that evaluates into an odd number,  unless the input is a word in $Y$ in which case the output evaluates to 0.

The $\epsilon$-moves of type I do not output any word.

The $\epsilon$-moves of type II output $tt$ if they start on some $\cA_i^+$ $i=1,\dots, j$ and go to the start state of $\bM_j$ with $i<_Ij$.
The $\epsilon$-moves of type II output $t^{-1}t^{-1}$ if they start on some $\cA_i^-$ $i=1,\dots, j$ and go to the start state of $\bM_j$ with $i>_Ij$.
The other $\epsilon$-moves of type II do not  output any word.
Note that these encode the contributions of index jumps and drops, and positive syllables and negative syllable as defined in Example \ref{ex: quasimorphism on  amalgamated free product}.

The $\epsilon$-moves of type III output a $t$ if they start on some vertex of $\cA^+_i$ and outputs a $t^{-1}$ if they start on some vertex of $\cM^-_i$.

It is easy to see that this gives a $\tau$-transducer.
\end{proof}

\begin{figure}[ht]
\begin{center}
\import{fig/}{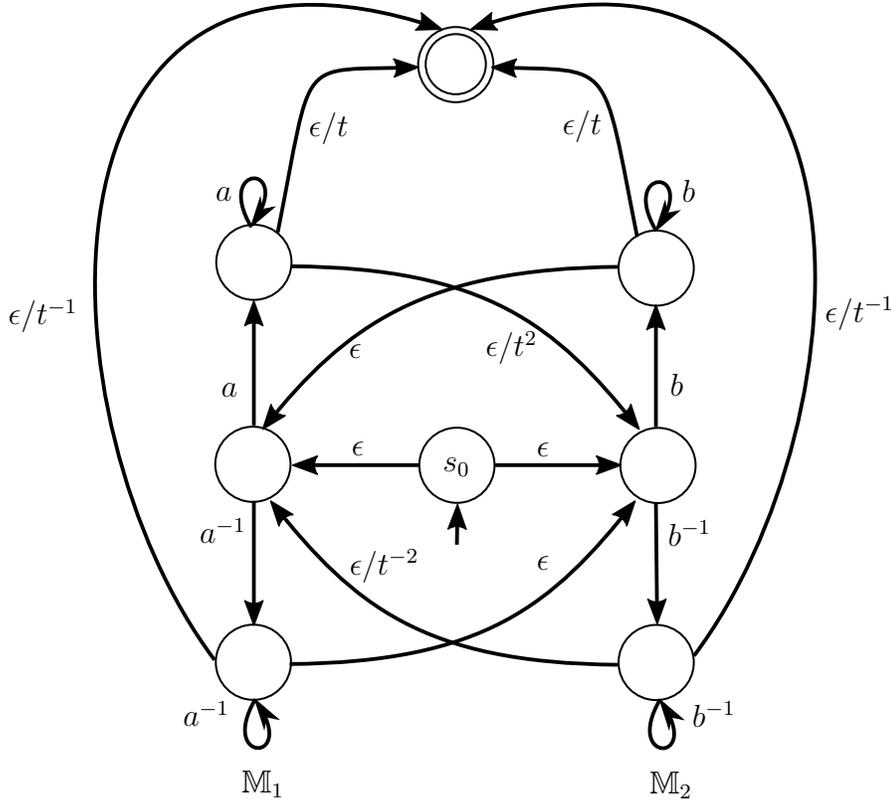}
\end{center}
\caption{The $\tau$-transducer $\bT$ for $F_2 = \langle a \rangle * \langle b \rangle$ constructed following the proof of Proposition \ref{prop: free-prod-cf}. 
Observe that the three vertices on the left, and the three vertices of the right are copies of the automaton of Figure \ref{fig: mfsa}. 
Observe also that $\bT^{-1}(\{t,t^{-1}\}^*)$ consists of all reduced words in $\{a,b,a^{-1},b^{-1}\}$. }
\label{fig: transducer-f2}
\end{figure}

From Propositions \ref{prop: transducer imply one-counter} and \ref{prop: free-prod-cf}, we get the following corollary.

\begin{cor}\label{cor: free prod admit one-counter order}
Let $A, B$ be groups admitting $\Reg$-left-orders. Then $A*B$ admits a $\onecounter$-left-order.
\end{cor}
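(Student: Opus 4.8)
The plan is to specialize Proposition \ref{prop: free-prod-cf} to the case where the amalgamating subgroup is trivial. Concretely, I would take the finite index set $I=\{1,2\}$ with $G_1=A$ and $G_2=B$, and set the common subgroup to be $C=\{1\}$. The trivial group is finitely generated, so the standing hypothesis of Proposition \ref{prop: free-prod-cf} that $C$ be a finitely generated common subgroup of the free factors is satisfied, and the amalgamated free product $A*_{\{1\}}B$ is just the ordinary free product $A*B$.

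The key observation is that a positive cone relative to the trivial subgroup is exactly an ordinary positive cone: the defining condition $G_i=P_i\sqcup C\sqcup P_i^{-1}$ becomes $G_i=P_i\sqcup\{1\}\sqcup P_i^{-1}$. Since $A$ and $B$ admit $\Reg$-left-orders by hypothesis, there are regular languages $\cL_1$ and $\cL_2$ evaluating onto positive cones $P_1$ and $P_2$ of $A$ and $B$, and these therefore qualify as regular positive cones relative to $C=\{1\}$. Feeding these data into Proposition \ref{prop: free-prod-cf} produces an ordering quasi-morphism $\tau\colon A*B\to\bZ$ with kernel $C=\{1\}$ together with a $\tau$-transducer $\bT$.

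It then remains to invoke Proposition \ref{prop: transducer imply one-counter}. Since a $\tau$-transducer exists, $P_\tau=\{g\in A*B\mid \tau(g)>0\}$ is a $\onecounter$-positive cone relative to $C=\{1\}$. Because the kernel is trivial, this relative cone is an honest positive cone, as $A*B=P_\tau\sqcup\{1\}\sqcup P_\tau^{-1}$. Formally, one may also apply the ``in particular'' clause of Proposition \ref{prop: transducer imply one-counter} with $\cL_C=\emptyset$, the empty language: it is regular, hence one-counter, and evaluates onto the empty positive cone of $\{1\}$, so that $P_\tau\cup\emptyset=P_\tau$ is the desired $\onecounter$-positive cone. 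This exhibits a $\onecounter$-left-order on $A*B$.

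I do not anticipate a genuine obstacle, as the statement is a direct corollary of the two preceding propositions; the only point requiring care is the bookkeeping that ``relative to the trivial subgroup'' collapses to the unrelativized notion, so that the regular positive cones of $A$ and $B$ literally meet the input requirements of Proposition \ref{prop: free-prod-cf}, and that the trivial group contributes the (one-counter) empty language in the final union.
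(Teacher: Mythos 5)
Your proof is correct and is exactly the paper's argument: the paper deduces this corollary in one line by combining Propositions \ref{prop: free-prod-cf} and \ref{prop: transducer imply one-counter}, which is precisely the specialization to $C=\{1\}$ that you carry out. The bookkeeping you supply (a positive cone relative to the trivial subgroup is an ordinary positive cone, and the empty language, being regular and hence one-counter, serves as $\cL_C$) is the intended, if unstated, content of the paper's deduction.
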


This corollary is optimal in the view of a result of Hermiller and \u{S}uni\'{c} \cite{HS} that says that free products do not admit regular positive cones.  
We also note that the first and second authors, together with J. Alonso and J. Brum \cite[Theorem 1.6]{AABR}, proved that certain free products with amalgamation also do not admit regular positive cones. 

To introduce an interesting example\footnote{Actions of $BS(1,m;1,n)$ on the closed interval $[0,1]$ have been studied in \cite{BMNR}, where it is showed that $BS(1,m;1,n)$ has no faithful action on $[0,1]$ by diffeomorphisms.} of an amalgamated free product allowing a $\onecounter$ positive cone, let $n,m$ be two positive integers and consider the group
 \begin{align*}
 BS(1,m;1,n)& \coloneqq \langle a, b, c \mid aba^{-1} = b^m,\, aca^{-1} = c^n \rangle \\
 & \cong \langle a, b \mid aba^{-1} = b^m \rangle *_{\langle a \rangle} \langle a, c \mid  aca^{-1}= c^n \rangle.
 \end{align*}
 
By Lemma \ref{lem: BS(1,q)}, $BS(1,n)$ has $\Reg$-positive cones relative to $\langle a \rangle$.
Therefore by Proposition \ref{prop: free-prod-cf}, $BS(1,m;1,n)$ admits a $\onecounter$-positive cone relative to $\langle a \rangle$ and by  Proposition \ref{prop: transducer imply one-counter} we get the following.
\begin{cor}\label{cor: BS(1,m;1,n)}
For $n,m\geq 1$, the group  $BS(1,m;1,n)$ has a $\onecounter$-positive cone. 
\end{cor}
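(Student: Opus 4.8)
The statement to prove is Corollary~\ref{cor: BS(1,m;1,n)}: for $n,m\geq 1$, the group $BS(1,m;1,n)$ admits a $\onecounter$-positive cone. The plan is to assemble the pieces already established in this section and apply them to the explicit amalgamated free product decomposition
$$BS(1,m;1,n)\cong \langle a, b \mid aba^{-1} = b^m \rangle *_{\langle a \rangle} \langle a, c \mid aca^{-1}= c^n \rangle.$$
Here the two free factors are $BS(1,m)$ and $BS(1,n)$, and the common amalgamated subgroup is $C=\langle a\rangle\cong\bZ$, which is indeed finitely generated.

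First I would verify that each free factor carries a $\Reg$-positive cone relative to $C=\langle a\rangle$. This is exactly the content of Lemma~\ref{lem: BS(1,q)}: for $q\geq 2$ the set $P_0=\{a^n(a^{-m}b^ka^m)\mid k>0\}$ is a $\Reg$-positive cone relative to $\langle a\rangle$, and this covers $BS(1,m)$ and $BS(1,n)$ when $m,n\geq 2$; the edge cases $m=1$ or $n=1$ give $BS(1,1)\cong\bZ^2$, which is poly-$\bZ$ and hence admits $\Reg$-positive cones relative to any subgroup via the lexicographic construction, so the hypothesis is met for all $m,n\geq 1$. Thus both $P_m\subseteq BS(1,m)$ and $P_n\subseteq BS(1,n)$ are positive cones relative to $C$ represented by regular languages $\cL_m,\cL_n$ over the respective generating sets $X_m=\{a,b,a^{-1},b^{-1}\}$ and $X_n=\{a,c,a^{-1},c^{-1}\}$.

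Next I would invoke Proposition~\ref{prop: free-prod-cf} with the index set $I=\{1,2\}$, the factors $G_1=BS(1,m)$, $G_2=BS(1,n)$, the common finitely generated subgroup $C=\langle a\rangle$, and the regular relative positive cones just exhibited. The proposition produces an ordering quasi-morphism $\tau\colon BS(1,m;1,n)\to \bZ$ with kernel $C$ together with a $\tau$-transducer $\bT$. Then Proposition~\ref{prop: transducer imply one-counter} applies: since a $\tau$-transducer exists, $P_\tau=\{g\mid \tau(g)>0\}$ is a $\onecounter$-positive cone relative to $C$. To upgrade this to an honest positive cone for the whole group, I need a $\onecounter$ (indeed regular) language evaluating onto a positive cone of $C\cong\bZ$; this is Example~\ref{ex: cyclic}, where $\langle a\rangle^+=\{a^n\mid n>0\}$ is a regular—hence one-counter—positive cone for $\bZ$. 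Applying the final ``in particular'' clause of Proposition~\ref{prop: transducer imply one-counter}, the union $P_\tau\cup \langle a\rangle^+$ is a $\onecounter$-positive cone for $BS(1,m;1,n)$, completing the proof.

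The argument is essentially a bookkeeping composition of results, so there is no single hard analytic step; the only point requiring genuine care is confirming that the hypotheses of Proposition~\ref{prop: free-prod-cf} are literally satisfied, namely that each factor really does admit a \emph{regular} positive cone relative to the amalgamated $C=\langle a\rangle$ and not merely relative to some other subgroup. The mild subtlety is the boundary case $m=1$ or $n=1$, where $BS(1,1)=\bZ^2$ and one must check that the relevant relative positive cone over the $\langle a\rangle$ factor is still regular; this follows from the lexicographic construction of Lemma~\ref{lem: quotient-leads} together with Example~\ref{ex: cyclic}, so the hypothesis holds uniformly for all $m,n\geq 1$. Once this is in place, the chain Proposition~\ref{prop: free-prod-cf} $\Rightarrow$ Proposition~\ref{prop: transducer imply one-counter} $\Rightarrow$ conclusion is immediate.
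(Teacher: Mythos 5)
Your proof is correct and follows essentially the same route as the paper: decompose $BS(1,m;1,n)$ as the amalgam of $BS(1,m)$ and $BS(1,n)$ over $\langle a\rangle$, feed the regular relative positive cones from Lemma \ref{lem: BS(1,q)} into Proposition \ref{prop: free-prod-cf} to obtain an ordering quasi-morphism with a $\tau$-transducer, and conclude via Proposition \ref{prop: transducer imply one-counter} together with the regular positive cone on $\langle a\rangle\cong\bZ$. If anything, you are slightly more careful than the paper, which invokes Lemma \ref{lem: BS(1,q)} (stated only for $q\geq 2$) without comment on the boundary case $m=1$ or $n=1$; your explicit check that $BS(1,1)\cong\bZ^2$ still carries a regular positive cone relative to $\langle a\rangle$ closes that small gap.
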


\subsection{Embedding theorem}
The main result of this section is a construction of a regular left-order on $G\times \bZ$ starting from an ordering quasi-morphism $\tau\colon G\to \bZ$.
We begin  describing a positive cone for $G\times \bZ$, and then  show that it is regular (Theorem \ref{thm: key-embedding}).

\begin{prop} \label{prop: pos-cone-cross-z}
Let $G$ be group,  $C$ be a subgroup of $G$, and $\tau\colon G \to 2\bZ + 1 \cup \{0\}$ be an ordering quasi-morphism  with kernel $C$. 
Let $$ P = \{(g,n) \in G \times \bZ \mid \tau(g) + 2n > 0 \}.$$
 Then, the set $ P$ is a positive cone relative to $C\times\{0\}$ for $G \times \bZ$.
\end{prop}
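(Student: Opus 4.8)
The plan is to reduce the statement directly to Lemma \ref{lem: alt-poscone} by exhibiting an auxiliary integer-valued function on $G \times \bZ$ that satisfies hypotheses (i)--(iii) of that lemma and whose positivity set is exactly $P$. Concretely, I would define $\tilde{\tau}\colon G \times \bZ \to \bZ$ by $\tilde{\tau}(g,n) = \tau(g) + 2n$. By construction $P = \{(g,n) \mid \tilde{\tau}(g,n) > 0\}$, so once $\tilde{\tau}$ is shown to satisfy the three hypotheses, Lemma \ref{lem: alt-poscone} immediately gives that $P$ is a positive cone relative to $\tilde{\tau}^{-1}(0)$, and it will remain only to identify this kernel with $C \times \{0\}$.

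For hypothesis (ii) I would compute $(g,n)\inv = (g\inv, -n)$ and use property (ii) of $\tau$ to get $\tilde{\tau}((g,n)\inv) = \tau(g\inv) - 2n = -\tau(g) - 2n = -\tilde{\tau}(g,n)$. For hypothesis (iii), since $(g,n)(h,m) = (gh, n+m)$, the $n$ and $m$ contributions cancel:
\begin{align*}
\tilde{\tau}(g,n) + \tilde{\tau}(h,m) + \tilde{\tau}(((g,n)(h,m))\inv) &= \tau(g) + \tau(h) + \tau((gh)\inv) + 2n + 2m - 2(n+m) \\
&= \tau(g) + \tau(h) + \tau((gh)\inv) \leq 1,
\end{align*}
using property (iii) of $\tau$. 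Both of these verifications are routine.

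The step I expect to carry the real content is hypothesis (i), namely that $\tilde{\tau}^{-1}(0) = C \times \{0\}$ is a subgroup; this is exactly where the hypothesis that $\tau$ takes values in $(2\bZ+1) \cup \{0\}$ becomes essential. If $\tilde{\tau}(g,n) = \tau(g) + 2n = 0$, then $\tau(g) = -2n$ is even; but the only even value in the range of $\tau$ is $0$, so $\tau(g) = 0$ and hence $n = 0$. Thus $g \in C$ (the kernel of $\tau$) and $n = 0$, giving $\tilde{\tau}^{-1}(0) = C \times \{0\}$, which is a subgroup since $C$ is. Without the parity restriction one could have $\tau(g)$ a nonzero even integer cancelled by $2n$, and the zero set would fail to equal $C \times \{0\}$; this is the subtle point on which the argument hinges, and it also explains the role of the $\bZ$ factor, whose generator contributes the even quantity $2$ that can never be annihilated by an odd syllable count. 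Having verified (i)--(iii), applying Lemma \ref{lem: alt-poscone} to $\tilde{\tau}$ finishes the proof.
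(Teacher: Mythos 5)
Your proposal is correct and takes essentially the same route as the paper: the paper's proof likewise defines $\tau'((g,n))=\tau(g)+2n$, checks hypotheses (i)--(iii) of Lemma \ref{lem: alt-poscone}, and your parity argument for (i) is exactly the paper's observation that $\tau(g)$ is odd for every $g\in G\setminus C$, so that $\tau'((g,n))=0$ if and only if $g\in C$ and $n=0$. Nothing further is needed.
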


\begin{proof}
We define a map $\tau'\colon G\times \bZ\to \bZ$ as  $\tau'((g,n))=\tau(g)+2n$ and show that it satisfies the conditions of Lemma \ref{lem: alt-poscone}.
For (i), since $\tau(g)$ is odd for every $g\in G-C$, we get that $\tau'((g,n))=0$ if and only if $g\in C$ and $n=0$.
For (ii), observe that $-\tau'((g,n))=-\tau(g)-2n= \tau(g^{-1})-2n =\tau'((g^{-1},-n))$ for all $(g,n)\in G\times \bZ$.
Finally, for (iii), observe that $\tau'((g,n))+\tau'((h,m))-\tau'((g^{-1}h^{-1},-n-m))=\tau(g)+\tau(h)-\tau( g^{-1}h^{-1})\leq 1$ for all $(g,n),(h,m)\in G\times \bZ$.
\end{proof}

Our main theorem is the following.

\begin{thm}\label{thm: key-embedding}
Let $G$ and $\tau$ be as in Proposition \ref{prop: pos-cone-cross-z}. If a $\tau$-transducer exists, then the set $P = \{(g,n) \in G \times \bZ \mid \tau(g) + 2n > 0 \}$ can be represented by a regular language.
\end{thm}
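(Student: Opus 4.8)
The plan is to construct a regular language over the generating set $(X \sqcup \{t,t^{-1}\}, \pi_\times)$ of $G \times \bZ$ that evaluates onto $P$. The key idea, foreshadowed in the introduction, is to use the $\bZ$-factor of $G \times \bZ$ to compensate the output of the $\tau$-transducer $\bT$ on the fly, so that no counter (stack) is needed to track the running value of $\tau$. Recall from Proposition \ref{prop: transducer imply one-counter} that $\tilde{\cL} = \bT^{-1}(\cL)$, where $\cL = \{w \in \{t,t^{-1}\}^* \mid \sharp_t(w) - \sharp_{t^{-1}}(w) > 0\}$, is a one-counter language evaluating onto $P_\tau = \{g \in G \mid \tau(g) > 0\}$. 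The reason $\tilde{\cL}$ is genuinely one-counter and not regular is that $\bT$ emits symbols $t, t^{-1}$ whose net balance must be checked to be positive at the end, which requires unbounded memory.

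First I would set up the combined automaton. Viewing $\bT = (\cS, X, \{t,t^{-1}\}, \delta, s_0, \cA)$ as a labelled graph (Remark \ref{rem: graph transducer}), I build a new non-deterministic finite state automaton $\bM$ over the alphabet $X \sqcup \{t,t^{-1}\}$ with the same state set $\cS$. Every transducer edge $s \xrightarrow{x/u} s'$ (with $x \in X \sqcup \{\epsilon\}$ and output $u \in \{t,t^{-1}\}^*$) becomes, in $\bM$, a path reading the input $x$ followed by reading the literal letters $t^{-1}$ \emph{of the same total} as the output $u$ — the crucial swap being that wherever $\bT$ outputs a $t$ we require $\bM$ to read a compensating $t^{-1}$ from the input (and vice versa). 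Concretely, for each element $(g,m)$ of $P$, I want an accepted word of the form $w\, t^{k}$ where $w$ spells $g$ using $X \sqcup \{t,t^{-1}\}$ through $\bT^{-1}$ and the $t$-letters appearing along the way already account for $\tau(g)$; the trailing $t^{k}$ adjusts by the $\bZ$-coordinate. Because the output length of $\bT$ on any single edge is bounded (it is a fixed finite word), the compensation introduced per edge is bounded, so this is achievable with finitely many states.

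The heart of the argument is the following accounting identity. Since $\pi_\times(t) = (1_G, 1)$ contributes $2$ to $\tau' = \tau + 2n$, while a transducer output of a single $t$ contributes $+1$ to $\tau$, the regular language must be arranged so that reading the input is consistent with $\tau(g) + 2n > 0$ \emph{expressed as a single nonnegative-balance condition on literal $t$'s and $t^{-1}$'s in the input word}. The trick is that the factor of $2$ lets me absorb the $\pm 1$ increments emitted by $\bT$: each odd value $\tau(g)$ can be paired with the even contribution $2n$ so that the overall positivity of $\tau(g) + 2n$ becomes a \emph{threshold that is automatically met} once the sign is fixed, because $\tau(g) + 2n > 0 \iff \tau(g) + 2n \geq 1 \iff \tau(g) + 2n \geq 1$, and since $\tau(g)$ is odd (off $C$) the sum is odd, so $> 0$ is equivalent to $\geq 1$ with no borderline ambiguity. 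I expect the main obstacle to be making this precise: I must show that the comparison $\tau(g) + 2n > 0$ can be decided by a \emph{finite-state} (rather than counter) machine after the $\bZ$-coordinate is allowed to cancel the transducer's running output. The resolution is that one never needs to know the exact value — only whether the final tally is positive — and by pushing the $\bZ$-adjustment to the correct place one reduces the test to checking membership in a regular language such as $\{t,t^{-1}\}^* t \{t\}^*$ or a boundedly-delayed sign condition, for which $\Reg$ suffices by Proposition \ref{prop: AFL} (closure under intersection with regular languages, inverse homomorphism, and the transducer operations of Proposition \ref{prop: closure transducers}).

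Finally I would assemble the language formally as the image, under the rational transducer $\bT$ suitably augmented to read the extra $\{t,t^{-1}\}$ input letters and pass them through, of a fixed regular test language, and invoke Proposition \ref{prop: closure transducers} together with Proposition \ref{prop: AFL} to conclude that the result lies in $\Reg$. Combined with Proposition \ref{prop: pos-cone-cross-z}, which guarantees $P$ is a genuine positive cone relative to $C \times \{0\}$, and Lemma \ref{lem: extend from convex} applied with a regular positive cone on $C \times \{0\}$ (or directly on $\bZ$ when $C$ is trivial, as in the free-product application of Example \ref{ex: quasimorphism on  free product}), this exhibits $P$, or the full positive cone extending it, as a $\Reg$-positive cone for $G \times \bZ$.
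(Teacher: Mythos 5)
Your overall strategy coincides with the paper's: interleave letters from the $\bZ$-factor into the words processed by the $\tau$-transducer so that the running value of $\tau'(g,n)=\tau(g)+2n$ stays bounded, making a finite-state acceptance test possible, with a trailing power of the $\bZ$-generator supplying the final adjustment. However, as written your construction has a genuine gap in the accounting. An input letter from the $\bZ$-factor shifts $\tau'$ by $\pm 2$, while the transducer output $u$ on a single edge shifts it by $\pi_T(u)$, which may be odd; hence exact per-edge cancellation, as in your prescription ``wherever $\bT$ outputs a $t$ we require $\bM$ to read a compensating $t^{-1}$'', is impossible (one compensating input letter cancels two units of $\tau'$, not one), and a residual of $\pm 1$ per edge is unavoidable. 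These residuals accumulate across edges, so an automaton ``with the same state set $\cS$'' cannot decide acceptance: it must remember the current residual. This is precisely the device your sketch lacks and the paper supplies: the state set is doubled to $\cS\times\{0,1\}$, and on an edge with output value $2k+\eta$ (with $\eta\in\{0,1\}$) one inserts $z^{-k}$ or $z^{-k-1}$ depending on the stored bit $\dagger$, so as to maintain the invariant (the Balancing property) that every word reaching state $(s,\dagger)$ evaluates to an element with $\tau'$ equal to $\dagger$ exactly. Your observation that $\tau(g)$ odd plus $2n$ even removes borderline ambiguity is correct and is indeed used, but it does not by itself reduce the test to a regular language: the natural test language $\{w \mid \sharp_t(w)>\sharp_{t^{-1}}(w)\}$ is one-counter, not regular, so your appeal to Propositions \ref{prop: AFL} and \ref{prop: closure transducers} in the final paragraph does not close the argument --- AFL closure applied to that test only yields a one-counter language, which is the content of Proposition \ref{prop: transducer imply one-counter}, not of Theorem \ref{thm: key-embedding}.

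The second missing step is surjectivity, i.e.\ $P\subseteq\pi(\cL)$. Your sketch argues (loosely) that accepted words land in $P$, but never shows that every $(g,n)\in P$ admits an accepted representative in the normalized, interleaved form. In the paper this requires an explicit induction along an accepting run of $\bT$ on a word for $g$: one chooses the interleaved powers $z^{\nu_i}$ edge by edge so that the Balancing property holds at each step, and then checks that the leftover exponent $m=n-\sum\nu_i$ satisfies $m\geq 0$ when the final bit is $1$, and $m>0$ when it is $0$, so that the trailing $z^m$ can be read and ends at an accepting state. Both directions are finite but essential verifications; with the parity-doubled states and this bookkeeping your outline becomes the paper's proof, but as it stands the automaton you describe would not accept the right language.
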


Note that it is implicit in the hypothesis of the existence of a $\tau$-transducer that $G$ is finitely generated.

\begin{rem}\label{rem: labelled edges}
In the proof of Theorem \ref{thm: key-embedding} we will make use of the graph descriptions of finite state automata (Remark \ref{rem: fsa}) and transducers (Remark \ref{rem: graph transducer}). 
In the case of finite state automata we will allow ourselves to label edges by words in the input alphabet and not just letters. To get a proper automaton, one has to change the edges labelled by a word $w$ by a path of the length $\ell(w)$ whose label is $w$. This will allow us to simplify the presentation of the proof. 
\end{rem}

\begin{proof}[Proof of Theorem \ref{thm: key-embedding}]
Let  $\tau'\colon G\times \bZ\to \bZ$ be defined by  $\tau'((g,n))=\tau(g)+2n$, so that $P=\{(g,n)\mid \tau'((g,n))>0\}$.
Let $(X,\pi)$ be a finite generating set for $G$, and
extend it to a generating set $(X \sqcup Z,\pi)$ of $G\times \bZ$,  where the elements of $Z = \{z,z\inv\}$ evaluate to $1$ and $-1$ on $\bZ$ respectively.
Let  $\bT =(\cS,X, T=\{t,t^{-1}\}, \delta_{\bT}, s_0, \cA)$ be a $\tau$-transducer.
We will modify $\bT$ to construct a finite state automaton  $\bM$ accepting a language ${\cL}$ that will evaluate onto ${P}$. 

We will construct $\bM$ to have a key property, called the \emph{balancing property}. To make the proof easier to follow, we have divided the steps of construction so that we will first define the features of the automaton unrelated to the balancing property. Then, we will define the balancing property and show why this property is important. Finally, we will finish the construction of the automaton such that the balancing property is respected. As a visual aid, we remark that Figure \ref{fig: reg-trans} exemplifies our construction starting from the transducer of Figure \ref{fig: transducer-f2}. The reader might find helpful to check these examples while following the construction.

We construct  $\bM$ as follows. The alphabet of the finite state automaton $\bM$ is $X\sqcup Z$.
 The set of states $\cS_\bM$ of $\bM$ is equal to $\{f\}\cup \left( \cS\times \{0,1\} \right) $ where $\cS$ are the states of $\bT$ and $f$ is a new state.
The initial state of $\bM$ is $(s_0,0)$.
The accepting states of $\bM$ are the states $\{(\alpha,1) \mid \alpha \in \cA\}\cup \{f\}$.

The transition function of $\bM$ is denoted by $\delta_\bM$. The transitions that go to $f$ are as follows. First, $\delta_\bM(f,z)=f$ (i.e. if a word is accepted, we can keep reading $z$'s). 
Second $\delta_\bM((s_0, 0), z)= f$ (i.e. all words of $\{z\}^+$ are accepted).
We have that $\delta_\bM ((\alpha,\dagger), z)= f$ for all $\dagger\in \{0,1\}$ and $\alpha \in \cA$. 

There are no more transitions from the state $f$ or going to the state $f$.

We will complete the construction of $\bM$ such that the following property holds.

\vspace{0,2cm}
\noindent {\bf Balancing property:}  For $w\in (X\sqcup Z)^*$, $s \in \mathcal S$ and $\dagger \in \{0,1\}$, we have that $(s,\dagger)\in \delta_\bM((s_0,0),w)$ implies that $\tau'(\pi(w))=\dagger$. 
\vspace{0,2cm}

Assuming the Balancing property we see that $\bM$ only accepts words that represent elements of $P$. Indeed, if a word $w$ is accepted at the accepting state $(\alpha,1)$ with $\alpha\in \cA$, then by the Balancing property we have that $\tau'(\pi(w))=1$, so $\pi(w)\in P$. On the other hand, if $w$ is accepted at the accepting state $\{f\}$, then, since all the edges in $\bM$ that end in the accepting state $\{f\}$ have label $z$, we can assume that $w$ is of the form $w'z^n$ where $w'$ does not end with $z$. Now, by the construction of transitions to $\{f\}$, we see that  $f\notin \delta_{\bM}((s_0,0),w')$.
Suppose that $(s,\dagger)\in \delta_{\bM}((s_0,0),w')$ for some $s\in \cS$. By the Balancing property we have that $\tau'(\pi(w'))=\dagger\in \{0,1\}$, so $\tau'(\pi(w))= \tau'(\pi(w'z^n))=\tau'(\pi(w'))+2n>0$. This implies that  $\pi(w)\in P$ as well, and therefore that $\pi(\cL) \subseteq P$.

We now define the other transitions so that the Balancing property is satisfied. 
Recall that $\delta_\bT$, the transition function of $\bT$, is of the form 
 $\delta_\bT\colon \cS\times X \to \textrm{Finite Subsets}(\cS\times T^*)$.  We use $\pi_T$ to denote the evaluation from $T^*=\{t,t^{-1}\}^*$ to $\bZ$.
 Recall our convention of Remark \ref{rem: labelled edges} as we will define the new transitions of $\bM$ as  edges labelled by words.
For each $\dagger \in \{0, 1\}$, $s\in \cS$ and $x\in X\cup \{\epsilon\}$, and each edge in $\bT$ from $s$ to $r$ with label $x/u$ we now define a corresponding edge in $\bM$ starting from $(s,\dagger)$ and ending in $(r, \dagger')$ with label $xv$.
The number $\dagger'\in \{0,1\}$ and the word $v\in Z^*$ are defined below depending on the values of $\pi_T(u)$ and $\dagger$.

Suppose that   $\pi_T(u)=2k+\eta$ with $\eta\in \{0,1\}$. We would like to have the property that $\dagger'=\pi_T(u)+\dagger+ 2\pi_Z(v).$ To do so, we define $\dagger'$ and $v$ as follows.

\begin{itemize}
    \item if $\dagger =0$ then $\dagger' = \eta$ and $v=z^{-k}$,
    \item if $\dagger =1$ and $\eta =0$ then $\dagger'= 1$ and $v=z^{-k}$,
     \item if $\dagger =1$ and $\eta =1$ then $\dagger'= 0$ and $v=z^{-k-1}$.
\end{itemize}

Note that when we write $z^{-k}$, we mean the word in $Z$ which is either a $k$-repetition of $z$ or $z\inv$. 

There are no more states or transitions (i.e the transitions not described go to the emptyset) and this completes the description of $\bM$.

We now check that the Balancing property holds.
Let  $w\in (X\sqcup Z)^*$ and $(\alpha,\dagger)\in \delta_M((s_0,0),w)$.
Then $w$ is the label of a path in $\bM$ from $(s_0,0)$ to $(\alpha,\dagger)$.
As each label of each edge of $\bM$ is of the form $xv$ with $x\in X\cup\{\epsilon\}$ and $v\in \{z,z^{-1}\}^*$, we can write 
 $w$ as  $x_1v_1x_2v_2\dots x_\ell v_\ell\in \cL$ such that $x_i\in X\cup \{\epsilon\}$, $v_i\in Z^*$, and the sequence $(x_iv_i)_i$ is the sequence of the labels of the edges in the path in $\bM$ defined by $w$.
We check the Balancing property by induction on $\ell$.

For $\ell=1$, we have that $x_1v_1$ labels an edge of $\bM$ starting at $(s_0,0)$ and ends in some $(s,\dagger)$.
Such edge comes from an edge in $\bT$  from $s_0$ with label $x_1/u$ to $s$.
Moreover, we have that if $\pi_T(u)=2k+\eta$ with $\eta\in \{0,1\}$ then $v_1= z^{-k}$ and $\dagger = \eta$.
Observe that  $\tau'(\pi(w))=\tau'(\pi(x_1v_1))=\tau(\pi(x_1))-2k=2k+\dagger-2k=\dagger.$

We now show that the case $\ell$ implies the case $\ell+1$.
By induction hypothesis $(s,\dagger)\in \delta_\bM((s_0,0), x_1v_1\dots x_\ell v_\ell)$ and  $\tau'(\pi(x_1v_1\dots x_\ell v_\ell)) = \dagger$.
Then, $x_{\ell+1}v_{\ell+1}$ labels and edge of $\bM$ starting at $(s,\dagger)$ and ends in some $(s',\dagger')$.
Such edge comes from an edge in $\bT$ from $s$ with label $x_{\ell+1}/u$ to $s'$. 
If $\pi_T(u)=2k+\eta$ with $\eta\in \{0,1\}$ then $v_{\ell+1}$ and $\dagger'$ are determined by $\dagger$, $\eta$ and $k$.
We have 
\begin{align*}
\tau'(\pi(w)) & = \tau(\pi(x_1x_2\dots x_\ell x_{\ell +1})) + 2 \pi_Z(v_1\dots v_{\ell+1}) \\
& =\tau(\pi(x_1x_2\dots x_\ell)) + \pi_T(u)  + 2 \pi_Z(v_1\dots v_{\ell})+ 2\pi_Z( v_{\ell+1})\\
& = \dagger + \pi_T(u)  + 2 \pi_Z( v_{\ell+1}) = \dagger'
\end{align*}
Therefore the Balancing property holds.

Let $\cL$ be the language accepted by $\bM$. It was observed in the paragraph after defining the Balancing property that $\pi(\cL)\subseteq P$.

On the other hand, to see that $P \subseteq  \pi(\cL)$, observe first that given $(g,n)\in P$ there is some $w\in \bT^{-1}(T^*)$ such that $\pi_X(w)=g$.
Suppose that a possible way for $\bT$ to process $w=x_1\dots x_\ell$ is a sequence of states of $\bT$ with outputs in $T^*$ as follows $$s_0, \; (s_1,u_1)\in \delta_\bT(s_0,x_1),\;(s_2, u_2)\in \delta_\bT(s_1,x_2),  \dots,\; (s_\ell,u_\ell)\in \delta_\bT(s_{\ell-1},x_{\ell})$$ with $s_i\in \cS$.
Consider the word $${w}'=x_1z^{\nu_1}x_2z^{\nu_2}\dots x_\ell z^{\nu_\ell} \text{  and  the sequence  } \dagger_0=0,\dagger_1,\dots, \dagger_\ell \text{ in } \{0,1\}$$  where the $\nu_i$'s and $\dagger_i$'s are determined by $\dagger_{i+1}= \tau(\pi(x_1 \dots x_i))+ \dagger_i + 2(\nu_1+\dots +\nu_i)$.
Then, by the definition of the transitions of $\bM$ we see that,  $(s_{i+1},\dagger_{i+1})$ is the end point of an edge of $\bM$ from $(s_i,\dagger_i)$ with label $x_{i+1}z^{\nu_i+1}$.

Let now $m= n- \sum \nu_i$.
Observe first that $\pi(w'z^m)=(g,n)$.
Indeed, $\pi(w'z^m)=\pi(x_1\dots x_\ell)\pi(z^{\nu_1}\dots z^{\nu_\ell}z^m)=(g,m +\sum \nu_i)=(g,n)$.
Finally, observe that $w'z^m\in \cL$.
By the Balancing property and construction of $w'$ we have that $(s_\ell,\dagger_\ell)\in \delta_\bM((s_0,0),w') $ with $\tau'(\pi(w'))=\dagger_\ell\in \{0,1\}$. 
As $(g,n)\in P$ and $\tau'((g,n))= \tau'(\pi(w'))+2m=\dagger_\ell +2m>0$ we have that $m\geq 0$ if $\dagger_\ell =1$ and $m>1$ if $\dagger_\ell=0$.
We have that if $m=0$, $(s_\ell,\dagger_\ell)$ is an accepting state, and if $m>0$ then $\delta_\bM((s_\ell,\dagger_\ell), z^m)=f$.
In either case $\delta_\bM((s_\ell,\dagger_\ell), z^m)\subseteq \delta_\bM((s_0,0),w'z^m)$ contains an accepting state and $w'z^m\in \cL$.
\end{proof}

\begin{figure}[ht]
\begin{center}
\resizebox{\textwidth}{!}{
\import{fig/}{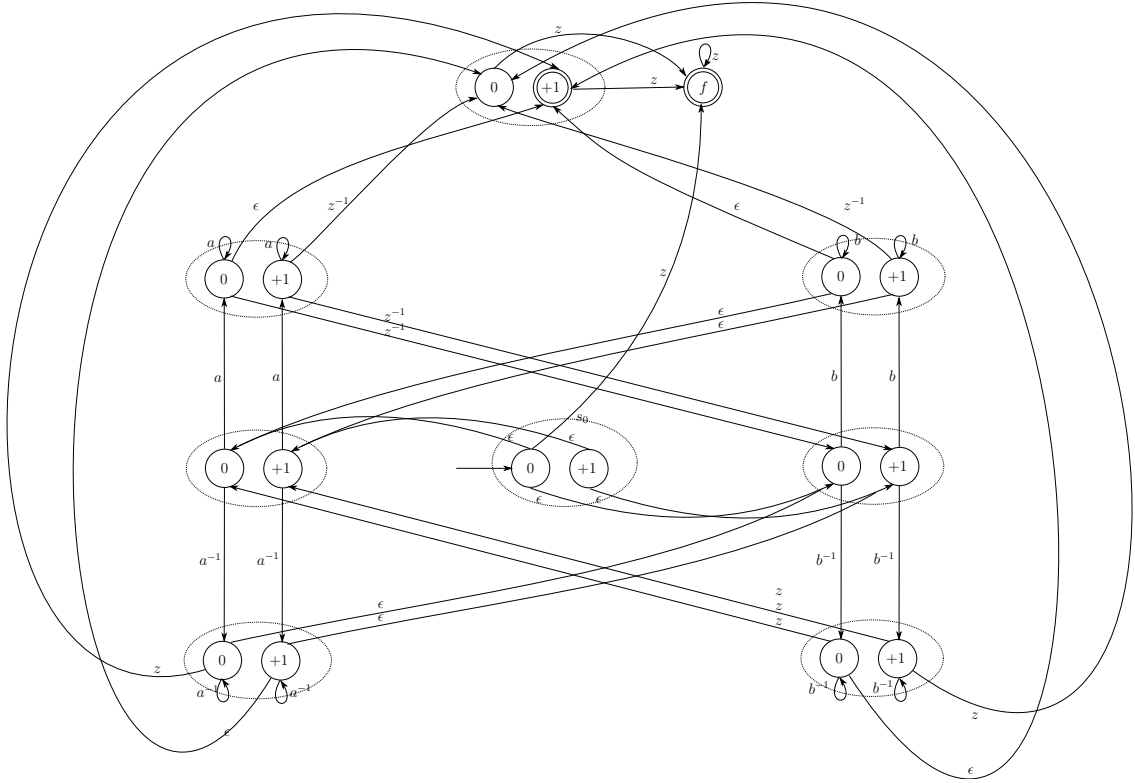}
}
\end{center}
\caption{A finite state automaton for $F_2 \times \bZ$. It looks similar to Figure \ref{fig: transducer-f2}, with each state doubled and extra arrows added. The state $(s_0,1)$ could have been deleted as it has no incoming arrow, but we have chosen to keep it in the picture as it is an state and arrows that appear from the construction.}
\label{fig: reg-trans}
\end{figure}

\begin{thm}\label{thm: main}
Let $G_1,G_2, \dots G_n$ be finitely generated groups with a common subgroup $C$ such that each $G_i$ admits a $\Reg$-positive cone relative to $C$ and $C$ admits a $\Reg$-positive cone.
Let $G$ be the free product of the $G_i$'s amalgamated over $C$. Then $G\times \bZ$ admits a $\Reg$-left-order.
\end{thm}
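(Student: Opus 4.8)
The plan is to assemble the machinery developed throughout this section, since the hypotheses of the theorem are tailored exactly to the constructions already in place. Observe first that the assumption that $C$ admits a $\Reg$-positive cone forces $C$ to be finitely generated, so the data $(G_i)_{i}$, $C$ satisfy precisely the hypotheses of Proposition \ref{prop: free-prod-cf}.

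First I would invoke Proposition \ref{prop: free-prod-cf} to obtain an ordering quasi-morphism $\tau\colon G\to 2\bZ+1\cup\{0\}$ with kernel $C$, together with a $\tau$-transducer $\bT$. This $\tau$ is of exactly the shape required by Proposition \ref{prop: pos-cone-cross-z} and Theorem \ref{thm: key-embedding}. Applying Proposition \ref{prop: pos-cone-cross-z} then shows that $P=\{(g,n)\in G\times\bZ\mid \tau(g)+2n>0\}$ is a positive cone relative to $C\times\{0\}$ for $G\times\bZ$. Since a $\tau$-transducer exists, Theorem \ref{thm: key-embedding} upgrades this to the statement that $P$ is represented by a regular language; that is, $P$ is a $\Reg$-positive cone relative to $C\times\{0\}$.

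It then remains to supply a $\Reg$-positive cone on the subgroup and glue. By hypothesis $C$ carries a $\Reg$-positive cone $P_C$, described by a regular language over a finite generating set $Y$ of $C$. Viewing $Y$ inside the generating set $X\sqcup Z$ of $G\times\bZ$ used in Theorem \ref{thm: key-embedding} (where $Y\subseteq X$ and the letters in $Z$ evaluate into the $\bZ$-factor), that same language contains no $Z$-letters and hence evaluates onto $P_C\times\{0\}$; by Lemma \ref{lem: independence gen set} this exhibits a $\Reg$-positive cone for $C\times\{0\}$. As $\Reg$ is a full AFL (Proposition \ref{prop: AFL}) it is closed under union, so the moreover part of Lemma \ref{lem: extend from convex}, applied with the subgroup $C\times\{0\}$, the relative cone $P$ and the cone $P_C\times\{0\}$, yields that $P\cup(P_C\times\{0\})$ is a $\Reg$-positive cone for $G\times\bZ$. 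Hence $G\times\bZ$ admits a $\Reg$-left-order.

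The genuine difficulty of this result has already been discharged inside Theorem \ref{thm: key-embedding}: the relative cone cut out by $\tau$ on $G$ alone is only one-counter, and it is the balancing property — using the auxiliary $\bZ$-factor to absorb the counter maintained by the transducer — that converts it into a regular language. In the present proof the only work that remains is the bookkeeping of matching generating sets so that Lemma \ref{lem: extend from convex} applies on the nose, which I expect to be routine.
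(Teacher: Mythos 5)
Your proposal is correct and follows essentially the same route as the paper's proof: invoke Proposition \ref{prop: free-prod-cf} to get the ordering quasi-morphism $\tau$ with kernel $C$ and a $\tau$-transducer, apply Theorem \ref{thm: key-embedding} (with Proposition \ref{prop: pos-cone-cross-z}) to obtain a $\Reg$-positive cone for $G\times\bZ$ relative to $C\times\{0\}$, and glue in a $\Reg$-positive cone of $C\times\{0\}$ via Lemma \ref{lem: extend from convex}. Your only additions --- checking that regularity of a cone on $C$ forces $C$ to be finitely generated, and verifying that the language for $P_C$ over $Y\subseteq X\sqcup Z$ evaluates onto $P_C\times\{0\}$ so the union is taken over a common generating set --- are details the paper leaves implicit, and you handle them correctly.
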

\begin{proof}
By Proposition \ref{prop: free-prod-cf}, $G$ admits a $\tau$-transducer where $\tau$ is  an ordering quasi-morphism $\tau\colon G\to 2\bZ+1\cup \{0\}$ with kernel $C$.
Now by Theorem \ref{thm: key-embedding}, $G\times \bZ$ has $\Reg$-positive relative to $C\times \{0\}$.
Finally, since $C\cong C\times \{0\}$ has $\Reg$-positive cones, we get from Lemma \ref{lem: extend from convex} that $G\times \bZ$ has relative positive cones.
\end{proof}

\begin{cor}\label{cor: A*BxZ}
Let $A, B$ be groups admitting $\Reg$-left-orders. 
Then $(A*B)\times \bZ$ admits $\Reg$-left-orders.
\end{cor}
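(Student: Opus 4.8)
The plan is to deduce this directly from Theorem \ref{thm: main} by viewing the ordinary free product $A * B$ as the degenerate amalgamated free product over the trivial subgroup $C = \{1\}$. Concretely, I would take $n = 2$, $G_1 = A$, $G_2 = B$, and $C = \{1\}$, so that the free product of the $G_i$ amalgamated over $C$ is exactly $A * B$. The whole corollary then amounts to checking that the hypotheses of Theorem \ref{thm: main} hold in this setting.

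The one point requiring verification is that these hypotheses translate into the assumption that $A$ and $B$ admit $\Reg$-left-orders. For this I would observe that a positive cone relative to the trivial subgroup $\{1\}$ is precisely an ordinary positive cone: by Definition \ref{defn: rel-order}, $P$ is a positive cone relative to $\{1\}$ if and only if $G = P \sqcup \{1\} \sqcup P^{-1}$, which is the defining condition for a positive cone of $G$. Hence a $\Reg$-positive cone relative to $\{1\}$ is the same thing as a $\Reg$-positive cone, and the assumption that $A$ and $B$ admit $\Reg$-left-orders supplies the required relative positive cones for $G_1$ and $G_2$ over $C$. I would also note that admitting a $\Reg$-positive cone forces finite generation (a regular positive cone is defined relative to a \emph{finite} generating set), so the finite-generation hypothesis of Theorem \ref{thm: main} is automatic. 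Finally, it remains to check that $C = \{1\}$ admits a $\Reg$-positive cone; here the positive cone is the empty set, represented by the empty (regular) language, so this hypothesis holds vacuously.

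With all hypotheses of Theorem \ref{thm: main} in place, the theorem yields a $\Reg$-left-order on $(A * B) \times \bZ$, which is the claim. I do not expect any genuine obstacle in the corollary itself: all of the content lives in Theorem \ref{thm: main}, namely in the construction (via Proposition \ref{prop: free-prod-cf}) of an ordering quasi-morphism $\tau$ admitting a $\tau$-transducer and with kernel $C$, and in the passage (via Theorem \ref{thm: key-embedding}) from the resulting one-counter relative cone to a genuine regular cone on $G \times \bZ$, where the $\bZ$ factor is used to absorb the increments of $\tau$. The only care needed is to confirm, as above, that the notions of relative positive cone and of $\Reg$-positive cone coincide in the degenerate case $C = \{1\}$.
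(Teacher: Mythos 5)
Your proof is correct and is exactly the argument the paper intends: the paper states Corollary \ref{cor: A*BxZ} without further proof as the immediate specialization of Theorem \ref{thm: main} to the trivial amalgamated subgroup $C=\{1\}$, which is precisely your route. Your routine verifications—that a positive cone relative to $\{1\}$ is an ordinary positive cone, that the trivial group's positive cone is the empty set represented by the empty regular language, and that admitting a $\Reg$-left-order already forces finite generation—are exactly the checks that make this specialization legitimate, so nothing is missing.
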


Also from the previous theorem, and by the discussion prior to  Corollary \ref{cor: BS(1,m;1,n)} we have
\begin{cor}
For $n,m\geq 1$, the group  $BS(1,m;1,n)\times \bZ$ has a $\Reg$-positive cone. 
\end{cor}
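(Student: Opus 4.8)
The plan is to realize $BS(1,m;1,n)\times\bZ$ as a group to which Theorem \ref{thm: main} applies directly, so that essentially all the work has already been done in the preceding results. First I would record the amalgam decomposition already exhibited in the excerpt,
$$BS(1,m;1,n)\cong \langle a,b\mid aba^{-1}=b^m\rangle *_{\langle a\rangle} \langle a,c\mid aca^{-1}=c^n\rangle,$$
so that $G\coloneqq BS(1,m;1,n)$ is the free product of the two factors $G_1=BS(1,m)$ and $G_2=BS(1,n)$ amalgamated over the common infinite cyclic subgroup $C=\langle a\rangle\cong\bZ$.

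Next I would verify the hypotheses of Theorem \ref{thm: main} for this decomposition. The first — that each factor $G_i$ admits a $\Reg$-positive cone relative to $C$ — is exactly the content of Lemma \ref{lem: BS(1,q)} whenever the defining parameter is at least $2$. The only point requiring a separate (and trivial) remark is the degenerate case where $m=1$ or $n=1$: there the corresponding factor is $BS(1,1)\cong\bZ^2=\langle a\rangle\times\langle c\rangle$, and the language $(\{a\}^*\cup\{a^{-1}\}^*)\cdot\{c\}^+$ is regular and evaluates onto $\{a^k c^\ell\mid \ell>0\}$, which is a positive cone relative to $\langle a\rangle$ (it is a subsemigroup by commutativity and partitions $\bZ^2$ as $P\sqcup\langle a\rangle\sqcup P^{-1}$). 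The second hypothesis — that $C\cong\bZ$ admits a $\Reg$-positive cone — is Example \ref{ex: cyclic}.

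With both hypotheses in hand, Theorem \ref{thm: main} yields a $\Reg$-left-order on $G\times\bZ=BS(1,m;1,n)\times\bZ$, which by Definition \ref{defn: C-left-order} is precisely a $\Reg$-positive cone, completing the argument. Since everything reduces to checking the hypotheses of Theorem \ref{thm: main}, there is no genuine obstacle here; the one thing to be careful about is not overlooking the boundary cases $m=1$ and $n=1$, where one must supply the $\bZ^2$ relative positive cone by hand rather than quoting Lemma \ref{lem: BS(1,q)}, which is stated only for parameters $\geq 2$.
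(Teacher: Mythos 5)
Your proposal is correct and follows essentially the same route as the paper: both realize $BS(1,m;1,n)$ as the amalgam $BS(1,m)*_{\langle a\rangle}BS(1,n)$, invoke Lemma \ref{lem: BS(1,q)} for the relative $\Reg$-positive cones on the factors, and then apply Theorem \ref{thm: main} to get the regular positive cone on the product with $\bZ$. Your explicit treatment of the degenerate cases $m=1$ or $n=1$ (where the factor is $\bZ^2$ and Lemma \ref{lem: BS(1,q)}, stated only for $q\geq 2$, does not literally apply) is a correct and welcome addition that the paper leaves implicit.
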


Another interesting application of the results of this paper, is the following.

\begin{thm}\label{thm: free-by-ZxZ}
Suppose that $G$ is a (non-abelian finitely generated free)-by-$\bZ$ group. 
Then, no lexicographic left-order $\prec$ on $G$ where $\bZ$ leads is regular.

However, there is a lexicographic left-order  on $G$ where $\bZ$ leads  that  is one-counter and extends to regular left-order on $G\times \bZ$.
\end{thm}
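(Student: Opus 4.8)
The plan is to handle the two assertions separately, writing $G=F\rtimes\bZ$ (the extension splits because $\bZ$ is free) with retraction $f\colon G\to\bZ$, non-abelian free kernel $F=\ker f$, and a complement generated by some $a$, so that $\langle a\rangle\cong\bZ$. For the first (negative) assertion I would argue by contradiction. Any lexicographic left-order $\prec$ led by $\bZ$ has $F=\ker f$ as a $\prec$-convex subgroup, and $F$ is finitely generated; so if $\prec$ were regular, Proposition \ref{prop: convex implies L-convex} would imply that the restriction of $\prec$ to $F$ is a $\Reg$-left-order. This contradicts the theorem of Hermiller and \u{S}uni\'{c} \cite{HS} that a non-abelian free group admits no regular positive cone. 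Hence no lexicographic left-order led by $\bZ$ is regular, with essentially no computation.

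For the positive assertion, the first step is to fix Dicks--\u{S}uni\'{c} data on the free factor. Realizing $F=\langle x_1\rangle*\dots*\langle x_k\rangle$ as a free product of infinite cyclic groups, Proposition \ref{prop: free-prod-cf} (with trivial amalgamated subgroup) produces an ordering quasi-morphism $\tau\colon F\to 2\bZ+1\cup\{0\}$ with trivial kernel, together with a $\tau$-transducer; by Proposition \ref{prop: transducer imply one-counter} the cone $P_F=\{w\in F\mid\tau(w)>0\}$ is $\onecounter$. Applying Proposition \ref{prop:extensions} to $1\to F\to G\xrightarrow{f}\bZ\to 1$ with the one-counter cone $P_F$ and the regular cone on $\bZ$, I obtain the $\onecounter$ positive cone $P_G=f^{-1}(\bZ_{\geq 1})\cup P_F$, whose order is exactly the lexicographic one on $G$ led by $\bZ$.

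The crux --- and the step I expect to be the main obstacle --- is that $P_G$ is itself not cut out by an ordering quasi-morphism on $G$ (the $F$-direction is unbounded with respect to $f$), so the embedding theorem cannot be applied to $G$ directly. The device I would use is to regroup $G\times\bZ\cong(F\times\bZ)\rtimes\langle a\rangle$, where $a$ acts via its action on $F$ and trivially on the new $\bZ$ factor; checking that this really is an isomorphism of groups is the one structural point that needs care. Then I would apply Theorem \ref{thm: key-embedding} to $F$ and $\tau$ to get a regular positive cone $\hat P_0=\{(w,m)\in F\times\bZ\mid\tau(w)+2m>0\}$ on $F\times\bZ$, and feed $\hat P_0$ together with the regular cone on $\langle a\rangle$ into Proposition \ref{prop:extensions} for the extension $1\to F\times\bZ\to G\times\bZ\xrightarrow{\hat f}\langle a\rangle\to 1$, where $\hat f=f\circ\mathrm{pr}_G$. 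This yields a regular lexicographic cone $\hat P=\hat f^{-1}(\bZ_{\geq 1})\cup\hat P_0$ on $G\times\bZ$ led by $\langle a\rangle$.

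It remains to identify the restriction of $\hat P$ to $G\times\{0\}\cong G$. Writing $g=wa^{k}$ with $w\in F$ and $k\in\bZ$, the element $(g,0)$ has $\langle a\rangle$-coordinate $k=f(g)$ and $F\times\bZ$-coordinate $(w,0)$ under the regrouping, so $(g,0)\in\hat P$ iff $f(g)>0$, or $f(g)=0$ and $\tau(w)>0$; this is precisely membership of $g$ in $P_G=f^{-1}(\bZ_{\geq 1})\cup P_F$. Thus $\hat P$ is a regular positive cone on $G\times\bZ$ restricting to the one-counter cone $P_G$ on $G$, and $P_G$ is not regular by the first assertion. The only delicate verifications are the regrouping isomorphism and this coordinate bookkeeping; all remaining steps are direct citations of the results above.
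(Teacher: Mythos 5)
Your proposal is correct and follows essentially the same route as the paper's proof: the negative assertion via Proposition \ref{prop: convex implies L-convex} together with the Hermiller--\u{S}uni\'{c} theorem, and the positive assertion via the Dicks--\u{S}uni\'{c} ordering quasi-morphism with a $\tau$-transducer on the free kernel (Propositions \ref{prop: free-prod-cf} and \ref{prop: transducer imply one-counter}), Proposition \ref{prop:extensions} for the one-counter lexicographic order on $G$, the embedding theorem for $F\times\bZ$, and Proposition \ref{prop:extensions} again for $G\times\bZ$ viewed as an $(F\times\bZ)$-by-$\bZ$ extension. The only cosmetic difference is that you invoke Theorem \ref{thm: key-embedding} directly with an explicit regrouping $G\times\bZ\cong(F\times\bZ)\rtimes\langle a\rangle$ and verify the restriction by hand, whereas the paper cites Theorem \ref{thm: main} and works with $\tilde f\colon G\times\bZ\to\bZ$, $(g,n)\mapsto f(g)$, without needing the splitting.
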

\begin{proof}
From Proposition \ref{prop: convex implies L-convex}, if $G$ admits a regular lexicographic left-order where $\bZ$ leads, then there will be a regular order on a finitely generated free group, contradicting  the theorem of Hermiller and \u{S}uni\'{c} \cite{HS} that says that non-abelian free groups do not admit regular positive cones.

Suppose that $f\colon G\to \bZ$ is a surjective homomorphism with kernel $F_n$, a free group of rank $n$. By Proposition \ref{prop: free-prod-cf} (see also \cite{Sunik13,Sunik13b}), there is  an ordering-quasimorphism  $\tau\colon F_n \to 2\bZ+1 \cup \{0\}$ with trivial kernel admitting a $\tau$-transducer. 
By Proposition \ref{prop: transducer imply one-counter}, $F_n$ admits a one-counter positive cone, and by  Proposition \ref{prop:extensions}, since $\bZ$ has regular orders, $G$ has a one-counter positive cone.

By Theorem \ref{thm: main},  we see that $F_n\times \bZ$ has a regular positive cone.
Note that $F_n\times \bZ$ is the kernel of $\tilde{f}\colon G\times \bZ\to \bZ$ given by $(g,n)\mapsto f(g)$.
As $\bZ$ has regular positive cones, Proposition \ref{prop:extensions} guaranties that $G\times \bZ$ has regular positive that is lexicographic with leading factor the quotient, when viewing $G\times \bZ$ as a ($F_n\times \bZ$)-by-$\bZ$ extension.
The restriction of this order on $G$ is still lexicographic.
\end{proof}

Some families of groups that are known to be (finitely-generated free)-by-cyclic are provided in the next corollary. We first recall the notion of Artin group.

\begin{defn}[Artin groups and defining graphs]
Let $\Gamma$ be a finite simplicial graph with edges labeled by integers greater or equal to two. 
We associate to $\Gamma$ a group $A(\Gamma)$ whose presentation has generators corresponding to the vertices of $\Gamma$ and the relations are of the form
$$\underbrace{aba\dots}_{n \text{ letters }} = \underbrace{bab\dots}_{n \text{ letters }}$$
 where $\{a,b\}$ is an edge of $\Gamma$ labeled with $n$. 
 The graph $\Gamma$ is called the \emph{defining graph} of the \emph{Artin group} $A(\Gamma)$.
\end{defn}

\begin{cor}
Let $A(\Gamma)$ be an Artin group whose defining graph $\Gamma$ is a tree. Then $G$ has one-counter left-orders and $G\times \bZ$ has regular left-orders.
\end{cor}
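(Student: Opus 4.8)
The statement concerns $G=A(\Gamma)$, and the plan is to exhibit $A(\Gamma)$ as a (finitely generated free)-by-$\bZ$ group and then apply Theorem \ref{thm: free-by-ZxZ}. The relevant homomorphism is the exponent-sum map $\phi\colon A(\Gamma)\to\bZ$ sending every standard generator to $1$. It is well defined because each defining relation $\underbrace{aba\cdots}_n=\underbrace{bab\cdots}_n$ has the same length on both sides, and it is surjective; write $N=\ker\phi$, so that $A(\Gamma)$ is an extension $1\to N\to A(\Gamma)\xrightarrow{\phi}\bZ\to1$.

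The crucial input is that, when $\Gamma$ is a tree, $N$ is a finitely generated free group; this is the sense in which $A(\Gamma)$ is known to be (finitely generated free)-by-cyclic. The freeness I would obtain by induction on the number of edges, using the decomposition of Artin groups along full subgraphs. Picking a leaf $v$ of $\Gamma$, with unique incident edge $\{u,v\}$ labelled $m$, and setting $\Gamma'=\Gamma\setminus\{v\}$, one has the amalgam $A(\Gamma)=A(\Gamma')*_{\langle u\rangle}A_m$ over the cyclic vertex group $\langle u\rangle\cong\bZ$, where $A_m$ is the dihedral Artin group on the removed edge. Since $\phi$ is injective on $\langle u\rangle$ (hence on all its conjugates, as $\phi$ factors through the abelianization), the normal subgroup $N$ intersects every conjugate of $\langle u\rangle$ trivially. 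Thus $N$ acts on the Bass--Serre tree of the splitting with trivial edge stabilizers, and by the structure theorem for groups acting on trees it is a free product of a free group with conjugates of $N\cap A(\Gamma')=\ker(\phi|_{A(\Gamma')})$ and $N\cap A_m=\ker(\phi|_{A_m})$ (using normality of $N$ to identify $N\cap gHg^{-1}$ with $\ker(\phi|_H)$ up to conjugacy). These are free by the inductive hypothesis, the base case being the classical fact that dihedral Artin groups are free-by-cyclic (for instance the kernel for $B_3=A_3$ is $F_2$), so $N$ is free.

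The step I expect to be the main obstacle is finite generation of $N$, equivalently the assertion that both $\phi$ and $-\phi$ lie in the Bieri--Neumann--Strebel invariant $\Sigma^1(A(\Gamma))$. This does not follow formally from finite generation of the vertex pieces, since the Bass--Serre tree has infinitely many edges; it is precisely here that the tree hypothesis on $\Gamma$ is essential, and I would invoke the known structural result that tree Artin groups have finitely generated (free) kernel $N$.

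Granting this, the corollary follows at once. If $N$ is non-abelian, then $A(\Gamma)$ is (non-abelian finitely generated free)-by-$\bZ$ and Theorem \ref{thm: free-by-ZxZ} yields directly a one-counter left-order on $A(\Gamma)$ and a regular left-order on $A(\Gamma)\times\bZ$. Otherwise $N$ has rank at most $1$, so $N$ is trivial or $\bZ$ and $A(\Gamma)$, being an extension of a poly-$\bZ$ group by $\bZ$, is itself poly-$\bZ$; then Corollary \ref{cor: polycyclic} gives that both $A(\Gamma)$ and $A(\Gamma)\times\bZ$ admit $\Reg$-left-orders, which are in particular one-counter since $\Reg\subseteq\onecounter$.
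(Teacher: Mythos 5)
Your proof is correct and takes essentially the same route as the paper: both exhibit $A(\Gamma)$ as a (finitely generated free)-by-$\bZ$ group via the exponent-sum homomorphism and then apply Theorem \ref{thm: free-by-ZxZ}, and the structural fact you concede must be invoked for finite generation of the kernel is exactly the result of Hermiller and Meier that the paper cites (which also gives freeness directly, making your Bass--Serre induction a nice but optional supplement). Your separate handling of the degenerate case of abelian kernel (e.g.\ $\Gamma$ a single vertex, or a single edge labelled $2$, where $A(\Gamma)$ is $\bZ$ or $\bZ^2$) is a point of care the paper's one-line proof silently skips, since Theorem \ref{thm: free-by-ZxZ} is stated only for non-abelian free kernel.
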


\begin{proof}
By a result of Hermiller and Meier \cite{HermillerMeier}, $A(\Gamma)$ admits a short exact sequence of the form $1 \to F_m \to A(\Gamma) \to \mathbb{Z} \to 1$ (and in fact  $m = \sum_{e_i \in \Gamma} (n_i - 1)$, where $n_i$ is the label of the edge $e_i$). Hence, $A(\Gamma)$ falls into the hypothesis of Theorem \ref{thm: free-by-ZxZ} and the conclusion follows.
\end{proof}

\begin{defn}
A right-angled Artin group is an Artin group whose defining graph only has edges with label 2.
\end{defn}

\begin{cor}\label{cor: RAAGS}
Let $G$ be a right-angled Artin group based on a connected graph with no induced subgraph isomorphic to $C_4$ (the cycle with $4$ edges) or $L_3$ (the line with 3 edges).
Then $G$ has  regular left-orders.
\end{cor}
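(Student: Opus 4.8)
The plan is to argue by induction on the number of vertices of $\Gamma$, using the fact that a connected graph with no induced $C_4$ or $L_3$ always admits a \emph{universal vertex}, that is, a vertex adjacent to every other vertex. I would justify this as follows: a connected graph with no induced $L_3$ (equivalently, no induced path on four vertices, i.e.\ a cograph) on at least two vertices decomposes as a join $\Gamma=\Gamma_1 * \Gamma_2$ of two nonempty induced subgraphs; and if in addition there is no induced $C_4$, then at least one factor, say $\Gamma_1$, must be a complete graph, since otherwise a non-adjacent pair in $\Gamma_1$ together with a non-adjacent pair in $\Gamma_2$ would induce a $C_4$. Any vertex of $\Gamma_1$ is then adjacent to all of $\Gamma$. (These are the \emph{trivially perfect} graphs, and this is their classical structure.)

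With this in hand, the base case $|V(\Gamma)|=1$ gives $A(\Gamma)\cong\bZ$, which has a regular positive cone by Example \ref{ex: cyclic}. For the inductive step, let $\Gamma$ be connected with at least two vertices and no induced $C_4$ or $L_3$, and pick a universal vertex $v$. Since $v$ commutes with every generator, $A(\Gamma)\cong\bZ\times A(\Gamma')$ where $\Gamma'=\Gamma\setminus\{v\}$ is an induced subgraph, again free of induced $C_4$ and $L_3$, with strictly fewer vertices. Let $\Delta_1,\dots,\Delta_k$ be the connected components of $\Gamma'$, so that $A(\Gamma')\cong A(\Delta_1)*\dots*A(\Delta_k)$; each $\Delta_i$ is connected, trivially perfect, and smaller than $\Gamma$, so by induction each $A(\Delta_i)$ admits a regular positive cone.

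It then remains to produce a regular left-order on $\bZ\times A(\Gamma')$. If $\Gamma'$ is connected ($k=1$), then $A(\Gamma')$ is regular and $A(\Gamma)\cong\bZ\times A(\Gamma')$ is regular by Proposition \ref{prop:extensions}, since a direct product is an extension. If $\Gamma'$ is disconnected ($k\ge 2$), then $A(\Gamma')$ is a nontrivial free product and hence, by \cite{HS}, carries no regular positive cone on its own; this is exactly where the spare $\bZ$ coming from the universal vertex is needed. Applying Theorem \ref{thm: main} with amalgamating subgroup $C=\{1\}$ — so that relative positive cones are ordinary positive cones and the trivial group visibly has a regular positive cone — shows that $(A(\Delta_1)*\dots*A(\Delta_k))\times\bZ\cong A(\Gamma)$ admits a regular left-order, completing the induction.

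The main obstacle is the graph-theoretic input together with the correct bookkeeping of the induction: one must see that the hypothesis forces a universal vertex in the connected case and that removing it splits off exactly one $\bZ$ direct factor while leaving a smaller trivially perfect graph. The conceptual heart is the interaction of this $\bZ$ factor with Theorem \ref{thm: main}: although a free product never admits a regular positive cone, its product with $\bZ$ does, and the universal vertex guarantees that this $\bZ$ is always available. The remaining points — that a universal vertex yields a direct-product splitting of the RAAG, that a disconnected defining graph yields a free-product splitting, and that the relevant language classes are closed under the operations used — are routine given the results already established.
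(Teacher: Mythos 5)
Your proof is correct and follows essentially the same route as the paper: induction on the defining graph using the existence of a universal vertex (which the paper cites from Droms and you instead derive from the cograph/join structure), splitting $A(\Gamma)\cong \bZ\times A(\Gamma')$, and handling the disconnected case via the free-product-times-$\bZ$ machinery. Your invocation of Theorem \ref{thm: main} with $C=\{1\}$ is just the packaged form of the paper's citation of Proposition \ref{prop: free-prod-cf} together with Theorem \ref{thm: key-embedding}, so the two arguments coincide in substance.
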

\begin{proof}
The proof is by induction on the size of the defining graph $\Gamma$.
If the graph has one vertex, then $G\cong \bZ$ and we know that $\bZ$ only has regular left-orders.
Now suppose that the defining graph has more than one vertex.
Droms \cite[Lemma]{Droms} observed that in that case there is a vertex connected to all other vertex of the graph $\Gamma$. 
That is $G\cong \bZ\times H$ where $H$ is right-angled Artin group based on a graph $\Gamma'$ that contains no induced subgraph isomorphic to $C_4$ or $L_3$. 
Note that $\Gamma'$ has fewer vertices that $\Gamma$.
Then, by induction, each connected component of $\Gamma'$ defines a subgroup of $H$ that has regular left-orders.
If $\Gamma'$ is connected, then $H$ has regular left-orders and so does $G$.
If $\Gamma'$ is not connected, then $H=A(\Gamma')$ is a free product of groups having regular left-orders. By Proposition \ref{prop: free-prod-cf} and Theorem \ref{thm: key-embedding} we get that $H\times \bZ$ has regular left-orders.
\end{proof}

\begin{rem}
It follows from \cite[Theorem 4]{HS}, that there is no positive cone $P$ on a right-angled Artin group defined over a graph of diameter $\geq 3$ such the set of  all geodesic words in the standard generating set that represent elements of $P$ form a regular language.

We remark that all the defining graphs of considered in  Corollary \ref{cor: RAAGS} have diameter at most two.
We also point out that the words of the regular left-orders constructed for $F_2\times \bZ$ do not induce (uniformly quasi)-geodesic paths.
\end{rem}

\appendix
\section{Appendix: pre-images of positive cones}
\label {appendix}
In this appendix, we explore the following definition.
\begin{defn}
Let $\cC$ be a class of languages.
Let $G$ be a group finitely generated by $(X,\pi)$ and $\prec$ a left-order on $G$.
We say that $\prec$ is a {\it $\cC$-preimage left-order} if the language $\pi^{-1}(P_\prec)\in \cC$. 
\end{defn}
Note that the difference between a $\cC$-left-order and a $\cC$-preimage left-order is that given an element $g \in P_\prec$, a $\cC$-left-order does not contain all words which map to $g$, but has at least one. 
For a given class $\cC$, a left-order might be a $\cC$-left-order but not a $\cC$-preimage left-order.
In fact, for the lower classes in the Chomsky hierarchy such as $\Reg$ and $\CF$, we will see that there are few examples of $\cC$-preimage left-orders.  
On the other hand, for the class of recursively enumerable languages, the highest in the Chomsky hierarchy, we will see that admitting a $\cC$-left-order and is equivalent to admitting $\cC$-preimage left-orders if $G$ is finitely presented.

The first observation follows from Lemma \ref{lem: independence gen set}.
\begin{lem}\label{lem: C-preim indep gen set}
If $\cC$ is class of languages closed under inverse homomorphisms, then having a $\cC$-preimage left-order is independent of generating set.
\end{lem}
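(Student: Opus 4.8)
The plan is to reduce the statement to part (2) of Lemma \ref{lem: independence gen set}, applied to the subset $S = P_\prec$. Since $\pi_X^{-1}(P_\prec)$ is by definition \emph{the} preimage of $P_\prec$, the condition ``there exists $\cL_X \in \cC$ with $\cL_X = \pi_X^{-1}(P_\prec)$'' is nothing but the condition $\pi_X^{-1}(P_\prec) \in \cC$, and similarly for $Y$. Thus part (2) of Lemma \ref{lem: independence gen set} asserts precisely that $\pi_X^{-1}(P_\prec) \in \cC$ if and only if $\pi_Y^{-1}(P_\prec) \in \cC$, which is the desired conclusion.

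Because Lemma \ref{lem: independence gen set} is stated under the stronger hypothesis of closure under both homomorphisms and inverse homomorphisms, I would either invoke only the relevant half of its proof or, more transparently, reproduce the one-line argument directly so that it is clear that inverse-homomorphism closure alone suffices. First I would fix the two generating sets $(X,\pi_X)$ and $(Y,\pi_Y)$ and, using that $X$ generates $G$, choose for each $y \in Y$ a word $w_y \in X^*$ with $\pi_X(w_y) = \pi_Y(y)$. This prescription defines a monoid homomorphism $\psi \colon Y^* \to X^*$ by $y \mapsto w_y$, and by construction $\pi_X \circ \psi = \pi_Y$.

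The key identity is then $\pi_Y^{-1}(P_\prec) = \psi^{-1}\bigl(\pi_X^{-1}(P_\prec)\bigr)$, which follows immediately from $\pi_Y = \pi_X \circ \psi$. Hence if $\pi_X^{-1}(P_\prec) \in \cC$, closure of $\cC$ under inverse homomorphisms gives $\pi_Y^{-1}(P_\prec) = \psi^{-1}\bigl(\pi_X^{-1}(P_\prec)\bigr) \in \cC$. Exchanging the roles of $X$ and $Y$ (choosing a homomorphism $X^* \to Y^*$ lifting the identity on $G$) yields the reverse implication, so the two conditions are equivalent.

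There is no genuine obstacle here: the only point requiring a trivial verification is that $\psi$ is a well-defined monoid homomorphism, which is automatic since $X^*$ is free and we are merely specifying the images of the generators $y \in Y$. The argument uses closure under inverse homomorphisms and nothing else, matching exactly the hypothesis of Lemma \ref{lem: C-preim indep gen set}; this also explains why, in contrast to the image version in part (1) of Lemma \ref{lem: independence gen set}, closure under forward homomorphisms plays no role for preimage left-orders.
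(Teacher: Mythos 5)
Your proof is correct and takes essentially the same route as the paper, which simply observes that the lemma follows from part (2) of Lemma \ref{lem: independence gen set}; the standard argument omitted there is exactly the one you spell out, namely lifting each generator of $Y$ to a word over $X$ to get a monoid homomorphism $\psi\colon Y^*\to X^*$ with $\pi_X\circ\psi=\pi_Y$ and applying $\pi_Y^{-1}(P_\prec)=\psi^{-1}\bigl(\pi_X^{-1}(P_\prec)\bigr)$. Your added remark that closure under inverse homomorphisms alone suffices (unlike the image statement in part (1)) is accurate and correctly matches the hypothesis of the lemma.
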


One of the advantages of studying $\cC$-preimage left-orders compared to $\cC$-left-orders is that being a $\cC$-preimage left-order is preserved when passing to subgroups.
\begin{lem}\label{lem: preimage inheritance}
Let $\cC$ a class of languages closed under inverse homomorphism and intersection with regular languages.

Let $H$ be a finitely generated subgroup of $G$.
If $\prec$ is a $\cC$-preimage left-order on $G$ then the induced order on $H$ is also  a $\cC$-preimage left-order.
\end{lem}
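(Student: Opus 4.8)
The plan is to realize the preimage language of the restricted order as an intersection of $\pi^{-1}(P_\prec)$ with a regular set, so that the two assumed closure properties do all the work. Write $P=P_\prec$ for the positive cone of $\prec$ on $G$. The order induced on $H$ is just the restriction of $\prec$, whose positive cone is $P_H:=P\cap H$, and the goal is to exhibit a finite generating set of $H$ for which the full preimage of $P_H$ lies in $\cC$.

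First I would fix the generating set advantageously. Since $\cC$ is closed under inverse homomorphisms, Lemma \ref{lem: C-preim indep gen set} lets me replace the given generating set of $G$ by any finite generating set I like without disturbing the hypothesis $\pi^{-1}(P)\in\cC$. As $H$ is finitely generated, I choose a finite set $X_H\subseteq H$ generating $H$ and take $X$ to be the union of $X_H$ with a finite generating set of $G$ (viewing the elements of $X_H$ as letters). Then $X$ generates $G$, the restriction of $\pi$ to $X_H^*$ is exactly a generating map $\pi_H\colon X_H^*\to H$ for $H$, and $\pi^{-1}(P)\in\cC$ for this choice of $X$.

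The heart of the argument is then the identity
$$\pi_H^{-1}(P_H)\;=\;\pi^{-1}(P)\cap X_H^*.$$
To verify it, take $w\in X_H^*$. Every such $w$ evaluates into $H$, so $\pi(w)=\pi_H(w)\in H$, and consequently $\pi(w)\in P$ holds if and only if $\pi(w)\in P\cap H=P_H$. Hence $w\in\pi^{-1}(P)\cap X_H^*$ precisely when $\pi_H(w)\in P_H$, which is the claimed equality. Now $X_H^*$ is a regular language over $X$, and $\pi^{-1}(P)\in\cC$; since $\cC$ is closed under intersection with regular languages, $\pi_H^{-1}(P_H)\in\cC$, so $\prec$ restricts to a $\cC$-preimage left-order on $H$.

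I expect no serious obstacle here: the only point requiring care is the choice of a generating set of $G$ that literally contains a generating set of $H$, together with the observation that words over $X_H$ automatically evaluate into $H$. It is this last fact that makes restriction correspond to intersecting with the regular language $X_H^*$ and makes the condition ``$\pi(w)\in P$'' coincide with ``$\pi(w)\in P_H$'' on $X_H^*$. (Alternatively, one can bypass the generating-set adjustment and argue directly with inverse homomorphisms: on any finite generating set $Y$ of $H$, defining $\phi\colon Y^*\to X^*$ by sending each generator to a word in $X$ representing it gives $\pi_H^{-1}(P_H)=\phi^{-1}(\pi^{-1}(P))$, which lies in $\cC$ by closure under inverse homomorphisms alone; the intersection-with-regular formulation is the one matching the stated hypotheses.)
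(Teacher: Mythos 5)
Your proof is correct and is essentially the paper's own argument: the paper likewise adjusts the generating set of $G$ (via the closure under inverse homomorphisms, i.e.\ Lemma \ref{lem: C-preim indep gen set}) so that it contains a generating subset $Y$ for $H$, and then uses exactly the identity $\pi_Y^{-1}(H\cap P)=\pi^{-1}(P)\cap Y^*$ together with closure under intersection with the regular language $Y^*$. Your parenthetical remark that closure under inverse homomorphisms alone already suffices (via $\phi\colon Y^*\to X^*$ sending each generator of $H$ to a representing word) is a valid observation beyond what the paper records, but the main line of your argument coincides with the paper's proof.
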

\begin{proof}
By the previous lemma, we can assume that we have a finite generating set $(X,\pi)$ of $G$, and that there is a subset of $Y$ of $X$ such that $(Y,\pi_Y=\pi|_{Y^*})$ is a generating set for $H$. 
Thus, if $P$ is a positive cone of $G$ with $\pi^{-1}(P)$ in the class $\cC$, we get that $\pi_Y^{-1}(H\cap P)= \pi^{-1}(P)\cap Y^*$ is in $\cC$.
\end{proof}

Now we will see that the classes of $\Reg$-preimage left-orders and $\CF$-preimage orders are quite limited.

Let $(X,\pi)$ be a finite generating set of a group $G$.
Recall that the set $\WP(G,X)= \{w\in X^* : \pi(w) = 1\}= \pi^{-1}(\{1\})$ is called the {\it the word problem} of $G$ (with respect to $X$).
A classic result of Anisimov \cite{Anisimov} states that $\WP(G,X)$ is regular if and only if $G$ is finite.

\begin{prop}\label{prop: regular-preimage}
A finitely generated group $G$ has a $\Reg$-preimage left-order if and only if it is trivial. 
\end{prop}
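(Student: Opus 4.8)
The plan is to recover the word problem $\WP(G,X)=\pi^{-1}(\{1_G\})$ from the preimage of the positive cone and then invoke Anisimov's theorem. First I would fix a finite generating set $(X,\pi)$ and suppose that $\prec$ is a $\Reg$-preimage left-order, so that $\pi^{-1}(P_\prec)$ is a regular language. Since $\Reg$ is closed under homomorphisms, inverse homomorphisms and reversal (Proposition \ref{prop: AFL}), Lemma \ref{lem: negative cone in cC}(ii) applies and shows that $\pi^{-1}(P_\prec^{-1})$ is also regular.

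Next I would exploit the partition $G=P_\prec\sqcup P_\prec^{-1}\sqcup\{1_G\}$, which pulls back under $\pi$ to the disjoint decomposition $X^*=\pi^{-1}(P_\prec)\sqcup\pi^{-1}(P_\prec^{-1})\sqcup\pi^{-1}(\{1_G\})$. Hence $\pi^{-1}(\{1_G\})=X^*\setminus\bigl(\pi^{-1}(P_\prec)\cup\pi^{-1}(P_\prec^{-1})\bigr)$, and because $\Reg$ is closed under union and complement (Proposition \ref{prop: AFL}), this exhibits $\WP(G,X)=\pi^{-1}(\{1_G\})$ as a regular language. By Anisimov's theorem \cite{Anisimov}, $G$ is then finite. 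Since left-orderable groups are torsion-free, a finite left-orderable group must be trivial, giving $G=\{1_G\}$. For the converse, the trivial group has empty positive cone, whose preimage is the empty (hence regular) language, so it admits a $\Reg$-preimage left-order.

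There is no deep obstacle here; the only point that requires care is that knowing $\pi^{-1}(P_\prec)$ is regular does not by itself recover the word problem, since one also needs regularity of the preimage of the \emph{negative} cone in order to take complements. This is precisely the symmetry supplied by Lemma \ref{lem: negative cone in cC}(ii), which is therefore the key step of the argument.
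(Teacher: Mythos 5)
Your proof is correct and follows essentially the same route as the paper's: regularity of $\pi^{-1}(P_\prec^{-1})$ via Lemma \ref{lem: negative cone in cC}, recovery of $\WP(G,X)$ as the complement of $\pi^{-1}(P_\prec)\cup\pi^{-1}(P_\prec^{-1})$ using closure of $\Reg$ under union and complement, Anisimov's theorem to conclude $G$ is finite, and triviality of finite left-orderable groups. Your closing remark correctly identifies the one step needing care (regularity of the negative-cone preimage), which is exactly the role Lemma \ref{lem: negative cone in cC} plays in the paper's argument.
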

\begin{proof}
Let $(X,\pi)$ be finite generating set for $G$.
If $G$ is trivial, the positive cone is the empty set.

Suppose that $P$ is a $\Reg$-preimage positive cone. 
That is, $\pi^{-1}(P)$ is a regular language.
By Lemma \ref{lem: negative cone in cC}, we have that $\pi^{-1}(P^{-1})$ is also a regular language.
Since $WP(G,X)=X^*\setminus (\pi^{-1}(P^{-1})\cup \pi^{-1}(P))$, we get that  $WP(G,X)$ is regular, and by Anisimov's theorem $G$ must be finite.
Finally, a finite left-orderable groups must be trivial. 
\end{proof}

We now move on to the case  of $\CF$-preimage left-orders. 
We start showing that this class contains free groups.

\begin{prop}
Finitely generated free groups have a $\CF$-preimage left-orders.
\end{prop}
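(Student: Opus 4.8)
The plan is to exhibit a single concrete left-order, the one coming from the ordering quasi-morphism of Section \ref{sec: quasimorphism}, and to show that its full preimage is context-free. Write the free group as $F=\langle x_1\rangle * \cdots * \langle x_n\rangle$, a free product of $n$ copies of $\bZ$, and let $\tau\colon F\to 2\bZ+1\cup\{0\}$ be the ordering quasi-morphism of Example \ref{ex: quasimorphism on free product}, which has trivial kernel, so that $P_\tau=\{g\in F\mid \tau(g)>0\}$ is a genuine positive cone. By Proposition \ref{prop: free-prod-cf} there is a $\tau$-transducer $\bT$, i.e. a \emph{finite-state} device reading reduced words over $X=\{x_1^{\pm1},\dots,x_n^{\pm1}\}$ whose output over $\{t,t^{-1}\}$ has net number of $t$'s equal to $\tau$ of the input. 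Since whether $\pi(w)\in P_\tau$ depends only on the freely reduced form $\bar w$ of $w$, the task reduces to producing a context-free machine deciding, on input $w\in X^*$, whether $\tau(\bar w)>0$. By Lemma \ref{lem: negative cone in cC} together with the closure of $\CF$ under reversal and (inverse) homomorphisms from Proposition \ref{prop: AFL}, it is harmless to argue with either $P_\tau$ or $P_\tau^{-1}$. The base case $n=1$ is already Example \ref{ex:one-counter}, where free reduction and the $\tau$-counter literally coincide and the preimage is even one-counter; the content is the case $n\ge 2$.

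First I would build a pushdown automaton $\bM$ that performs free reduction: reading $w$ one letter at a time, $\bM$ keeps the reduced form $\bar u$ of the prefix $u$ read so far on its stack, pushing a letter that does not cancel the current top and popping when it does. Crucially, because $\bT$ is finite-state and its transitions are oblivious to the accumulated output, I would decorate each stack cell, alongside its letter, with the state of $\bT$ reached after the reduced prefix ending at that cell; a cancellation then merely pops a cell and thereby automatically restores the previous $\bT$-state. After all of $w$ is read, the stack encodes $\bar w$ together with the bounded per-cell $t$-contributions of $\bT$, whose sum is exactly $\tau(\bar w)$.

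The main obstacle is precisely the final comparison $\tau(\bar w)>0$: naively this calls for an unbounded running tally in addition to the reduction stack, whereas a pushdown automaton has only one unbounded memory. The plan to resolve this is to never store the tally explicitly. Instead, I would recognize $\pi^{-1}(P_\tau)$ non-deterministically by simulating, \emph{along} the reduction, the one-counter automaton $\bT^{-1}(\{u\mid \sharp_t(u)-\sharp_{t^{-1}}(u)>0\})$ of Proposition \ref{prop: transducer imply one-counter} acting on the reduced word, using the (bounded) reduction increments themselves to drive that automaton's counter, so that the single stack carries both the free reduction and the $\tau$-counter at once. I expect the cleanest packaging to be a context-free grammar for $\pi^{-1}(P_\tau)$ whose non-terminals track a $\bT$-state together with the sign regime of the partial $\tau$-value, and which generates the cancelling insertions $xx^{-1}$ freely at every position; establishing that this grammar is well-defined and generates exactly the words reducing into $P_\tau$ is the technical heart of the argument. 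I would then conclude by invoking Proposition \ref{prop: AFL} to confirm the resulting language lies in $\CF$, so that $\prec_{P_\tau}$ is a $\CF$-preimage left-order on $F$.
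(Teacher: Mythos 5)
You have correctly isolated the two ingredients the paper itself uses --- the $\onecounter$ language $\cL$ of \emph{reduced} words evaluating onto the Dicks--\u{S}uni\'{c} cone $P_\tau$ (Corollary \ref{cor: free prod admit one-counter order}, via Proposition \ref{prop: free-prod-cf}), and the fact that an arbitrary word of $\pi^{-1}(P_\tau)$ is a word of $\cL$ with words representing $1$ inserted at arbitrary positions --- but the machine you propose for combining them does not work as described. An online pushdown automaton whose single stack ``carries both the free reduction and the $\tau$-counter at once'' hits two fatal problems. First, cancellation must \emph{undo} counter moves: after reading a prefix $u$ the counter should record $\tau(\bar u)$, and reading $u^{-1}$ must return it to $0$; but a decrement performed earlier pops a symbol whose information is gone, so the pre-cancellation value cannot be restored, and it cannot be cached per cell either, since the partial values of $\tau$ are unbounded while the stack alphabet is finite. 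Second, even your decorated reduction stack (letters plus $\bT$-states, with bounded per-cell contributions summing to $\tau(\bar w)$) leaves the final test $\tau(\bar w)>0$ out of reach: totalling unboundedly many $\pm$-bounded contributions while popping at end of input is itself a one-counter computation on the reversed stack contents, and there is no second stack to perform it. For the same reason, a grammar whose non-terminals track ``the sign regime of the partial $\tau$-value'' cannot have finitely many non-terminals as stated; you flag this step as the technical heart, and it is precisely where the plan is incomplete.

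The missing idea --- and the paper's actual proof --- is to never run the counter and the reduction concurrently, but to guess the insertion decomposition globally and let closure properties do the work. Introduce a fresh symbol $e$ and the homomorphism $\phi\colon (X\cup\{e\})^*\to X^*$ deleting $e$; then $\phi^{-1}(\cL)$ marks all possible insertion points and lies in $\CF$ by closure under inverse homomorphism. Now apply the substitution $s$ with $s(e)=\WP(F,X)$ and $s(x)=x$ for $x \in X$: since $\WP(F,X)$ is context-free (the obvious reduction pushdown automaton --- your stack put to its proper, \emph{balanced} use) and $\CF$ is closed under substitution (\cite[Theorems 7.23 and 7.30]{HopUll}), the language $s(\phi^{-1}(\cL))$ is context-free, and it equals $\pi^{-1}(P_\tau)$ because each $g\in P_\tau$ has a unique reduced word, which necessarily lies in $\cL$, and every word evaluating to $g$ arises from it by such insertions. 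Note why this evades your interleaving problem: in the decomposition $w=u_0y_1u_1\cdots y_ku_k$ with $y_1\cdots y_k\in\cL$ and $u_i\in\WP(F,X)$, the letters of the $y_j$ are \emph{guessed} to survive and are never cancelled, so no counter move is ever undone, while each $u_i$-check is a balanced stack excursion returning to its starting height above the counter symbols. That global nondeterministic guess, packaged as a substitution, is what must replace your ``reduction increments driving the counter,'' and it turns the technical heart into a two-line closure argument.
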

\begin{proof}
Let $F$ be a finitely generated free group with generating set $X$. From Corollary \ref{cor: free prod admit one-counter order} there exists a $\onecounter$-language $\cL\subseteq X^*$ representing the positive cone $P$ of $F$.
In fact, $\cL$ can be taken to be a sublanguage of reduced words over $X$ (see Proposition \ref{prop: free-prod-cf} or Figure \ref{fig: transducer-f2} for the rank 2 free group case).

Let $e$ be a symbol disjoint from $X$. Define $\phi : (X \cup \{e\})^* \to X^*$ to be the monoid homomorphism sending $e$ to the empty word. Then $\phi^{-1}(\cL)$ is the language of words in $w \in \cL$ with arbitrary insertions of symbols $e$ in between its letters. Let $s$ be a substitution (in the sense of Hopcroft, Motwani, and Ullman \cite{HopUll}) such that $s(e) = \text{WP}(F)$, meaning we replace the symbol $e$ with a word over $X$ which is equal to the identity in $F$, and $s(x) = x$ for $x \in X$ otherwise. 

We claim that $s(\phi^{-1}(\cL))$ is a context-free language which is the preimage of $P$ under $\pi_X$. The language is context-free by closure properties of context-free languages under substitution and inverse homomorphism (see for example \cite[Theorem 7.23 and Theorem 7.30]{HopUll}). Moreover, $\pi_X(s(\phi\inv(\cL))) = \pi_X(\cL)$ by construction, as each $e$-substitution is a word which is equal to the identity. Therefore, $s(\phi\inv(\cL)) \subseteq \pi_X\inv(P)$. 
For the converse, observe that for every positive element $g \in P$, there is a reduced word in $w \in \cL$ such that $\pi(w) = g$. 
The preimage of $g$ under the evaluation map $\pi_X$ is $w$ with arbitrarily many insertion of words which are equal to the identity. Therefore, $\pi\inv_X(g) \in s(\phi\inv(\cL))$, so $\pi\inv(P) \subseteq s(\phi\inv(\cL))$. 
\end{proof}

However, the class of $\CF$-pre-image left-orders is still very limited. 
Indeed, all finitely generated abelian subgroups of a group admitting a $\CF$-preimage positive cone must be cyclic.
\begin{prop}\label{prop: Z2 is not CF-preimage}
$\mathbb{Z}^2$ does not have  $\CF$-preimage left-orders.
In particular, if $G$ has a $\CF$-preimage left-order,  then it does not contain $\bZ^2$ as a subgroup.
\end{prop}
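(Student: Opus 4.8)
The ``in particular'' clause is immediate from the main assertion: if $G$ carried a $\CF$-preimage left-order and contained a copy of $\bZ^2$, then since $\CF$ is a full AFL (Proposition~\ref{prop: AFL}), hence closed under inverse homomorphism and intersection with regular languages, Lemma~\ref{lem: preimage inheritance} would hand us a $\CF$-preimage left-order on $\bZ^2$, contradicting the first sentence. So I focus on $\bZ^2$ itself. Fix a basis $a,b$, put $X=\{a,a^{-1},b,b^{-1}\}$ and let $\pi\colon X^*\to\bZ^2$ be evaluation, so that $\pi(w)=(\sharp_a(w)-\sharp_{a^{-1}}(w),\,\sharp_b(w)-\sharp_{b^{-1}}(w))$. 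Suppose for contradiction that some positive cone $P$ has $\cL:=\pi^{-1}(P)\in\CF$.

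The first step extracts the coarse geometry of $P$ from Parikh's theorem. As $\cL$ is context-free, its Parikh image $\Psi(\cL)\subseteq\bN^4$ is semilinear. Since $\pi$ factors through the Parikh map via the linear surjection $L(p,q,r,s)=(p-q,r-s)$, every Parikh vector is realized by some word, whence $\Psi(\cL)=\{(p,q,r,s):L(p,q,r,s)\in P\}$ and therefore $P=L(\Psi(\cL))$; as semilinear sets are preserved by linear images, $P$ is a semilinear subset of $\bZ^2$. Now I invoke the classical classification of left-orders on $\bZ^2$ (all of which are bi-orders; see e.g.\ \cite{Kopytov-Medvedev}): either $P=\{(m,n):\alpha m+\beta n>0\}$ for a real functional with $\alpha/\beta$ irrational, or $P$ is lexicographic with respect to some basis. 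An irrational half-plane in $\bZ^2$ is not semilinear — along a quadrant crossed by its boundary line its lattice points are governed by a Beatty sequence $n\mapsto\lfloor(\text{irrational})\cdot n\rfloor$, which is not eventually periodic-plus-linear and hence not Presburger-definable — so semilinearity of $P$ excludes this case.

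It remains to rule out the lexicographic cones, which is the crux: a lexicographic $P$ \emph{does} have semilinear Parikh image, so Parikh's theorem gives no contradiction and one must expose a genuine context-freeness obstruction. Because a change of basis of $\bZ^2$ is an automorphism, hence a change of generating set, Lemma~\ref{lem: C-preim indep gen set} lets me assume $P$ is the standard lexicographic cone $P=\{(m,n):m>0\}\cup\{(0,n):n>0\}$. Intersecting with the regular language $R=a^*\,b^*\,(a^{-1})^*\,(b^{-1})^*$ keeps us in $\CF$, and yields
\[
 \cL\cap R=\{a^p b^r (a^{-1})^q (b^{-1})^s \mid p>q,\ \text{or}\ p=q\ \text{and}\ r>s\},
\]
which encodes the crossing dependency ``the $a$'s balance the $a^{-1}$'s while the $b$'s outnumber the $b^{-1}$'s'' — the archetype of a non-context-free language. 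I would contradict the context-free pumping lemma using the boundary word $z=a^N b^N (a^{-1})^N (b^{-1})^{N-1}$ (with $N$ the pumping length), which lies in $\cL\cap R$ because $\pi(z)=(0,1)\in P$. In any decomposition $z=uvwxy$ with $|vwx|\le N$ and $|vx|\ge 1$, the factors $v,x$ must each be a power of a single letter (otherwise $uv^2wx^2y\notin R$) and can meet at most two adjacent of the four blocks; a short case analysis then shows that the appropriate direction of pumping — down when an $a$ or $b$ is deleted (forcing $p<q$, or $p=q$ with $r\le s$), up when an $a^{-1}$ or $b^{-1}$ is inserted (forcing $q>p$, or $r\le s$) — always produces a word whose image leaves $P$, contradicting the pumping lemma.

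The main obstacle is exactly this lexicographic case. Unlike the irrational half-planes, lexicographic cones are Presburger-definable, so their preimages have semilinear Parikh image and cannot be detected by any commutative or counting invariant; the failure of context-freeness lives entirely in the \emph{order} of the blocks and must be surfaced by pumping the interleaved $A$--$C$ and $B$--$D$ dependencies after cutting $\cL$ down to the well-chosen regular sublanguage $R$.
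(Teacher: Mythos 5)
Your proof is correct, and it takes a genuinely different route from the paper's. The paper runs a \emph{single} pumping-lemma argument covering all cones at once: like you, it intersects $\pi^{-1}(P)$ with the regular language $a^*b^*(a^{-1})^*(b^{-1})^*$, but instead of splitting by the slope of the boundary line it observes that the pumped words $uv^iwx^iy$ evaluate to lattice points lying on a common line of \emph{rational} slope through $\pi(s)$, with points on both sides of $\pi(s)$; choosing $s$ with $\pi(s)$ sufficiently close to the boundary line (or, when the slope $\lambda$ is rational, equal to the first lattice point on the chosen half-line --- essentially your word $z$) guarantees that every rational-slope line through $\pi(s)$ has a half-line all of whose lattice points avoid $P$, so pumping with $i=0$ or $i=2$ exits the cone. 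This handles irrational and lexicographic cones uniformly and needs neither Parikh's theorem nor your basis normalization. Your route buys modularity and, in the irrational case, a slightly stronger conclusion: since $P=L(\Psi(\cL))$ only uses $\pi(\cL)=P$ and not fullness of the preimage, your Parikh step shows an irrational-slope cone cannot even be the image of a context-free language, so fullness is genuinely needed only in the lexicographic case --- consistently so, because lexicographic orders on $\bZ^2$ are regular left-orders (Proposition \ref{prop:extensions}) and no commutative invariant can detect them, exactly as you note. Two small points if you write it up: the non-semilinearity of an irrational half-plane deserves the standard Presburger argument (the boundary function $m\mapsto\min\{n : (m,n)\in P\}$ of a semilinear set is eventually linear with rational slopes on arithmetic progressions, which a Beatty sequence violates), and your reduction to the standard lexicographic cone uses Lemma \ref{lem: C-preim indep gen set} together with precomposing $\pi$ by an automorphism of $\bZ^2$ carrying the standard cone to $P$ --- legitimate, but worth a sentence; your case analysis for pumping $z=a^Nb^N(a^{-1})^N(b^{-1})^{N-1}$ (pump down when $vx$ lies in the $a,b$-blocks, up when it meets the $a^{-1},b^{-1}$-blocks) does check out in all cases.
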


\begin{proof}
Positive cones of $\bZ^2$ are very well understood. See for example \cite[Section 1.2.1]{GOD} and references therein.
We view $\bZ^2$ as subspace of $\bR^2$.
A positive cone in $\bZ^2$ is determined by a line in $\bR^2$ through the origin, and then selecting one of the half-space (excluding the origin) delimited by the line. If the slope of the line is rational, causing some elements of $\bZ^2$ to lie on the line, then one has to choose half of the line (starting from but excluding the origin) to be in the positive cone as well.

\begin{figure}[ht]
\begin{center}
\import{fig/}{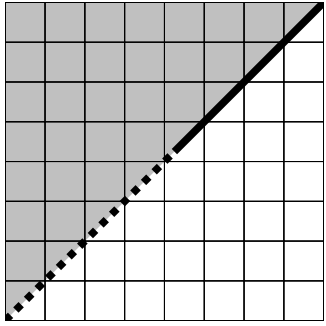}
\end{center}
\caption{$\bZ^2$ with rational line and choices of orientation}
\label{fig: zsquare}
\end{figure}

By Lemma \ref{lem: C-preim indep gen set} we may choose any generating set of $\bZ^2$ for this proof. Take $a$ and $b$ to be standard basis vectors, in the $x$ and $y$ direction respectively.  
We use $A$ and $B$ to denote $a^{-1}$ and $b^{-1}$ respectively.

Suppose that the half-space defining the positive cone is $\texttt{y}> \lambda \texttt{x}$, where $\lambda \in \bR$. The case $\texttt{y}<\lambda \texttt{x}$ is analogous.
If $\lambda\in \mathbb{Q}$, we choose $\texttt{y}=\lambda \texttt{x}$, $\texttt{x}>0$ to be in the positive cone. 
Again, the case $\texttt{x}< 0$ is analogous. 

Therefore, we can assume that words evaluating to the positive cone consists on
$$\left\lbrace
w\in \{a,b,A,B\}^* \,\middle|  
\begin{array}{c} 
\text{ either }
(\sharp_b(w)-\sharp_B(w))>\lambda (\sharp_a (w)- \sharp_A (w))  \text{ or }
\\

((\sharp_b(w)-\sharp_B(w))=\lambda (\sharp_a (w)- \sharp_A (w)) \text{ and } (\sharp_a (w)- \sharp_A (w))>0 )
\end{array}
\right\rbrace.$$

Suppose the previous set is a $\CF$ language. Then if we take the intersection with the regular language $a^*b^*A^*B^*$ we get that $$\cL= \{ a^\alpha b^\beta A^\gamma B^\delta \,|\, (\beta-\delta)> \lambda(\alpha-\gamma)\text{ or } ((\beta-\delta)= \lambda(\alpha-\gamma)\text{ and } (\alpha-\gamma)>0)\}$$
is $\CF$. 
We now use the pumping lemma for $\CF$-languages (see \cite{HopUll} for example)  to derive a contradiction. 
Suppose that the pumping length is $p$ and take a word 
 $s=a^c b^{d}  A^e  B^f\in \cL$ with $c,d,e,f>p$.
The pumping lemma says that $s$ can be factorized  as $s=uvwxy$ (i.e. $u,v,w,x,y$ are all subwords of $s$),  with $\ell(vwx)\leq p$, and with at least $v$ or $x$ is non-empty, such that for all $i\geq 0$ $uv^iwx^i y$ belongs to $\cL$.
Let $\texttt{v}= \pi(v)+\pi(x)$ be a vector in $\bZ^2$.
Let $\texttt{p}= \pi(s)$ a point of $\bZ^2$.
Note that the points $\pi(uv^iwx^iy)$ for $i\geq 0$ lie on the line $L= \{\texttt{p}+ \mu \texttt{v} : \mu \in \mathbb{R}\}$ that goes through $\pi(s)$ 
and there are points a $\pi(uv^iwx^iy)$ on both sides of $L\setminus \{\pi(s)\}$, for instance $\pi(uwy)$ and $\pi(uv^2wx^2y)$. Remark that the line $L$ has rational slope.

Now  take $s=a^c b^{d}  A^e  B^f$ with $c,d,e,f>p$ and such that  $\pi(s)$ is very close to the line $\texttt{y}=\lambda \texttt{x}$
 (or on the line if $\lambda$ is rational) and with the property that for any  line $L$ going through $\pi(s)$ there is a half-line of $L\setminus \{\pi(s)\}$ such that all rational points on this half-line are outside of $P$.
If $\lambda$ is not rational, then we can find such point $\pi(s)$ by taking a rational approximation of $\lambda$.
If $\lambda$ is rational, we take $\pi(s)$ to be the first point with integer coordinates on the set $\{(\texttt{x},\texttt{y}) \mid \texttt{y}=\lambda \texttt{x}, \texttt{x}>0\}$.

Then for that $s$, some choice of $i$ on the pumping lemma will give an element not in $\cL$. Indeed, evaluating  pumped words with $i=0$ and $i=2$, we get two different points on $\bZ^2$ that lie on a line going through $\pi(s)$, and moreover these points lie in different components of the line minus $\pi(s)$, thus some of them are in the negative cone. This gives the desired contradiction.

From Lemma \ref{lem: preimage inheritance} we get that no group with $\CF$-preimage left-orders can contain $\bZ^2$.
\end{proof}

The celebrated result of Muller and Schupp \cite{MullerSchupp83} states that  the word problem of a group is context-free if and only if $G$ is virtually free.
 Unfortunately, the argument of Lemma \ref{prop: regular-preimage}, does not generalize directly to context-free languages, since this class is not closed under complement. Thus, it is natural to ask the following.
\begin{prob}
Is there a non-free group admitting a $\CF$-preimage left-order?
\end{prob}
A simpler, but related question is the following.
\begin{prob}
Is there a non-cyclic group admitting a $\onecounter$-preimage left-order?
\end{prob}

Recall that the class of languages that are complements of a context-free languages are called {co-context-free} and we denote it by {co-$\CF$}. The class of groups for which $WP(G,X)$ is co-$\CF$ was first studied by Holt, Rees, R\"{o}ver, and Thomas \cite{HRRT}. We have the following easy observation.

\begin{prop}
Let $G$ be finitely generated group with $\CF$-preimage left-orders.
Then the word problem of $G$ is in the class co-$\mathcal{CF}$.
\end{prop}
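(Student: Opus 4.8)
The plan is to avoid any direct complementation of $\WP(G,X)$ (which would be illegitimate, since $\CF$ is not closed under complement by Proposition \ref{prop: AFL}) and instead realize the complement of the word problem directly as a finite union of preimages that are manifestly context-free. The key algebraic observation is the partition $G = P \sqcup P^{-1} \sqcup \{1_G\}$ coming from the positive cone $P$ associated to the given $\CF$-preimage left-order. Pulling this back along $\pi$, we obtain the disjoint decomposition $X^* = \pi^{-1}(P) \sqcup \pi^{-1}(P^{-1}) \sqcup \pi^{-1}(\{1_G\})$, and since $\pi^{-1}(\{1_G\}) = \WP(G,X)$ by definition, this identifies the complement $X^* \setminus \WP(G,X)$ with $\pi^{-1}(P) \cup \pi^{-1}(P^{-1})$.

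First I would record that $\pi^{-1}(P)$ lies in $\CF$ by the very hypothesis that $\prec$ is a $\CF$-preimage left-order. The main point is then to show that the preimage of the negative cone, $\pi^{-1}(P^{-1})$, is also context-free. For this I would invoke Lemma \ref{lem: negative cone in cC}(ii): that lemma applies to any class closed under homomorphisms, inverse homomorphisms, and reversal, and Proposition \ref{prop: AFL} tells us that $\CF$ is a full AFL which is moreover closed under reversal, so all three closure hypotheses are satisfied with $\cC = \CF$. Hence $\pi^{-1}(P) \in \CF$ if and only if $\pi^{-1}(P^{-1}) \in \CF$, and the forward direction gives what we want.

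Finally, since $\CF$ is a full AFL it is closed under unions (Proposition \ref{prop: AFL}), so $\pi^{-1}(P) \cup \pi^{-1}(P^{-1})$ is context-free. By the identification of the previous paragraph this is exactly $X^* \setminus \WP(G,X)$, so $\WP(G,X)$ is the complement of a context-free language, i.e.\ it is co-$\CF$. There is essentially no obstacle here beyond being careful to route around the failure of complementation: the entire argument is an application of the closure properties packaged in Propositions \ref{prop: AFL} and Lemma \ref{lem: negative cone in cC}, together with the cone partition. (I would also remark that the statement and proof are independent of the chosen finite generating set, since being co-$\CF$ is a group invariant, in parallel with Lemma \ref{lem: C-preim indep gen set}.)
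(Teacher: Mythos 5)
Your proof is correct and follows essentially the same route as the paper's: the paper likewise obtains context-freeness of $\pi^{-1}(P^{-1})$ from closure of $\CF$ under reversal and homomorphisms (i.e.\ Lemma \ref{lem: negative cone in cC}) and then uses closure under union to express $X^* \setminus \WP(G,X) = \pi^{-1}(P)\cup\pi^{-1}(P^{-1})$ as a context-free language. Your write-up simply spells out the cone-partition bookkeeping that the paper's two-line proof leaves implicit.
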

\begin{proof}
Since $\CF$ is closed under reversal and homomorphisms, we get that the full language of $\prec$-negative words is $\CF$. Since the class $\CF$ is closed under union, $WP(G)$ is co-$\mathcal{CF}$.
\end{proof}
The class of groups of {co-$\CF$} word problem is closed by taking finite direct products, taking restricted standard wreath products $\oplus H\rtimes Q$  with $Q$ having  context-free word problem, passing to finitely generated subgroups, and passing to finite index overgroups \cite{HRRT}. In \cite[Theorem 13]{HRRT} it is also shown that a Baumslag-Solitar group has a co-$\CF$ word problem  if and only if it is virtually abelian. Thus, we obtain the following.

\begin{cor}\label{cor: BS is not CF-pre}
Baumslag-Solitar groups do not admit $\CF$-preimage left-orders.
\end{cor}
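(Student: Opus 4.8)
The plan is to argue by contradiction, chaining the two preceding propositions with the classification result of Holt--Rees--R\"over--Thomas. Suppose some Baumslag--Solitar group $G = BS(m,n)$ (with $m,n\neq 0$) admitted a $\CF$-preimage left-order. Then $G$ would in particular be finitely generated and left-orderable, so the proposition immediately preceding this corollary would apply and yield that the word problem $\WP(G,X)$ lies in the class co-$\CF$. Note that no case distinction on left-orderability of $G$ is needed, since the hypothesis of the contradiction already supplies a left-order; the non-left-orderable Baumslag--Solitar groups satisfy the statement vacuously.

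Next I would invoke \cite[Theorem 13]{HRRT}, quoted in the discussion above, which asserts that a Baumslag--Solitar group has co-$\CF$ word problem if and only if it is virtually abelian. Hence $G$ would be virtually abelian. The final step is to observe that every virtually abelian Baumslag--Solitar group contains a copy of $\bZ^2$: such a $G$ is virtually $\bZ^d$ for some $d\geq 1$, and it cannot be virtually cyclic, because $BS(m,n)$ surjects onto $\bZ$ (abelianizing sends $a\mapsto 1$, $b\mapsto 0$) with infinite kernel, whereas a two-ended group surjecting onto $\bZ$ has finite kernel. Thus $d\geq 2$ and $G$ contains $\bZ^2$. (Concretely, the only virtually abelian groups among the $BS(m,n)$ are $BS(\pm1,\pm1)$, each of which is virtually $\bZ^2$.) This contradicts Proposition \ref{prop: Z2 is not CF-preimage}, which forbids a $\bZ^2$ subgroup in any group carrying a $\CF$-preimage left-order, completing the proof.

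The argument is short, and its only genuinely non-formal ingredient is the last step: verifying that a virtually abelian Baumslag--Solitar group actually contains $\bZ^2$ rather than being merely virtually cyclic. I expect this to be the main (and rather minor) obstacle, handled either by the ends argument above or by directly citing that the virtually abelian $BS(m,n)$ are exactly $BS(\pm1,\pm1)$. Everything else is a formal concatenation of the preceding proposition (giving co-$\CF$), \cite[Theorem 13]{HRRT} (giving virtual abelianness), and Proposition \ref{prop: Z2 is not CF-preimage} (giving the contradiction).
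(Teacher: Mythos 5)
Your proposal is correct and takes essentially the same route as the paper, which derives the corollary (implicitly, with the words ``Thus, we obtain the following'') from the proposition that a $\CF$-preimage left-order forces a co-$\CF$ word problem together with \cite[Theorem 13]{HRRT}. Your final step --- identifying the virtually abelian Baumslag--Solitar groups as $BS(\pm 1,\pm 1)$ and ruling them out via a $\bZ^2$ subgroup, Lemma \ref{lem: preimage inheritance} and Proposition \ref{prop: Z2 is not CF-preimage} --- merely spells out the detail the paper leaves to the reader, and your ends argument for it is sound.
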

Note that $\bZ^2$ embeds into every Baumslag-Solitar group, except $BS(1, n)$ with $n\neq \pm 1$ \cite[Proposition 7.11]{Levitt}, thus,  Corollary \ref{cor: BS is not CF-pre} does not follow from Proposition \ref{prop: Z2 is not CF-preimage} on the solvable case.

Lehnert and Schweitzer \cite{LS} showed that Thompson's group $V$ has co-$\CF$ word problem, and a conjecture of Lehnert can be formulated as all groups with co-$\CF$ word problem are finitely generated subgroups of $V$  \cite{BMN}.
Thus, a potential place to look for examples of groups with $\CF$-preimage left-orders are left-orderable subgroups of $V$ that do not contain $\bZ^2$.

\paragraph{Positive cones whose preimage is recursively enumerable.}
We now recall some definitions and results that we will need.

\begin{defn}
A {\it recursively enumerable language} is a formal language for which there exists a Turing machine  which enumerates all accepted strings of the language.
We denote the family of recursively enumerable languages by $\RecEnum$.

\end{defn}
We will not give a formal definition of a Turing machine. It can be found in  \cite{HopUll}, for example.
We will use that if $\cL$ is $\RecEnum$ is recursively enumerable, then there is an algorithm that lists all elements of $\cL$. On the other hand, if there is an algorithm that decides if a given word $w$ is in $\cL$, then $\cL$ is in $\RecEnum$. Note that for such algorithm, on an input $w\in \cL$  always halts and decides correctly, where for $w\not\in \cL$ it never says that $w$ lies in $\cL$ but it might not halt (thus we can not deduce that $w$ does not belong to $\cL$ in that case).

The following is well-known. See \cite{HopUll}.

\begin{prop}
The class $\RecEnum$ is a full AFL and it is closed under reversal.
\end{prop}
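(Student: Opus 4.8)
The plan is to prove all the required closure properties using the two standard and equivalent descriptions of a recursively enumerable language: as the set of outputs of a Turing machine that enumerates it, and as the set of inputs on which a Turing machine halts and accepts (a \emph{semi-decision procedure}). The equivalence of these two descriptions (see \cite{HopUll}) is what makes the argument uniform: for operations that build a language as a set of \emph{outputs} I would use the enumerator, and for operations that build a language as a set of \emph{inputs passing a test} I would use the semi-decision procedure, switching freely between the two.

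First I would handle the operations of the output type, namely reversal, union, concatenation, Kleene closure and (direct) homomorphism. Given enumerators for $\cL$ and $\cM$, an enumerator for $\cL^R$ simply reverses each emitted word; one for $\cL \cup \cM$ dovetails the two enumerations; one for $\cL \cdot \cM$ dovetails over all pairs $(u,v)$ with $u \in \cL$, $v \in \cM$ and emits $uv$; one for $\cL^*$ dovetails over all $n \geq 0$ and all $n$-tuples of words produced by the enumerator of $\cL$, emitting their concatenations (together with the empty word); and for a monoid homomorphism $f$, an enumerator for $f(\cL)$ applies $f$ to each emitted word. In every case the dovetailing is routine and $\RecEnum$ is seen to be closed.

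Next I would treat the operations of the input-test type, namely inverse homomorphism and intersection with a regular language. For a homomorphism $f \colon Y^* \to X^*$ and a semi-decision procedure for $\cL \subseteq X^*$, the composite procedure that on input $w \in Y^*$ computes $f(w)$ and then runs the procedure for $\cL$ on $f(w)$ halts and accepts exactly when $f(w) \in \cL$, so $f^{-1}(\cL) \in \RecEnum$. For intersection with a regular language $\cR$ (in fact with any recursively enumerable language), I would run the semi-decision procedure for $\cL$ and the decision procedure for $\cR$ on the same input and accept only if both accept; since membership in $\cR$ is decidable, this semi-decides $\cL \cap \cR$.

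Assembling these, $\RecEnum$ is closed under homomorphisms, inverse homomorphisms, intersection with regular languages, unions, concatenations and Kleene closure, hence is a full AFL, and it is also closed under reversal. The only point requiring any care is the switch between the enumerator and the recognizer descriptions, so I would either invoke their equivalence at the outset or, where convenient, convert an enumerator into a recognizer by dovetailing the enumeration and comparing each emitted word with the input. Since all of this is classical, I expect the final write-up to be short and to cite \cite{HopUll} for the underlying machine equivalences.
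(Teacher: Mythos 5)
Your proof is correct, and it matches the paper's treatment: the paper gives no argument at all for this proposition, stating only that it is well-known and citing Hopcroft--Motwani--Ullman, and your enumerator/semi-decision constructions (dovetailing for union, concatenation, Kleene closure, homomorphism, and reversal; composing with the recognizer for inverse homomorphism and intersection with a regular language) are precisely the standard arguments underlying that citation. Nothing further is needed.
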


Recall that a group is recursively presented if it admits a presentation $\langle X \mid R \rangle$ where $X$ is finite and $R$ is a recursively enumerable language over $X^*$.

We now observe  the following.
\begin{lem}\label{lem:RecEnum preim LO= RecEnum LO}
Let $G$ be a recursively presentable group. Then $G$ admits $\RecEnum$-left-orders if and only if it admits $\RecEnum$-preimage left-orders.
\end{lem}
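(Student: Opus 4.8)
The plan is to prove the two implications separately; in fact I will establish the stronger, order-by-order statement that for a recursively presented $G$ a fixed left-order $\prec$ is a $\RecEnum$-preimage left-order if and only if it is a $\RecEnum$-left-order, and the lemma then follows immediately at the level of existence. Throughout I will fix a finite generating set $(X,\pi)$; by Lemma \ref{lem: C-preim indep gen set} and the fact that $\RecEnum$ is closed under homomorphisms, both notions are independent of this choice, so I will take $X$ to be closed under inverses, writing $u^{-1}$ for the formal inverse word of $u\in X^*$.

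The easy implication requires no hypothesis on $G$. Suppose $\prec$ is a $\RecEnum$-preimage left-order, so that $\pi^{-1}(P_\prec)\in\RecEnum$. Setting $\cL:=\pi^{-1}(P_\prec)$ and using that $\pi$ is surjective gives $\pi(\cL)=P_\prec$, so the same order $\prec$ is witnessed to be a $\RecEnum$-left-order. For the converse I start from a $\RecEnum$-left-order $\prec$ with a witnessing language $\cL\in\RecEnum$, $\pi(\cL)=P_\prec$, and I aim to show $\pi^{-1}(P_\prec)\in\RecEnum$. The key external input here is that a recursively presented group has recursively enumerable word problem: writing $G=\langle X\mid R\rangle$ with $R\in\RecEnum$, a word $w$ lies in $\WP(G,X)$ precisely when it equals, in the free group on $X$, a product of conjugates of elements of $R^{\pm 1}$, so enumerating all such products and freely reducing yields a semi-decision procedure and hence $\WP(G,X)\in\RecEnum$.

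With this in hand I would record the identity
$$\pi^{-1}(P_\prec)=\{\,w\in X^* : \exists\, u\in\cL \text{ with } u^{-1}w\in\WP(G,X)\,\},$$
which holds because $\pi(w)\in P_\prec=\pi(\cL)$ is equivalent to $\pi(w)=\pi(u)$ for some $u\in\cL$, i.e.\ $u^{-1}w=_G 1$. A semi-decision procedure for membership in $\pi^{-1}(P_\prec)$ then dovetails the enumerations of the two recursively enumerable languages $\cL$ and $\WP(G,X)$: on input $w$ it enumerates pairs $(u,v)$ with $u\in\cL$ and $v\in\WP(G,X)$ and halts, accepting, as soon as $u^{-1}w=v$ as words of $X^*$. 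If $w\in\pi^{-1}(P_\prec)$ such a pair exists and is eventually found; otherwise the procedure never halts. Hence $\pi^{-1}(P_\prec)\in\RecEnum$ and $\prec$ is a $\RecEnum$-preimage left-order, completing the equivalence.

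The only genuinely nontrivial ingredient is the recursive enumerability of the word problem of a recursively presented group, and I expect the main obstacle to be presenting that fact cleanly (and deciding whether to prove the normal-closure enumeration in detail or simply cite it); everything else is a routine dovetailing argument reflecting that $\RecEnum$ is effectively closed under the operations used. Note also that the concatenation $\cL\cdot\WP(G,X)$ does \emph{not} directly give $\pi^{-1}(P_\prec)$, since a word evaluating into $P_\prec$ need not literally factor through $\cL$ and $\WP(G,X)$; this is exactly why the group-level comparison via $u^{-1}w\in\WP(G,X)$, handled by dovetailing, is the correct device.
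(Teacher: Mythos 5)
Your proposal is correct and follows essentially the same route as the paper: both directions reduce to the fact that a recursively presented group has recursively enumerable word problem, and the nontrivial implication is a dovetailed semi-decision procedure combining $\cL$ with $\WP(G,X)$. The only (cosmetic) difference is that you test the literal word $u^{-1}w$ for membership in $\WP(G,X)$, whereas the paper enumerates the concatenation language $\WP(G,X)\cdot\cL$ and compares freely reduced forms against the freely reduced input $w$ --- your variant even sidesteps the free-reduction step cleanly, and your closing remark about why $\cL\cdot\WP(G,X)$ alone does not suffice is exactly the point the paper's free-reduction device addresses.
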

\begin{proof}
Let $(X,\pi_X)$ be a finite generating set of a group $G$.
If $\prec$ is a  $\RecEnum$-preimage left-order, then $\prec$ is a $\RecEnum$ left-order.

Assume that $\prec$ is a $\RecEnum$ left-order and let $P$ be the corresponding positive cone.
Let $\cL\subseteq X^*$ be in $\RecEnum$ so that $\pi(\cL)=P$.
Since $G$ is recursively presented, the word problem $WP(G,X):= \pi^{-1}(\{1\})$, is in $\RecEnum$.

Let $w\in X^*$ such that $\pi(w)\in P$. 
Then there is $u\in \cL$ such that $\pi(w)=\pi(u)$ and therefore, $wu^{-1} \in WP(G,X)$.
This implies that $w =_{F(X)} vu$ with $v\in WP(G,X)$ and $u\in \cL$, where the equality is as group elements of the free group, not as words. 

Since $WP(G,X)\cdot \cL$ is recursively enumerable, and freely reducing words can be computed by a Turing machine, we can decide if $w\in \pi^{-1}(P)$ by checking if the freely reduced version of $w$ is equal to a freely reduced version of some word in $WP(G,X)\cdot \cL$ whose elements can be listed by an algorithm.
\end{proof}

We note the following special case.
\begin{cor}\label{cor: word problem}
If $G$ is recursively presented and has a $\RecEnum$-left-order, then $G$ has solvable word problem.
\end{cor}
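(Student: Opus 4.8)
The plan is to combine Lemma \ref{lem:RecEnum preim LO= RecEnum LO} with the classical characterization of decidable languages as those that are recursively enumerable together with their complements. The key observation is that the word problem $\WP(G,X)$ is precisely the complement of the language of all words evaluating into $P\cup P^{-1}$, so controlling the complexity of both the positive and negative cone preimages will pin down the complexity of $\WP(G,X)$.

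Concretely, I would first invoke Lemma \ref{lem:RecEnum preim LO= RecEnum LO}: since $G$ is recursively presented and admits a $\RecEnum$-left-order, it admits a $\RecEnum$-preimage left-order. Fixing a finite generating set $(X,\pi)$, this yields a positive cone $P$ with $\pi^{-1}(P)\in\RecEnum$. Because the class $\RecEnum$ is a full AFL that is closed under reversal (as recalled in the proposition preceding Lemma \ref{lem:RecEnum preim LO= RecEnum LO}), Lemma \ref{lem: negative cone in cC}(ii) applies and gives $\pi^{-1}(P^{-1})\in\RecEnum$ as well. Closure of $\RecEnum$ under union then shows that $\pi^{-1}(P)\cup\pi^{-1}(P^{-1})\in\RecEnum$.

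The next step is to exploit the partition $G=P\sqcup P^{-1}\sqcup\{1_G\}$, which translates into
$$\WP(G,X)=\pi^{-1}(\{1_G\})=X^*\setminus\bigl(\pi^{-1}(P)\cup\pi^{-1}(P^{-1})\bigr).$$
Thus the complement of $\WP(G,X)$ is recursively enumerable. On the other hand, since $G$ is recursively presented, $\WP(G,X)$ is itself recursively enumerable (this is exactly the fact already used inside the proof of Lemma \ref{lem:RecEnum preim LO= RecEnum LO}). Having both $\WP(G,X)$ and its complement recursively enumerable means $\WP(G,X)$ is recursive, i.e. decidable, which is the assertion that $G$ has solvable word problem.

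I do not expect a genuine obstacle here: the entire argument is a short assembly of already-established closure properties and the standard fact that a language recognized together with its complement by semi-deciding procedures is decidable. The only point requiring minor care is making explicit that $G$ being recursively presented supplies the recursive enumerability of $\WP(G,X)$ itself, so that both directions of the decidability criterion are available; everything else follows mechanically from Lemma \ref{lem: negative cone in cC}(ii) and the full-AFL/reversal closure of $\RecEnum$.
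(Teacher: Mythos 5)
Your proposal is correct and follows essentially the same route as the paper's proof: both show that $\pi^{-1}(P)\cup\pi^{-1}(P^{-1})=X^*\setminus\WP(G,X)$ is in $\RecEnum$ via Lemma \ref{lem: negative cone in cC} and the closure properties of $\RecEnum$, note that $\WP(G,X)$ is recursively enumerable because $G$ is recursively presented, and conclude decidability from the standard fact that a language which is recursively enumerable together with its complement is recursive. Your explicit appeal to Lemma \ref{lem:RecEnum preim LO= RecEnum LO} to pass from a representing language to the full preimage $\pi^{-1}(P)$ merely makes precise a step the paper leaves implicit.
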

\begin{proof}
Recall that having solvable word problem is independent of generating sets. 
Fix $(X,\pi)$ a generating set for $G$.
We need to show that if $\cL\subseteq X$ is a $\RecEnum$ language representing a positive cone, and $G$ is recursively presented, then  there is an algorithm that decides if given a word in the generators, this word represents the trivial element or not. 

For a recursively presented group,  $WP(G,X)=\pi^{-1}(\{1\})$ is $\RecEnum$. 

Since the class of recursively enumerable languages are closed under reversal and homomorphism and union (see Proposition \ref{prop: AFL}), using  Lemma \ref{lem: negative cone in cC}, it is easy to see that there is a recursively enumerable language over $X$,  such that $\pi^{-1}( P \sqcup P\inv)$ is $\RecEnum$.

Thus, given a word, we can check if it belongs to $\pi^{-1}(\{1\})$ or to $\pi^{-1}(G\setminus \{1\})$ since both are recursively enumerable languages, and hence the word problem is solvable.
\end{proof}

We note that the converse of Corollary \ref{cor: word problem} is false. 
First, not all left-orderable groups admit $\RecEnum$-left-orders. 
This has been recently proved by Harrison-Trainor \cite{HT} for left-ordererable groups and, in the bi-orderable case by Darbinyan \cite{Dar}. 
Moreover, the lack of $\RecEnum$-left-orders is not related to the solvability of the word problem since \cite[Corollary 2]{Dar} says that there exists a finitely presentable left-orderable group with solvable word
problem and without $\RecEnum$-left-orders.

\vspace{1cm}

\noindent{\textbf{{Acknowledgments}}} 

The three authors acknowledge Z. \u{S}uni\'{c} ans S. Hermiller for many fruitful discussions on this subject.  The three authors are also grateful to the referee/s for his/her comments and pointing out a simplication on the proof of Theorem \ref{thm: key-embedding}.

 Yago Antol\'{i}n and Hang Lu Su acknowledge partial support from the Spanish Government through grants number  MTM2017-82690-P, and through the ''Severo Ochoa Programme for Centres of Excellence in R\&{}D'' (SEV-2015-0554) and (CEX2019-000904-S)
 
 Hang Lu Su  has received funding from ``la Caixa'' Foundation (ID 100010434) with fellowship code LCF/BQ/IN17/11620066,
 from the European Union's Horizon 2020 research and innovation programme under the Marie Sklodowska-Curie grant agreement No. 713673.

Crist\'obal Rivas acknowledge the support from Fondecyt 1181548 and Fondecyt 1210155.

\noindent\textit{\\ Yago Antol\'{i}n,\\
Fac. Matem\'{a}ticas, Universidad Complutense de Madrid and \\ 
Instituto de Ciencias Matem\'aticas, CSIC-UAM-UC3M-UCM\\
Madrid, Spain\\}
{email: yago.anpi@gmail.com}

\noindent\textit{\\ Hang Lu Su\\
Instituto de Ciencias Matem\'aticas, CSIC-UAM-UC3M-UCM and\\
Dpto. de Matem\'aticas
Universidad Aut\'onoma de Madrid \\
Madrid, Spain\\}
{email: homeowmorphism@gmail.com}

\noindent\textit{\\ Crist\'obal Rivas\\
Dpto.\ de Matem\'aticas y C.C.\\
Universidad de Santiago de Chile\\
Alameda 3363, Estaci\'on Central\\ Santiago, Chile\\}
{email: cristobal.rivas@usach.cl}

\end{document}